\definecolor{green}{RGB}{0,200,0}
\definecolor{brown}{RGB}{100,50,0}
\newtheorem{theorem}{Theorem}[subsection]
\newtheorem{lemma}[theorem]{Lemma}
\newtheorem{prop}[theorem]{Proposition}
\newtheorem{coro}[theorem]{Corollary}
\theoremstyle{definition}
\newtheorem{defi}[theorem]{Definition}
\newtheorem{rema}[theorem]{Remark}
\newtheorem{example}[theorem]{Example}
\newtheorem{notation}[theorem]{Notation}
\numberwithin{equation}{subsection}
\newcounter{visible}
\newcounter{auxl}
\newcommand{\excludeDiagrams}[2]
{\setcounter{auxl}{#1}
\addtocounter{auxl}{\value{visible}}
\ifnum\value{auxl}>0 {#2} \fi
}
\title{Diagramatics of $A_1 \times A_1$ Folded Soergel Bimodules}
\author{Nicolas Jaramillo Torres}
\begin{document}

\maketitle

\begin{abstract}
	There is an action of $\ZZ/2$ on the category of Soergel Bimodules of type $A_1\times A_1$ induced by
	the nontrivial automorphism of its Dynkin diagram.
	We give an isotopy presentation by local generators and relations
	of the equivariantization of the category of Soergel Bimodules of type $A_1\times A_1$ under this action.
	This is the first step in a program to describe and study the categories
	of equivariant Soergel Bimodules that appear in \cite{elias2017folding}.
\end{abstract}

	\section{Introduction}
	The category of Soergel Bimodules $\Hcal(W)$ is a categorification of the Hecke algebra $\Hbold(W)$ associated to a Coxeter group $W$. 
	In \cite{elias2010diagrammatics}, Elias and Khovanov gave a diagrammatic presentation
	(by generators and relations) of this category for Coxeter groups of type $A$.
	This was generalized to other Coxeter groups 
	in \cite{elias2011soergel} and \cite{elias2016soergel}.

	Let $W$ be a finite Coxeter group with Coxeter system $(W,S)$ and let $G$ be a finite group of automorphisms of $W$ that preserves $S$.
	To fix notation, let $W^G$ be the $G$-invariant subgroup of $S$,
	let $S/G$ be the set of $G$-orbits of $S$,
	and for each $s\in S$ let $O_s$ be its $G$-orbit.
	Also let  $w_{O_s}$ the longest element of the parabolic subgroup generated by $O_s$.
	Under the identification $\widehat{\iota}:S/G\longrightarrow W$ sending $O_s\mapsto w_{O_s}$,
	we may regard $(W^G,S/G)$ as a Coxeter system,
	and the inclusion $\iota:W^G\longrightarrow W$ becomes a Coxeter embedding that extends $\widehat{\iota}$.
	Using $\iota$, we may restrict the length function $\ell_W$ of $W$ 
	to a weight function $L:W^G\longrightarrow \NN$ defined by $L(w)= \ell_W(w)$.
	The triple $(W^G,S/G,L)$ defines a Hecke algebra with unequal parameters $H(W^G,L)$ \cite[Ch. 3]{lusztig2002hecke}.
	Note that if we take the Dynkin diagram of $(W,S)$, 
	identify the vertices on each orbit, and relabel the edges appropriately,
	we obtain the Dynkin diagram of $(W^G,S/G)$.
	Because of this, the Coxeter system $(W^G,S/G)$ is called a \textit{folding} of $(W,S)$.
	
	The following is the smallest example of folding.
	Let \begin{equation}\label{labels A2}
		A_1 \times A_1=	
		\begin{unboxedstrings}
			\node[line width=0.1mm, label={[yshift=-4.5mm]$s$}, circle,draw=black, fill=black, minimum size=1.5mm] (s) at (0,0) {};
			\node[line width=0.1mm, label={[yshift=-4.5mm]$t$}, circle,draw=black, fill=black, minimum size=1.5mm] (t) at (0.5,0) {};
		\end{unboxedstrings}
	\end{equation}
	be the Dynkin diagram above with simple reflections $s$ and $t$.
	Let $\tau$ be the automorphism of $A_1 \times A_1$ swapping 
	the simple reflections $s$ and $t$.
	The invariant subgroup $W(A_1 \times A_1)^\tau$ is generated by $st$ and is isomorphic to $W(A_1)$.
	In this case $L(st)=2$ and the Hecke algebra with unequal parameters
	\begin{equation}
		H(W(A_1 \times A_1)^\tau,L)=
		\bigrightslant{\ZZ[v^{\pm 1}][b_{st}]}{\left<b_{st}^2=v^2 b_{st} + v^{-2}b_{st}\right>}.
	\end{equation}

	Let us now consider
	\begin{equation}\label{labels A3}
		A_3=	
		\begin{unboxedstrings}
			\node[line width=0.1mm, label={[yshift=-4.5mm]$s_1$}, circle,draw=black, fill=red, minimum size=1.5mm] (s1) at (0,0) {};
			\node[line width=0.1mm, label={[yshift=-4.5mm]$s_2$}, circle,draw=black, fill=blue, minimum size=1.5mm] (s2) at (0.5,0) {};
			\node[line width=0.1mm, label={[yshift=-4.5mm]$s_3$}, circle,draw=black, fill=red, minimum size=1.5mm] (s3) at (1,0) {};
			\draw [line width=0.5mm] (s1) -- (s2) -- (s3);
		\end{unboxedstrings}
	\end{equation}
	be the Dynkin diagram above, and let $s_1,s_2,$ and $s_3$ be its simple reflections.
	Let $\tau$ be the automorphism of $A_3$
	given by swapping the simple reflections $s_1$ and $s_3$.
	In this case, $\tau$ induces an automorphism of the group $W(A_3)$,
	whose invariant subgroup $W(A_3)^\tau$ is isomorphic to $W(B_2)$.
	Moreover, if we set $t_1,t_2$ to be the simple reflections of $B_2$,
	the morphism $\iota:W(B_2)\longrightarrow W(A_3)$
	sending $t_1\mapsto s_1s_3$ and $t_2\mapsto s_2$
	(the longest elements of the parabolic subgroups of the orbits) is a Coxeter embedding.
	In  this case, we may obtain the Dynkin diagram of $B_2$ by folding the Dynkin diagram of $A_1$.
	That is, we fold the diagram
	\begin{equation}\label{folding A3}	
		\begin{unboxedstrings}
			\node[line width=0.1mm, label={[yshift=-4.5mm]$s_1$}, circle,draw=black, fill=red, minimum size=1.5mm] (s1) at (0,0) {};
			\node[line width=0.1mm, label={[yshift=-4.5mm]$s_2$}, circle,draw=black, fill=blue, minimum size=1.5mm] (s2) at (0.5,0) {};
			\node[line width=0.1mm, label={[yshift=-4.5mm]$s_3$}, circle,draw=black, fill=red, minimum size=1.5mm] (s3) at (1,0) {};
			\draw [line width=0.5mm] (s1) -- (s2) -- (s3);
			\draw [stealth-stealth,line width=0.4mm]
			(s1) edge [bend left=60] (s3);
			\node[label={[yshift=3.5mm]$\tau$}] (tau) at (0.5,0) {};
		\end{unboxedstrings}
		\text{ into }
		\begin{unboxedstrings}
			\node[line width=0.1mm, label={[yshift=-4.5mm]$s_1s_3$}, circle,draw=black, fill=red, minimum size=1.5mm] (s1) at (0,0) {};
			\node[line width=0.1mm, label={[yshift=-4.5mm]$s_2$}, circle,draw=black, fill=blue, minimum size=1.5mm] (s2) at (0.8,0) {};
			\node[label={[yshift=0.5mm]{\footnotesize $4$}}] (mid) at (0.4,0) {};
			\draw [line width=0.5mm] (s1) -- (s2) ;
		\end{unboxedstrings}
		.
	\end{equation}
	The induced weight function $L:W(B_2)\longrightarrow \NN$ is determined by $L(t_1)=2$ and $L(t_2)=1$.
	Finally, the Hecke algebra with unequal parameters
	\begin{equation}
		H(W(B_2),L)=
		\bigrightslant{\ZZ[v^{\pm 1}]\left<b_{t_1},b_{t_2}\right>}
		{\left<
			\begin{matrix}
				b_{t_1}^2=v^2 b_{t_1} + v^{-2}b_{t_1} \\
				b_{t_2}^2=v b_{t_1} + v^{-1}b_{t_2} \\
				b_{t_1}b_{t_2}b_{t_1}b_{t_2}=b_{t_2}b_{t_1}b_{t_2}b_{t_1}
			\end{matrix}
			\right>}.\vspace{0.4cm}
	\end{equation}
	
	Note that the embedding of $W(B_2)$ into $W(A_3)$ does not induce a morphism of algebras
	from $H(W(B_2),L)$ to $H(W(A_3))$.
	Instead, the relation between these two algebras only becomes clear through the use of categorification.

	The categorification of Hecke algebra with unequal parameters has a long history.
	It was first done geometrically in \cite{lusztig2002hecke},
	and then algebraically in \cite{elias2017folding}.
	In the algebraic case, we consider an action of $G$ on $\Hcal(W)$ by autoequivalences and take the equivariantization $\Hcal(W)^G$.
	Both $H(W^G,S/G,L)$ and $H(W,S)^G$ 
	(the subalgebra of $G$-invariants of the Hecke algebra)
	are obtained as a specialization of the	Grothendieck group of $\Hcal(W)^G$
	\cite[Prop. 5.1, Thm. 5.4]{elias2017folding}.
	One can calculate this specialization indirectly,
	by looking at the action of $G$ on various $\Hom$ spaces and taking their characters. This was the method employed by Lusztig \cite{lusztig2002hecke}.
	
	Nevertheless, there has not been a systematic study of the equivariant category itself.
	In particular, we want to study if the equivariant category has an intrinsic
	diagrammatic presentation, independent of its description as an equivariant category.
	Given one such description, we may ask:
	How does the diagrammatic presentation of the category of Soergel Bimodules
	compare to the presentation of the equivariant category?
	Which desirable properties of the category of Soergel Bimodules are preserved by the equivariantization?

	Let $\HcalAxA^\tau$, be the equivariantization of $\HcalAxA$ with respect to the action of $\tau$, the natural involution of $A_1\times A_1$.
	In this paper we give a diagrammatic presentation of $\HcalAxA^\tau$ by generators and relations,
	independent of its description as an equivariantization of $\HcalAxA$.
	Even though this is the smallest possible example of equivariant Soergel bimodules,
	it is far more complex than the original category. 
	We encourage the reader to take a glance at Definition \ref{relations equiv category},
	where we lay out the relations in our presentation.
	We want to use as a first step to present other equivariant categories.
	For instance, $\HcalAxA^\tau$ is a full subcategory of the equivariant category
	$\Hcal(A_3)^\tau$ coming from the folding in \eqref{folding A3}.

	Our main results are to construct the diagrammatic category $\Dcal_{A_1\times A_1}$,
	to build a functor $\Fbold:\Dcal_{A_1\times A_1}\longrightarrow\HcalAxA^\tau$,
	and then show that $\Fbold$ is, in fact, an equivalence of categories.
	Here are a few words about the proof.
	We show that $\Fbold$ is an essentially surjective well-defined functor by verifying that our relations still hold in $\HcalAxA$ after applying $\Fbold$.
	By finding enough idempotent decompositions, we classify the indecomposable objects of $\Dcal_{A_1\times A_1}$.
	Using adjunctions, we show that to check $\Fbold$ is full and faithful
	it is enough to check it on $\Hom(\widehat{A},\onecalbar)$,
	where $\widehat{A}$ is an indecomposable object of $\Dcal_{A_1\times A_1}$.
	Theorem \ref{diagram reduction} finds a small spanning set for $\Hom(\widehat{A},\onecalbar)$, using a diagrammatic induction argument.
	These spanning sets are a basis after applying $\Fbold$, implying that $F$ is full and faithful.
	It is important to note that to use this presentation
	it really helps to have a comprehensive list of simplifying relations
	including the idempotent decompositions in Proposition \ref{prop idemp decomp}
	and the relations in Appendix \ref{appendix-relations}.

	This article is meant to be the first in a series of three articles  describing the diagrammatics of folded Soergel Bimodules of type $A_1\times A_1$.
	The second article will provide a diagrammatic presentation of the localization of $\HcalAxA^\tau$ by inverting $\alpha_s\alpha_t$.
	It will use this localization to provide cellular bases for the
	$\Hom$ spaces of $\HcalAxA^\tau$, analogous to
	Libedinsky's light leaves \cite{libedinsky2015light} in the non-equivariant case.
	The third article will study the categorical diagonalizability of the full twist chain complex,
	corresponding the equivariantization of the full twist on $A_1\times A_1$.

	The structure of this article is as follows:
	
	In Sections \ref{sec 0 graded and pivotal} and \ref{sec 1 equivar general} we present some background on graded categories, pivotal categories, and their equivariantization.
	These results shown are known to experts,
	especially in the context of semi-simple categories,
	but finding them out in the literature is difficult,
	so we prove them carefully here.

	In Sections \ref{subsection diagrammatics of A1}
	and \ref{subsection diagrammatics of AxA} we give a quick introduction to
	the structure of $\HcalA$ and $\HcalAxA$.
	Then, in Sections \ref{subsection Equivariantization of Soergel} and \ref{subsection hom spaces equivar} we describe $\HcalAxA^\tau$,
	its Grothendieck group, and its general properties.
	
	Section \ref{sec 3 diagrammatic folded category}
	are about the diagrammatic category $\Dcal_{A_1\times A_1}$.
	In Sections \ref{subsection free category} and \ref{subsection-presentation},
	we define $\Dcal_{A_1\times A_1}$ by generators and relations,
	construct the functor $\Fbold:\Dcal_{A_1\times A_1}\longrightarrow \Hcaltau$,
	and show it is well defined.

	In Section \ref{subsection poly forcing 1} and \ref{subsection idempotent decomposition},
	we deduce basic polynomial forcing relations and the idempotent decompositions in $\Dcal_{A_1\times A_1}$ from its defining relations.
	In Section \ref{subsection orange strand} we show that the orange strands
	slide over other diagrams.
	Finally, in Section \ref{subsection-diagram reduction},
	we prove Theorem \ref{diagram reduction} by means of diagrammatic induction.
	
	\textbf{Acknowledgments.}
	The author would like to thank Ben Elias for all the advice, support, and corrections throughout the writing of this article.
	The author was supported by several NSF grants awarded to Elias, namely DMS-2201387 and DMS-1800498 and DMS-1553032.

\section{Graded and Pivotal Categories}\label{sec 0 graded and pivotal}

\subsection{Graded Monoidal Categories}

\begin{defi}\label{defi graded monoidal cat}
	We say $\Ccal$ is a \textit{$\ZZ$-graded monoidal category} 
	(or just a \textit{graded monoidalcategory}),
	if the $\Hom$ spaces in $\Ccal$ have a $\ZZ$-graduation compatible with both the composition of morphisms and the tensor product.
	
	For $X,Y\in\Ccal$ we use the notation $\Hom^i(X,Y)$ for the $i$-th graded component of  $\Hom(X,Y)$, so that
	\begin{equation}
		\Hom(X,Y)=\bigoplus_{i\in\ZZ}\Hom^i(X,Y).
	\end{equation}
\end{defi}

We assume all our graded monoidal categories $\Ccal$ are \textit{closed under grading shift}.
That is to say that $\Ccal$ has a degree $1$ graded autoequivalence $(-)[1]:\Ccal\longrightarrow \Ccal$
with a graded pseudo-inverse $(-)[-1]:\Ccal\longrightarrow \Ccal$ (of degree $-1$).
We will denote by
$(-)[d]:\Ccal\longrightarrow \Ccal$ to the shift functor of degree $d$.
Define
\begin{equation}
	\Hom^i(X,Y)[d]:= \Hom^{i+d}(X,Y)[d],
\end{equation}
and 
\begin{equation}
	\Hom(X,Y)[d]:= \bigoplus_{i\in\ZZ} \Hom^i(X,Y)[d].
\end{equation}
Unless specified otherwise, we will use the notation
$\Hom(X,Y)$ (without any superindex) for the whole graded ring in $\Ccal$.
Note that
\begin{equation}
	\Hom(X[k],Y[m])= \Hom(X,Y)[m-k].
\end{equation}

\begin{defi}
	We say that a graded ring $R=\bigoplus_{i\in\ZZ}R^i$ is a \textit{local graded ring}
	if any sum of two homogeneous non-units is again a non-unit.
\end{defi}

\begin{rema}\label{local graded ring}
	Let $\Ccal$ be a graded monoidal category and
	let $X\in \Ccal$. 
	Then $\End(X)$ is a graded algebra and its idempotents only live in degree $0$.
	In particular, if $\End(X)$ is non-negatively graded and $\End^0(X)$ is local,
	we have that $\End(X)$ is a local graded ring and $X$ is indecomposable.
\end{rema}

\subsection{Pivotal Categories and Tensor Adjunction}

\begin{defi}
	Let $\Ccal$ be a strict monoidal category and $X\in\Ccal$.
	A \textit{right tensor adjoint} (or just \textit{right adjoint}) $(X^*,\eval_X,\coeval_X)$ of $X$ is a triple where $X^*\in\Ccal$, 
	$\eval_X:X^*\otimes X \longrightarrow \onecal$,
	and 
	$\coeval_X:\onecal\longrightarrow X\otimes X^*$
	such that the compositions
	\begin{equation}\label{eqn dual object 1}
		X \overlongrightarrow{\coeval_X\otimes\id_{X}}
		X \otimes X^* \otimes X
		\overlongrightarrow{\id_{X} \otimes \eval_X}
		X
	\end{equation}
	and
	\begin{equation}\label{eqn dual object 2}
		X^* \overlongrightarrow{\id_{X^*}\otimes\coeval_X}
		X^* \otimes X \otimes X^*
		\overlongrightarrow{\eval_X \otimes \id_{X^*}}
		X^*
	\end{equation}
	are $\id_{X}$ and $\id_{X^*}$ respectively.
	We call $\eval_X$ the \textit{evaluation} or \textit{cap} morphism.
	We call $\coeval_X$ the \textit{coevaluation} or \textit{cup} morphism.
	
	A  \textit{left adjoint}
	$(\rstar{X},\sideset{_X}{}{\eval},\sideset{_X}{}{\coeval})$ of $X$
	is defined analogously except 
	$\sideset{_X}{}{\eval}:X\otimes \rstar{X}\longrightarrow \onecal$ and
	$\sideset{_X}{}{\coeval}:\onecal \longrightarrow \rstar{X} \otimes X$.
	
	A  \textit{biadjoint}
	$(X^*,\eval_X,\coeval_X,\sideset{_X}{}{\eval},\sideset{_X}{}{\coeval})$ of $X$
	is an object such that $(X^*,\eval_X,\coeval_X)$ is right adjoint and
	$(X^*,\sideset{_X}{}{\eval},\sideset{_X}{}{\coeval})$ is a left adjoint of $X$.	
\end{defi}

The following standard result shows the name 'right/left adjoint' is well deserved.
\begin{prop}[{\cite[Lemma 2.1.6]{bakalov2001lectures}}]\label{prop tensor adjunction 1}
	Let $X\in \Ccal$. The object $(X^*,\eval_X,\coeval_X)$ is a right adjoint of $X$
		if and only if the functor $(-)\otimes X^*$ is a right adjoint of $(-)\otimes X$
		with $\coeval_X$ and $\eval_X$ being the unit and counit of adjunction respectively.
		We have an analogous result for left adjoint objects.
\end{prop}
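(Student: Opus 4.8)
The plan is to treat the statement as the standard dictionary between the ``dual object'' description of an adjunction---the zig-zag relations \eqref{eqn dual object 1} and \eqref{eqn dual object 2}---and the unit--counit description of an adjunction of functors. The translation is especially transparent because $\Ccal$ is strict, so no associator or unitor needs to be inserted anywhere; I will carry out the right adjoint case, the left adjoint case being entirely analogous with the roles of the two tensor positions interchanged.

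For the implication from a right adjoint object to an adjunction of functors, suppose $(X^*,\eval_X,\coeval_X)$ satisfies \eqref{eqn dual object 1} and \eqref{eqn dual object 2}. Writing $F=(-)\otimes X$ and $G=(-)\otimes X^*$, I would define, for every $Z\in\Ccal$,
\[
	\eta_Z:=\id_Z\otimes\coeval_X\colon Z\longrightarrow Z\otimes X\otimes X^*,\qquad
	\varepsilon_Z:=\id_Z\otimes\eval_X\colon Z\otimes X^*\otimes X\longrightarrow Z.
\]
The first point to check is that $\eta$ and $\varepsilon$ are natural transformations $\id_\Ccal\Rightarrow GF$ and $FG\Rightarrow\id_\Ccal$; for a morphism $f\colon Z\to Z'$ both legs of each naturality square collapse to $f\otimes\coeval_X$, respectively $f\otimes\eval_X$, by bifunctoriality of $\otimes$ (the interchange law). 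The central step is then the two triangle identities $(\varepsilon F)\circ(F\eta)=\id_F$ and $(G\varepsilon)\circ(\eta G)=\id_G$: evaluating at $Z$ and using strictness, they become $\id_Z\otimes\big((\id_X\otimes\eval_X)\circ(\coeval_X\otimes\id_X)\big)=\id_{Z\otimes X}$ and $\id_Z\otimes\big((\eval_X\otimes\id_{X^*})\circ(\id_{X^*}\otimes\coeval_X)\big)=\id_{Z\otimes X^*}$, and the bracketed composites are precisely \eqref{eqn dual object 1} and \eqref{eqn dual object 2}. A unit and counit satisfying the triangle identities constitute an adjunction $F\dashv G$, and the components of $\eta,\varepsilon$ at $\onecal$ are $\coeval_X,\eval_X$, as required.

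Conversely, if $(-)\otimes X^*$ is right adjoint to $(-)\otimes X$ with unit and counit equal, componentwise, to $\coeval_X$ and $\eval_X$ whiskered as above, then evaluating the two triangle identities of this adjunction at the unit object $\onecal$ yields exactly \eqref{eqn dual object 1} and \eqref{eqn dual object 2}, so $(X^*,\eval_X,\coeval_X)$ is a right adjoint object of $X$.

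I expect no real obstacle: the argument is bookkeeping with the interchange law, and the one thing worth watching is keeping straight which functor is the left adjoint and which is the right adjoint, so that the triangle identities are applied in the correct order. If one prefers not to name $\eta$ and $\varepsilon$, an equivalent route is to build the bijection $\Hom(Z\otimes X,W)\xrightarrow{\ \sim\ }\Hom(Z,W\otimes X^*)$ directly, sending $\phi$ to $(\phi\otimes\id_{X^*})\circ(\id_Z\otimes\coeval_X)$ with candidate inverse $\psi\mapsto(\id_W\otimes\eval_X)\circ(\psi\otimes\id_X)$, then checking these are mutually inverse by \eqref{eqn dual object 1}--\eqref{eqn dual object 2} and natural in $Z$ and $W$; this is the packaging most convenient for the applications later in the paper.
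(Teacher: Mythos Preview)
Your proposal is correct and is exactly the standard argument for this equivalence. The paper does not supply its own proof of this proposition; it simply cites \cite[Lemma 2.1.6]{bakalov2001lectures} as a known result, so there is nothing to compare against beyond noting that your write-up is a faithful unpacking of that cited lemma.
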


\begin{rema}
	In \cite[Sec. 1-2]{turaev2017monoidal} and most of the rigid and fusion category literature, right adjoint objects are called \textit{left duals} and left adjoint are called \textit{right duals}.
\end{rema}

\begin{defi}
	A \textit{two-sided adjoint structure} on $\Ccal$ is a family
\begin{equation}
		\{(X^*,\eval_X,\coeval_X,\sideset{_X}{}{\eval},\sideset{_X}{}{\coeval})\}_{X\in\Ccal}
\end{equation}
	consisting of a choice of a biadjoint for each $X\in\Ccal$.
	We call it a \textit{pivotal structure} if,
	in addition, the natural adjunction functors $(-)^*$ and $\rstar{(-)}$ are equal 
	(see \cite[Sec 1.7]{turaev2017monoidal}).
\end{defi}

By Proposition \ref{prop tensor adjunction 1} we have the following result.
\begin{prop}\label{prop adjunction general}
	Let $\Ccal$ be a pivotal category and let $X\in\Ccal$.
	The functors $(-)\otimes X$ and $(-)\otimes X^*$ are biadjoint.
	The functors $X\otimes (-)$ and $X^* \otimes (-)$ are also biadjoint.
\end{prop}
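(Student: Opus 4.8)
The plan is to deduce everything from Proposition \ref{prop tensor adjunction 1}, which already identifies the tensor-adjunction data of $X$ with genuine unit/counit data for the pair of functors $(-)\otimes X \dashv (-)\otimes X^*$, together with the defining property of a pivotal structure that the right-dual functor $(-)^*$ and the left-dual functor $\rstar{(-)}$ coincide. First I would unpack what a pivotal structure gives us on a fixed object $X$: a biadjoint $(X^*,\eval_X,\coeval_X,\sideset{_X}{}{\eval},\sideset{_X}{}{\coeval})$, so that $(X^*,\eval_X,\coeval_X)$ is a right adjoint of $X$ \emph{and} $(X^*,\sideset{_X}{}{\eval},\sideset{_X}{}{\coeval})$ is a left adjoint of $X$ (the key point being that the underlying object is the same $X^*$, which is exactly where pivotality is used — a priori a left adjoint of $X$ is some object $\rstar{X}$, but pivotality forces $\rstar{X}=X^*$). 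Applying the "right adjoint object" half of Proposition \ref{prop tensor adjunction 1} to $(X^*,\eval_X,\coeval_X)$ gives that $(-)\otimes X^*$ is right adjoint to $(-)\otimes X$. Applying the "left adjoint object" half of the same proposition to $(X^*,\sideset{_X}{}{\eval},\sideset{_X}{}{\coeval})$ gives that $(-)\otimes X^*$ is left adjoint to $(-)\otimes X$; equivalently $(-)\otimes X$ is right adjoint to $(-)\otimes X^*$. Together these two statements say precisely that $(-)\otimes X$ and $(-)\otimes X^*$ are biadjoint.

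For the second assertion, about $X\otimes(-)$ and $X^*\otimes(-)$, I would run the mirror-image argument. The cleanest route is to pass to the opposite-tensor category $\Ccal^{\otimesop}$, where $X\otimes_{\Ccal}(-)$ becomes $(-)\otimes_{\Ccal^{\otimesop}}X$; the biadjoint data for $X$ in $\Ccal$ transports to biadjoint data for $X$ in $\Ccal^{\otimesop}$ (with left and right evaluations/coevaluations swapped), and pivotality is preserved, so the first part applied in $\Ccal^{\otimesop}$ yields that $X\otimes(-)$ and $X^*\otimes(-)$ are biadjoint in $\Ccal$. Alternatively, and perhaps more transparently, one observes directly that $(\sideset{_X}{}{\coeval}, \sideset{_X}{}{\eval})$ and $(\coeval_X,\eval_X)$, read with the tensor factors in the other order, serve as units and counits exhibiting $X^*\otimes(-) \dashv X\otimes(-)$ and $X\otimes(-)\dashv X^*\otimes(-)$ respectively — this is again just Proposition \ref{prop tensor adjunction 1} with the roles of left and right tensoring interchanged, which is legitimate since that proposition (via its reference to \cite[Lemma 2.1.6]{bakalov2001lectures}) is symmetric in this respect.

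The only genuine content beyond bookkeeping is making sure the two halves of Proposition \ref{prop tensor adjunction 1} are being invoked with the correct triple: the right-adjoint half with $(X^*,\eval_X,\coeval_X)$ and the left-adjoint half with $(X^*,\sideset{_X}{}{\eval},\sideset{_X}{}{\coeval})$, and that "biadjoint of functors" is by definition just the conjunction "simultaneously left and right adjoint". I expect the main (very minor) obstacle to be purely notational: keeping straight which of $\eval$/$\coeval$ plays the unit and which the counit under the left-adjoint convention versus the right-adjoint convention, since the zig-zag identities \eqref{eqn dual object 1}–\eqref{eqn dual object 2} and their left-handed analogues have the roles of evaluation and coevaluation in opposite positions. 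Once that is pinned down the proof is a two-line citation of Proposition \ref{prop tensor adjunction 1} in each of the four needed directions.
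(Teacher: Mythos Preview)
Your proposal is correct and matches the paper's approach exactly: the paper simply states that this proposition follows from Proposition~\ref{prop tensor adjunction 1}, and your write-up spells out precisely how (applying both halves of that proposition to the biadjoint data $(X^*,\eval_X,\coeval_X,\sideset{_X}{}{\eval},\sideset{_X}{}{\coeval})$ supplied by the pivotal structure). Your treatment of the $X\otimes(-)$ case via $\Ccal^{\otimesop}$ or directly is more detailed than what the paper provides, but the underlying argument is the same.
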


The following corollary is a direct consequence of Proposition \ref{prop adjunction general}, and is frequently used to calculate the $\Hom$ spaces of additive pivotal categories.
\begin{coro}\label{coro Hom spaces general}
	Let $\Ccal$ be an additive monoidal category 
	with a right (or left) adjunction structure,
	in which every object
	is a finite direct sum of indecomposable objects.
	Then the $\Hom$ spaces of $\Ccal$ are completely determined
	by the spaces $\Hom(X,\onecal)$ (alternatively $\Hom(\onecal, X)$)
	for all indecomposable objects $X\in \Ccal$.
	That is, for any $X,Y\in \Ccal$ the space
	\begin{equation}
		\Hom(X,Y)\cong \bigoplus_{i=1}^k \Hom(X_i,\onecal)
	\end{equation}
	for some indecomposable objects $X_1,...,X_k\in\Ccal$.
\end{coro}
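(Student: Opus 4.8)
The plan is to reduce the computation of an arbitrary $\Hom(X,Y)$ to the special case where the target is the monoidal unit $\onecal$, by using the tensor adjunction from Proposition \ref{prop adjunction general} to "fold" $Y$ into the source object. First I would use the hypothesis that every object is a finite direct sum of indecomposables: write $Y\cong\bigoplus_{j=1}^m Y_j$ with each $Y_j$ indecomposable, and observe that $\Hom(X,Y)\cong\bigoplus_{j=1}^m\Hom(X,Y_j)$ since $\Hom$ is additive in each variable. So it suffices to treat the case $Y$ indecomposable, and in fact it suffices to compute $\Hom(X,Z)$ for arbitrary $X$ and arbitrary $Z$; the point of the corollary is the further reduction of the \emph{target} to $\onecal$.

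The key step is the following: given an object $Z$, pick (using the right adjunction structure — or the left one, symmetrically) a right adjoint $(Z^*,\eval_Z,\coeval_Z)$. By Proposition \ref{prop adjunction general}, the functor $(-)\otimes Z^*$ is right adjoint to $(-)\otimes Z$, hence also $(-)\otimes Z$ is left adjoint to $(-)\otimes Z^*$; in particular, applying the adjunction isomorphism with first slot $X\otimes Z^*$ and using $\onecal\otimes Z \cong Z$ (strictness, or the unit isomorphisms) gives
\begin{equation}
	\Hom(X,Z)\;\cong\;\Hom(X, \onecal\otimes Z)\;\cong\;\Hom(X\otimes Z^{*},\onecal).
\end{equation}
Thus $\Hom(X,Z)$ is identified with a $\Hom$ space into $\onecal$ whose source is $X\otimes Z^*$. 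Now decompose $X\otimes Z^*\cong\bigoplus_{i=1}^k X_i$ into indecomposables (again using the standing finiteness hypothesis), and conclude
\begin{equation}
	\Hom(X,Z)\;\cong\;\Hom\Bigl(\bigoplus_{i=1}^k X_i,\,\onecal\Bigr)\;\cong\;\bigoplus_{i=1}^k\Hom(X_i,\onecal),
\end{equation}
which is exactly the claimed formula. The alternative form with $\Hom(\onecal,X)$ follows by running the same argument with the left adjunction structure, or equivalently by taking opposite categories.

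I do not expect a serious obstacle here — this is a bookkeeping argument assembling Proposition \ref{prop adjunction general}, additivity of $\Hom$, and the Krull–Schmidt-type finiteness assumption. The one place to be slightly careful is the bifunctoriality: one must check that the adjunction isomorphism $\Hom(A\otimes Z,B)\cong\Hom(A,B\otimes Z^*)$ from Proposition \ref{prop tensor adjunction 1} is natural in $A$ and $B$, so that it intertwines the direct sum decompositions correctly and so that the displayed isomorphism really is an isomorphism of the whole (graded) $\Hom$ spaces, not merely of underlying sets. This naturality is part of the statement of Proposition \ref{prop tensor adjunction 1} (the unit and counit are $\coeval_Z$ and $\eval_Z$), so there is nothing new to prove; one just records that the grading is respected because $\eval_Z$ and $\coeval_Z$ are degree-zero morphisms in the graded setting. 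I would also remark that the indecomposable objects $X_1,\dots,X_k$ appearing in the conclusion are precisely the indecomposable summands of $X\otimes Z^*$, so the formula is effective once one knows how tensor products decompose.
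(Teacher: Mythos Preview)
Your proof is correct and is exactly the argument the paper has in mind: the corollary is stated as a direct consequence of Proposition~\ref{prop adjunction general}, and you have simply unpacked that adjunction isomorphism and then decomposed the resulting source object into indecomposables, which is precisely what the paper carries out explicitly in the proof of Proposition~\ref{prop criterion for equivalence}. One tiny technical caveat: with only a one-sided \emph{right} adjunction structure the isomorphism one actually obtains is $\Hom(X,Z)\cong\Hom(Z^{*}\otimes X,\onecal)$ rather than $\Hom(X\otimes Z^{*},\onecal)$ --- this makes no difference to the conclusion, and the paper itself uses the same $X\otimes Y^{*}$ form you do.
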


Let $\Ical$ be a set of objects in $\Ccal$ such that every object of $\Ccal$ is a finite direct sum of the objects in $\Ical$.
Corollary \ref{coro Hom spaces general} suggests that to check a given strict monoidal additive functor is a full or faithful, we only need to check it on $\Hom(X,\onecal)$ for all $X\in\Ical$.
The following proposition shows this.

\begin{prop}\label{prop criterion for equivalence}
	Let $\Ccal$ and $\Dcal$ be strict additive monoidal categories.
	Let $\Ical$ be a set of objects in $\Ccal$ such that every object of $\Ccal$ is a finite direct sum of the objects in $\Ical$.
	Suppose $\Ccal$ has a right (or left) adjunction structure.
	Let $\Fbold:\Ccal\longrightarrow\Dcal$ be a strict monoidal functor.
	\begin{enumerate}[(a)]
		\item If the induced map
		 $\Fbold:\Hom(X,\onecal)\longrightarrow
		 \Hom(\Fbold(X),\onecal)$
		  is injective for all 
		  $X\in\Ical$ then $\Fbold$ is faithful.
		 \item If the induced map
		 $\Fbold:\Hom(X,\onecal)\longrightarrow
		 \Hom(\Fbold(X),\onecal)$
		 is surjective for all 
		 $X\in\Ical$ then $\Fbold$ is full.
		 \item If the induced map
		 $\Fbold:\Hom(X,\onecal)\longrightarrow
		 \Hom(\Fbold(X),\onecal)$
		 is bijective for all 
		 $X\in\Ical$
		 and $\Fbold$ is essentially surjective,
		 then $\Fbold$ is an equivalence.
	\end{enumerate}
\end{prop}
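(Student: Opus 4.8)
The plan is to reduce every $\Hom$-space to one of the form $\Hom(-,\onecal)$ with source in $\Ical$, by the same adjunction manipulation used for Corollary \ref{coro Hom spaces general}, but tracking compatibility with $\Fbold$; with that reduction in hand, parts (a) and (b) follow at once from the hypothesis, and (c) follows from (a), (b), and the standard fact that a fully faithful essentially surjective functor is an equivalence of categories.

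So assume $\Ccal$ carries a right adjunction structure (the left case is entirely symmetric, with $X\otimes\rstar Y$ in place of $Y^*\otimes X$ and the left (co)evaluations in place of the right ones). The first step is the bijection furnished by the right adjoint $(Y^*,\eval_Y,\coeval_Y)$ of $Y$: for all $X,Y\in\Ccal$,
\[
	\Phi_{X,Y}\colon\Hom_\Ccal(X,Y)\xrightarrow{\;\sim\;}\Hom_\Ccal(Y^*\otimes X,\onecal),\qquad f\longmapsto \eval_Y\circ(\id_{Y^*}\otimes f),
\]
with inverse $g\longmapsto(\id_Y\otimes g)\circ(\coeval_Y\otimes\id_X)$; that these are mutually inverse is precisely the pair of zig-zag identities \eqref{eqn dual object 1} and \eqref{eqn dual object 2}, so only the right adjunction structure (not a full pivotal structure) is used. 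Since $\Fbold$ is strict monoidal, $(\Fbold(Y^*),\Fbold(\eval_Y),\Fbold(\coeval_Y))$ is a right adjoint of $\Fbold(Y)$ in $\Dcal$, so $\Phi_{\Fbold X,\Fbold Y}$ is defined, and the defining formula shows $\Phi_{\Fbold X,\Fbold Y}\circ\Fbold=\Fbold\circ\Phi_{X,Y}$. Consequently $\Fbold\colon\Hom_\Ccal(X,Y)\to\Hom_\Dcal(\Fbold X,\Fbold Y)$ is injective (resp. surjective, bijective) if and only if $\Fbold\colon\Hom_\Ccal(Y^*\otimes X,\onecal)\to\Hom_\Dcal(\Fbold(Y^*\otimes X),\onecal)$ is.

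For the second step, decompose the object $Y^*\otimes X\in\Ccal$ as $\bigoplus_{k=1}^n Z_k$ with each $Z_k\in\Ical$, using the hypothesis on $\Ical$. Applying the additive functor $\Fbold$ to the corresponding inclusions and projections, and using that $\Hom(-,\onecal)$ turns a finite direct sum into the direct sum of the corresponding $\Hom$-groups, we see that, up to these isomorphisms, $\Fbold$ on $\Hom_\Ccal(Y^*\otimes X,\onecal)$ is the direct sum of the maps $\Fbold\colon\Hom_\Ccal(Z_k,\onecal)\to\Hom_\Dcal(\Fbold Z_k,\onecal)$; hence it is injective (resp. surjective, bijective) iff every one of those is. Combining the two steps: if $\Fbold$ is injective on $\Hom(Z,\onecal)$ for all $Z\in\Ical$, then it is injective on $\Hom(X,Y)$ for all $X,Y\in\Ccal$, i.e.\ $\Fbold$ is faithful, which is (a); replacing ``injective'' by ``surjective'' throughout gives (b). Finally, under the bijectivity hypothesis $\Fbold$ is simultaneously injective and surjective on each $\Hom(X,\onecal)$ with $X\in\Ical$, so by (a) and (b) it is faithful and full; together with essential surjectivity this yields that $\Fbold$ is an equivalence, proving (c).

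The one place that needs care is the compatibility identity $\Phi_{\Fbold X,\Fbold Y}\circ\Fbold=\Fbold\circ\Phi_{X,Y}$ in the first step: it relies on $\Fbold$ being \emph{strictly} monoidal, so that it sends $(Y^*,\eval_Y,\coeval_Y)$ to a genuine right adjoint of $\Fbold(Y)$ and commutes with the tensor-and-compose expression defining $\Phi$. Everything after that is formal bookkeeping with direct sums.
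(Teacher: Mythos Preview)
Your proof is correct and follows essentially the same route as the paper's: transport the right adjoint along the strict monoidal functor $\Fbold$, set up the commuting square between $\Hom(X,Y)$ and $\Hom(-,\onecal)$, then decompose the source into summands from $\Ical$. The only cosmetic difference is that the paper writes the adjunction as $\Hom(X,Y)\cong\Hom(X\otimes Y^*,\onecal)$ while you use $\Hom(Y^*\otimes X,\onecal)$; your version is the one that matches the paper's stated conventions for $\eval_Y$, and your explicit formula for $\Phi_{X,Y}$ and its compatibility with $\Fbold$ is more careful than the paper's sketch.
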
 
\begin{proof}
	Clearly (a) and (b) imply (c).
	We will only show (a), as the proof of (b) is analogous.
	Note that if $(X^*,\eval_X,\coeval_X)$ is a right adjoint of $X$ then
	$(\Fbold(X^*),\Fbold\left(\eval_X\right),\Fbold\left(\coeval_X\right))$
	is a right adjoint of $\Fbold(X)$.
	This induces a right adjoint structure on $\Fbold(\Ccal)$
	where $\Fbold(X^*)=\Fbold(X)^*$.
	Under this identification, for all $X,Y\in\Ccal$ the diagram
	\begin{equation}
		\xymatrix{
			\Hom(X,Y) \ar[r]^{\Fbold} \ar[d]^{\cong}
			& \Hom(\Fbold(X),\Fbold(Y)) \ar[d]^{\cong}\\
			\Hom(X\otimes Y^*,\onecal) \ar[r]^{\Fbold} 
			& \Hom(\Fbold(X)\otimes \Fbold(Y)^*,\onecal) 
		}
	\end{equation}
	commutes.
	Here, the vertical isomorphisms are given by the right adjoint structures.
	This shows that the top morphism is injective if and only if the bottom morphism is injective.
	Writing 
	\begin{equation}
		X\otimes Y^*\cong X_1\oplus ... \oplus X_k
	\end{equation}
	for some $X_i\in\Ical$ we reduce the problem to
	the assumption that $\Fbold$ is injective on each $\Hom(X_i,\onecal)$.	
\end{proof}

\section{Equivariantization}\label{sec 1 equivar general}

\subsection{Action of a Group on a Monoidal Category}\label{sec-action-def}

\begin{defi}
Let $\Ccal$ be a category and $G$ a group.
An \textit{action of $G$ on $\Ccal$} is a triple
$(G,\Fbold, \gamma, \epsilon)$, where:
\begin{itemize}
	\item $\Fbold$ is a collection of functors $\Fbold_g:\Ccal\longrightarrow \Ccal$ for $g\in G$.
	\item $\gamma_{g,h}:\Fbold_{gh}\longrightarrow \Fbold_g\circ \Fbold_h$ is collection of natural isomorphisms of functors for all $g,h\in G$ called the associator of the action.
	\item $\eta:\Fbold_1\longrightarrow \Id_{\Ccal}$ is an isomorphism of functors called the unit of the action.
\end{itemize}
This collection has to satisfy the axioms in \cite[Sec. 2.1]{burciu2013fusion}.
In this case we say that $\Ccal$ is a \textit{$G$-category}.

If both the associator and unit are the identity morphism, we say that the $G$-action is \textit{strict}.

If $\Ccal$ is a strict monoidal category, we say that the action of $G$ is \textit{strict monoidal} if it is strict
and all the functors $\Fbold_g$ are strict monoidal functors.

If $\Ccal$ is additive, we say that the action of $G$ is \textit{additive} if all the functors $\Fbold_g$ are additive.

If $\Ccal$ is a $\kk$-linear category for some field $\kk$, we say that the action of $G$ is \textit{$\kk$-linear} if all the functors $\Fbold_g$ are $\kk$-linear.

If $\Ccal$ is a strict graded monoidal, we say that the action of $G$ is \textit{graded monoidal} if it is strict and
all the functors $\Fbold_g$ are strict graded monoidal functors.
\end{defi}

In this article we will suppose all our categories are strict monoidal categories and all actions are strict group actions by strict functors, meaning that $\Fbold_g$ is a strict monoidal functor for all $g\in G$.
These conditions simplify our previous definition into the following.
\begin{prop}
Let $\Ccal$ be a strict monoidal category and $G$ a group.
A strict monoidal of $G$ on $\Ccal$ is equivalent to a pair $(G,\Fbold)$ where $F_g:\Ccal\longrightarrow \Ccal$ is a strict monoidal functor for all $g\in G$ such that $\Fbold_1=\Id_{\Ccal}$ and
$\Fbold_g\circ \Fbold_h = \Fbold_{gh}.$
\end{prop}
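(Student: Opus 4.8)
The plan is to unwind the definitions and check that nothing is lost in passing between the two descriptions. By hypothesis the action $(G,\Fbold,\gamma,\eta)$ is \emph{strict}, so the associator $\gamma_{g,h}\colon\Fbold_{gh}\longrightarrow\Fbold_g\circ\Fbold_h$ and the unit $\eta\colon\Fbold_1\longrightarrow\Id_\Ccal$ are identity natural transformations, and each $\Fbold_g$ is a strict monoidal functor. First I would note that for $\eta$ to be an identity it must be the identity of a single functor, which forces $\Fbold_1=\Id_\Ccal$ literally (on objects and on morphisms). Likewise, for each pair $g,h\in G$, the natural isomorphism $\gamma_{g,h}$ can only be an identity if its source and target coincide, so $\Fbold_{gh}=\Fbold_g\circ\Fbold_h$ on the nose. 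Thus a strict monoidal action of $G$ on $\Ccal$ determines precisely a pair $(G,\Fbold)$ with $\Fbold_1=\Id_\Ccal$ and $\Fbold_g\circ\Fbold_h=\Fbold_{gh}$, each $\Fbold_g$ strict monoidal.

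Conversely, given such a pair I would produce an action by setting $\gamma_{g,h}:=\id_{\Fbold_{gh}}$ — legitimate exactly because $\Fbold_{gh}=\Fbold_g\circ\Fbold_h$ — and $\eta:=\id_{\Id_\Ccal}$, and then verify the coherence axioms of \cite[Sec. 2.1]{burciu2013fusion}. Those axioms consist of a pentagon-type diagram comparing the two ways of rewriting $\Fbold_{ghk}$ as $\Fbold_g\circ\Fbold_h\circ\Fbold_k$ using $\gamma$, together with a triangle-type diagram relating $\eta$ to $\gamma_{1,g}$ and $\gamma_{g,1}$. Since every $2$-cell appearing in them is an identity — here one uses $\Fbold_1=\Id_\Ccal$ and the strict associativity $\Fbold_{gh}\circ\Fbold_k=\Fbold_{ghk}=\Fbold_g\circ\Fbold_{hk}$ — both diagrams commute automatically, and the requisite compatibilities with the monoidal structures hold because each $\Fbold_g$ is strict monoidal. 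Hence the pair genuinely assembles into a strict monoidal action.

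Finally I would observe that the two constructions are mutually inverse: extracting the pair from an action and re-inflating recovers the original action, since its $\gamma$ and $\eta$ were already identities; and inflating a pair and then extracting returns the original $\Fbold$. This yields the claimed equivalence. I do not expect a serious obstacle here; the only point requiring care is to recall the exact form of the Burciu coherence axioms and to confirm that under strictness each of their constituent natural transformations collapses to an identity, so that no residual compatibility condition survives beyond $\Fbold_1=\Id_\Ccal$ and $\Fbold_g\circ\Fbold_h=\Fbold_{gh}$.
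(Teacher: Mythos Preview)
Your proposal is correct and follows the natural approach of unwinding the definitions; the paper itself does not supply a proof for this proposition, treating it as immediate from the definition of a strict action. Your argument simply makes explicit what the paper leaves to the reader.
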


We also assume that when $\Ccal$ is additive, $\kk$-linear, or graded, so is the group action.

\begin{notation}\label{g is for functor}
Let $g\in G$, $X,Y\in \Ccal$ and a morphism $T:X\longrightarrow Y$. We will write $g(X)$ and $g(T)$ as a shorthand for $\Fbold_g(X)$ and $\Fbold_g(T)$ respectively.
\end{notation}

\begin{rema}
An action can be defined in a more general way as a monoidal functor $\rho:\underline{G}\longrightarrow \Aut_\otimes(\Ccal)$.
Here $\underline{G}$ is the monoidal category whose objects are the set $G$,
with only identity morphisms, and $\otimes$ is given by the product in $G$.
In this context, the action is strict if and only if $\rho$ is a strict monoidal functor (see \cite[Section 2.7]{etingof2016tensor}).
\end{rema}

\subsection{Definition of Equivariantization}\label{sec-equiv-1}
From now on let $G$ be a group and fix a field $\kk$ such that
$\chr(\kk) \nmid |G|$.
Let $\Ccal$ be a strict monoidal $\kk$-linear category with a strict $G$-action by strict $\kk$-linear functors.

\begin{defi}
A \textit{$G$-equivariant object} of $\Ccal$ is a pair $(X,f)$ where $X\in \Ccal$ and $f=(f_g)_{g\in G}$ is a collection of isomorphisms $f_g:g(X)\longrightarrow X$ where $f_1=\Id_{X}$ and such that the diagram
\begin{equation}\label{equivariant objects}
    \xymatrix{
    gh(X)\ar[d]^{f_{gh}} \ar[r]^{g(f_h)} &
    g(X)\ar[dl]^{f_g}\\
    X 
    }
\end{equation}
commutes for all $g,h\in G$.

A \textit{$G$-equivariant morphism} $T:(X,f)\longrightarrow(Y,f')$ is a morphism $T:X\longrightarrow Y$ that intertwines $f$ and $f'$, i.e. a morphism such that the diagram
\begin{equation}\label{equivariant mor}
    \xymatrix{
    g(X) \ar[d]^{f_g} \ar[r]^{g(T)} &
    g(Y)\ar[d]^{f'_g}\\
    X \ar[r]^{T}&
    Y\
    }   
\end{equation}
commutes for all $g\in G$.

The \textit{equivariantization $\Ccal^G$} is the category whose objects are $G$-equivariant objects and whose morphisms are $G$-equivariant morphisms.
\end{defi}

\begin{prop}
    Let $(X,f)$ and $(Y,f')$ be equivariant objects in $\Ccal$.
    The tensor product $(X,f)\otimes (Y,f'):= (X\otimes Y, f\otimes f')$ makes
    $\Ccal^G$ into a strict monoidal category.
\end{prop}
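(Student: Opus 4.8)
The plan is to transport the strict monoidal structure of $\Ccal$ along the forgetful functor $\Ccal^G\to\Ccal$, checking at each stage that the added equivariant data behaves well. The only hypothesis that does real work is that the $G$-action is strict monoidal, so that for every $g\in G$ one has $g(X\otimes Y)=g(X)\otimes g(Y)$, $g(\onecal)=\onecal$, and $g(T\otimes T')=g(T)\otimes g(T')$ on the nose; combined with the interchange law in $\Ccal$, this is all that is needed.

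First I would pin down the structure maps. For equivariant objects $(X,f)$ and $(Y,f')$ and each $g\in G$, set $(f\otimes f')_g:=f_g\otimes f'_g$; by strict monoidality of $\Fbold_g$ this is a morphism $g(X\otimes Y)=g(X)\otimes g(Y)\to X\otimes Y$, and it is an isomorphism as a tensor product of isomorphisms. One then checks $(f\otimes f')_1=f_1\otimes f'_1=\id_{X\otimes Y}$ and the cocycle axiom \eqref{equivariant objects}: using $g\big((f\otimes f')_h\big)=g(f_h)\otimes g(f'_h)$ and the interchange law,
\[
(f\otimes f')_g\circ g\big((f\otimes f')_h\big)=\big(f_g\circ g(f_h)\big)\otimes\big(f'_g\circ g(f'_h)\big)=f_{gh}\otimes f'_{gh}=(f\otimes f')_{gh},
\]
where the middle step is \eqref{equivariant objects} for $(X,f)$ and $(Y,f')$ separately. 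Hence $(X\otimes Y,f\otimes f')\in\Ccal^G$, and likewise the pair $(\onecal,\mathbf 1)$ with $\mathbf 1_g:=\id_{\onecal}$ is an equivariant object (all diagrams in \eqref{equivariant objects} collapsing to identities), which will serve as the monoidal unit.

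Next I would check that $\otimes$ descends to a bifunctor on $\Ccal^G$. Given equivariant morphisms $T:(X,f)\to(X',\hat f)$ and $S:(Y,f')\to(Y',\hat f')$, the square \eqref{equivariant mor} for $T\otimes S$ is the tensor product (via $g(T\otimes S)=g(T)\otimes g(S)$ and the interchange law) of the corresponding squares for $T$ and for $S$, hence commutes; so $T\otimes S$ is equivariant. Functoriality of $\otimes$ in each slot and its compatibility with composition of morphisms are then inherited directly from $\Ccal$, since both the underlying morphisms and their composites are computed there.

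Finally I would verify the strict monoidal axioms. On underlying objects, associativity and unitality hold strictly because they do in $\Ccal$; one only has to compare equivariant structure maps componentwise, e.g. $((f\otimes f')\otimes f'')_g=(f_g\otimes f'_g)\otimes f''_g=f_g\otimes(f'_g\otimes f''_g)=(f\otimes(f'\otimes f''))_g$ by strict associativity in $\Ccal$, and $(\mathbf 1\otimes f)_g=\id_{\onecal}\otimes f_g=f_g=f_g\otimes\id_{\onecal}=(f\otimes\mathbf 1)_g$ by strict unitality; the associativity and unit constraints on morphisms are literally those of $\Ccal$. This completes the construction. I do not expect any genuine obstacle: the entire argument is bookkeeping, and the single place where a non-strict action would force extra work is the identification $g(X\otimes Y)=g(X)\otimes g(Y)$ used above, where in the general case one would have to thread in the monoidal constraints of each $\Fbold_g$ and the associator $\gamma$, turning the one-line cocycle computation into a coherence diagram.
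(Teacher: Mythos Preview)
Your proposal is correct and takes essentially the same approach as the paper, which simply remarks that the result is a direct consequence of the functoriality of $\otimes$. You have spelled out in full the bookkeeping that the paper leaves implicit, but there is no difference in substance.
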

\begin{proof}
	This is a direct consequence of the functoriality of $\otimes$.
\end{proof}

\begin{rema}
For the definition of equivariantization in the general (non-strict) setting see \cite[Sec. 2.7]{etingof2016tensor}.
\end{rema}

As the next example shows, an object $X\in \Ccal$ may be equipped with multiple non-isomorphic equivariant structures.

\begin{example}\label{exmp repg}
	Let $\kk$ be a field.
	Let us consider the case where our category is $\VEC(\kk)$,
	the category of finite dimensional vector spaces over $\kk$,
	and $G$ acts trivially on $\VEC(\kk)$.
	The equivariantization $\VEC(\kk)^G$ is isomorphic to
	$\Rep(G)$, the category of finite dimensional representations of $G$.
	In fact, we can see the object $(V,f)\in \VEC(\kk)^G$ as the
	 representation where $g(v)=f_g(v)$ (for $v\in V$ and $g\in G$).
	As such, we can see $G$-equivariant categories as a generalization of the category $\Rep(G)$.
\end{example}

\begin{defi}
	Let $\Ccal$ be an additive category $\Ccal$. We say $\Ccal$ is
	\textit{Karoubian} if for every object $X\in\Ccal$ and every idempotent
	$f\in \Hom(X,X)$ there is an object $Y$ and maps 
	$\iota_Y:Y\longrightarrow X$ and $p_Y:X\longrightarrow Y$ such that
	$f=\iota_Y\circ p_Y$ and $p_Y\circ \iota_Y = \id_Y$.
	That is, $\Ccal$ is closed under direct summands.
\end{defi}

In \cite[Sec. 4.1.3]{drinfeld2010braided}, it is noted without a proof that the equivariantization 
of a Karoubian category will again be Karoubian.
Here we provide a proof.

\begin{prop}\label{equiv karoubian}
	Let $\kk$ be a field. Let $\Ccal$ be a Karoubian $\kk$-linear category
	with an additive $G$-action.
	The category $\Ccal^G$ is Karoubian.
\end{prop}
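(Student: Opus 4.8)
The plan is to show that any idempotent on a $G$-equivariant object splits in $\Ccal^G$, using the fact that it splits in $\Ccal$ and then upgrading the splitting data to an equivariant one by averaging over $G$.

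First I would start with an equivariant object $(X,f)$ and an idempotent $e \in \End_{\Ccal^G}((X,f))$; by definition this is an idempotent $e \in \End_\Ccal(X)$ that is an equivariant morphism, i.e.\ $e \circ f_g = f_g \circ g(e)$ for all $g\in G$. Since $\Ccal$ is Karoubian, there is an object $Y\in\Ccal$ together with $\iota: Y \to X$ and $p: X\to Y$ with $p\iota = \id_Y$ and $\iota p = e$. The goal is to put a $G$-equivariant structure $f'$ on $Y$ making $\iota$ and $p$ into equivariant morphisms, since then $(Y,f')$ is a summand of $(X,f)$ in $\Ccal^G$ witnessing the splitting of $e$.

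Next I would construct the candidate structure maps. For each $g\in G$, define $f'_g : g(Y) \to Y$ by $f'_g := p \circ f_g \circ g(\iota)$, and check that $\tilde{f}_g := \iota \circ f'_g \circ g(p) = e \circ f_g \circ g(e) = f_g \circ g(e) \circ g(e) = f_g \circ g(e)$ using equivariance of $e$ and idempotency; conversely $g(p) \circ \tilde f_g \circ g(\iota)$-type computations show $f'_g$ is invertible with inverse built from $f_g^{-1}$. (This is where one does not yet need averaging; the structure maps are defined directly, and since $e$ is already equivariant they behave well.) Then one verifies the cocycle condition \eqref{equivariant objects} for $f'$ and $f'_1 = \id_Y$ — both follow by conjugating the corresponding identities for $f$ through $\iota$ and $p$, repeatedly using $p\iota = \id_Y$, $\iota p = e$, and that $e$ commutes appropriately with the $f_g$ and with the functors $g(-)$. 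Finally, $\iota$ and $p$ are equivariant by construction: $\iota \circ f'_g = \iota p f_g g(\iota) = e f_g g(\iota) = f_g g(e) g(\iota) = f_g g(\iota p \iota)= f_g g(\iota)$, and similarly for $p$.

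The main obstacle — and the reason the hypothesis $\chr(\kk)\nmid |G|$ might seem relevant — is whether one truly needs to average. In fact, because the idempotent $e$ is assumed to already be a morphism in $\Ccal^G$ (hence equivariant), no averaging is required: the splitting object $Y$ in $\Ccal$ comes with canonical maps that are automatically compatible once conjugated. So the genuinely fiddly part is purely bookkeeping: carefully checking that $f'_g$ as defined is an isomorphism and that the hexagon/cocycle diagram commutes, which amounts to chasing the defining identities of $(X,f)$ and of $e$ through the section–retraction pair. I would organize this as: (1) define $(Y, f')$; (2) check $f'_g$ invertible; (3) check $f'_1 = \id$ and the cocycle law; (4) check $\iota, p$ equivariant; (5) conclude $e = \iota p$ splits in $\Ccal^G$. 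Step (3) is the one to write out most carefully, but it is routine diagram-chasing rather than conceptually hard.
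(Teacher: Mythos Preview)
Your proposal is correct and follows essentially the same approach as the paper: both define $f'_g := p \circ f_g \circ g(\iota)$ from a splitting $(Y,\iota,p)$ of the idempotent in $\Ccal$, then verify the cocycle condition for $(Y,f')$ and the equivariance of $\iota$ and $p$ by direct diagram chases. Your observation that no averaging (and hence no use of $\chr(\kk)\nmid |G|$) is needed here is also correct; the paper's proof likewise does not invoke that hypothesis.
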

\begin{proof}
	Suppose that $\Ccal$ is Karoubian.
	Let $(X,f)\in \Ccal^G$ and let $T:X\longrightarrow X$ be an idempotent.
	Since $\Ccal$ is Karoubian, there exists a $Y\in\Ccal$,
	$\iota_Y:Y\longrightarrow X$ and
	$p_Y:X\longrightarrow Y$ such that
	$T= \iota_Y \circ p_Y$ and $\Id_Y= p_Y\circ \iota_Y$.
	For each $g\in G$ define $f'_g:g(Y)\longrightarrow Y$ by the composition
	\begin{equation}
		g(Y) \overset{g(\iota_Y)}{\longrightarrow} g(X) 
		\overset{f_g}{\longrightarrow} X 
		\overset{p_Y}{\longrightarrow} Y.
	\end{equation}
	We want to show that $(Y,f')$ is a $G$-equivariant object and that
	$\iota_Y:(Y,f')\longrightarrow (X,f)$ and 
	$p_Y:(X,f)\longrightarrow (Y,f')$ 
	are $G$-equivariant morphisms.
	For the first statement, check that the diagram
	\begin{equation}
		\xymatrix{
			gh(Y) \ar[r]^{gh(\iota_Y)} & gh(X) \ar[r]^{g(f_h)} \ar[ddrr]^{f_{gh}}&
			g(X) \ar[r]^{g(p_Y)} \ar[dr]^{\Id_{g(X)}}& g(Y) \ar[d]^{g(\iota_Y)} \\
			& & & g(X) \ar[d]^{f_g} \\ 
			& & & X \ar[d]^{p_Y} \\ 
			& & & Y 
		}
	\end{equation}
	commutes.
	
	To check that $\iota_Y$ is a $G$-equivariant morphism, observe that
	\begin{equation}
		\xymatrix{
			g(Y) \ar[dr]_{g(\iota_Y)} \ar[r]^{g(\iota_Y)}&
			g(X)\ar[r]^{f_g} \ar[d]^{\Id_{g(X)}}& X \ar[r]^{p_Y} \ar[d]^{\Id_X} & Y \ar[dl]^{\iota_Y}\\
			& g(X)\ar[r]^{f_g} & X
		}
	\end{equation}
	commutes.
	The diagram for $p_Y$ is similar.
\end{proof}

\subsection{Induction and Restriction Functors}

Since $G$-equivariant categories generalize the category of finite
dimensional representations of a group, we may expect induction and
restriction functors like in the usual case.
We will base our approach on \cite[Sec. 4]{drinfeld2010braided}.
We will discuss these functors because they provide a way to understand the indecomposable objects of the equivariant category
from the indecomposable objects of the original category.

\begin{defi}
Define the \textit{induction functor} $\Ind:\Ccal\longrightarrow \Ccal^G$ as follows:
\begin{itemize}
    \item We will map an object $X\in \Ccal$ to $X\longmapsto (\bigoplus_{h\in G} h(X), \sigma_g)$,
    where $\sigma_g$ just permutes the components of $\bigoplus_{h\in G} h(X)$ by multiplication by $g$.
    
    \item We will map a morphism $T:X\longmapsto Y$ to $\bigoplus_{h\in G}h(T)$.
\end{itemize}

Define the \textit{restriction functor} $\Res:\Ccal^G \longrightarrow \Ccal$ to be the functor sending $(X,f)\mapsto X$ and sending $T:(X,f)\longrightarrow (Y,f')$ to $T:X\longrightarrow Y$ (in $\Ccal$).
\end{defi}

Just like in $\Rep(G)$, we will show that the induction and restriction functors are biadjoint. Let $X\in \Ccal$ and $(Y,f')\in \Ccal^G$.
Define the map 
\begin{equation}
	\Phi_{X,(Y,f')}:\Hom(X,\Res(Y,f'))=\Hom(X,Y)\longrightarrow \Hom(\Ind(X),(Y,f'))
\end{equation}
by 
\begin{equation}
	\Phi_{X,(Y,f')}:\varphi\mapsto \bigoplus_{g\in G} f'_g\circ g(\varphi).
\end{equation}
Also define the map 
\begin{equation}
	\Psi_{X,(Y,f')}:\Hom(\Ind(X),(Y,f'))\longrightarrow \Hom(X,\Res(Y,f'))=\Hom(X,Y)
\end{equation}
by the restriction
\begin{equation}
	\Psi_{X,(Y,f')}:\psi=\left(\bigoplus_{g\in G}\psi_g\right) \mapsto \psi_1.
\end{equation}
\begin{prop}
	Let $\phi:X\longrightarrow Y$. The map $\Phi_{X,(Y,f')}(\phi)$ is $G$-equivariant. Therefore, $\Phi_{X,(Y,f')}$ is a well defined map. 
\end{prop}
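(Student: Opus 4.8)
The plan is to unwind the definition of a $G$-equivariant morphism for $\psi := \Phi_{X,(Y,f')}(\phi)$ and reduce the required identity to the compatibility axiom \eqref{equivariant objects} satisfied by the equivariant object $(Y,f')$, via a computation done summand by summand. Write $\psi = \bigoplus_{g\in G}\psi_g$ with $\psi_g := f'_g\circ g(\phi)\colon g(X)\longrightarrow Y$, so that $\psi$ is a priori a morphism $\Ind(X)=\bigl(\bigoplus_{h\in G}h(X),\sigma\bigr)\longrightarrow Y$ in $\Ccal$; the claim is that it is in fact a morphism in $\Ccal^G$. Recall that $\sigma_a\colon a(\Ind(X))\longrightarrow\Ind(X)$ is the isomorphism permuting the summands by left multiplication by $a$. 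Using additivity of $\Fbold_a$ together with strictness of the action (so that $\Fbold_a\circ\Fbold_h=\Fbold_{ah}$ and $\Fbold_1=\Id_{\Ccal}$), one identifies $a(\Ind(X))=\bigoplus_{h\in G}(ah)(X)$, and $\sigma_a$ carries the summand indexed by $h$ (namely $(ah)(X)$) identically onto the summand indexed by $ah$ of $\Ind(X)$.

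First I would fix $a\in G$ and write down the equivariance square \eqref{equivariant mor} to be checked, namely $\psi\circ\sigma_a = f'_a\circ a(\psi)$ as maps $a(\Ind(X))\longrightarrow Y$. Since both sides are direct sums indexed by $G$, it suffices to compare their restrictions to each summand $(ah)(X)$ of $a(\Ind(X))$. On this summand, $\psi\circ\sigma_a$ is $\psi_{ah}=f'_{ah}\circ(ah)(\phi)$. On the other side, $a(\psi)$ restricts on this summand to $a(\psi_h)=a(f'_h)\circ a(h(\phi))=a(f'_h)\circ(ah)(\phi)$, landing in $a(Y)$; post-composing with $f'_a$ gives $f'_a\circ a(f'_h)\circ(ah)(\phi)$. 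Thus the two restrictions agree if and only if $f'_{ah}=f'_a\circ a(f'_h)$, which is exactly the commutativity of the triangle \eqref{equivariant objects} for $(Y,f')$. This establishes that $\psi$ is $G$-equivariant, and hence that $\Phi_{X,(Y,f')}(\phi)\in\Hom(\Ind(X),(Y,f'))$ for every $\phi$, i.e. $\Phi_{X,(Y,f')}$ is a well-defined map.

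The only real care needed is in the bookkeeping of indices: one must pin down the convention for the permutation isomorphisms $\sigma_a$ and for the decomposition of $a\bigl(\bigoplus_{h}h(X)\bigr)$ so that the summand $h$ on the source is matched with the summand $ah$ on the target. There is no deeper obstacle; all the content is concentrated in the single invocation of \eqref{equivariant objects}, and the functoriality and strictness hypotheses on the action supply everything else.
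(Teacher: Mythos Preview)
Your proof is correct and takes essentially the same approach as the paper: both check the equivariance condition summand by summand and reduce it to the compatibility triangle \eqref{equivariant objects} for $(Y,f')$. The paper presents this as a single commutative diagram (with a square that commutes trivially and a triangle that commutes by \eqref{equivariant objects}), but the content is identical to your computation.
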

\begin{proof}
	To check that the map $\Phi(\varphi)$ is $G$-equivariant, we can follow the component $gh(X)$ of $\Ind(X)$ for $g,h\in G$ on the diagram \eqref{equivariant mor}.
	That is the same as checking that the diagram
	\begin{equation}
		\xymatrix{
			gh(X)\ar[r]^{gh(\varphi)} \ar[d]^{\Id} & 
			gh(Y) \ar[r]^{g(f'_{h})} \ar[dr]^{f'_{gh}}&
			g(Y) \ar[d]^{f'_{g}} \\
			gh(X) \ar[rr]^{f'_{gh}\circ gh(\varphi)} & &
			Y
		}
	\end{equation}
	commutes. The square clearly commutes and the triangle commutes by equation \eqref{equivariant objects}.
\end{proof}

\begin{prop}\label{ind-res adjunction}
The maps $\Phi_{X,(Y,f')}$ and $\Psi_{X,(Y,f')}$ are functorial and inverse to each other.
Therefore, the functor $\Ind:\Ccal\longrightarrow \Ccal^G$ is left adjoint to $\Res:\Ccal^G \longrightarrow \Ccal$.
\end{prop}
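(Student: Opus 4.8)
The plan is to verify directly that $\Psi_{X,(Y,f')}$ and $\Phi_{X,(Y,f')}$ are mutually inverse and natural in $X$ and $(Y,f')$; once that is done, the assertion that $\Ind$ is left adjoint to $\Res$ is just the standard characterization of an adjunction by a natural isomorphism of $\Hom$-bifunctors. Well-definedness of $\Phi_{X,(Y,f')}$ was already established in the preceding proposition, and $\Psi_{X,(Y,f')}$ is manifestly well-defined, so only these two points remain.

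First I would check $\Psi\circ\Phi=\id$, which is immediate: for $\varphi\colon X\longrightarrow Y$ the $1$-component of $\Phi_{X,(Y,f')}(\varphi)=\bigoplus_{g\in G}f'_g\circ g(\varphi)$ is $f'_1\circ 1(\varphi)=\Id_Y\circ\varphi=\varphi$, using $f'_1=\Id_Y$ and $\Fbold_1=\Id_{\Ccal}$.

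The substantive step is $\Phi\circ\Psi=\id$, i.e.\ that every $G$-equivariant morphism $\psi=\bigoplus_{g\in G}\psi_g\colon\Ind(X)\longrightarrow(Y,f')$ is reconstructed from its $1$-component via $\psi_g=f'_g\circ g(\psi_1)$ for all $g$. Here I would unwind the equivariance square \eqref{equivariant mor} for $\psi$ summand by summand. Since the action is strict, $g(\Ind(X))=\bigoplus_{h\in G}g(h(X))=\bigoplus_{h\in G}(gh)(X)$; the permutation $\sigma_g$ sends the summand indexed by $h$ (which is $(gh)(X)$) identically onto the summand of $\Ind(X)$ indexed by $gh$, while $g(\psi)$ restricted to that summand is $g(\psi_h)\colon(gh)(X)\longrightarrow g(Y)$. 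Tracing the summand indexed by $h$ of $g(\Ind(X))$ around the square therefore yields $\psi_{gh}=f'_g\circ g(\psi_h)$ for all $g,h\in G$; specializing to $h=1$ gives $\psi_g=f'_g\circ g(\psi_1)$, which is exactly $\Phi_{X,(Y,f')}(\psi_1)=\psi$. Keeping the index permutations straight in $g(\Ind(X))=\bigoplus_h(gh)(X)$ is the one place where care is needed, so I would carry out this chase explicitly; this is the main (mild) obstacle.

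Finally I would check naturality of the family $\Phi_{X,(Y,f')}$ in both variables. Naturality in $(Y,f')$ follows by postcomposing with a $G$-equivariant morphism $T\colon(Y,f')\longrightarrow(Y'',f'')$ and using $T\circ f'_g=f''_g\circ g(T)$ componentwise; naturality in $X$ follows by precomposing with $u\colon X'\longrightarrow X$ and using that $\Ind(u)=\bigoplus_g g(u)$ intertwines the permutations $\sigma_g$ on $\Ind(X')$ and $\Ind(X)$, together with functoriality of each $\Fbold_g$. Both are routine diagram chases. This exhibits $\Phi$ as a natural isomorphism between the bifunctors $\Hom(\Ind(-),-)$ and $\Hom(-,\Res(-))$, and hence $\Ind$ is left adjoint to $\Res$.
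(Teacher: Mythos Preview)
Your proposal is correct and follows essentially the same approach as the paper: both verify $\Psi\circ\Phi=\id$ directly from $f'_1=\Id_Y$, derive $\Phi\circ\Psi=\id$ by reading the equivariance square \eqref{equivariant mor} componentwise to obtain $\psi_g=f'_g\circ g(\psi_1)$, and check naturality by a routine chase. Your write-up is slightly more detailed on the index bookkeeping for $g(\Ind(X))$ and on naturality in the $X$-variable, but the argument is the same.
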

\begin{proof}
Let $X\in\Ccal$, and $(Y,f),(Y',f')\in \Ccal^G$.
Let $T:(Y,f)\longrightarrow (Y',f')$, $\varphi:X\longrightarrow Y$, and
$\psi:\Ind(X)\longrightarrow (Y,f)$.
The functoriality of most components is straightforward. 
We will show the functoriality of $\Phi_{X,(Y,f)}$ on the $(Y,f)$ component and leave the others to the reader.

Using \eqref{equivariant mor},
\begin{align}
	\Phi_{X,(Y',f')}(T\circ \varphi) 
	&= \bigoplus_{g\in G} f'_g\circ g(T\circ\varphi) \\
	&= \bigoplus_{g\in G} f'_g\circ g(T)\circ g(\varphi) \\
	&= \bigoplus_{g\in G} T\circ f_g\circ g(\varphi).
\end{align}

The following computations show that $\Phi$ and $\Psi$ are mutually inverse.
\begin{align}
	\Psi_{X,(X,f)}\circ \Phi_{X,(Y,f)} (\varphi) 
	&= \Psi_{X,(X,f)}\left(\bigoplus_{g\in G} f_g\circ g(\varphi) \right)\\
	&= f_1\circ 1(\varphi) \\
	&= \varphi.
\end{align}
\begin{align}
	\Phi_{X,(Y,f)}\circ \Psi_{X,(Y,f)} (\psi) 
	&= \bigoplus_{g\in G} f_g\circ g(\psi)_1.
\end{align}
Using \eqref{equivariant mor}, we obtain
\begin{align}
	\Phi_{X,(Y,f)}\circ \Psi_{X,(Y,f)} (\psi) 
	&= \bigoplus_{g\in G} \psi_g = \psi.
\end{align}
\end{proof}

Now suppose that $G$ is finite. 
The induction and restriction functors on 
$\Rep(G)$ are bi-adjoint. We will show the same for $\Ind$ and $\Res$.
Define 
\begin{equation}
	\Phi'_{(X,f),Y}:\Hom(\Res(X,f),Y)=\Hom(X,Y)\longrightarrow \Hom((X,f),\Ind(Y))
\end{equation}
by 
\begin{equation}
	\Phi'_{(X,f),Y}:\varphi\mapsto 
	\begin{pmatrix}
		1(\varphi\circ f_{1})\\
		\vdots \\
		g(\varphi\circ f_{g^{-1}})
	\end{pmatrix}_{g\in G}
	.
\end{equation}
Above and elsewhere, we will describe the maps to and from $\Ind(X)$ by matrices indexed by $G$.

Also define
\begin{equation}
	\Psi'_{(X,f),Y}:\Hom((X,f),\Ind(Y))\longrightarrow \Hom(\Res(X,f),Y)=\Hom(X,Y)
\end{equation}
by 
\begin{equation}
	\Psi'_{(X,f),Y}:\psi\mapsto \pi_1\circ\psi,
\end{equation}
where $\pi_1:\bigoplus_{g\in G}g(Y) \longrightarrow Y$ is the projection to the identity component.
Note that the definition of $\Phi'$ requires $G$ to be finite. Otherwise, it might have infinitely many non-zero components.

\begin{prop}\label{ind-res coadjunction}
	The maps $\Phi'_{(X,f),Y}$ and $\Psi'_{(X,f),Y}$ are functorial and mutually inverse.
	Therefore, when $G$ is finite, the functor $\Ind:\Ccal\longrightarrow \Ccal^G$ is right adjoint to $\Res:\Ccal^G \longrightarrow \Ccal$.
\end{prop}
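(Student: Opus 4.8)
The plan is to imitate the proof of Proposition~\ref{ind-res adjunction}, now exploiting that $G$ is finite so that the ``coinduction'' functor $\bigoplus_{g\in G} g(-)$ coincides with $\Ind$. Concretely, I would first check that $\Phi'_{(X,f),Y}$ is well defined (its image consists of $G$-equivariant morphisms), then that $\Phi'_{(X,f),Y}$ and $\Psi'_{(X,f),Y}$ are mutually inverse, then that both are natural in $(X,f)$ and in $Y$, and finally read off the adjunction, which combined with Proposition~\ref{ind-res adjunction} gives biadjointness for finite $G$.

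\emph{Well-definedness.} Given $\varphi\colon X\to Y$, the morphism $\Phi'_{(X,f),Y}(\varphi)\colon X\to \bigoplus_{g\in G} g(Y)$ has $g$-th component $g(\varphi\circ f_{g^{-1}})$, and I must show it intertwines $f$ on $X$ with the permutation structure $\sigma$ on $\Ind(Y)$, i.e. $\sigma_a\circ a(\Phi'(\varphi)) = \Phi'(\varphi)\circ f_a$ for all $a\in G$. Since $\sigma_a$ carries the summand of $a(\Ind(Y))=\bigoplus_g (ag)(Y)$ indexed by $g$ identically onto the summand of $\Ind(Y)$ indexed by $ag$, comparing both sides in the summand indexed by $n$ reduces the claim to the identity $n(\varphi\circ f_{n^{-1}a}) = n(\varphi\circ f_{n^{-1}})\circ f_a$ of maps $a(X)\to n(Y)$. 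This follows by applying $\Fbold_n$ to the cocycle relation $f_{n^{-1}a}=f_{n^{-1}}\circ n^{-1}(f_a)$, which is precisely the commuting triangle \eqref{equivariant objects}; strictness of the action is used to identify $\Fbold_n\circ \Fbold_{n^{-1}}$ with $\Id_\Ccal$.

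\emph{Mutual inverse.} That $\Psi'_{(X,f),Y}\circ\Phi'_{(X,f),Y}=\id$ is immediate: projecting $\Phi'(\varphi)$ to its identity component gives $1(\varphi\circ f_1)=\varphi$, since $f_1=\Id_X$ and $\Fbold_1=\Id$. For the reverse composite, take $\psi\colon(X,f)\to\Ind(Y)$ with components $\psi_g\colon X\to g(Y)$. Reading the equivariance $\sigma_a\circ a(\psi)=\psi\circ f_a$ in the summand of $\Ind(Y)$ indexed by $a$ (the unique summand reached from the identity component of the source under $\sigma_a$) yields $a(\psi_1)=\psi_a\circ f_a$, hence $\psi_a=a(\psi_1)\circ f_a^{-1}$. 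The cocycle relation with exponents $(a,a^{-1})$ gives $\Id_X=f_1=f_a\circ a(f_{a^{-1}})$, so $f_a^{-1}=a(f_{a^{-1}})$ and therefore $\psi_a=a(\psi_1)\circ a(f_{a^{-1}})=a(\psi_1\circ f_{a^{-1}})$, which is exactly the $a$-th component of $\Phi'(\Psi'(\psi))=\Phi'(\psi_1)$. Hence $\Phi'\circ\Psi'=\id$.

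\emph{Naturality and conclusion.} Naturality of $\Phi'$ in $Y$ and in $(X,f)$ is a computation of the same flavour as in Proposition~\ref{ind-res adjunction}, using that $\Ind$ acts on morphisms by $\bigoplus_h h(-)$ and $\Res$ is the identity on morphisms; I would write out one variable and leave the other to the reader. Natural mutually inverse maps $\Hom(\Res(X,f),Y)\cong\Hom((X,f),\Ind(Y))$ exhibit $\Ind$ as a right adjoint of $\Res$, and with Proposition~\ref{ind-res adjunction} this shows $\Ind$ and $\Res$ are biadjoint whenever $G$ is finite. The only real obstacle I anticipate is bookkeeping: tracking which summand of $a(\Ind(Y))=\bigoplus_g (ag)(Y)$ the permutation isomorphism $\sigma_a$ sends where, and staying disciplined that strictness makes identities like $\Fbold_g(\Fbold_{g^{-1}}(Z))=Z$ genuine equalities rather than coherence isomorphisms; once the index shuffling is pinned down, every step collapses to the cocycle relation \eqref{equivariant objects} together with the normalizations $f_1=\Id$ and $\Fbold_1=\Id$.
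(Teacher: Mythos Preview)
Your proposal is correct and follows exactly the approach the paper indicates: the paper's proof simply states that it is similar to the one of Proposition~\ref{ind-res adjunction}, and you have carried out precisely that parallel argument in full detail. Your bookkeeping with the cocycle relation \eqref{equivariant objects} and the identity $f_a^{-1}=a(f_{a^{-1}})$ (which is equation~\eqref{condition equiv mor} in the paper) is accurate, so there is nothing to add.
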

The proof of this proposition is similar to the one of Proposition \ref{ind-res adjunction}.

Now, for every $(X,f)\in \Ccal^G$ define the morphisms
$p_{(X,f)}:\Ind(X)\longrightarrow (X,f)$ given by 
\begin{equation}
	p_{(X,f)}:= 
	\begin{pmatrix}
		f_1 & \cdots & f_g
	\end{pmatrix}_{g\in G}
\end{equation}
and $\iota_{(X,f)}: (X,f)\longrightarrow\Ind(X)$ given by
\begin{equation}
	\iota_{(X,f)}:= 
	\begin{pmatrix}
		1(f_{1})\\
		\vdots \\
		g(f_{g^{-1}})
	\end{pmatrix}_{g\in G}
	.
\end{equation}
\begin{prop}
	The morphisms $p_{(X,f)}:\Ind(X)\longrightarrow (X,f)$ and 
	$\iota_{(X,f)}:\Ind(X)\longrightarrow (X,f)$ are $G$-equivariant
	and satisfy $p_{(X,f)} \circ \iota_{(X,f)}= |G|\id_{X}$.
\end{prop}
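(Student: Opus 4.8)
The plan is to obtain the $G$-equivariance of both $p_{(X,f)}$ and $\iota_{(X,f)}$ essentially for free from the adjunctions already established in Propositions~\ref{ind-res adjunction} and~\ref{ind-res coadjunction}, and then to check the normalization $p_{(X,f)}\circ\iota_{(X,f)}=|G|\,\id_X$ by a one-line computation with the cocycle axiom~\eqref{equivariant objects}. (As throughout this subsection, $G$ is assumed finite, so that the $G$-indexed matrices and the scalar $|G|$ make sense.)

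For the equivariance, I would observe that $\iota_{(X,f)}$ and $p_{(X,f)}$ are precisely the images of identity morphisms under the adjunction isomorphisms. Indeed, feeding $\id_X\in\Hom(\Res(X,f),X)=\Hom(X,X)$ into $\Phi'_{(X,f),X}$ yields exactly the column matrix $\bigl(g(\id_X\circ f_{g^{-1}})\bigr)_{g\in G}=\iota_{(X,f)}$, and feeding $\id_X\in\Hom(X,\Res(X,f))=\Hom(X,X)$ into $\Phi_{X,(X,f)}$ yields $\bigoplus_{g\in G}f_g\circ g(\id_X)=p_{(X,f)}$. Since by construction $\Phi'_{(X,f),X}$ and $\Phi_{X,(X,f)}$ take values in the $\Hom$-spaces of $\Ccal^G$, whose elements are $G$-equivariant morphisms by definition, this already shows that $\iota_{(X,f)}$ and $p_{(X,f)}$ are $G$-equivariant. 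If one prefers a self-contained verification, one can instead chase the summand $gh(X)$ of $g(\Ind(X))$ through the equivariance square~\eqref{equivariant mor}, using the permutation isomorphism $\sigma_g$ defining the equivariant structure on $\Ind(X)$ together with the identity $f_g\circ g(f_h)=f_{gh}$ from~\eqref{equivariant objects}; this is entirely parallel to the proof that $\Phi_{X,(Y,f')}(\varphi)$ is equivariant.

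For the normalization, I would compute the composite summandwise. Restricted to the $g$-th summand of $\Ind(X)=\bigoplus_{g\in G}g(X)$, the composite $p_{(X,f)}\circ\iota_{(X,f)}$ is $X\overlongrightarrow{g(f_{g^{-1}})}g(X)\overlongrightarrow{f_g}X$, so $p_{(X,f)}\circ\iota_{(X,f)}=\sum_{g\in G}f_g\circ g(f_{g^{-1}})$. Applying~\eqref{equivariant objects} with the pair $(g,g^{-1})$ gives $f_g\circ g(f_{g^{-1}})=f_{gg^{-1}}=f_1=\id_X$ for every $g$, whence the sum equals $|G|\,\id_X$.

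I do not expect a genuine obstacle; this is a routine compatibility check. The only point requiring care is the bookkeeping with the strict-action identifications $g(h(X))=(gh)(X)$ and with the indexing of the row/column matrices describing maps into and out of $\Ind(X)$, so that, in the self-contained version of the equivariance argument, $\sigma_g$ is matched correctly against the components of $g(\iota_{(X,f)})$ and $g(p_{(X,f)})$.
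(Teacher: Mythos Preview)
Your proof is correct and follows essentially the same approach as the paper: the paper also identifies $p_{(X,f)}=\Phi_{X,(X,f)}(\id_X)$ and $\iota_{(X,f)}=\Phi'_{(X,f),X}(\id_{(X,f)})$ to obtain equivariance from the adjunctions, and computes the composite as $\sum_{g\in G} f_g\circ g(f_{g^{-1}})=|G|\,\id_X$. The only cosmetic difference is that the paper passes through the intermediate identity $g(f_{g^{-1}})=f_g^{-1}$ before summing, whereas you apply the cocycle relation~\eqref{equivariant objects} with $(g,g^{-1})$ directly; these are the same step.
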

\begin{proof}
	The map $p_{(X,f)}=\Phi_{X,(X,f)}(\id_{X})$ is the counit of the adjunction from Proposition \ref{ind-res adjunction}.
	Similarly, $\iota_{(X,f)}=\Phi'_{(X,f),X}(\id_{(X,f)})$ 
	is the unit of the adjunction from Proposition \ref{ind-res coadjunction}.
	Alternatively, we can show $p_{(X,f)}$ is equivariant by following the component $gh(X)$ of $\Ind(X)$ in equation \eqref{equivariant mor}.
	That is, checking that the diagram
	\begin{equation}
		\xymatrix{
			gh(X) \ar[r]^{g(f_h)} \ar[d]^{\id}&
			g(X)\ar[d]^{f_g}\\
			gh(X) \ar[r]^{f_{gh}}&
			X
		}
	\end{equation}
	commutes.
	But that is just equation \eqref{equivariant objects}.
	The check for $\iota_{(X,f)}$ is similar.
	
	Also note that 
	\begin{equation}\label{condition equiv mor}
		g(f_{g^{-1}})=f_g^{-1}
	\end{equation}
	by equation \eqref{equivariant mor}.
	Hence
	\begin{align}
		p_{(X,f)} \circ \iota_{(X,f)}
		&= \sum_{g\in G} f_g \circ g(f_{g^{-1}})\\
		&= \sum_{g\in G} f_g \circ f_g^{-1}\\
		&= |G| \id_{X}.
	\end{align}
\end{proof}

The following corollary relies on the fact that $\chr(\kk)\nmid |G|$.
\begin{coro}\label{coro summands}
	The map $\iota_{(X,f)}\circ p_{(X,f)}$ 
	is a pseudo-idempotent.
	Moreover, in this case $(X,f)$ is a direct summand of $\Ind(X)$
	with inclusion $\iota_{(X,f)}$ and projection $\frac{1}{|G|}p_{(X,f)}$.
\end{coro}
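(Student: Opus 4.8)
The plan is to read everything off the identity $p_{(X,f)}\circ\iota_{(X,f)} = |G|\,\id_X$ established in the preceding proposition, invoking the standing hypothesis $\chr(\kk)\nmid|G|$ only to make sense of dividing by $|G|$ in the $\kk$-linear category $\Ccal^G$.

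First I would record that $e:=\iota_{(X,f)}\circ p_{(X,f)}$ is an endomorphism of $\Ind(X)$ \emph{inside} $\Ccal^G$, since both factors were already shown to be $G$-equivariant. A single associativity rearrangement then gives
\begin{equation}
	e\circ e = \iota_{(X,f)}\circ\bigl(p_{(X,f)}\circ\iota_{(X,f)}\bigr)\circ p_{(X,f)} = |G|\,e,
\end{equation}
so $e$ is a pseudo-idempotent; because $|G|$ is invertible in $\kk$, the morphism $\tfrac{1}{|G|}e$ is an honest idempotent in $\End(\Ind(X))$.

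For the splitting I would take the inclusion to be $\iota_{(X,f)}$ and the projection to be $\tfrac{1}{|G|}p_{(X,f)}$, and check that
\begin{equation}
	\Bigl(\tfrac{1}{|G|}p_{(X,f)}\Bigr)\circ\iota_{(X,f)} = \tfrac{1}{|G|}\bigl(p_{(X,f)}\circ\iota_{(X,f)}\bigr) = \tfrac{1}{|G|}\cdot|G|\,\id_X = \id_{(X,f)},
\end{equation}
while $\iota_{(X,f)}\circ\bigl(\tfrac{1}{|G|}p_{(X,f)}\bigr) = \tfrac{1}{|G|}e$ is the idempotent just produced. This is exactly the data exhibiting $(X,f)$ as a direct summand of $\Ind(X)$ with the stated inclusion and projection; if one wishes to name the complementary summand it is $\im\bigl(\id_{\Ind(X)}-\tfrac{1}{|G|}e\bigr)$, which exists because $\Ccal^G$ is Karoubian by Proposition \ref{equiv karoubian}.

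I do not expect a genuine obstacle here: the substantive content was already extracted in the previous proposition, and what remains is the formal observation that $\tfrac{1}{|G|}$ is available — the sole place $\chr(\kk)\nmid|G|$ is used — together with the bookkeeping that all morphisms in sight are morphisms in $\Ccal^G$ rather than merely in $\Ccal$, which was checked when $p_{(X,f)}$ and $\iota_{(X,f)}$ were constructed.
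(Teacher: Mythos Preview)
Your argument is correct and follows the same approach as the paper: both deduce $(\iota_{(X,f)}\circ p_{(X,f)})^2=|G|\,\iota_{(X,f)}\circ p_{(X,f)}$ directly from the identity $p_{(X,f)}\circ\iota_{(X,f)}=|G|\,\id_X$ of the preceding proposition, and then invoke $\chr(\kk)\nmid|G|$ to divide by $|G|$. Your write-up is more explicit (spelling out the splitting data and mentioning Karoubianness for the complement), but the substance is identical.
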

\begin{proof}
	By the previous proposition,
	\begin{equation}
		(\iota_{(X,f)}\circ p_{(X,f)})^2 = |G|\iota_{(X,f)}\circ p_{(X,f)}.
	\end{equation}
	The scalar $|G|$ is invertible if and only if the characteristic of $\kk$ does not divide the order of $G$.
\end{proof}

\subsection{Unique Decomposition in $\Ccal^G$}

\begin{defi}
	Let $\Ccal$ be an additive category and let $X\in \Ccal$.
	The object $X$ is said to have \textit{unique decomposition} if
	it satisfies the following properties:
	\begin{itemize}
		\item $X$ has a 
		finite direct sum decomposition
		\begin{equation}
			X\cong X_1\oplus X_2\oplus ... \oplus X_k
		\end{equation}
		for some
		indecomposable objects $X_1,...,X_k \in \Ccal$.
		\item If 
		\begin{equation}
			X\cong Y_1\oplus Y_2 \oplus ... \oplus Y_m
		\end{equation}
		is another decomposition of $X$ into indecomposable objects,
		then $m=k$ and $Y_i\cong X_{\sigma(i)}$ for some 
		permutation $\sigma\in S_k$.
	\end{itemize}
	The category $\Ccal$ is said to have the \textit{unique decomposition property} if all of its objects have unique decomposition.
	Alternatively we may say that $\Ccal$ \textit{has unique decomposition}.
\end{defi}

One of our goals is to show that all the categories we care about,
$\HcalA$, $\HcalAxA$, and $\HcalAxA^\tau$,
have unique decomposition.
The unique decomposition property for these categories will come from the general theory for Krull-Schmidt categories.
This will require us to have a description of all their indecomposable objects
and show they have (graded) local endomorphism rings.
The next result will allow us to get a full list of indecomposable objects
for the equivariantization in all the cases we care about.

\begin{prop}\label{Prop unique decomp indX}
	Let $(Y,f)\in\Hcal$. Suppose that both $Y$ and $\Ind(Y)$
	have unique decomposition.
	Let 
	\begin{equation}
		Y\cong Y_1\oplus Y_2 \oplus ... \oplus Y_k
	\end{equation}
	be the unique decomposition of $Y$ into indecomposables.
	Then $(Y,f)$ is a summand of $\Ind(Y_i)$ for some $i$.
	Hence, listing all the summands of $\Ind(X)$ 
	for all indecomposable $X\in\Ccal$
	will give us a full list of the indecomposable objects of $\Ccal^G$.
	
\end{prop}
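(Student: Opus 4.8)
The plan is to use the adjunction machinery developed earlier in the section, together with the pseudo-idempotent from Corollary \ref{coro summands}. First I would observe that by Corollary \ref{coro summands}, $(Y,f)$ is a direct summand of $\Ind(Y)$ (via the inclusion $\iota_{(Y,f)}$ and the rescaled projection $\frac{1}{|G|}p_{(Y,f)}$), which uses that $\chr(\kk)\nmid|G|$. So it suffices to show that $(Y,f)$ is a summand of $\Ind(Y_i)$ for some $i$, given that it is a summand of $\Ind(Y)$ and that $Y\cong Y_1\oplus\cdots\oplus Y_k$.

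Next I would note that $\Ind$ is an additive functor, so from $Y\cong\bigoplus_i Y_i$ we get $\Ind(Y)\cong\bigoplus_i\Ind(Y_i)$ in $\Ccal^G$. Thus $(Y,f)$ is a direct summand of $\bigoplus_i\Ind(Y_i)$. Now I would invoke the hypothesis that $\Ind(Y)$ has unique decomposition: since $(Y,f)$ is indecomposable (it is the object we are decomposing, and unique decomposition forces summands to be indecomposable — here I should be slightly careful, as the statement is phrased for a general $(Y,f)$; the intended reading is that $(Y,f)$ indecomposable, which is the case of interest, and the general case follows summand-by-summand), the Krull--Schmidt uniqueness for $\Ind(Y)\cong\bigoplus_i\Ind(Y_i)$ identifies $(Y,f)$ with one of the indecomposable summands appearing in the decomposition of some $\Ind(Y_i)$. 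Hence $(Y,f)$ is a summand of that $\Ind(Y_i)$, which is the first claim.

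For the final sentence, I would argue as follows: let $(Z,h)\in\Ccal^G$ be an arbitrary indecomposable object. Applying the first part with $(Y,f)=(Z,h)$ and decomposing $Z=\Res(Z,h)$ into indecomposables $Z\cong\bigoplus Z_j$ in $\Ccal$, we conclude $(Z,h)$ is a summand of $\Ind(Z_j)$ for some $j$. Therefore every indecomposable object of $\Ccal^G$ occurs as a summand of $\Ind(X)$ for some indecomposable $X\in\Ccal$; conversely every such summand is of course an object of $\Ccal^G$. So the list of all indecomposable summands of all $\Ind(X)$, $X$ indecomposable, is exactly the list of indecomposable objects of $\Ccal^G$ (up to isomorphism and repetition).

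The main obstacle, and the place where care is needed, is the application of unique decomposition to deduce that $(Y,f)$ sits inside a \emph{single} $\Ind(Y_i)$ rather than being "spread across" several of them. This is really the content of the Krull--Schmidt theorem: an indecomposable summand of a finite direct sum $\bigoplus_i M_i$ must be isomorphic to an indecomposable summand of one of the $M_i$. I would either cite this as the standard consequence of unique decomposition, or spell out the short argument: write the idempotent $e=\iota_{(Y,f)}\circ\frac{1}{|G|}p_{(Y,f)}$ on $\bigoplus_i\Ind(Y_i)$, refine each $\Ind(Y_i)$ into indecomposables to get a decomposition of $\Ind(Y)$, and use that the image of an indecomposable idempotent lies in one isotypic block by uniqueness of the decomposition. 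Everything else is routine bookkeeping with the functors $\Ind$, $\Res$ and their (bi)adjunction established in Propositions \ref{ind-res adjunction} and \ref{ind-res coadjunction}.
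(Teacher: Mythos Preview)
Your proposal is correct and follows essentially the same approach as the paper: invoke Corollary~\ref{coro summands} to get $(Y,f)$ as a summand of $\Ind(Y)$, use additivity of $\Ind$ to write $\Ind(Y)\cong\bigoplus_i\Ind(Y_i)$, and then apply unique decomposition of $\Ind(Y)$ to conclude that the indecomposable $(Y,f)$ must land inside a single $\Ind(Y_i)$. You are also right to flag that the argument tacitly assumes $(Y,f)$ is indecomposable; the paper's proof begins by making exactly that assumption explicit.
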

\begin{proof}
	Let $(Y,f)$ be an indecomposable object of $\Ccal^G$.
	By Corollary \ref{coro summands}, $(Y,f)$ is a summand of $\Ind(Y)$.
	If $Y$ is indecomposable, $k=1$ and we are done. 
	Otherwise, $\Ind(Y)\cong \Ind(Y_1)\oplus ... \oplus \Ind(Y_k)$.
	Since $\Ind(Y)$ has unique decomposition, $(Y,f')$ has to be a summand of $\Ind(Y_i)$ for some $i$.
\end{proof}

\begin{defi}
	Let $\Ccal$ be an additive category and let $X\in \Ccal$.
	We say $X$ is a has a \textit{local decomposition} if it there exists a direct sum decomposition
	\begin{equation}
	X\cong X_1\oplus X_2\oplus ... \oplus X_k
	\end{equation}
	 for some
	indecomposable objects $X_1,...,X_k \in \Ccal$ with local endomorphism rings.
	If case $\Ccal$ is a graded category,
	$X$ has a \textit{graded local decomposition}  if the endomorphism rings of the $X_i$'s are graded local.
	
	We say $\Ccal$ is a \textit{Krull-Schmidt category} 
	(or has the \textit{Krull-Schmidt} property) 
	if all of its objects have local decompositions.
	A graded category $\Ccal$ is a \textit{graded Krull-Schmidt category} if all of its objects have graded local decompositions.
\end{defi}

It is important to note that all the results here apply both in the graded and non-graded cases.
The following is a standard result about Krull-Schmidt categories.
\begin{lemma}[{\cite[Thm. 11.50]{elias2020introduction}}]\label{krullSchmidt objects}
	Let $\Ccal$ be a Karoubian category and suppose that $X\in\Ccal$
	admits a local decomposition. Then $X$ has unique decomposition.
	Hence, any Krull-Schmidt category has unique decomposition.
\end{lemma}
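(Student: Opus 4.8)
The plan is to prove the uniqueness part by the classical Krull--Schmidt exchange argument and then deduce the statement about Krull--Schmidt categories by applying it objectwise. First I would record the preliminaries: if $\End(X_i)$ is local then its only idempotents are $0$ and $\id$, so $X_i$ has no nontrivial direct summand and is therefore indecomposable and nonzero; hence the hypothesised decomposition $X\cong X_1\oplus\cdots\oplus X_k$ already is a decomposition into indecomposables, and only its uniqueness is at stake. I would then induct on $k$. The case $k\leq 1$ is immediate, since an indecomposable (or zero) object admits only the trivial decomposition into indecomposables.

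For the inductive step, let $X\cong Y_1\oplus\cdots\oplus Y_m$ be another decomposition into indecomposables and let $\psi\colon\bigoplus_i X_i\xrightarrow{\sim}\bigoplus_l Y_l$ be the chosen isomorphism, with $\iota^X_i, p^X_i, \iota^Y_l, p^Y_l$ the biproduct structure maps. Set $a_l=p^Y_l\,\psi\,\iota^X_1$ and $b_l=p^X_1\,\psi^{-1}\,\iota^Y_l$, so that $\sum_l b_l a_l=\id_{X_1}$. Since $\End(X_1)$ is local and a sum of non-units is a non-unit there, some $b_l a_l$ must be a unit; after reindexing say $b_1 a_1=u\in\Aut(X_1)$. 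Then $e:=a_1 u^{-1} b_1$ is an idempotent in $\End(Y_1)$, and it is nonzero because $b_1 e a_1=u$; as $Y_1$ is indecomposable and $\Ccal$ is Karoubian this forces $e=\id_{Y_1}$, so that $a_1\colon X_1\to Y_1$ is an isomorphism with inverse $u^{-1}b_1$. Now $\psi$ is an isomorphism of biproducts whose $(1,1)$-block is the invertible map $a_1$, and composing $\psi$ on each side with the unitriangular automorphisms determined by its off-diagonal entries brings it to block-diagonal form $\diag(a_1,\psi')$ with $\psi'\colon\bigoplus_{i\geq 2}X_i\xrightarrow{\sim}\bigoplus_{l\geq 2}Y_l$ an isomorphism. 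Since $\bigoplus_{i\geq 2}X_i$ still admits a local decomposition, the inductive hypothesis gives $k-1=m-1$ together with a permutation matching the $X_i$ ($i\geq 2$) to the $Y_l$ ($l\geq 2$); combined with $X_1\cong Y_1$ this is the matching we want, completing the induction. The final clause of the lemma follows by applying the result to every object of a Krull--Schmidt category.

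I expect the main obstacle to be the block-matrix reduction in the inductive step: one has to verify carefully that an isomorphism between biproducts with one invertible corner can be conjugated to block-diagonal form by the unitriangular automorphisms determined by its off-diagonal entries, equivalently a cancellation lemma asserting that if $X_1\oplus A\cong Y_1\oplus B$ via a map whose $X_1$-to-$Y_1$ component is invertible then $A\cong B$. Everything else — the locality argument producing an invertible $b_1 a_1$, and the short idempotent computation identifying $X_1$ with $Y_1$ — is routine bookkeeping with inclusions and projections once the matrix notation is fixed.
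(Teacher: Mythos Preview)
Your argument is correct and is the standard Krull--Schmidt exchange proof; the block-reduction step you flag as the main obstacle is exactly the Schur complement factorisation and goes through without difficulty once the $(1,1)$-block is known to be invertible. The paper itself does not supply a proof of this lemma: it is stated as a citation of \cite[Thm.~11.50]{elias2020introduction} and used as a black box, so there is no paper-proof to compare your approach against.
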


To show that an additive (graded) category $\Ccal$ is Krull-Schmidt,
one usually finds a set $P$ of objects of $\Ccal$ such that:
\begin{itemize}
	\item All the objects in $\mathcal{P}$ admit a local decomposition.
	\item All objects in $\Ccal$ are a summand of an object of the form $P_1\oplus P_2\oplus ... \oplus P_k$
	for some $P_1,...,P_k\in P$ (or are a grading shift of an object in $P$).
	In other words,
	\begin{equation}
		\mathsf{Kar}(\mathsf{add}(A))=\Ccal.
	\end{equation}
\end{itemize}
This is enough because a direct summand of an object with a local decomposition also has a local decomposition.
In the case of $\Ccal^G$ this yields the following result.

\begin{prop}\label{prop CcalG is KS}
	Suppose $\Ccal$ is a (graded) category with unique decomposition and for every indecomposable object $X\in\Ccal$
	the object $\Ind(X)$ has a (graded) local decomposition. Then $\Ccal^G$ is a Krull-Schmidt category.
\end{prop}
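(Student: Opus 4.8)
The plan is to verify directly that every object of $\Ccal^G$ admits a (graded) local decomposition, which is precisely the definition of $\Ccal^G$ being a (graded) Krull-Schmidt category. The main reduction is supplied by Corollary \ref{coro summands}: since $\chr(\kk)\nmid|G|$, every object $(X,f)\in\Ccal^G$ is a direct summand of $\Ind(X)$, with explicit inclusion $\iota_{(X,f)}$ and projection $\tfrac{1}{|G|}p_{(X,f)}$. Since a direct summand of an object possessing a (graded) local decomposition again possesses one (this is a standard consequence of the fact that objects with local decompositions have unique decomposition; see \cite{elias2020introduction}), it suffices to show that $\Ind(X)$ has a (graded) local decomposition for every $X\in\Ccal$.

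To establish that, I would argue as follows. Fix $X\in\Ccal$. Because $\Ccal$ has unique decomposition, we may write $X\cong X_1\oplus\cdots\oplus X_k$ with each $X_i$ indecomposable. The $G$-action is additive, so $\Ind$ is an additive functor, and hence $\Ind(X)\cong\Ind(X_1)\oplus\cdots\oplus\Ind(X_k)$. By hypothesis each $\Ind(X_i)$ has a (graded) local decomposition, that is, a finite direct sum decomposition into indecomposable objects with (graded) local endomorphism rings. Concatenating these finitely many decompositions exhibits $\Ind(X)$ itself as a finite direct sum of indecomposables with (graded) local endomorphism rings, so $\Ind(X)$ has a (graded) local decomposition.

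Combining the two steps yields the conclusion: an arbitrary $(X,f)\in\Ccal^G$ is a direct summand of $\Ind(X)$, which has a (graded) local decomposition, hence $(X,f)$ has a (graded) local decomposition as well; as $(X,f)$ was arbitrary, $\Ccal^G$ is a (graded) Krull-Schmidt category. The only input that is not pure bookkeeping is the assertion that direct summands inherit local decompositions, and that is where I expect whatever genuine work there is to lie; one could alternatively route around quoting it in full generality by observing that in every case of interest $\Ccal$ is Karoubian, so $\Ccal^G$ is Karoubian by Proposition \ref{equiv karoubian}, and then applying Lemma \ref{krullSchmidt objects} to $\Ind(X)$ together with Corollary \ref{coro summands}. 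Everything else — additivity of $\Ind$, the reduction from general $X$ to indecomposable $X$, and the concatenation of decompositions — is routine.
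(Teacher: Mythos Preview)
Your proof is correct and follows essentially the same approach as the paper: reduce to $\Ind(X)$ via Corollary~\ref{coro summands}, decompose $X$ into indecomposables using unique decomposition in $\Ccal$, use additivity of $\Ind$ to write $\Ind(X)$ as a direct sum of the $\Ind(X_i)$, invoke the hypothesis that each $\Ind(X_i)$ has a local decomposition, and conclude that the summand $(X,f)$ does too. The paper's proof is terser but structurally identical; your discussion of why summands inherit local decompositions (and the alternative route through Proposition~\ref{equiv karoubian} and Lemma~\ref{krullSchmidt objects}) makes explicit what the paper leaves implicit in the paragraph preceding the proposition.
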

\begin{proof}
	Let $(Y,f)\in\Ccal^G$. By Corollary \ref{coro summands}, $(Y,f)$ is a summand of
	$\Ind(Y)$.
	By unique decomposition, there exist indecomposable objects $X_1,...,X_k$ such that
	$Y\cong X_1\oplus...\oplus X_k$.
	Hence $\Ind(Y)\cong \Ind(X_1)\oplus...\oplus \Ind(X_k)$ has unique decomposition.
	Since $(Y,f)$ is a summand of $\Ind(Y)$, it also has a local decomposition.
\end{proof}

\subsection{Actions of $\ZZ/2$ on a Monoidal Category}\label{sec-equiv-2}

One very important special case is when the group $G=\ZZ/2$. Let $\tau$ be the generator of $G$. 
From here on, we will suppose that $\chr(\kk)\neq 2$.

A strict action of $\ZZ/2$ on a monoidal strict category $\Ccal$ is the same as a
strict monoidal functor
$\Fbold_\tau$ such that $\Fbold_\tau^2=\Id_{\Ccal}$.
Using Notation \ref{g is for functor} $\tau$ will represent both the functor and the group element.
In this case, an equivariant object $(X,f)$
only depends on $f_\tau$,
as $f_1=\id_{X}$.
We will denote our equivariant objects by $(X,f_\tau)$, listing the morphism $f_\tau$ in the second coordinate.
For an equivariant object $(X,f_\tau)$, equation \eqref{condition equiv mor} shows that we need 
\begin{equation}\label{condition Z2 equiv structure}
	\tau(f_{\tau})= f_{\tau}^{-1}.
\end{equation}
Also note that for a given $(X,f_\tau)$ satisfying \eqref{condition Z2 equiv structure}, $(X,-f_\tau)$ is also an equivariant object. 

\begin{example}\label{example End1}
	Consider the unit object $\onecal\in \Ccal$.
	Since $\tau$ fixes $\onecal$, we have that both 
	$(\onecal, \id_{\onecal})$ and $(\onecal, -\id_{\onecal})$.
	We will note these objects by $(\onecal,1)$ and $(\onecal,-1)$ respectively. Let $R:=\End(\onecal)$.
	By our strictness assumption, $\tau(\onecal)=\onecal$.
	Thus, functor $\tau$ acts on $R$ in a natural way.
	Define
	\begin{equation}\label{definition rtau}
		\Rtau:=\{r\in R | \tau(r)=r\}
	\end{equation}
	and
	\begin{equation}
		\Rantitau:=\{r\in R | \tau(r)=-r\}.
	\end{equation}
	Using equation \eqref{equivariant mor} it is easy to see that
	\begin{equation}\label{eqn end 1}
		\Hom((\onecal,1),(\onecal,1))=\Rtau,
	\end{equation}
	the set of  $\tau$-invariant elements of $R$.
	Similarly, 
	\begin{equation}
		\Hom((\onecal,-1),(\onecal,-1))=\Rtau,
	\end{equation}
	and
	\begin{equation}
		\Hom((\onecal,1),(\onecal,-1))=\Rantitau.
	\end{equation}
	Since $\Hom((\onecal,1),(\onecal,1))\ncong \Hom((\onecal,1),(\onecal,-1))$ we can conclude
	that $(\onecal,1)$ and $(\onecal,-1)$ are not isomorphic.	
\end{example}

\begin{example}
	Now let $X\in \Ccal$ and $\Ind(X) = \left(X\oplus\tau(X), \IndMatrix{X}\right)$.
	In contrast with Example \ref{example End1}, we claim that 
	\begin{equation}\label{eqn isomorphism sngXInd}
		\Ind(X) \cong (\onecal,-1)\otimes\Ind(X)  =\left(X\oplus\tau(X), -\IndMatrix{X}\right).
	\end{equation}
	Let 	
	\begin{equation}
		T:  \xymatrix{(\onecal,-1)\otimes\Ind(X)
		\ar[rr]^(0.58){
			\smallMatrixgen{\id_{X}}{0}{0}{-\id_{\tau(X)}}
		} 
		&&
		\Ind(X)
	}.
	\end{equation}
	It is easy to see that $T^2=\smallMatrixgen{\id_{X}}{0}{0}{\id_{\tau(X)}}$.
	To check that $T$ is a $\tau$-equivariant morphism, observe that the diagram
	\begin{equation}
		\xymatrix{
			\tau(X\oplus \tau(X))
			\ar @{} [r] |{=}
			&
			\tau(X)\oplus X
			\ar[rr]^(0.5){
				\smallMatrixgen{\id_{\tau(X)}}{0}{0}{-\id_{X}}
			} 
			\ar[d]_{
			-\IndMatrix{X}
			}
			&&
			\tau(X)\oplus X
			\ar @{} [r] |{=}
			\ar[d]^{
				\IndMatrix{X}
			}
			&
			\tau(X\oplus \tau(X))
			\\
			& 
			X\oplus \tau(X)
			\ar[rr]^(0.5){
				\smallMatrixgen{\id_{X}}{0}{0}{-\id_{\tau(X)}}
			} 
			&&
			X\oplus \tau(X)
		}
	\end{equation}
	commutes.

\end{example}

Let $\Ccal$ be a graded monoidal $\kk$-linear category with unique decomposition for an algebraically closed field $\kk$.
Let $X\in\Ccal$ be an indecomposable object such that
 $\End^0(X)=\kk \id_{X}$.
Let $G_X$ to the stabilizer of $X$, that is,
\begin{equation}
	G_X:=\{g\in G| g(X)\cong X\}.
\end{equation}
 Since $G=\ZZ/2$, $G_X$ is either trivial or $G$.
 We have two cases for decomposing $\Ind(X)$ into indecomposables.
\begin{itemize}
	\item If $G_X=1$ then $\Ind(X)=\left(X\oplus \tau(X),
	\IndMatrix{X}
	\right) $
	is indecomposable.
	That is because, by unique decomposition,
	$X$ and $\tau(X)$ are the only summands of $\Ind(X)$
	and they can not have an equivariant structure.

	\item If $G_X=G$, let $T:\tau(X)\longrightarrow X$ be an isomorphism.
	Then the morphism $T\circ \tau(T)= a \id_{X}$ for some $a\in\kk^*$.
	The field $\kk$ is algebraically closed, so there is $b\in\kk^*$ such that $b^2=a$. Let $f_\tau=b^{-1}T$.
	Since $(b^{-1}T)\circ (\tau(b^{-1}T)) = \id_{X}$, the object
	$(X,f_{\tau})$ is an equivariant structure of $X$.
	Furthermore,
	we have a decomposition $\Ind(X)\cong (X,f_{\tau})\oplus (X,-f_{\tau})$ with splittings
	\begin{equation}\label{splitting ind X}
		\xymatrix{
			(X,f_\tau) 
			\ar@/^1pc/[r]^{
				\begin{pmatrix}
					1 \\
					\tau(f_\tau)
				\end{pmatrix}
			}
			& \Ind(X) 
			\ar@/^1pc/[l]^{
				\frac{1}{2}
				\begin{pmatrix}
					1 &
					f_\tau
				\end{pmatrix}
			}
			\ar@/^1pc/[r]^{
				\begin{pmatrix}
					1 \\
					-\tau(f_\tau)
				\end{pmatrix}
			}
			& (X,-f_\tau) \ar@/^1pc/[l]^{
				\frac{1}{2}
				\begin{pmatrix}
					1 &
					-f_\tau
				\end{pmatrix}
			}
		}
	\end{equation}
	(as in Corollary \ref{coro summands}).
\end{itemize}

\section{Diagrammatic Hecke Category of type $A_1\times A_1$.}\label{sec 2 diag hecke}

\subsection{Diagrammatic Hecke Category of type $A_1$}\label{subsection diagrammatics of A1}
\begin{defi}
Let $R:= \kk[\alpha_s]$.
A one color \textit{$(m,n)$-diagram} consists of the following.
\begin{enumerate}[(a)]
    \item A graph $G$ embedded on the planar strip $[0,1]\times [0,1]$ with $m$ strands on its lower boundary and $n$ strands on its upper boundary, made out of trivalent vertices 
    $\left(
    \Trivup{red}{red}{red}
    \right)
    $
    and univalent vertices (also called dots)
    $\left(
    \Dotup{red}
    \right)
    $.
    
    \item Floating polynomial boxes in the regions of $[0,1]\times [0,1]$ partitioned by $G$, labeled by polynomials $f\in R$.
    
\end{enumerate}

We say two $(n,m)$-diagrams are \textit{isotopic}
if the graph embeddings are isotopic relative to the boundary 
(both upper and lower) 
and we have the we have the same polynomial boxes in each region of $[0,1]\times [0,1]$.
\end{defi}

\begin{defi}\label{A1 Hecke}
We define the Diagrammatic Hecke Category of type $A_1$, $\HcalA$,  to be the graded $R$-linear monoidal category presented in the following way.
The objects of $\HcalA$  sums of tensor powers of $B_s$ and its grading shifts.

The morphisms  $B_s^{\otimes m}\longrightarrow B_s^{\otimes n}$
are $R$-linear combination of $(m,n)$-diagrams.
The morphisms of $\HcalA$ are generated by
\begin{enumerate}[(a)]
    \item The identity of $B_s$
        \begin{equation}
        \begin{strings}
        \idfig{(1,1)}{red}
        \end{strings}
        : B_s \longrightarrow B_s,
        \end{equation}
    of degree $0$.
    \item The trivalent vertices
        \begin{equation}
        \begin{strings}
            \trivu{(0,0)}{red}
            \idfigparam{(nd-1-1)}{-0.25}{red}
            \idfigparam{(nd-1-2)}{0.25}{red}
            \idfigparam{(nd-1-3)}{0.25}{red}
        \end{strings}
        : B_s\otimes B_s \longrightarrow B_s[-1] \hspace{0.1cm}
        \text{ and } \hspace{0.1cm}
        \begin{strings}
            \trivd{(0,0)}{red}
            \idfigparam{(nd-1-1)}{-0.25}{red}
            \idfigparam{(nd-1-2)}{-0.25}{red}
            \idfigparam{(nd-1-3)}{0.25}{red}
        \end{strings}
        : B_s[1]\longrightarrow B_s\otimes B_s,
        \end{equation}
    of degree $-1$.
    \item The dots 
    \begin{equation}
            \begin{strings}
            \dotuparam{(0,0)}{0.6}{0.06}{red}
            \path (nd-1-1) ++ (0,1);
            \end{strings}
            :B_s\longrightarrow \onecal[1]
            \hspace{0.1cm}
            \text{ and }
            \hspace{0.1cm}
            \begin{strings}
            \dotdparam{(0,1)}{0.6}{0.06}{red}
            \path (0,0) ++ (0,1);
            \end{strings}
            :\onecal\longrightarrow B_s[1]
            ,
    \end{equation}
    of degree $1$.
    \item The polynomial boxes
    \begin{equation}
            \begin{strings}
            \polybox{(0.1,0.5)}{$f$}
            \path (0,0) ++ (0,1);
            \end{strings} 
            :\onecal\longrightarrow \onecal
            ,
    \end{equation}
    of degree $\deg(f)$ (where $\deg(\alpha_s)=2$).
\end{enumerate}

These morphisms are subject to the isotopy relations and subject to the following local relations.

    \begin{equation}\label{red unit rel}
    \begin{boxedstrings}
    \idfig{(1,1)}{red}
    \dotl{(nd-1-3)}{red}
    \end{boxedstrings}
    =
    \begin{boxedstrings}
    \idfig{(1,1)}{red}
    \end{boxedstrings}
    =
    \begin{boxedstrings}
    \idfig{(1,1)}{red}
    \dotr{(nd-1-3)}{red}
    \end{boxedstrings}
    ,
    \end{equation}

    \begin{equation}\label{red H=I}
    \begin{boxedstrings}
        \Hlongparam{(0,0)}{0.5}{1}{red}{red}{red}{red}{red}
    \end{boxedstrings}
    =
    \begin{boxedstrings}
        \Ilongparam{(0,0)}{0.5}{1}{red}{red}{red}{red}{red}
    \end{boxedstrings},
    \end{equation}

	\begin{equation}
		\begin{boxedstrings}
			\node at (0,1) {};
		\node (a) at (-0.2,0.5) {};
		\def\coltop{red}
		\def\colbot{red}
		\capfigparam{(a)}{0.4}{0.2}{\coltop}
		\cupfigparam{(a)}{0.4}{0.2}{\coltop}
		\idfigparam {(nd-2-3)}{0.3}{\colbot}
		\end{boxedstrings}
		=0,
	\end{equation}

    \begin{equation}\label{red barbell}
    \begin{boxedstrings}
    \vbarb{(0,0.7)}{red}
    \path (0,0) ++ (0,1);
    \end{boxedstrings}
    =
    \begin{boxedstrings}
    \polybox{(0,0.5)}{$\alpha_s$}
    \path (0,0) ++ (0,1);
    \end{boxedstrings},
    \end{equation}
    
    \begin{equation}\label{left poly rel}
    	\begin{boxedstrings}
    		\polybox{(0,0.5)}{$f$}
    		\path (0,0) ++ (0,1);
    	\end{boxedstrings}
    	=
    	f
    	\begin{boxedstrings}
    		\path (0,0) ++ (0,1);
    	\end{boxedstrings},
    \end{equation}
    
    \begin{equation}\label{polynomial sum}
    \begin{boxedstrings}
    \polybox{(0,0.5)}{$f+g$}
    \path (0,0) ++ (0,1);
    \end{boxedstrings}  
    =
    \begin{boxedstrings}
    \polybox{(0,0.5)}{$f$}
    \path (0,0) ++ (0,1);
    \end{boxedstrings}  
    +
    \begin{boxedstrings}
    \polybox{(0,0.5)}{$g$}
    \path (0,0) ++ (0,1);
    \end{boxedstrings}  
    \end{equation}

    \begin{equation}\label{polynomial mult}
    \begin{boxedstrings}
    \polybox{(0,0.5)}{$fg$}
    \path (0,0) ++ (0,1);
    \end{boxedstrings}  
    =
    \begin{boxedstrings}
    \polybox{(0,0.5)}{$f$}
    \polybox{(0.65,0.5)}{$g$}
    \path (0,0) ++ (0,1);
    \end{boxedstrings}
    \end{equation}
    
    \begin{equation}\label{polynomial forcing}
    \begin{boxedstrings}
     \idfigparam{(0,1)}{1}{red}
     \polybox{(0.4,0.5)}{$f$}
     \end{boxedstrings} 
     =
     \begin{boxedstrings}
     \polybox{(-0.6,0.5)}{$s(f)$}
     \idfigparam{(0,1)}{1}{red}
     \end{boxedstrings} 
     +
    \begin{boxedstrings}
    \dotd{(0.75,1)}{red}
    \dotu{(0.75,0)}{red}
    \polybox{(0,0.5)}{$\partial_s(f)$}
    \end{boxedstrings}.
    \end{equation}
    
    Here, the dashed boxes on the boundary indicate that this relation takes place within a neighborhood homeomorphic to $D^2$.
    
    The tensor product of morphisms is given by horizontal concatenation 
    of diagrams (resizing them to fit into $[0,1]\times [0,1]$),
    while the composition is given by vertical concatenation (of morphisms with matching boundaries).
\end{defi}

Here are some important consequences of the relations in $\HcalA$:
\begin{enumerate}
    \item Any empty cycle is zero. To see this use \eqref{red H=I} repeatedly until you get a needle.
    \item We can always move polynomials to the right by using relation \ref{polynomial forcing}.
    \item One idempotent decomposition for $B_s^{\otimes 2}$ is
    \begin{equation}\label{idem decomp Bs}
        \begin{strings}
        \idfigparam{(0,2)}{1}{red}
        \idfigparam{(0.5,2)}{1}{red}
        \end{strings}
        =
        \frac{1}{2}
        \begin{strings}
        \idfigparam{(0,1)}{0.25}{red}
        \Ifig{(nd-1-2)}{red}
        \idfigparam{(nd-2-3)}{0.25}{red}
        \idfigparam{(nd-2-2)}{-0.25}{red}
        \idfigparam{(nd-2-4)}{0.25}{red}
        \hbarbparam{(0.125,0.875)}{0.25}{0.04}{red}
        \end{strings}
        +
        \frac{1}{2}
        \begin{strings}
        \idfigparam{(0,1)}{0.25}{red}
        \Ifig{(nd-1-2)}{red}
        \idfigparam{(nd-2-3)}{0.25}{red}
        \idfigparam{(nd-2-2)}{-0.25}{red}
        \idfigparam{(nd-2-4)}{0.25}{red}
        \hbarbparam{(0.125,0.125)}{0.25}{0.04}{red}
        \end{strings}.
    \end{equation}
    Since every idempotent in this decomposition factors through $B_s$, then 
    \begin{equation}
    	B_s\otimes B_s\cong B_s[1]\oplus B_s[-1]. 
    \end{equation}
    Note that this requires $\chr(\kk)\neq 2$.
    \item Using the previous idempotent decomposition
    every object $X\in\HcalA$ can be written as
    \begin{equation}\label{A1decompIntoIndec}
    	X\cong \bigoplus_{i\in\NN} 
    	\left(
    	B_s^{\oplus a_i}
    	\oplus
    	\onecal^{\oplus b_i}\right)[i],
    \end{equation}
    where $a_i,b_i\in \NN$ 
    and only finitely many of them are non-zero.
    
    This makes $B_s$ and $\onecal$ (and their grading shifts) the only indecomposable objects. 
    In particular, this shows that $\HcalA$
    is Karoubian.
\end{enumerate}

\begin{rema}
For a general Coxeter group, one typically defines the diagrammatic category of Bott-Samelson bimodules as in Definition \ref{A1 Hecke}
and takes its Karoubi envelope to get the category of Soergel Bimodules (our true object of study).
However, by enumerating their indecomposable objects we obtain that categories $\HcalA$ and $\HcalAxA$ are already Karoubian.  
For the general definition of the Diagrammatic Hecke Category see \cite[Ch. 10]{elias2020introduction}.
\end{rema}

We refer to
$
\Capp{red}
$
and
$
\Cupp{red}
$
as the \textit{cap} and \textit{cup} morphisms respectively.

Note that by isotopy and \eqref{red unit rel} we have
\begin{equation}
	\begin{strings}
		\capfigparam{(0,0)}{0.5}{0.5}{red}
		\path (0,0)--(0,1);
	\end{strings}
	=
	\begin{strings}
		\trivu{(0,0.75)}{red}
		\dotfig{(nd-1-1)}{red}
		\idfigparam{(nd-1-2)}{0.25}{red}
		\idfigparam{(nd-1-3)}{0.25}{red}
		\path (0,0)--(0,1);
	\end{strings}
	\text{ and }
	\begin{strings}
		\cupfigparam{(0,1)}{0.5}{0.5}{red}
		\path (0,0)--(0,1);
	\end{strings}
	=
	\begin{strings}
		\trivd{(0,0.75)}{red}
		\idfigparam{(nd-1-1)}{-0.25}{red}
		\idfigparam{(nd-1-2)}{-0.25}{red}
		\dotfig{(nd-1-3)}{red}
		\path (0,0)--(0,1);
	\end{strings}.
\end{equation}

Also note that $\left(B_s, 
\Capp{red}
,
\Cupp{red}
\right)$ is biadjoint to $B_s$.

The next proposition is a consequence of Proposition
 \ref{prop adjunction general}, 
 but we will prove it diagrammatically so that the reader becomes more familiar with the graphical calculus of $\HcalA$.

\begin{prop}\label{Bs selft adjoint}
	The functor $B_s\otimes(-)$ is self adjoint, i.e. for all $X,Y\in\HcalA$ we have a natural isomorphism
	$\Hom(B_s\otimes X,Y)\cong \Hom(X,B_s\otimes Y)$.
	Moreover, the functor $B_s[k]\otimes (-)$ is bi-adjoint to $B_s[-k]\otimes(-)$.
	Hence, $\HcalA$ is a rigid category.
\end{prop}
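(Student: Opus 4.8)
The whole argument reduces to the two \emph{snake} (zigzag) identities
\[
	(\id_{B_s}\otimes\Capp{red})\circ(\Cupp{red}\otimes\id_{B_s})=\id_{B_s}=(\Capp{red}\otimes\id_{B_s})\circ(\id_{B_s}\otimes\Cupp{red}),
\]
which are exactly what is packaged in the already-noted biadjunction of $B_s$ with itself, and which also drop out of Proposition \ref{prop adjunction general} once $\HcalA$ is seen to be pivotal. Since the stated point of the proposition is to exercise the graphical calculus, the first thing I would do is re-prove these by hand: unfold $\Capp{red}$ as a merge followed by a dot and $\Cupp{red}$ as a dot followed by a split, so that each side of the zigzag becomes a diagram built from two trivalent vertices and two dots; apply \eqref{red H=I} once to re-associate the two trivalent vertices, leaving a single trivalent vertex each of whose two ``spare'' legs is capped by a dot; then apply \eqref{red unit rel} twice to swallow those dots and collapse the picture to one straight strand. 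The two snake identities are interchanged by the left-to-right reflection of the strip $[0,1]\times[0,1]$, under which $\Capp{red}$ and $\Cupp{red}$ are invariant, so one such computation settles both.

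Granting the snakes, the first assertion comes from writing the adjunction bijection explicitly. For $X,Y\in\HcalA$ set
\[
	\Phi(f):=(\id_{B_s}\otimes f)\circ(\Cupp{red}\otimes\id_X)\colon X\longrightarrow B_s\otimes B_s\otimes X\longrightarrow B_s\otimes Y
\]
for $f\colon B_s\otimes X\to Y$, and
\[
	\Psi(g):=(\Capp{red}\otimes\id_Y)\circ(\id_{B_s}\otimes g)\colon B_s\otimes X\longrightarrow B_s\otimes B_s\otimes Y\longrightarrow Y
\]
for $g\colon X\to B_s\otimes Y$ --- that is, bend the free $B_s$-strand up with the cup, resp.\ down with the cap. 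Both maps are $R$-linear, and they preserve degree because $\Capp{red}$ and $\Cupp{red}$ have degree $0$ (a merge has degree $-1$, a dot degree $+1$), so they are maps of graded $\Hom$ spaces; naturality in $X$ and $Y$ is immediate from the interchange law for $\otimes$. Drawing out $\Psi\circ\Phi$ (resp.\ $\Phi\circ\Psi$), the morphism $\Psi(\Phi(f))$ is $f$ with a zigzag spliced into its $B_s$-strand, which is $f$ by a snake identity; hence $\Phi$ and $\Psi$ are mutually inverse and $\Hom(B_s\otimes X,Y)\cong\Hom(X,B_s\otimes Y)$ naturally. The mirror-image construction (using only the same two snake identities, by the horizontal symmetry above) gives the adjunction in the opposite direction, so $B_s\otimes(-)$ is biadjoint to itself.

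The claim about shifts and rigidity is then formal. Because $\HcalA$ is closed under grading shift, $(-)[k]$ is an autoequivalence with pseudo-inverse $(-)[-k]$, and a pair of mutually inverse equivalences is biadjoint; since $B_s\otimes(-)$ is a graded functor it commutes with $(-)[k]$, so $B_s[k]\otimes(-)\cong(-)[k]\circ(B_s\otimes(-))\cong(B_s\otimes(-))\circ(-)[k]$, and the biadjoint of a composite is the composite of the biadjoints in reverse order; therefore the biadjoint of $B_s[k]\otimes(-)$ is $(B_s\otimes(-))\circ(-)[-k]=B_s[-k]\otimes(-)$. (One could equally record the degree-shifted cap and cup and redo the snake argument verbatim.) Finally, $\onecal$ is its own biadjoint, $B_s[k]$ has biadjoint $B_s[-k]$ by the previous line, and biadjoints of $X$ and $Y$ assemble into biadjoints of $X\oplus Y$ and of direct summands; since by \eqref{A1decompIntoIndec} every object of $\HcalA$ is a finite direct sum of grading shifts of $B_s$ and $\onecal$, every object has a biadjoint, hence left and right duals, and $\HcalA$ is rigid.

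The only step that is not pure bookkeeping is the graphical derivation of the snake identities in the first paragraph --- specifically, unwinding $\Capp{red}$ and $\Cupp{red}$ into trivalent vertices and dots correctly and tracking exactly which legs remain dotted after the single use of \eqref{red H=I}. Once that is in hand, the construction of $\Phi$ and $\Psi$, the shift manipulation, and the passage to rigidity are all routine.
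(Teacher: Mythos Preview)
Your proof is correct and follows the same approach as the paper: construct the adjunction via the cup (and its inverse via the cap), then deduce rigidity from the decomposition \eqref{A1decompIntoIndec}. The paper's argument is terser, simply exhibiting the map $T\mapsto(\id_{B_s}\otimes T)\circ(\Cupp{red}\otimes\id_X)$ pictorially and asserting its inverse.

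One remark: your first paragraph works harder than necessary. Since $\HcalA$ is defined so that morphisms are diagrams \emph{modulo isotopy}, the snake identities hold automatically---the two zigzags are literally isotopic to a straight strand---so there is no need to unfold $\Capp{red}$ and $\Cupp{red}$ into trivalent vertices and dots and invoke \eqref{red H=I} and \eqref{red unit rel}. Your derivation is valid, but it reproves something already built into the category.
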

\begin{proof}
To describe this isomorphism, let
\begin{equation}
    \begin{strings}
        \draw (0.3,1) -- (0.3,1.5);
        \draw (0.7,1) -- (0.7,1.5);
        \draw (0,0.5) rectangle (1,1);
        \draw[red] (0.1,0) -- (0.1,0.5);
        \draw (0.4,0) -- (0.4,0.5);
        \draw (0.8,0) -- (0.8,0.5);
        \node (a) at (0.5,0.75) {$T$};
        \node (b) at (0.6,0.25) {$X$};
        \node (c) at (0.5,1.25) {$Y$};
    \end{strings}
\end{equation}
represent a map from $T:B_s\otimes X\longrightarrow Y$. 
We can pre-compose $T$ with the cup morphism 
$
\begin{miniunboxedstrings}
    \cupfigparam{(0,1)}{0.5}{0.5}{red}
    \path (0,0) -- ++(0.5,-0.2);
    \end{miniunboxedstrings}
    \otimes \Id_X
$
and get the mapping
\begin{equation}
    \begin{strings}
        \draw (0.3,1) -- (0.3,1.5);
        \draw (0.7,1) -- (0.7,1.5);
        \draw[red] (0.1,0) -- (0.1,0.5);
        \draw (0.4,0) -- (0.4,0.5);
        \draw (0.8,0) -- (0.8,0.5);
        \draw (0,0.5) rectangle (1,1);
        \node (a) at (0.5,0.75) {$T$};
        \node (b) at (0.6,0.25) {$X$};
        \node (c) at (0.5,1.25) {$Y$};
    \end{strings}
    \longmapsto
    \begin{strings}
        \draw (0.3,1) -- (0.3,1.5);
        \draw (0.7,1) -- (0.7,1.5);
        \cupfigparam{(0.1,0.5)}{-0.4}{0.2}{red}
        \idfigparam{(-0.3,0.5)}{-1}{red}
        \draw (0.4,0) -- (0.4,0.5);
        \draw (0.8,0) -- (0.8,0.5);
        \draw (0,0.5) rectangle (1,1);
        \node (a) at (0.5,0.75) {$T$};
        \node (b) at (0.6,0.25) {$X$};
        \node (c) at (0.5,1.25) {$Y$};
    \end{strings}.
\end{equation}
Its inverse consists of applying the corresponding cap morphism on top.

Since all the objects of $\HcalA$ are finite direct sums of grading shifts of $\onecal$ and $B_s$, we get that $\HcalA$ is rigid.
\end{proof}

\begin{defi}
		Define the rotation functor $(-)^\vee:\HcalA\longrightarrow \HcalA^{\op,\otimesop}$ to be the strict additive contravariant and tensor contravariant functor such that:
	\begin{itemize}
		\item $(-)^\vee$ sends the objects $B_s[k]\mapsto B_s[-k]$,
		$\onecal[k]\mapsto \onecal[-k]$ and extends linearly to all objects of $\HcalA$.
		\item Sends a degree $d$ morphism $T:X\longrightarrow Y$
		to a degree $-d$ morphism $T^\vee:Y^\vee\mapsto X^\vee$ given by rotating $T$ by $180^\circ$.		
	\end{itemize}
\end{defi}

Let $\eval_{B_s^{\otimes k}}:B_s^{\otimes k}\otimes B_s^{\otimes k} \longrightarrow \onecal$ 
and 
$\coeval_{B_s^{\otimes k}}:\onecal\longrightarrow B_s^{\otimes k}\otimes B_s^{\otimes k}$ 
be the morphism corresponding to nesting $k$ caps or cups respectively.
Given that $\left(B_s, 
\Capp{red}
,
\Cupp{red}
\right)$ is biadjoint to $B_s$,
we can easily show that 
$\left(B_s^{\otimes k},\eval_{B_s^{\otimes k}},\coeval_{B_s^{\otimes k}}\right)$
is a biadjoint to $B_s^{\otimes k}$.
The next proposition shows that this, in fact, determines a pivotal structure on $\Ccal$.

\begin{prop}\label{prop HA1 is pivotal}
	With the two-sided adjunction structures above we have that the functors 
	$(-)^*=\rstar{(-)}=(-)^\vee$.
	Hence, $\HcalA$ is pivotal.
\end{prop}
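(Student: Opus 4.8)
The plan is to verify directly that the right-dual functor $(-)^*$ and the left-dual functor $\rstar{(-)}$ determined by the nested-cup/nested-cap adjunction data both agree, literally, with the rotation functor $(-)^\vee$; pivotality is then immediate from the definition of pivotal structure, since it only asks that these two dual functors coincide.

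On objects there is nothing to prove: by construction the biadjoint of $B_s^{\otimes k}[d]$ chosen for \emph{both} the left and the right adjunction structure is $B_s^{\otimes k}[-d]$, which is exactly $\bigl(B_s^{\otimes k}[d]\bigr)^\vee$, and all three functors are extended additively. So $X^* = \rstar X = X^\vee$ for every object $X$.

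On morphisms I would unwind the mate construction. For $T\colon X\to Y$ the transpose $T^*\colon Y^*\to X^*$ is the composite
\[
	Y^* \overlongrightarrow{\id_{Y^*}\otimes\coeval_X} Y^*\otimes X\otimes X^* \overlongrightarrow{\id\otimes T\otimes\id} Y^*\otimes Y\otimes X^* \overlongrightarrow{\eval_Y\otimes\id_{X^*}} X^*,
\]
and $\rstar T\colon \rstar Y\to\rstar X$ is the analogous composite built from $\rcoeval{X}$ and $\reval{Y}$. Graphically, the first composite takes the box for $T$, bends its lower ($X$) legs around with a cup and its upper ($Y$) legs around with a cap; by isotopy invariance of diagrams in $\HcalA$ (the isotopy relations together with the zigzag identities \eqref{eqn dual object 1}--\eqref{eqn dual object 2} for the nested caps and cups) one slides the box through these U-turns, and what remains is precisely the diagram for $T$ turned upside down, i.e.\ $T^\vee$. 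The second composite performs the same bending on the opposite side; but in $\HcalA$ the cap and the cup are each a plain U-turn and are isotopic, so the two bendings land on the same diagram, again $T^\vee$. Hence $T^* = T^\vee = \rstar T$ for every $T$, so $(-)^* = \rstar{(-)} = (-)^\vee$ and $\HcalA$ is pivotal.

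In practice this reduces to checking the generating morphisms (the identity, the two trivalent vertices, the dot, and the polynomial box): for each one, rotating it by $180^\circ$ and computing its mate from the displayed formula give the same diagram after applying \eqref{red unit rel} and isotopy; the rest follows since $(-)^\vee$, $(-)^*$, $\rstar{(-)}$ are all functorial. The only bookkeeping is with grading shifts — a degree-$d$ morphism rotates to a degree-$(-d)$ one, matching the degree of the mate once one accounts for the shifts carried by $\eval$ and $\coeval$ — but this is routine. I do not anticipate a real obstacle: the statement is essentially a formal consequence of the isotopy invariance built into the presentation of $\HcalA$, and the content of the argument is simply recognizing the categorical mate as the pictorial $180^\circ$ rotation.
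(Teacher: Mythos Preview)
Your proposal is correct and follows essentially the same approach as the paper: reduce to checking the equality on generating morphisms and verify each by an explicit diagrammatic computation using isotopy. The paper's proof is terser---it lists the generators $\Trivup{red}{red}{red}$, $\Trivdown{red}{red}{red}$, $\Dotup{red}$, $\Dotdown{red}$, carries out the mate computation for the trivalent vertex, and declares the rest ``just as straightforward''---while you add the conceptual framing (the mate as $180^\circ$ rotation via isotopy) and the bookkeeping on degrees, but the substance is the same.
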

\begin{proof}
	It suffices to verify this equality
	on the generating morphisms
	\begin{equation}
		\begin{strings}
			\trivu{(0,0)}{red}
			\idfigparam{(nd-1-1)}{-0.25}{red}
			\idfigparam{(nd-1-2)}{0.25}{red}
			\idfigparam{(nd-1-3)}{0.25}{red}
		\end{strings}
		,
		\hspace{0.2cm}
		\begin{strings}
			\trivd{(0,0)}{red}
			\idfigparam{(nd-1-1)}{-0.25}{red}
			\idfigparam{(nd-1-2)}{-0.25}{red}
			\idfigparam{(nd-1-3)}{0.25}{red}
		\end{strings}
		,
		\hspace{0.2cm}
		 \begin{strings}
			\dotuparam{(0,0)}{0.6}{0.06}{red}
			\path (nd-1-1) ++ (0,1);
		\end{strings},
		\hspace{0.1cm}
		\text{ and }
		\hspace{0.1cm}
		\begin{strings}
			\dotdparam{(0,1)}{0.6}{0.06}{red}
			\path (0,0) ++ (0,1);
		\end{strings}.
	\end{equation}

	For the first generator,
	\begin{equation}
		\left(
		\begin{strings}
			\trivu{(0,0)}{red}
			\idfigparam{(nd-1-1)}{-0.375}{red}
			\idfigparam{(nd-1-2)}{0.375}{red}
			\idfigparam{(nd-1-3)}{0.375}{red}
		\end{strings}
		\right)^*
		=
		\begin{strings}
			\trivuparamcol{(0,0)}{0.375}{0.375}{red}{red}{red}
			\capfigparam{(nd-1-1)}{-0.375}{0.25}{red}
			\cupfigparam{(nd-1-2)}{0.875}{0.4}{red}
			\cupfigparam{(nd-1-3)}{0.25}{0.125}{red}
			\idfigparam{(nd-2-2)}{ 0.875}{red}
			\idfigparam{(nd-3-2)}{-0.875}{red}
			\idfigparam{(nd-4-2)}{-0.875}{red}
		\end{strings}
		=
		\begin{strings}
			\trivd{(0,0)}{red}
			\idfigparam{(nd-1-1)}{-0.375}{red}
			\idfigparam{(nd-1-2)}{-0.375}{red}
			\idfigparam{(nd-1-3)}{0.375}{red}
		\end{strings}.
	\end{equation}
	The rest of the calculations are just as straightforward.

\end{proof}

In \cite{elias2010diagrammatics}, Elias and Khovanov used diagrammatic arguments to calculate the $\Hom$ spaces of
$\Hcal(W)$ for any Coxeter group $W$ of type $A$.
In particular, we have the following proposition.
\begin{prop}
	[{\cite[Coro. 4.26]{elias2010diagrammatics}}]\label{prop HomBs,1 in A1}
	We have that
	\begin{equation}
		\Hom(\onecal,\onecal)= R \text{ and }
		\Hom(B_s,\onecal)= R \cdot
		\begin{strings}
			\dotuparam{(0,0)}{0.6}{0.06}{red}
			\path (0,0) ++ (0,1);
		\end{strings}.
	\end{equation}
	Both of them are free graded left $R$-modules with graded dimension
	\begin{equation}
		\grdim_R(\Hom(\onecal,\onecal))=1  \text{ and }
		\grdim_R(\Hom(B_s,\onecal))= v.
	\end{equation}
\end{prop}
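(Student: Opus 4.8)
The plan is as follows. The assertion is precisely \cite[Coro.~4.26]{elias2010diagrammatics}, so the most economical route is to invoke it; I outline here how one re-derives it working inside $\HcalA$. First observe that both $\Hom(\onecal,\onecal)$ and $\Hom(B_s,\onecal)$ are graded left $R$-modules: a polynomial box placed in the leftmost region of a diagram defines the action, relations \eqref{left poly rel}, \eqref{polynomial sum}, \eqref{polynomial mult} make it compatible with the ring structure of $R$, and \eqref{polynomial forcing} lets boxes be moved past strands. It therefore suffices to exhibit a one-element $R$-spanning set in each case and then check it is $R$-linearly independent.

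For the spanning statement I would argue by diagrammatic reduction. Let $D$ be an $(m,0)$-diagram with $m\in\{0,1\}$; its underlying graph has only trivalent and univalent internal vertices together with $m$ endpoints on the boundary. Using \eqref{red H=I} one rewrites every internal face (an ``$H$'') into a non-face configuration (an ``$I$''); this terminates because it strictly decreases a suitable complexity (first the number of bounded faces, then the number of trivalent vertices), so after finitely many steps every connected component is a tree. An Euler-characteristic count shows a tree component with $v_3$ trivalent vertices and $v_1$ dots satisfies $v_1=v_3+2$ if it touches no boundary endpoint and $v_1=v_3+1$ if it touches exactly one. In the boundaryless case, repeated use of \eqref{red unit rel} and the needle relation (an empty cycle is zero, as noted after Definition~\ref{A1 Hecke}) collapses the tree either to $0$ or to a disjoint union of barbells, each equal to $\alpha_s$ by \eqref{red barbell}; in the case with one boundary endpoint the same moves collapse it to the single dot, possibly with a barbell factor. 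Hence $\Hom(\onecal,\onecal)=R\cdot\id_{\onecal}$ and $\Hom(B_s,\onecal)=R\cdot(\text{dot})$ are spanned over $R$ by one element, giving the asserted graded ranks ($1$ and $v$, since the dot has degree $1$) as \emph{upper} bounds.

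The remaining point — that $R\to\Hom(\onecal,\onecal)$, $r\mapsto r\cdot\id_{\onecal}$, and $R\to\Hom(B_s,\onecal)$, $r\mapsto r\cdot(\text{dot})$, are injective — is where the real content lies, since the reduction above only bounds the spaces from above. I would settle it by applying the functor from $\HcalA$ to $R$-bimodules (see \cite{elias2010diagrammatics}), under which $\onecal\mapsto R$ and $B_s\mapsto R\otimes_{R^s}R$ with $R=\kk[\alpha_s]$ and $R^s=\kk[\alpha_s^2]$: a direct computation in bimodules gives $\Hom(R,R)\cong R$ and $\Hom(R\otimes_{R^s}R,R)\cong R$ of the right graded ranks, and the functor carries $\id_{\onecal}$ and the dot to free $R$-module generators, so the spanning sets upstairs are in fact bases. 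The main obstacle is exactly this last step: ruling out hidden relations cannot be done from the defining relations alone and requires an external model — the bimodule functor, or equivalently the rank-one case of Libedinsky's light-leaves basis \cite{libedinsky2015light}. Everything else is a finite, terminating rewriting in the planar calculus.
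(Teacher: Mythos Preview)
Your proposal is correct, and in fact goes well beyond what the paper does: the paper gives no proof of this proposition at all, simply importing it as \cite[Coro.~4.26]{elias2010diagrammatics}. Your outline is essentially the argument of that reference specialized to type $A_1$: diagrammatic reduction (using \eqref{red H=I}, \eqref{red unit rel}, the needle relation, and \eqref{red barbell}) to establish that the empty diagram and the dot span the respective $\Hom$ spaces over $R$, followed by the functor to Soergel bimodules to rule out hidden relations. Your identification of where the real content lies---that injectivity requires an external model rather than purely diagrammatic manipulation---is exactly right, and is the reason the paper defers to the cited source rather than attempting a self-contained argument.
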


We will now use this to calculate the graded dimension of the endomorphism ring of $B_s$. This is an example on how to calculate $\Hom$ spaces in $\HcalA$.
\begin{coro}\label{coro grdim Bs}
	We have that
	\begin{equation}
		\grdim_R(\End(B_s))= 1 + v^2.
	\end{equation}
\end{coro}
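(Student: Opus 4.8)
The plan is to avoid computing $\End(B_s)$ by hand and instead reduce it, via the pivotal structure, to the already-known module $\Hom(B_s,\onecal)$ of Proposition \ref{prop HomBs,1 in A1}. This is exactly the philosophy of Corollary \ref{coro Hom spaces general}: in a pivotal additive category in which every object is a finite direct sum of indecomposables, every $\Hom$ space is controlled by the spaces $\Hom(X,\onecal)$.

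First I would apply Proposition \ref{Bs selft adjoint}. Since $B_s\otimes(-)$ is biadjoint to itself, the adjunction $\Hom(B_s\otimes X,Y)\cong\Hom(X,B_s\otimes Y)$ with $X=B_s$ and $Y=\onecal$ gives a natural isomorphism
\begin{equation}
	\End(B_s)=\Hom(B_s,B_s)\cong\Hom(B_s\otimes B_s,\onecal).
\end{equation}
This isomorphism is realized by precomposing with (the cup morphism)$\,\otimes\,\id_{B_s}$, with inverse given by capping off on top, exactly as in the proof of Proposition \ref{Bs selft adjoint}; since the cup and cap are homogeneous of degree $0$, it is an isomorphism of graded $R$-modules. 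Next I would invoke the idempotent decomposition \eqref{idem decomp Bs}, whence $B_s\otimes B_s\cong B_s[1]\oplus B_s[-1]$ as graded objects, and combine it with $\Hom(X[k],Y[m])=\Hom(X,Y)[m-k]$ to obtain
\begin{equation}
	\End(B_s)\cong\Hom(B_s[1],\onecal)\oplus\Hom(B_s[-1],\onecal)\cong\Hom(B_s,\onecal)[-1]\oplus\Hom(B_s,\onecal)[1].
\end{equation}
Finally, Proposition \ref{prop HomBs,1 in A1} gives $\grdim_R(\Hom(B_s,\onecal))=v$, and since a grading shift by $[d]$ scales the graded dimension by $v^{-d}$, adding the two contributions yields $\grdim_R(\End(B_s))=v\cdot v+v\cdot v^{-1}=1+v^2$.

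I do not anticipate any real obstacle: the argument is entirely formal once Propositions \ref{Bs selft adjoint} and \ref{prop HomBs,1 in A1} and the decomposition \eqref{idem decomp Bs} are available. The only point requiring care is the grading bookkeeping — verifying that the cup and cap used in the adjunction genuinely sit in degree $0$, so that the first isomorphism is graded on the nose and not merely up to an overall shift, and keeping track of which of $B_s[1]$ and $B_s[-1]$ yields which summand (and hence which of the two shifts $[-1],[1]$ of $\Hom(B_s,\onecal)$ appears). As a consistency check one can also see the answer diagrammatically, since the computation above in fact exhibits a homogeneous free $R$-basis of $\End(B_s)$ consisting of $\id_{B_s}$ in degree $0$ and one endomorphism in degree $2$; but the adjunction argument is the one worth recording, as it models the $\Hom$-space computations used throughout the paper.
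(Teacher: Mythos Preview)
Your argument is correct and follows exactly the same route as the paper: adjoin via Proposition \ref{Bs selft adjoint} to reduce $\End(B_s)$ to $\Hom(B_s\otimes B_s,\onecal)$, split using the decomposition \eqref{idem decomp Bs}, and read off the graded dimension from Proposition \ref{prop HomBs,1 in A1}. The only addition is your explicit remark that the cup and cap are degree $0$, which the paper leaves implicit.
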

\begin{proof}
	Using the adjunction in Proposition \ref{Bs selft adjoint}, we get
	\begin{align}
		\Hom(B_s,B_s) &\cong \Hom(B_s\otimes B_s,\onecal)
		\\
		&\cong \Hom(B_s[-1],\onecal)\oplus \Hom(B_s[1],\onecal)
		\\
		&\cong \Hom(B_s,\onecal)[1]\oplus \Hom(B_s,\onecal)[-1].
	\end{align}
	Hence,
	\begin{align}
		\grdim_R(\End(B_s))&= \grdim_R(B_s[1])+\grdim_R(B_s[-1])
		\\
		&= \grdim_R(B_s)v+\grdim_R(B_s)v^{-1}
		\\
		&= 1+v^2.
	\end{align}
\end{proof}

\begin{coro}
	The category $\HcalA$ is a graded Krull-Schmidt category.
\end{coro}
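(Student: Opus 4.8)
The plan is to assemble facts already established for $\HcalA$. Recall from the remarks following Definition \ref{A1 Hecke} that $\HcalA$ is Karoubian and that, by \eqref{A1decompIntoIndec}, every object is a finite direct sum of grading shifts of $B_s$ and $\onecal$. By the definition of a graded Krull-Schmidt category it therefore suffices to show that $B_s$, $\onecal$, and all their grading shifts are indecomposable with graded local endomorphism rings. Since $\Hom(X[k],X[k])=\Hom(X,X)$ as graded rings (by the shift conventions recorded before Definition \ref{A1 Hecke}), it is enough to treat $B_s$ and $\onecal$ themselves.

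First I would dispose of $\onecal$: by Proposition \ref{prop HomBs,1 in A1} we have $\End(\onecal)=R=\kk[\alpha_s]$, which is non-negatively graded with $\End^0(\onecal)=\kk$ a field, hence local; Remark \ref{local graded ring} then gives that $\End(\onecal)$ is a local graded ring and $\onecal$ is indecomposable. For $B_s$, Corollary \ref{coro grdim Bs} gives $\grdim_R(\End(B_s))=1+v^2$; multiplying by the graded dimension of $R$ shows that $\End(B_s)$ is non-negatively graded with one-dimensional degree-zero part. As $\id_{B_s}\in\End^0(B_s)$, we conclude $\End^0(B_s)=\kk\,\id_{B_s}$, which is local, so Remark \ref{local graded ring} again shows $\End(B_s)$ is a local graded ring and $B_s$ is indecomposable.

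Putting this together, the decomposition \eqref{A1decompIntoIndec} exhibits every object of $\HcalA$ as a finite direct sum of indecomposables with graded local endomorphism rings, that is, a graded local decomposition, so $\HcalA$ is graded Krull-Schmidt. There is no serious obstacle here; the only point requiring care is the bookkeeping with grading shifts and the passage from $\grdim_R(\End(B_s))$ to the $\kk$-graded dimension, which is what is needed to see that $\End^0(B_s)$ is spanned by the identity.
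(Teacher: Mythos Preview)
Your proposal is correct and follows essentially the same approach as the paper: both verify that $\End(\onecal)=R$ and $\End(B_s)$ are non-negatively graded with degree-zero part $\kk$, invoking Remark~\ref{local graded ring} to conclude they are graded local, and then use the decomposition \eqref{A1decompIntoIndec} to deduce the Krull--Schmidt property. The paper's proof is terser (it simply writes $\End(B_s)\cong R\oplus R[2]$ as a graded $R$-module from Corollary~\ref{coro grdim Bs}), but your more explicit bookkeeping with grading shifts and the passage to the $\kk$-graded dimension amounts to the same argument.
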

\begin{proof}
	The ring $\End(\onecal)=R$ is non-negatively graded with $\End^0(\onecal)=\kk$, thus local. 
	By Corollary \ref{coro grdim Bs},  $\End(B_s)\cong R \oplus R[2]$ as a graded left $R$-module.
	Since $\End(B_s)$ is non-negatively graded and $\End^0(B_s)=\kk$,
	it also is graded local.
\end{proof}

\begin{coro}\label{coro free R modules A1}
	All the $\Hom$ spaces of $\HcalA$ are free graded left $R$-modules.
\end{coro}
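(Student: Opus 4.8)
The plan is to reduce every $\Hom$ space of $\HcalA$ to a finite direct sum of grading shifts of the two spaces computed in Proposition~\ref{prop HomBs,1 in A1}, both of which are free graded left $R$-modules, and then observe that this class of modules is closed under the operations involved.

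First I would record the structural facts already available: $\HcalA$ is an additive pivotal category (Proposition~\ref{prop HA1 is pivotal}), hence carries a two-sided adjunction structure, and by \eqref{A1decompIntoIndec} every object of $\HcalA$ is a finite direct sum of grading shifts of $B_s$ and $\onecal$. Therefore Corollary~\ref{coro Hom spaces general} applies, giving, for any $X,Y\in\HcalA$, an isomorphism
\[
\Hom(X,Y)\;\cong\;\bigoplus_{j=1}^k \Hom(X_j,\onecal),
\]
where each $X_j$ is an indecomposable object, i.e.\ a grading shift $B_s[m_j]$ or $\onecal[m_j]$. The only point that genuinely needs care here is that this isomorphism is one of \emph{graded left $R$-modules}, not merely of $\kk$-vector spaces. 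This holds because the isomorphism is assembled by pre- and post-composing with fixed cap and cup morphisms of $\HcalA$ (exactly as in the proof of Proposition~\ref{Bs selft adjoint}), and composition in a graded $R$-linear monoidal category is graded and $R$-bilinear; in particular the left $R$-action, given by adjoining a floating polynomial box labeled by $f\in R$, commutes with composition by these structure morphisms.

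Next I would invoke Proposition~\ref{prop HomBs,1 in A1}: $\Hom(\onecal,\onecal)=R$ and $\Hom(B_s,\onecal)=R\cdot(\text{dot})$ are free graded left $R$-modules. Using the grading-shift identity $\Hom(Z[m],\onecal)=\Hom(Z,\onecal)[-m]$ recorded in the excerpt, each summand $\Hom(X_j,\onecal)$ above is a grading shift of one of these two modules, and a grading shift of a free graded module is free. Since a finite direct sum of free graded left $R$-modules is again free graded, $\Hom(X,Y)$ is a free graded left $R$-module. As a consistency check, the same bookkeeping reproduces $\End(B_s)\cong R\oplus R[2]$ from Corollary~\ref{coro grdim Bs}.

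I do not expect a real obstacle: all the ingredients are in place, and the argument is purely an assembly of Proposition~\ref{prop HA1 is pivotal}, \eqref{A1decompIntoIndec}, and Proposition~\ref{prop HomBs,1 in A1}. The one place deserving a written sentence rather than a hand-wave is the compatibility of the adjunction isomorphism with the $\ZZ$-grading and the $R$-action, which I would handle as indicated above.
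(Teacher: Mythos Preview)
Your proposal is correct and follows essentially the same route as the paper: apply Corollary~\ref{coro Hom spaces general} together with the decomposition \eqref{A1decompIntoIndec} to reduce any $\Hom$ space to a finite direct sum of grading shifts of $\Hom(\onecal,\onecal)$ and $\Hom(B_s,\onecal)$, then cite Proposition~\ref{prop HomBs,1 in A1}. Your added remark about the adjunction isomorphism being compatible with the grading and the left $R$-action is a worthwhile point that the paper leaves implicit.
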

\begin{proof}
	By Corollary \ref{coro Hom spaces general}, all $\Hom$ spaces are
	finite direct sums of grading shifts of
	$\Hom(\onecal,\onecal)$ and $\Hom(B_s,\onecal)$.
	By Proposition \ref{prop HomBs,1 in A1}, these are all free left $R$-modules.
\end{proof}

\subsection{Diagrammatic Hecke Category of type $A_1\times A_1$}
\label{subsection diagrammatics of AxA}
The Diagrammatic Hecke Category of type $A_1\times A_1$ is defined analogously to $\HcalA$, but adding a generating object we will call $B_t$.
Let us define the diagrams in this category.

\begin{defi}
Let $w_1=(i_1,i_2,...,i_m)$ and $w_2=(j_1,j_2,...,j_n)$ be words in letters $s$ and $t$. Let $R:= \kk[\alpha_s,\alpha_t]$.
A \textit{$(w_1,w_2)$-Hecke diagram} 
(of type $A_1\times A_1$) consists of the following.
\begin{enumerate}[(a)]
    \item A graph $G$ embedded on the planar strip $[0,1]\times (0,1)$ whose edges are labeled by $s$ (red) or $t$ (blue), with the following properties:
    \begin{enumerate}[(i)]
        \item $G$ has $m$ strands on the lower boundary, labeled with $w_1=(i_1,...,i_m)$ (from left to right).
        \item $G$ has $n$ strands on the upper boundary, labeled with $w_2=(j_1,...,j_n)$ (from left to right).
        \item $G$ is made of one-colored trivalent vertices
            $\left(
            \Trivup{red}{red}{red}
            ,
            \Trivup{blue}{blue}{blue}
            \right)
            $
        , one-colored dots 
            $\left(
            \Dotup{red},
            \Dotup{blue}
            \right)
            $
        , and two color crossings
        $\left( \Cross{red}{blue}, \Cross{blue}{red} \right)$.
    \end{enumerate}

    \item Floating polynomial boxes in the regions of $[0,1]\times (0,1)$ partitioned by $G$, labeled by polynomials $f\in R$.
    \end{enumerate}
    
    We say two $(w_1,w_2)$-diagrams are \textit{isotopic}
    if the graph embeddings are isotopic (relative to their boundary) and we have the we have the same polynomial boxes in each region of $[0,1]\times (0,1)$.
\end{defi}

\begin{defi}
The \textit{diagrammatic Hecke category} of type $A_1\times A_1$, $\HcalAxA$ (abbreviated into $\Hcal$),  to be the graded $R$-linear monoidal category presented in the following way.

The objects of $\Hcal$ are generated by $B_s$ and $B_t$ and their grading shifts, under the operations of direct sum and tensor product.

The generating morphisms are:
        \begin{enumerate}[(a)]
            \item The identity of $B_s$ and $B_t$
                \begin{equation}
                \begin{strings}
                \idfig{(1,1)}{red}
                \end{strings}
                : B_s \longrightarrow B_s,\hspace{0.2cm}
                \text{ and }
                \hspace{0.2cm}
                \begin{strings}
                \idfig{(1,1)}{blue}
                \end{strings}
                : B_t \longrightarrow B_t
                \end{equation}
            and have degree $0$.
            \item The trivalent vertices in red and blue
                \begin{equation}
                \begin{strings}
                    \trivu{(0,0)}{red}
                    \idfigparam{(nd-1-1)}{-0.25}{red}
                    \idfigparam{(nd-1-2)}{0.25}{red}
                    \idfigparam{(nd-1-3)}{0.25}{red}
                \end{strings}
                : B_s\otimes B_s \longrightarrow B_s[-1],
                \hspace{0.1cm}
                \begin{strings}
                    \trivd{(0,0)}{red}
                    \idfigparam{(nd-1-1)}{-0.25}{red}
                    \idfigparam{(nd-1-2)}{-0.25}{red}
                    \idfigparam{(nd-1-3)}{0.25}{red}
                \end{strings}
                : B_s[1]\longrightarrow B_s\otimes B_s,
                \end{equation}
                \begin{equation}
                \begin{strings}
                    \trivu{(0,0)}{blue}
                    \idfigparam{(nd-1-1)}{-0.25}{blue}
                    \idfigparam{(nd-1-2)}{0.25}{blue}
                    \idfigparam{(nd-1-3)}{0.25}{blue}
                \end{strings}
                : B_t\otimes B_t \longrightarrow B_t,
                \hspace{0.1cm}
                \text{ and }
                \hspace{0.1cm}
                \begin{strings}
                    \trivd{(0,0)}{blue}
                    \idfigparam{(nd-1-1)}{-0.25}{blue}
                    \idfigparam{(nd-1-2)}{-0.25}{blue}
                    \idfigparam{(nd-1-3)}{0.25}{blue}
                \end{strings}
                : B_t\longrightarrow B_t\otimes B_t,
                \end{equation}
            of degree $-1$.
            \item The dots 
            \begin{equation}
                    \begin{strings}
                    \dotuparam{(0,0)}{0.6}{0.06}{red}
                    \path (nd-1-1) ++ (0,1);
                    \end{strings}
                    :B_s\longrightarrow \onecal[1],
                    \hspace{0.2cm}
                    \begin{strings}
                    \dotdparam{(0,1)}{0.6}{0.06}{red}
                    \path (0,0) ++ (0,1);
                    \end{strings}
                    :\onecal\longrightarrow B_s[1]
                    ,
            \end{equation}
            \begin{equation}
                    \begin{strings}
                    \dotuparam{(0,0)}{0.6}{0.06}{blue}
                    \path (nd-1-1) ++ (0,1);
                    \end{strings}
                    :B_t\longrightarrow \onecal[1],
                    \hspace{0.1cm}
                    \text{ and }
                    \hspace{0.1cm}
                    \begin{strings}
                    \dotdparam{(0,1)}{0.6}{0.06}{blue}
                    \path (0,0) ++ (0,1);
                    \end{strings}
                    :\onecal\longrightarrow B_t[1]
                    ,
            \end{equation}
            of degree $1$.
            \item The polynomial boxes
            \begin{equation}
                    \begin{strings}
                    \polybox{(0,0.5)}{$f$}
                    \path (0,0) ++ (0,1);
                    \end{strings} 
                    :\onecal\longrightarrow \onecal
                    ,
            \end{equation}
            of degree $\deg(f)$ (where $\deg(\alpha_s)=2$ and $\deg(\alpha_t)=2$).

		\item The $4$-valent vertices
    \begin{equation}
        \begin{strings}
        \Xfig{(0,0)}{red}{blue}
        \idfigparam{(nd-1-1)}{-0.25}{red}
        \idfigparam{(nd-1-2)}{-0.25}{blue}
        \idfigparam{(nd-1-4)}{0.25}{red}
        \idfigparam{(nd-1-3)}{0.25}{blue}
        \end{strings},
        \hspace{0.1cm}
        \text{ and }
        \hspace{0.1cm}
        \begin{strings}
        \Xfig{(0,0)}{blue}{red}
        \idfigparam{(nd-1-1)}{-0.25}{blue}
        \idfigparam{(nd-1-2)}{-0.25}{red}
        \idfigparam{(nd-1-4)}{0.25}{blue}
        \idfigparam{(nd-1-3)}{0.25}{red}
        \end{strings}
    \end{equation}
    of degree $0$.
   
   	        \end{enumerate}
    These morphisms are subject to isotopy relations, the local relations 
    \eqref{red unit rel}-\eqref{polynomial forcing} both in red and blue colors, and the $2$ color relations
    \begin{equation}\label{2 color relations A1xA1}
        \begin{boxedstrings}
            \Xfig{(0,0)}{red}{blue}
            \Xfig{(nd-1-3)}{blue}{red}
        \end{boxedstrings}
            =
         \begin{boxedstrings}
            \idfigparam{(0,0)}{1}{red}
            \idfigparam{(0.5,0)}{1}{blue}
            \end{boxedstrings} 
            ,\hspace{0.5cm}
            \begin{boxedstrings}
            \Xfig{(0,0)}{blue}{red}
            \idfigparam{(nd-1-1)}{-0.25}{blue}
            \idfigparam{(nd-1-3)}{0.25}{red}
            \idfigparam{(nd-1-4)}{0.25}{blue}
            \dotfig{(nd-1-2)}{red}
            \end{boxedstrings}
        =
        \begin{boxedstrings}
            \idfig{(1,1)}{blue}
            \dotu{(0.75,0)}{red}
        \end{boxedstrings}    
            ,\hspace{0.2cm}
            \text{and}
            \hspace{0.2cm}
            \begin{boxedstrings}
            \Xfig{(0,0)}{red}{blue}
            \idfigparam{(nd-1-1)}{-0.25}{red}
            \idfigparam{(nd-1-3)}{0.25}{blue}
            \idfigparam{(nd-1-4)}{0.25}{red}
            \dotfig{(nd-1-2)}{blue}
            \end{boxedstrings}
        =
        \begin{boxedstrings}
            \idfig{(1,1)}{red}
            \dotu{(0.75,0)}{blue}
        \end{boxedstrings}    .
    \end{equation}
    
\end{defi}
Note that by \eqref{2 color relations A1xA1} and \eqref{red barbell},
\begin{equation}\label{poly slide red blue}
	\alpha_s
	\begin{boxedstrings}
		\idfigparam{(0,1)}{1}{blue}
	\end{boxedstrings}
	=
	\begin{boxedstrings}
		\idfigparam{(0,1)}{1}{blue}
		\vbarb{(-0.2,0.7)}{red}
	\end{boxedstrings}
	=
	\begin{boxedstrings}
		\idfigparam{(0,1)}{1}{blue}
		\hbarbparam{(-0.2,0.5)}{0.4}{0.06}{red}
	\end{boxedstrings}
	=
	\begin{boxedstrings}
		\idfigparam{(0,1)}{1}{blue}
		\vbarb{(0.2,0.7)}{red}
	\end{boxedstrings}
	=
	\begin{boxedstrings}
		\idfigparam{(0,1)}{1}{blue}
	\end{boxedstrings}
	\alpha_s.
\end{equation}
Therefore, any polynomial on $\alpha_s$ slides over the blue strands and any polynomial on $\alpha_t$ slides over the red strands.

As before, we have $B_s^2\cong B_s[1]\oplus B_s[-1]$ and $B_t^2\cong B_t[1]\oplus B_t[-1]$. Let $A:=\{\onecal,B_s,B_t,B_sB_t\}$.
Since $B_sB_t\cong B_tB_s$, we have that
\begin{align}
	B_sB_t\otimes B_s & \cong B_tB_s\otimes B_s 
	\cong B_sB_t[-1] \oplus B_sB_t[1],
	\\
	B_sB_t\otimes B_t & \cong  B_sB_t[-1] \oplus B_sB_t[1]
	,
	\\
	B_sB_t\otimes B_sB_t &\cong B_s\otimes B_tB_s \otimes B_t
	\\
	&\cong B_s\otimes B_sB_t \otimes B_t
	\\
	&\cong 
	B_sB_t[-2] \oplus B_sB_t^{\oplus 2}
	\oplus B_sB_t[2]
	.
\end{align}
Hence, the tensor products of any two objects in $A$ decomposes again as direct sums of objects in $A$.
Therefore we have the following proposition.
\begin{prop}\label{decomposition on HAxA}
	Any object $X\in\Hcal$ is a finite direct sum of the objects
	in $A$ and their grading shifts.
\end{prop}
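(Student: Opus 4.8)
The plan is to reduce the statement to showing that the full additive subcategory $\Add(A)\subseteq\Hcal$ --- consisting of all finite direct sums of grading shifts of the four objects in $A=\{\onecal,B_s,B_t,B_sB_t\}$ --- is closed under the tensor product. Granting this, the proposition follows by a structural induction on the way an object is built: by definition every object of $\Hcal$ is obtained from $B_s$ and $B_t$ (which lie in $A$) and their grading shifts by finitely many applications of $\oplus$ and $\otimes$. The subcategory $\Add(A)$ is closed under $\oplus$ and under grading shift by construction, and closed under $\otimes$ by the claim, so every object of $\Hcal$ lands in $\Add(A)$. Concretely, I would phrase the induction on the number $n$ of tensor factors in a Bott--Samelson expression $B_{x_1}\otimes\cdots\otimes B_{x_n}$ (with $x_i\in\{s,t\}$), the base cases $n=0,1$ being $\onecal,B_s,B_t\in A$; the inductive step tensors $B_{x_1}\in A$ on the left of an object already known to be in $\Add(A)$ and invokes closure under $\otimes$. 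A general object of $\Hcal$ is a finite direct sum of grading shifts of such Bott--Samelson objects, so it too lies in $\Add(A)$.

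To prove closure under $\otimes$, note that $\otimes$ distributes over direct sums and commutes with grading shifts, so it suffices to verify $P\otimes Q\in\Add(A)$ for each pair $P,Q\in A$. The unit is absorbed, $\onecal\otimes P\cong P\cong P\otimes\onecal$, which disposes of all pairs involving $\onecal$; and using the isomorphism $B_sB_t\cong B_tB_s$ (coming from the two $4$-valent vertices together with the first relation in \eqref{2 color relations A1xA1}, and the one-color idempotent decompositions) we may normalize the ordering of the remaining pairs drawn from $\{B_s,B_t,B_sB_t\}$. These are exactly the decompositions already recorded immediately before the statement:
\begin{align}
	B_s\otimes B_s &\cong B_s[1]\oplus B_s[-1], &
	B_t\otimes B_t &\cong B_t[1]\oplus B_t[-1], \\
	B_s\otimes B_t &= B_sB_t, &
	B_sB_t\otimes B_s &\cong B_sB_t[-1]\oplus B_sB_t[1], \\
	B_sB_t\otimes B_t &\cong B_sB_t[-1]\oplus B_sB_t[1], &
	B_sB_t\otimes B_sB_t &\cong B_sB_t[-2]\oplus B_sB_t^{\oplus 2}\oplus B_sB_t[2].
\end{align}
Each right-hand side is visibly an object of $\Add(A)$, so the verification is complete.

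I do not expect any genuine obstacle here: all of the content sits in the one-color isomorphism $B_s\otimes B_s\cong B_s[1]\oplus B_s[-1]$ (which uses $\chr(\kk)\neq 2$, via the idempotent decomposition \eqref{idem decomp Bs} and its blue analogue) and in the commutativity $B_sB_t\cong B_tB_s$, both established earlier; the rest is bookkeeping over finitely many cases. The only point requiring a little care is to make sure the structural induction is set up so that ``object of $\Hcal$'' is parsed exactly as a finite iterated $\oplus$/$\otimes$/shift of $B_s$ and $B_t$, which is how objects are defined in the presentation.
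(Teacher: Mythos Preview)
Your proposal is correct and follows essentially the same approach as the paper: both arguments reduce to checking that the tensor product of any two objects of $A$ decomposes as a direct sum of grading shifts of objects in $A$, using $B_sB_t\cong B_tB_s$ and the one-color decompositions $B_s^2\cong B_s[1]\oplus B_s[-1]$, $B_t^2\cong B_t[1]\oplus B_t[-1]$. Your write-up is simply more explicit about the structural induction underlying the passage from this closure-under-$\otimes$ to the full statement.
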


Like in the $\HcalA$ case, $\left(B_s, 
\Capp{red}
,
\Cupp{red}
\right)$ is biadjoint to $B_s$ and
$\left(B_t, 
\Capp{blue}
,
\Cupp{blue}
\right)$
is biadjoint to $B_t$.
By Proposition \ref{prop adjunction general}, we conclude that
\begin{prop}
	The functors $B_s[k]\otimes (-)$ and $B_t[k]\otimes (-)$
	are bi-adjoint to the functors $B_s[-k]\otimes (-)$ and $B_t[-k]\otimes (-)$.
\end{prop}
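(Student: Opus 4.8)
The plan is to reduce the statement to Proposition \ref{prop adjunction general}, so that the actual work consists of just two bookkeeping steps: identifying the dual of a grading-shifted generator, and recording that $\HcalAxA$ is pivotal.

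First I would treat the grading shifts. We already know that $\left(B_s,\Capp{red},\Cupp{red}\right)$ is biadjoint to $B_s$ and $\left(B_t,\Capp{blue},\Cupp{blue}\right)$ is biadjoint to $B_t$. Since the shift functor $(-)[d]$ is an invertible monoidal autoequivalence and tensor product is additive on shifts, we have $B_s[k]\otimes B_s[-k]=(B_s\otimes B_s)[0]=B_s\otimes B_s$ and likewise $B_s[-k]\otimes B_s[k]=B_s\otimes B_s$; therefore the same red cap and cup, now read as morphisms $B_s[k]\otimes B_s[-k]\to\onecal$ and $\onecal\to B_s[k]\otimes B_s[-k]$ (together with their left-handed counterparts), still satisfy the zig-zag identities \eqref{eqn dual object 1}--\eqref{eqn dual object 2} verbatim. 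Hence $B_s[-k]$ is a biadjoint of $B_s[k]$, i.e. $(B_s[k])^*=\rstar{(B_s[k])}=B_s[-k]$, and symmetrically $(B_t[k])^*=\rstar{(B_t[k])}=B_t[-k]$.

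Next I would note that $\HcalAxA$ is pivotal, by the same argument used for $\HcalA$ in Proposition \ref{prop HA1 is pivotal}. By Proposition \ref{decomposition on HAxA} every object of $\HcalAxA$ is a finite direct sum of grading shifts of the four objects in $A=\{\onecal,B_s,B_t,B_sB_t\}$, so it suffices to equip these with compatible biadjoints: $\onecal$ with the identity maps, $B_s$ and $B_t$ with the red and blue caps and cups, and $B_sB_t$ with the morphisms obtained by nesting the blue cap/cup inside the red one (exactly as $\eval_{B_s^{\otimes k}}$ was built in the type $A_1$ case). One then checks on the generating morphisms --- the one-colour trivalent vertices and dots (handled just as in Proposition \ref{prop HA1 is pivotal}) together with the two $4$-valent crossings --- that the left and right dual functors agree; the crossing case is an immediate isotopy computation using \eqref{2 color relations A1xA1}. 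This produces a two-sided adjunction structure with $(-)^*=\rstar{(-)}$, that is, a pivotal structure.

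With these two inputs the proof concludes by invoking Proposition \ref{prop adjunction general} with $X=B_s[k]$ and then $X=B_t[k]$: the functors $(-)\otimes X$ and $(-)\otimes X^*$ are biadjoint, as are $X\otimes(-)$ and $X^*\otimes(-)$, and by the first step $X^*=B_s[-k]$ (resp.\ $B_t[-k]$). Alternatively, one can bypass the pivotal structure altogether and apply Proposition \ref{prop tensor adjunction 1} directly to the biadjoint triple for $B_s[k]$ (resp.\ $B_t[k]$). There is no genuine obstacle here: the only points requiring care are tracking the grading shifts so that the sources and targets of the (co)evaluation morphisms line up, and verifying left/right dual agreement on the new $4$-valent generators --- both routine diagram chases.
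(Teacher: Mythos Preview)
Your proposal is correct and follows essentially the same route as the paper: the paper simply records that $B_s$ and $B_t$ are self-biadjoint via the cap/cup morphisms and then invokes Proposition \ref{prop adjunction general} directly. You have added the bookkeeping about grading shifts and anticipated the pivotality check (which the paper carries out separately in Proposition \ref{prop HAxA is pivotal}), but the core argument is the same appeal to Proposition \ref{prop adjunction general} (or, as you note, Proposition \ref{prop tensor adjunction 1}).
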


\begin{defi}
	Define the rotation functor
	$(-)^\vee:\Hcal\longrightarrow \Hcal^{\op,\otimesop}$ to be the strict additive contravariant and tensor contravariant functor such that
	\begin{itemize}
		\item  Sends $\onecal[k]\mapsto \onecal[-k]$,
		$B_s[k]\mapsto B_s[-k]$, and $B_t[k]\mapsto B_t[-k]$.
		\item Sends a degree $d$ morphism $T:X\longrightarrow Y$
		to a degree $-d$ morphism $T^\vee:Y^\vee\mapsto X^\vee$ given by rotating $T$ by $180^\circ$.
	\end{itemize} 
		
\end{defi}

By nesting cups/caps, we can extend the biadjoints of $B_s$ and $B_t$ to a two-sided adjoint structure on $\Hcal$.

\begin{prop}\label{prop HAxA is pivotal}
	Let us fix the two-sided adjoint structure defined above.
	The functors
	$(-)^*=\rstar{(-)}=(-)^\vee$.
	Hence, $\Hcal$ is pivotal.	
\end{prop}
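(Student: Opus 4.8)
The plan is to reprise the strategy of Proposition~\ref{prop HA1 is pivotal}. All three of $(-)^*$, $\rstar{(-)}$ and $(-)^\vee$ are strict, contravariant, tensor contravariant functors $\Hcal\longrightarrow\Hcal^{\op,\otimesop}$, and they agree on objects: each sends $\onecal[k]\mapsto\onecal[-k]$, $B_s[k]\mapsto B_s[-k]$, $B_t[k]\mapsto B_t[-k]$, and reverses the order of tensor factors (for the two dual functors because $B_s$ and $B_t$ are self-biadjoint and $(X\otimes Y)^*=Y^*\otimes X^*$; for $(-)^\vee$ by definition). Since a strict monoidal functor is determined by its values on generating objects and generating morphisms, and these three functors already agree on all generating objects, it suffices to prove $T^*=T^\vee$ and $\rstar T=T^\vee$ for each generating morphism $T$ of $\Hcal$. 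Furthermore, since our left and right adjunction data are given by the very same nested cups and caps, the identity $\rstar T=T^\vee$ follows from $T^*=T^\vee$; so below we only discuss $(-)^*$.

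For the one-colored generators --- the two identities, the red and blue trivalent vertices, the red and blue dots, and the polynomial boxes --- the verification is exactly the computation in the proof of Proposition~\ref{prop HA1 is pivotal}, carried out once in red and once in blue, since neither those diagrams nor the cups and caps used to rotate them involve the other color. (A polynomial box is fixed by a $180^\circ$ turn and already factors through $\onecal$, so there is nothing to check there.) Hence the real content of the proof is concentrated in the one remaining generator, the $4$-valent crossing.

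For the crossing, observe first that a half-turn of the crossing $B_tB_s\longrightarrow B_sB_t$ is again a crossing of the same two colors, with source and target interchanged exactly as demanded of a dual morphism; so $(-)^\vee$ fixes each crossing generator. It then remains to show that the adjunction mate of the crossing --- obtained by pre- and post-composing with the nested cups and caps of the two-letter boundary words and reading off a morphism $B_tB_s\longrightarrow B_sB_t$ --- reduces to the bare crossing. This is carried out by an isotopy of string diagrams relative to the boundary: the two transversal strands are pulled straight through the surrounding cups and caps, the single genuine crossing of the two colors that appears along the way being resolved, if needed, by the invertibility relation, namely the first equality in~\eqref{2 color relations A1xA1}. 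I expect this crossing computation to be the main obstacle: one must set up the wrapping with the correct nested cups and caps for the length-two words, and then track carefully the isotopy and the at-most-one application of~\eqref{2 color relations A1xA1} needed to return the diagram to the crossing. Once this has been checked for both crossing generators, functoriality upgrades the equality on generators to $(-)^*=\rstar{(-)}=(-)^\vee$ on all of $\Hcal$, and therefore $\Hcal$ is pivotal.
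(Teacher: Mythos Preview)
Your proposal is correct and follows essentially the same approach as the paper: reduce to generators, invoke Proposition~\ref{prop HA1 is pivotal} for the one-color generators, and handle the $4$-valent crossing directly. One small clarification: for the crossing, the computation of the adjunction mate is a pure isotopy --- bending the top strands down on one side and the bottom strands up on the other simply rotates the crossing by $180^\circ$, which is the same crossing --- so the invertibility relation in~\eqref{2 color relations A1xA1} is not needed.
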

\begin{proof}
	In the proof of Proposition \ref{prop HA1 is pivotal}, 
	we showed that these functors agree on the one-color generators.
	For the 2-color generators,
	notice that
	\begin{equation}
		\begin{strings}
			\Xfig{(0,0)}{red}{blue}
			\idfigparam{(nd-1-1)}{-0.375}{red}
			\idfigparam{(nd-1-2)}{-0.375}{blue}
			\idfigparam{(nd-1-4)}{0.375}{red}
			\idfigparam{(nd-1-3)}{0.375}{blue}
		\end{strings}
		=
		\begin{strings}
			\Xfig{(0,0)}{red}{blue}
			\capfigparam{(nd-1-1)}{-0.25}{0.1}{red}
			\capfigparam{(nd-1-2)}{-1.25}{0.2}{blue}
			\cupfigparam{(nd-1-3)}{1.25}{0.2}{blue}
			\cupfigparam{(nd-1-4)}{0.25}{0.1}{red}
			\idfigparam{(nd-2-2)}{ 0.875}{red}
			\idfigparam{(nd-3-2)}{ 0.875}{blue}
			\idfigparam{(nd-4-2)}{-0.875}{blue}
			\idfigparam{(nd-5-2)}{-0.875}{red}
		\end{strings}
		=
		\begin{strings}
			\Xfig{(0,0)}{red}{blue}
			\idfigparam{(nd-1-1)}{-0.375}{red}
			\idfigparam{(nd-1-2)}{-0.375}{blue}
			\idfigparam{(nd-1-4)}{ 0.375}{red}
			\idfigparam{(nd-1-3)}{ 0.375}{blue}
		\end{strings}.
	\end{equation}
		
\end{proof}

As before, we need the diagrammatic argument in \cite{elias2010diagrammatics} to get a description of the $\Hom$ spaces in $\Hcal$.
\begin{prop}[{\cite[Coro. 4.26]{elias2010diagrammatics}}]\label{prop HomBs,1 in AxA}
	We have that \vspace{-0.25cm}
	\begin{align}
		\Hom(\onecal,\onecal)&= R, 
		&
		\Hom(B_s,\onecal)&= R \cdot
		\begin{strings}
			\dotuparam{(0,0)}{0.6}{0.06}{red}
			\path (0,0) ++ (0,1);
		\end{strings},
		\\
		\Hom(B_t,\onecal)&= R \cdot
		\begin{strings}
			\dotuparam{(0,0)}{0.6}{0.06}{blue}
			\path (0,0) ++ (0,1);
		\end{strings}, 
		&
		\Hom(B_sB_t,\onecal)&= R \cdot
		\begin{strings}
			\dotuparam{(0,0)}{0.6}{0.06}{red}
			\dotuparam{(0.25,0)}{0.6}{0.06}{blue}
			\path (0,0) ++ (0,1);
		\end{strings}.
	\end{align}
	All of them are free graded left $R$-modules with graded dimension
	\begin{align}
		\grdim_R(\Hom(\onecal,\onecal))&=1,
		\\
		\grdim_R(\Hom(B_s,\onecal))&= v,
		\\
		\grdim_R(\Hom(B_t,\onecal))&= v,
		\\
		\grdim_R(\Hom(B_sB_t,\onecal))&= v^2.
	\end{align}
\end{prop}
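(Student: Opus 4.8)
The plan is to bootstrap from the one-color computation of Proposition~\ref{prop HomBs,1 in A1} (that is, from \cite[Coro. 4.26]{elias2010diagrammatics}) together with the two-color relations \eqref{2 color relations A1xA1}. By Proposition~\ref{decomposition on HAxA} every object of $\Hcal$ is a finite direct sum of grading shifts of the four objects in $A=\{\onecal,B_s,B_t,B_sB_t\}$, and $\Hcal$ is pivotal (Proposition~\ref{prop HAxA is pivotal}); so by Corollary~\ref{coro Hom spaces general} it suffices to analyze $\Hom(X,\onecal)$ for $X\in A$, with word $\underline w\in\{\emptyset,(s),(t),(s,t)\}$.

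First I would show the listed generators $R$-span each of these four spaces. Given a $(\underline w,\emptyset)$-Hecke diagram $D$, use polynomial forcing \eqref{polynomial forcing} in each color and the slide rule \eqref{poly slide red blue} to gather all polynomial boxes into one region, writing $D=f\cdot D_0$ with $f\in R$ and $D_0$ polynomial-free. Then, following the crossing-reduction procedure of \cite{elias2010diagrammatics} --- canceling opposite adjacent crossings via the first relation in \eqref{2 color relations A1xA1}, and absorbing a crossing into a dot via the last two relations whenever a strand terminates --- reduce $D_0$ to a diagram with no two-color crossings. Such a diagram is, up to isotopy, a horizontal juxtaposition of a purely red sub-diagram and a purely blue one (in a one-color object the strands of the other color form closed components, which collapse to polynomials by the one-color relations). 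Applying Proposition~\ref{prop HomBs,1 in A1} in each color then writes $D_0$ as an $R$-combination of the empty diagram (for $\underline w=\emptyset$), the red or blue dot (for $\underline w=(s)$ or $(t)$), or the red dot beside the blue dot (for $\underline w=(s,t)$); hence the stated generators span.

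It remains to show these generators are $R$-linearly independent, so that each $\Hom(X,\onecal)$ is free of graded rank $1,v,v,v^2$ respectively. I would do this by evaluating into honest bimodules: take the standard monoidal functor from $\Hcal$ to graded $R$-bimodules sending $B_s$ and $B_t$ to the (suitably shifted) Bott--Samelson bimodules $R\otimes_{R^s}R$ and $R\otimes_{R^t}R$, and the red/blue dots to the respective multiplication maps (well-definedness is the standard check that the defining relations hold in bimodules). A direct computation with the base-change adjunctions shows that the bimodule spaces $\Hom(R,R)$, $\Hom(R\otimes_{R^s}R,R)$, $\Hom(R\otimes_{R^t}R,R)$ and $\Hom(R\otimes_{R^s}R\otimes_{R^t}R,R)$ are free graded left $R$-modules of graded ranks $1,v,v,v^2$, with free generator the image of the corresponding candidate generator. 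Since the functor sends each candidate generator to a free generator of its target, the candidates are $R$-linearly independent in $\Hcal$, and together with the spanning statement they are free graded bases of the asserted ranks. The main obstacle is precisely this last step: the spanning reduction is just bookkeeping with the given relations, but excluding that a polynomial annihilates one of the generators requires an input from outside the diagrammatics --- either the bimodule computation above, or, equivalently, the double-leaves basis theorem of \cite{elias2010diagrammatics}, once one checks that for $\underline w\in\{\emptyset,(s),(t),(s,t)\}$ the double leaves with target the empty word are exactly the candidate generators. (One might hope to view $\HcalAxA$ as a subcategory of $\Hcal(A_3)$ via the sub-diagram on $\{s_1,s_3\}$, but the two realizations carry different polynomial rings, so I would not rely on that shortcut.)
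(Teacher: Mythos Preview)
Your proposal is correct, but note that the paper does not actually prove this proposition: it is stated with the citation \cite[Coro.~4.26]{elias2010diagrammatics} and no argument is given, since it is a direct specialization of the Elias--Khovanov double-leaves result to the parabolic subgroup $\{s,t\}$ with $m_{st}=2$. Your plan is precisely a reconstruction of that argument in this easy case: the spanning step via crossing-elimination (using \eqref{2 color relations A1xA1} to pull the red and blue graphs apart, which works because distant colors satisfy Reidemeister~II and dot-sliding) followed by the one-color reduction of Proposition~\ref{prop HomBs,1 in A1}, and the independence step via the functor to genuine Soergel bimodules. Both halves are standard; the only place to be slightly careful in writing it up is the ``pull apart'' step, where one should argue that R2 and the dot-through-crossing relations suffice to remove \emph{all} two-color crossings in an arbitrary diagram (not just adjacent pairs or those next to dots), which is exactly the ``distant colors'' lemma in \cite{elias2010diagrammatics}.
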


We can check like in Corollary \ref{coro grdim Bs}, that the endomorphism rings of $\onecal, B_s,B_t,$ and $B_sB_t$ are graded local.
Thus we have the following corollary.
\begin{coro}
	The category $\HcalAxA$ is a Krull-Schmidt category.
\end{coro}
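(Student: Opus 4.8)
The plan is to combine the reduction of Proposition~\ref{decomposition on HAxA} with an endomorphism-ring computation for each of the four objects in $A=\{\onecal,B_s,B_t,B_sB_t\}$, and then to invoke Remark~\ref{local graded ring}. Proposition~\ref{decomposition on HAxA} writes every object of $\Hcal$ as a finite direct sum of grading shifts of objects in $A$; a grading shift of an object with graded local endomorphism ring again has graded local endomorphism ring; and a finite direct sum of objects with graded local endomorphism rings is by definition a graded local decomposition. So it suffices to show that each object of $A$ is indecomposable with graded local endomorphism ring, and by Remark~\ref{local graded ring} it is enough to verify that each of these endomorphism rings is non-negatively graded with one-dimensional degree-zero part.

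First, $\End(\onecal)=R=\kk[\alpha_s,\alpha_t]$ by Proposition~\ref{prop HomBs,1 in AxA}; this is non-negatively graded with $\End^0(\onecal)=\kk\cdot\id_{\onecal}$. For $B_s$, and symmetrically for $B_t$, I would repeat the computation of Corollary~\ref{coro grdim Bs} verbatim: $B_s\otimes(-)$ is self-biadjoint, so $\End(B_s)\cong\Hom(B_s\otimes B_s,\onecal)$, and combining $B_s\otimes B_s\cong B_s[1]\oplus B_s[-1]$ with Proposition~\ref{prop HomBs,1 in AxA} gives $\grdim_R\End(B_s)=1+v^2$. For $B_sB_t$, I would use that $(B_sB_t)^\vee\cong B_tB_s\cong B_sB_t$ (since $B_s^\vee=B_s$, $B_t^\vee=B_t$, and $B_sB_t\cong B_tB_s$) together with pivotality (Proposition~\ref{prop HAxA is pivotal}) to get $\End(B_sB_t)\cong\Hom(B_sB_t\otimes B_sB_t,\onecal)$; feeding in the decomposition $B_sB_t\otimes B_sB_t\cong B_sB_t[-2]\oplus B_sB_t^{\oplus 2}\oplus B_sB_t[2]$ established above and the value $\grdim_R\Hom(B_sB_t,\onecal)=v^2$ of Proposition~\ref{prop HomBs,1 in AxA} gives $\grdim_R\End(B_sB_t)=v^4+2v^2+1=(1+v^2)^2$. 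In all four cases the endomorphism ring is a free graded left $R$-module, non-negatively graded, with degree-zero part equal to $\kk\cdot\id$, so Remark~\ref{local graded ring} applies and each object of $A$ is indecomposable with graded local endomorphism ring. The opening paragraph then yields a graded local decomposition for every object of $\Hcal$.

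I do not expect any genuine obstacle: the argument is routine bookkeeping of graded dimensions, entirely parallel to the type $A_1$ case. The only point that deserves a sentence of care is the claim $\End^0(B_sB_t)=\kk\cdot\id$; this is because in $\End(B_sB_t)\cong\Hom(B_sB_t,\onecal)[2]\oplus\Hom(B_sB_t,\onecal)^{\oplus 2}\oplus\Hom(B_sB_t,\onecal)[-2]$ only the last summand $\Hom(B_sB_t,\onecal)[-2]\cong R$ has any part in degree zero, and it contributes exactly $\kk$. (As $\Hcal$ is also Karoubian by the remark after Definition~\ref{A1 Hecke}, Lemma~\ref{krullSchmidt objects} additionally yields unique decomposition.)
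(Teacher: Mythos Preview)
Your proposal is correct and follows exactly the approach indicated by the paper, which merely says ``We can check like in Corollary~\ref{coro grdim Bs} that the endomorphism rings of $\onecal, B_s, B_t,$ and $B_sB_t$ are graded local.'' You have supplied precisely those details, including the explicit computation $\grdim_R\End(B_sB_t)=(1+v^2)^2$ and the verification that the degree-zero part is $\kk\cdot\id$, which the paper leaves to the reader.
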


Like Corollary \ref{coro free R modules A1},
the following is a consequence of Corollary \ref{coro Hom spaces general} 
and Proposition \ref{prop HomBs,1 in AxA}.
\begin{coro}\label{coro free R modules AxA}
	All the $\Hom$ spaces of $\Hcal$ are free graded left $R$-modules.
\end{coro}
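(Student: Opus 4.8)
The plan is to repeat, in the two-colour setting, the argument used for Corollary~\ref{coro free R modules A1}. First I would observe that the hypotheses of Corollary~\ref{coro Hom spaces general} are satisfied for $\Hcal$: by Proposition~\ref{prop HAxA is pivotal} (or already by Proposition~\ref{prop adjunction general}) the category $\Hcal$ carries a two-sided adjunction structure, and by Proposition~\ref{decomposition on HAxA} every object of $\Hcal$ is a finite direct sum of grading shifts of the four objects of $A=\{\onecal,B_s,B_t,B_sB_t\}$. Each of these four objects has graded local endomorphism ring (checked exactly as in Corollary~\ref{coro grdim Bs}, as remarked just before the statement), hence is indecomposable, so indeed every object of $\Hcal$ is a finite direct sum of indecomposables.

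Next, applying Corollary~\ref{coro Hom spaces general} to $\Hcal$ yields, for any $X,Y\in\Hcal$, an isomorphism of graded left $R$-modules
\[
\Hom(X,Y)\cong\bigoplus_{i=1}^{k}\Hom(Z_i,\onecal)[d_i]
\]
for suitable $Z_i\in A$ and shifts $d_i\in\ZZ$; concretely this comes from writing $X\otimes Y^\vee$ as a direct sum of grading shifts of objects of $A$ via Proposition~\ref{decomposition on HAxA} and using the adjunction isomorphism $\Hom(X,Y)\cong\Hom(X\otimes Y^\vee,\onecal)$. Then I would invoke Proposition~\ref{prop HomBs,1 in AxA}, which states precisely that $\Hom(\onecal,\onecal)$, $\Hom(B_s,\onecal)$, $\Hom(B_t,\onecal)$ and $\Hom(B_sB_t,\onecal)$ are all free graded left $R$-modules. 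Since a grading shift of a free graded $R$-module is free and a finite direct sum of free graded $R$-modules is again free, the displayed decomposition exhibits $\Hom(X,Y)$ as a free graded left $R$-module.

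There is essentially no obstacle here — the statement is a formal corollary of the machinery of Section~\ref{sec 0 graded and pivotal} together with the explicit computation of Proposition~\ref{prop HomBs,1 in AxA}. The only point that merits a word of care is the appeal to the indecomposability of the objects in $A$ (so that Corollary~\ref{coro Hom spaces general} genuinely applies), which is supplied by the graded-local endomorphism-ring computation indicated immediately before the statement; everything else is bookkeeping with grading shifts and finite direct sums.
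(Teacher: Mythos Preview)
Your proposal is correct and follows essentially the same approach as the paper: invoke Corollary~\ref{coro Hom spaces general} to reduce to the spaces $\Hom(Z,\onecal)$ for $Z\in A$, then appeal to Proposition~\ref{prop HomBs,1 in AxA} to conclude these are free over $R$. Your version is somewhat more careful in verifying the hypotheses of Corollary~\ref{coro Hom spaces general} (the adjunction structure and the indecomposability of the objects in $A$), but the substance of the argument is identical.
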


\subsection{Equivariantization of 
	$\Hcal(A_1\times A_1)$ with respect to $\tau$.
}\label{subsection Equivariantization of Soergel}

\begin{defi}
Let $R=\kk[\alpha_s,\alpha_t]$. 
Define \textit{$\tau_R:R\longrightarrow R$} to be the algebra morphism
permuting $\alpha_s$ and $\alpha_t$.
It is clear that $\tau_R$ has order 2.
\end{defi}

\begin{defi}
Define the functor $\tau:\Hcal\longrightarrow \Hcal$ to be the strict monoidal functor that:
\begin{enumerate}
    \item sends $B_s\mapsto B_t$ and $B_t\mapsto B_s$.
    \item $\tau$ acts on diagrams by exchanging the two colors and acts on polynomials by $\tau_R$.
\end{enumerate}
\end{defi}

\begin{prop}
The functor $\tau$ is its own inverse and it defines a strict action of $\ZZ/2$ on $\Hcal$.
\end{prop}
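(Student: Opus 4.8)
The plan is to verify three things and then invoke the Proposition of Subsection~\ref{sec-action-def}: (i) $\tau$ is a well-defined endofunctor of $\Hcal$, i.e.\ the color-swap together with $\tau_R$ respects the defining relations; (ii) $\tau$ is strict monoidal (and additive, $\kk$-linear, and graded); (iii) $\tau\circ\tau=\Id_{\Hcal}$ on the nose. Given (i)--(iii), the pair $(\ZZ/2,\Fbold)$ with $\Fbold_1=\Id_{\Hcal}$ and $\Fbold_\tau=\tau$ satisfies $\Fbold_1=\Id$ and $\Fbold_g\circ\Fbold_h=\Fbold_{gh}$, hence is a strict action of $\ZZ/2$ on $\Hcal$, and in particular $\tau$ is its own inverse.

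For (i), recall $\Hcal$ is the free $R$-linear monoidal category on the listed generators modulo the isotopy relations, the one-color relations \eqref{red unit rel}--\eqref{polynomial forcing} in both the red and the blue color, and the two-color relations \eqref{2 color relations A1xA1}. The operation ``swap red and blue on every diagram, and apply $\tau_R$ to every polynomial label'' is evidently defined on the free category, sends generators to generators, and preserves isotopy classes since reflecting the color palette is a symmetry of a labelled graph-on-a-strip. So it remains to check that it sends relations to relations. Each one-color relation occurs in a red and a blue version, and $\tau$ interchanges the two; for \eqref{red barbell} this uses $\tau_R(\alpha_s)=\alpha_t$, and for the polynomial-forcing relation \eqref{polynomial forcing} it uses the compatibilities $\tau_R\circ s=t\circ\tau_R$ and $\tau_R\circ\partial_s=\partial_t\circ\tau_R$ of $\tau_R$ with the Demazure data, both immediate from $\tau_R(\alpha_s)=\alpha_t$. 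For \eqref{2 color relations A1xA1}, $\tau$ swaps the two $4$-valent vertices, so it permutes this set of relations: the two dot-slide relations are exchanged with each other, and the Reidemeister-II-type relation is sent to a relation of the same form, which also holds in $\Hcal$. Hence the defining ideal is preserved and $\tau$ descends to a functor $\Hcal\to\Hcal$.

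For (ii), $\tau$ sends $B_s\leftrightarrow B_t$, fixes $\onecal$, and acts on morphisms by the color-swap, which visibly commutes with horizontal concatenation (the tensor product) and with vertical concatenation (composition); so $\tau$ is strict monoidal with $\tau(\onecal)=\onecal$. It is additive and $\kk$-linear because it acts $\kk$-linearly on each $\Hom$ space, and it is graded because the color-swap preserves the degree of every generator and $\tau_R$ preserves degrees ($\deg\alpha_s=\deg\alpha_t=2$). For (iii), applying the color-swap twice is the identity on diagrams and $\tau_R^2=\id_R$, so $\tau^2$ fixes every generating object and every generating morphism, whence $\tau^2=\Id_{\Hcal}$. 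I do not anticipate a genuine obstacle here: the whole argument is a matter of unwinding definitions, and the only point requiring care is the bookkeeping in (i) --- namely that $\tau_R$ intertwines the pairs $(s,\partial_s)$ and $(t,\partial_t)$, and identifying which relations in \eqref{2 color relations A1xA1} are fixed by the color-swap and which are merely permuted among themselves.
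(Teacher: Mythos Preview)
Your proposal is correct and is essentially a careful expansion of the paper's own proof, which consists solely of the phrase ``This is clear.'' You have simply unpacked what that means: checking that the color-swap together with $\tau_R$ permutes the defining relations (the only subtlety being the compatibilities $\tau_R\circ s=t\circ\tau_R$ and $\tau_R\circ\partial_s=\partial_t\circ\tau_R$), that it is strict monoidal and graded, and that it squares to the identity on generators.
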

\begin{proof}
This is clear.
\end{proof}

Given the functor $\tau$, we can think of the equivariantization
$\Hcaltau$.
As seen in Section \ref{sec-equiv-2}, an object in 
$\Hcaltau$ is an object $A\in \Hcal$ together with a morphism
$f:\tau(A)\longrightarrow X$ where $f\circ \tau(f)= \Id_A$.
We will start by decomposing $\Ind(\onecal), \Ind(B_s)=\Ind(B_t)$, and $\Ind(B_sB_t)$ into indecomposables.
Consider the equivariant structures $(\onecal,1)$ and 
$\left(B_s B_t,
\Cross{blue}{red}
\right)$.
From the computation in \eqref{splitting ind X}, we get
\begin{equation}\label{decomp Ind1}
	\Ind(\onecal)\cong (\onecal,1)\oplus (\onecal, -1)
\end{equation} 
and
\begin{equation}\label{decom IndBsBt}
	\Ind(B_s B_t)\cong 
	\left(B_s B_t,	\Cross{blue}{red}\right)
	\oplus 
	\left(B_s B_t,-	\Cross{blue}{red}\right)
	.
\end{equation}
Since $B_s$ and $B_t$,
are not  $\tau$-invariant, 
the object
\begin{equation}
	\Ind(B_s)=\Ind(B_t)=
	\left(B_s\oplus B_t,
	\Ymatrix : B_t\oplus B_s \longrightarrow B_s\oplus B_t
	\right)
\end{equation}
is indecomposable.

From now on we will use the following notation:
\begin{align}
	\onecal&:=(\onecal,1),
\\
\Xobj&:=(\onecal,-1),
\\
\Yobj&:=\left(B_s\oplus B_t,
\Ymatrix
\right),
\\
\Zobj&:=
\left(B_s B_t,\Cross{blue}{red}\right)
,
\\
\XZobj&:=
\left(B_s B_t,-\Cross{blue}{red}\right)
.
\end{align}
This makes sense since $\Xobj\otimes \Zobj \cong \XZobj$.
Also define
\begin{equation}
	\Ical:=\{\onecal, \Xobj,\Yobj,\Zobj,\XZobj\}.
\end{equation}

\begin{lemma}\label{Ind unique decomposition}
	The objects $\Ind(\onecal), \Ind(B_s),$ and $\Ind(B_sB_t)$
	have a graded local decomposition, and therefore have unique decomposition.
	Moreover, $\Ind(A)$ has a graded local decomposition for all $A\in\Hcal$.
\end{lemma}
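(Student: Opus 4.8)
The plan is to reduce the whole statement to the three explicit objects $\Ind(\onecal)$, $\Ind(B_s)$, $\Ind(B_sB_t)$, whose decompositions into indecomposables are already recorded — $\Ind(\onecal)\cong(\onecal,1)\oplus(\onecal,-1)$ by \eqref{decomp Ind1}, $\Ind(B_sB_t)\cong\Zobj\oplus\XZobj$ by \eqref{decom IndBsBt}, and $\Ind(B_s)=\Ind(B_t)=\Yobj$, which was observed above to be indecomposable — and then to check that each indecomposable summand has graded local endomorphism ring. By Remark \ref{local graded ring} this last check reduces, in each case, to verifying that the ring is non-negatively graded with local degree-zero part. The one structural fact I will use repeatedly is that $\Res$ identifies $\End_{\Hcaltau}((X,f))$ with the subring of $\End_{\Hcal}(X)$ consisting of endomorphisms commuting with $f$; since in all our cases $f$ is a degree-zero isomorphism ($\pm\id$, respectively $\pm$ the $4$-valent crossing), this is a \emph{graded} subring containing $\id_X$.

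For the summands $(\onecal,1)$, $(\onecal,-1)$, $\Zobj$, $\XZobj$ the underlying $\Hcal$-object is $\onecal$ or $B_sB_t$, both indecomposable in $\Hcal$, so $\End_{\Hcal}$ of it is non-negatively graded with one-dimensional degree-zero part $\kk\,\id$: for $\onecal$ this is $\End_\Hcal(\onecal)=R$, and for $B_sB_t$ it follows from Proposition \ref{prop HomBs,1 in AxA}, Corollary \ref{coro Hom spaces general}, and the decomposition $B_sB_t\otimes B_sB_t\cong B_sB_t[-2]\oplus B_sB_t^{\oplus2}\oplus B_sB_t[2]$, which exhibit $\End_\Hcal(B_sB_t)$ as a free graded $R$-module with a single degree-zero generator (namely $\id$). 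Since the equivariant endomorphism ring is then a graded subring of $\End_\Hcal(X)$ containing $\id_X$, its degree-zero part is squeezed between $\kk\,\id_X$ and itself, hence equals $\kk\,\id_X$; Remark \ref{local graded ring} then makes the ring graded local. (For $(\onecal,\pm1)$ one may also just quote Example \ref{example End1}: the ring is $R^\tau=\kk[\alpha_s+\alpha_t,\alpha_s\alpha_t]$, visibly non-negatively graded with degree-zero part $\kk$.) So $\Ind(\onecal)$ and $\Ind(B_sB_t)$ have graded local decompositions.

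The object $\Yobj=\Ind(B_s)$ needs one extra step, because here the underlying object $B_s\oplus B_t$ is decomposable and $\End^0_\Hcal(B_s\oplus B_t)=\kk\,\id_{B_s}\oplus\kk\,\id_{B_t}\cong\kk\times\kk$ is not local (using $\Hom^0_\Hcal(B_s,B_t)=\Hom^0_\Hcal(B_t,B_s)=0$, which follows from Proposition \ref{prop HomBs,1 in AxA}). Still, $\End_\Hcal(B_s\oplus B_t)$ is non-negatively graded, hence so is the graded subring $\End_{\Hcaltau}(\Yobj)$, and its degree-zero part is a unital $\kk$-subalgebra of $\kk\times\kk$. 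Because $\Yobj$ is indecomposable and $\Hcaltau$ is Karoubian (Proposition \ref{equiv karoubian}), $\End_{\Hcaltau}(\Yobj)$ has no idempotents besides $0$ and $\id$, and these live in degree zero (Remark \ref{local graded ring}); hence $\End^0_{\Hcaltau}(\Yobj)$ has no nontrivial idempotents, which forces it to be the diagonal copy of $\kk$, and Remark \ref{local graded ring} again yields that $\End_{\Hcaltau}(\Yobj)$ is graded local. Finally, for arbitrary $A\in\Hcal$, Proposition \ref{decomposition on HAxA} gives $A\cong\bigoplus_i C_i[k_i]$ with $C_i\in\{\onecal,B_s,B_t,B_sB_t\}$ and finitely many terms; since $\Ind$ is additive, commutes with grading shifts ($\tau$ being graded), and $\Ind(B_t)\cong\Ind(B_s)$, we get $\Ind(A)\cong\bigoplus_i\Ind(C_i)[k_i]$, a finite direct sum of grading shifts of objects already shown to admit graded local decompositions — a property stable under grading shift (as $\End(Y[k])\cong\End(Y)$ as graded rings) and under finite direct sums — so $\Ind(A)$ has a graded local decomposition, whence unique decomposition by Lemma \ref{krullSchmidt objects}. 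I expect the $\Yobj$ case to be the only genuine obstacle: ruling out that its degree-zero endomorphisms form the non-local algebra $\kk\times\kk$ is precisely where the a priori indecomposability of $\Ind(B_s)$ is needed, everything else being assembly of results already in place.
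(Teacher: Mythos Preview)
Your proof is correct and follows the same overall plan as the paper: exhibit the decompositions \eqref{decomp Ind1}, \eqref{decom IndBsBt}, and $\Ind(B_s)=\Yobj$, verify that each summand in $\Ical$ has graded local endomorphism ring, and then extend to arbitrary $A$ via Proposition~\ref{decomposition on HAxA} and additivity of $\Ind$.

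The only real difference is in how the endomorphism rings are shown to be graded local. The paper simply forward-references Proposition~\ref{prop end spaces Htau}, which computes each $\End$ explicitly---in particular $\End(\Yobj)\cong R\oplus R^{\oplus 2}[2]$ via the $\Ind$--$\Res$ adjunction (Proposition~\ref{ind-res adjunction}). You instead argue indirectly: the restriction functor identifies $\End_{\Hcaltau}((X,f))$ with a graded subring of $\End_{\Hcal}(X)$ (since $f_\tau$ has degree~$0$), so non-negative grading and the degree-zero part are inherited from $\Hcal$. For $\onecal$, $\Xobj$, $\Zobj$, $\XZobj$ this immediately gives $\End^0=\kk$; for $\Yobj$ you combine the bound $\End^0_{\Hcaltau}(\Yobj)\subseteq\kk\times\kk$ with the a~priori indecomposability of $\Yobj$ and Karoubianness (Proposition~\ref{equiv karoubian}) to rule out nontrivial idempotents and force the degree-zero part down to $\kk$. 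Both arguments are valid: the paper's is more explicit and also yields the full graded rank of $\End(\Yobj)$, while yours is more self-contained at this point in the text and avoids the forward reference, at the cost of using the indecomposability of $\Yobj$ (established just above via the $G_X=1$ discussion of Section~\ref{sec-equiv-2}) as input.
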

\begin{proof}
	In Proposition \ref{prop end spaces Htau} we will show that the endomorphism rings
	of objects in $\Ical$ are graded local.	
	The object $\Ind(B_s)= Y$ is already indecomposable and has a graded local endomorphism ring.
	In \eqref{decomp Ind1} and \eqref{decom IndBsBt} we showed
	graded local decompositions for $\Ind(\onecal)$ and $\Ind(B_sB_t)$.
	Since $\Hcal$ is a graded Krull-Schmidt category,
	$\Ind(A)$ is a direct sum of copies of $\Ind(\onecal), \Ind(B_s),$ and $\Ind(B_sB_t)$ (and their grading shifts).
	Hence $\Ind(A)$ also has a graded local decomposition.
\end{proof}

\begin{prop}\label{prop full indecomposable set Htau}
	The set $\Ical:=\{\onecal, \Xobj,\Yobj,\Zobj,\XZobj\}$ and
	its grading shifts make a full set of indecomposable objects of
	$\Hcaltau$.
\end{prop}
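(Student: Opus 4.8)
The plan is to invoke Proposition \ref{Prop unique decomp indX}, which reduces the problem to the decompositions of $\Ind$ applied to the indecomposable objects of $\Hcal$. By Proposition \ref{decomposition on HAxA} every object of $\Hcal$ is a finite direct sum of grading shifts of $\onecal$, $B_s$, $B_t$, $B_sB_t$; since each of these four has a graded local endomorphism ring (as used in the proof that $\HcalAxA$ is Krull-Schmidt), these are precisely the indecomposable objects of $\Hcal$ up to grading shift, and $\Hcal$ has unique decomposition. Because the $\tau$-action is graded, $\Ind$ commutes with grading shifts, so together with Lemma \ref{Ind unique decomposition} and Proposition \ref{Prop unique decomp indX} this shows that every indecomposable object of $\Hcaltau$ is, up to a grading shift, a direct summand of one of $\Ind(\onecal)$, $\Ind(B_s)=\Ind(B_t)$, or $\Ind(B_sB_t)$.

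Next I would collect the three relevant decompositions, all already recorded in Section \ref{subsection Equivariantization of Soergel}: $\Ind(\onecal)\cong \onecal\oplus\Xobj$ by \eqref{decomp Ind1}; $\Ind(B_sB_t)\cong \Zobj\oplus\XZobj$ by \eqref{decom IndBsBt}; and $\Ind(B_s)=\Ind(B_t)=\Yobj$, which is indecomposable because $B_s$ is not $\tau$-fixed (it lies in the $G_X=1$ case of Section \ref{sec-equiv-2}, using $B_s\not\cong B_t$ in $\Hcal$ together with unique decomposition there). Reading off the summands shows that every indecomposable object of $\Hcaltau$ is a grading shift of an object of $\Ical=\{\onecal,\Xobj,\Yobj,\Zobj,\XZobj\}$.

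It remains to check that each object of $\Ical$ is itself indecomposable; by Remark \ref{local graded ring} it suffices that its endomorphism ring be graded local. For $\Yobj$ this was just argued. For $\onecal$ and $\Xobj$ one has $\End((\onecal,\pm 1))=\Rtau$ by Example \ref{example End1}, which is non-negatively graded with degree-zero part $\kk$, hence graded local; an analogous computation handles $\Zobj$ and $\XZobj$. I would cite Proposition \ref{prop end spaces Htau}, where the endomorphism rings of all objects of $\Ical$ are shown to be graded local, for the uniform statement. (If pairwise non-isomorphism of the five objects is also wanted, apply $\Res$: it sends $\onecal,\Xobj\mapsto \onecal$, $\Yobj\mapsto B_s\oplus B_t$, and $\Zobj,\XZobj\mapsto B_sB_t$, which separates the three classes in $\Hcal$; within the first class $\onecal\not\cong\Xobj$ by Example \ref{example End1}, and within $\{\Zobj,\XZobj\}$ the same invariants-versus-anti-invariants comparison as in Example \ref{example End1} applies.)

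The only step that is not pure bookkeeping is the graded locality of $\End(\onecal)$, $\End(\Xobj)$, $\End(\Zobj)$, and $\End(\XZobj)$ — equivalently, the assertion that these four objects do not split further. This is exactly what Proposition \ref{prop end spaces Htau} supplies, and it is the one point where the concrete structure of $\Hcaltau$ enters rather than formal properties of equivariantization; everything else follows from the decompositions of $\Ind(\onecal)$, $\Ind(B_s)$, and $\Ind(B_sB_t)$ already in hand.
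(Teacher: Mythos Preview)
Your proof is correct and follows essentially the same approach as the paper: invoke Proposition~\ref{Prop unique decomp indX} to reduce to summands of $\Ind(\onecal)$, $\Ind(B_s)$, and $\Ind(B_sB_t)$, then use Lemma~\ref{Ind unique decomposition} (which itself appeals to Proposition~\ref{prop end spaces Htau}) to conclude these decompose only into objects of $\Ical$. Your additional explicit checks (indecomposability of each object of $\Ical$ and pairwise non-isomorphism) are correct elaborations that the paper leaves implicit in its citation of Lemma~\ref{Ind unique decomposition}.
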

\begin{proof}
	By Proposition \ref{Prop unique decomp indX},
	every indecomposable object of $\Hcaltau$ is a summand
	of $\Ind(\onecal), \Ind(B_s),$ or $\Ind(B_sB_t)$.
	By Lemma \ref{Ind unique decomposition}, these objects have unique decomposition, so they can not have any summands not in 
	$\Ical$.
\end{proof}

\begin{coro}\label{coro Htau is Krull schmidt}
	Every object in $\Hcaltau$ is a direct sum of objects in $\Ical$
	(and their grading shifts).
	Hence, the category $\Hcaltau$ is a graded Krull-Schmidt category.
\end{coro}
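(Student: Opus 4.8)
The plan is to deduce both assertions directly from the general equivariantization results of Section~\ref{sec 1 equivar general}, combined with the explicit description of the indecomposables of $\Hcaltau$ furnished by Proposition~\ref{prop full indecomposable set Htau}.

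First I would obtain the Krull--Schmidt property by applying Proposition~\ref{prop CcalG is KS} to $\Ccal=\HcalAxA$ with $G=\ZZ/2$. Its hypotheses hold: $\HcalAxA$ has unique decomposition because it is graded Krull--Schmidt (established above, the endomorphism rings of $\onecal,B_s,B_t,B_sB_t$ being graded local) and Karoubian, so Lemma~\ref{krullSchmidt objects} applies; and for every indecomposable $X\in\HcalAxA$ the object $\Ind(X)$ has a graded local decomposition by Lemma~\ref{Ind unique decomposition}. Hence $\Hcaltau$ is a graded Krull--Schmidt category, and by Lemma~\ref{krullSchmidt objects} every object of $\Hcaltau$ has a unique decomposition into indecomposables; if one wishes to invoke that lemma in its Karoubian formulation, Proposition~\ref{equiv karoubian} guarantees $\Hcaltau$ is Karoubian.

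Next I would identify which indecomposables actually occur. Let $(Y,f)\in\Hcaltau$ be arbitrary. By Proposition~\ref{decomposition on HAxA}, $Y$ is a finite direct sum of grading shifts of $\onecal,B_s,B_t,$ and $B_sB_t$; since $\Ind$ is additive and commutes with grading shift, $\Ind(Y)$ is a finite direct sum of grading shifts of $\Ind(\onecal)$, $\Ind(B_s)=\Ind(B_t)$, and $\Ind(B_sB_t)$. Each of these is a direct sum of objects of $\Ical$: we have $\Ind(\onecal)\cong\onecal\oplus\Xobj$ by \eqref{decomp Ind1}, $\Ind(B_sB_t)\cong\Zobj\oplus\XZobj$ by \eqref{decom IndBsBt}, and $\Ind(B_s)=\Yobj\in\Ical$ is indecomposable. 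So $\Ind(Y)$ is a finite direct sum of objects of $\Ical$ and their grading shifts. By Corollary~\ref{coro summands}, $(Y,f)$ is a direct summand of $\Ind(Y)$, and by the uniqueness of decomposition in $\Hcaltau$ the indecomposable summands of $(Y,f)$ lie, up to isomorphism and grading shift, among those of $\Ind(Y)$, hence in $\Ical$. This gives the first assertion, while the second is the Krull--Schmidt statement proved above.

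I do not expect any genuine obstacle, since every ingredient is already in place; the only point needing a little care is to have the uniqueness of decomposition in $\Hcaltau$ in force (via Lemma~\ref{krullSchmidt objects}, and Proposition~\ref{equiv karoubian} if needed) before transferring the $\Ical$-decomposition of $\Ind(Y)$ to the arbitrary summand $(Y,f)$.
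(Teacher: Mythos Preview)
Your proposal is correct and follows essentially the same approach as the paper. The paper's proof is terser: it directly argues that $(A,f)$ is a summand of $\Ind(A)$, which has a graded local decomposition by Lemma~\ref{Ind unique decomposition} (implicitly into objects of $\Ical$), so $(A,f)$ does too; your version simply makes this explicit by invoking Proposition~\ref{prop CcalG is KS} for the Krull--Schmidt part and spelling out the identification of the summands via uniqueness.
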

\begin{proof}
	Let $(A,f)\in\Hcaltau$.
	By Lemma \ref{Ind unique decomposition}, $\Ind(A)$ has a graded local decomposition.
	By Corollary \ref{coro summands}, $(A,f)$ is a direct summand of 
	$\Ind(A)$ so it also has a local decomposition.
\end{proof}

\subsection{The Grothendieck Ring of $\Hcaltau$}\label{subsection Grothendieck group}

Our next goal is to describe the Grothendieck ring of $\Hcaltau$. 
We will do this by writing each tensor products of two indecomposable objects in $\Hcaltau$ as a direct sum of indecomposables.

We will write all the morphisms involved in the following notation.
\begin{notation}
	The tensor products of $\onecal, B_s\oplus B_t,$ and  $B_sB_t$ can be decomposed naively into a direct sum of objects of the form
	$B_{i_1}\otimes ... \otimes B_{i_m}$ for $i_j\in \{s,t\}$ and $m\in\NN$.
	For example,
	\begin{equation}
		(B_s\oplus B_t)\otimes (B_s\oplus B_t) = 
		B_s\otimes B_s \oplus
		B_s\otimes B_t \oplus
		B_t\otimes B_s \oplus
		B_t\otimes B_t 	.
	\end{equation}
	By associating the object $B_{i_1}\otimes ... \otimes B_{i_m}$ with the tuple $(i_1,...,i_m)$ 
	we can order summands with the same number of tensor factors lexicographically (with $s>t$).
	This order allows us to represent morphisms between direct sums of objects of this form by matrices of morphisms between the summands.
	For instance, a morphism $\phi:\Yobj\otimes \Yobj\longrightarrow \Yobj\otimes \Yobj$ can be represented by a matrix of the form
	\begin{equation}
		\begin{pmatrix}
			 B_s\otimes B_s \rightarrow B_s\otimes B_s & 
			 B_s\otimes B_t \rightarrow B_s\otimes B_s &
			 B_t\otimes B_s \rightarrow B_s\otimes B_s &
			 B_t\otimes B_t \rightarrow B_s\otimes B_s \\
			 B_s\otimes B_s \rightarrow B_t\otimes B_s & 
			 B_s\otimes B_t \rightarrow B_t\otimes B_s &
			 B_t\otimes B_s \rightarrow B_t\otimes B_s &
			 B_t\otimes B_t \rightarrow B_t\otimes B_s \\
			 B_s\otimes B_s \rightarrow B_s\otimes B_t & 
			 B_s\otimes B_t \rightarrow B_s\otimes B_t &
			 B_t\otimes B_s \rightarrow B_s\otimes B_t &
			 B_t\otimes B_t \rightarrow B_s\otimes B_t \\
			 B_s\otimes B_s \rightarrow B_t\otimes B_t & 
			 B_s\otimes B_t \rightarrow B_t\otimes B_t &
			 B_t\otimes B_s \rightarrow B_t\otimes B_t &
			 B_t\otimes B_t \rightarrow B_t\otimes B_t 		
		\end{pmatrix}.
	\end{equation}
	To simplify our notation further, we will only write the non-zero entries of our matrices.
\end{notation}

\begin{example}\label{ex decomp YY}
	We will start by calculating the decomposition of $(B_s\oplus B_t)\otimes(B_s\oplus B_t)$ inside $\Hcal$.
	\begin{align}
		(B_s\oplus B_t)\otimes(B_s\oplus B_t) =&
		(B_s^{\otimes 2} \oplus B_t^{\otimes 2})
		\oplus (B_sB_t  \oplus B_tB_s) 
		\\
		\begin{split}
			=&(B_s[-1]\oplus B_s[1] \oplus B_t[-1] \oplus B_t[1])
			\\
			&\oplus (B_sB_t  \oplus B_tB_s) 
		\end{split}
		\\
		\begin{split}
			=&(B_s\oplus B_t)[-1] \oplus (B_s\oplus B_t)[1]
			\\
			&\oplus (B_sB_t  \oplus B_tB_s) .
		\end{split}
	\end{align}
	This suggests that $\Yobj\otimes \Yobj \cong \Yobj[-1] \oplus Y[+1] \oplus \Ind(B_sB_t)$.

Let us consider the morphism
\begin{equation}
	\id_{\Yobj}=
	\begin{pmatrix}
		\iddoubletextmatrix{red}{red}  &  &  &  \\
		 & \iddoubletextmatrix{red}{blue} &  &  \\
		 &  & \iddoubletextmatrix{blue}{red} &  \\
		 &  &  & \iddoubletextmatrix{blue}{blue}
	\end{pmatrix}
	.
\end{equation}
Observe that the restriction
\begin{equation}
	\id_{\Yobj}|_{B_sB_t\oplus B_tB_s}
	=
	\begin{pmatrix}
		\iddoubletextmatrix{red}{blue}  \\
		&  \iddoubletextmatrix{blue}{red} 
	\end{pmatrix}
	=
	\id_{\Ind(B_sB_t)}.
\end{equation}
This is an idempotent factoring through $\Ind(B_sB_t)$, so $\Ind(B_sB_t)$ is a summand of $\Yobj\otimes \Yobj$.

Now consider
\begin{equation}
	\id_{\Yobj}|_{B_sB_s\oplus B_tB_t}
	=
	\begin{pmatrix}
		\iddoubletextmatrix{red}{red}& \\
		& \iddoubletextmatrix{blue}{blue}
	\end{pmatrix}
	.
\end{equation}
Notice that \eqref{idem decomp Bs} gives us a decomposition of $\id_{B_s\otimes B_s}$, but its idempotents are 
not $\tau$-invariant. 
To fix this, an extra 
$\alpha_t\hspace{-0.15cm}
\scaledstrings[0.5]{
\idfigparam{(0,1.5)}{0.5}{red}
\Ifigparamcol{(nd-1-2)}{1}{0.5}{red}{red}{red}{red}{red}
\idfigparam{(nd-2-3)}{0.5}{red}
\idfigparam{(nd-2-2)}{-0.5}{red}
\idfigparam{(nd-2-4)}{0.5}{red}
}
$
and as follows:
\begin{align}
	\begin{strings}
		\idfigparam{(0,1.5)}{1.5}{red}
		\idfigparam{(0.5,1.5)}{1.5}{red}
	\end{strings}
	=&
	\frac{1}{2}
	\left(
	\begin{strings}
		\idfigparam{(0,1.5)}{0.5}{red}
		\Ifigparamcol{(nd-1-2)}{1}{0.5}{red}{red}{red}{red}{red}
		\idfigparam{(nd-2-3)}{0.5}{red}
		\idfigparam{(nd-2-2)}{-0.5}{red}
		\idfigparam{(nd-2-4)}{0.5}{red}
		\polyboxscale{(0.5,1.25)}{$\alpha_s$}{1}
	\end{strings}
	+
	\alpha_t
	\begin{strings}
		\idfigparam{(0,1.5)}{0.5}{red}
		\Ifigparamcol{(nd-1-2)}{1}{0.5}{red}{red}{red}{red}{red}
		\idfigparam{(nd-2-3)}{0.5}{red}
		\idfigparam{(nd-2-2)}{-0.5}{red}
		\idfigparam{(nd-2-4)}{0.5}{red}
	\end{strings}
	-\alpha_t
	\begin{strings}
		\idfigparam{(0,1.5)}{0.5}{red}
		\Ifigparamcol{(nd-1-2)}{1}{0.5}{red}{red}{red}{red}{red}
		\idfigparam{(nd-2-3)}{0.5}{red}
		\idfigparam{(nd-2-2)}{-0.5}{red}
		\idfigparam{(nd-2-4)}{0.5}{red}
	\end{strings}
	+
	\begin{strings}
		\idfigparam{(0,1.5)}{0.5}{red}
		\Ifigparamcol{(nd-1-2)}{1}{0.5}{red}{red}{red}{red}{red}
		\idfigparam{(nd-2-3)}{0.5}{red}
		\idfigparam{(nd-2-2)}{-0.5}{red}
		\idfigparam{(nd-2-4)}{0.5}{red}
		\polyboxscale{(0.5,0.25)}{$\alpha_s$}{1}
	\end{strings}
	\right)
	\\
	=&
	\frac{1}{2}
	\left(
	\begin{strings}
		\idfigparam{(0,1.5)}{0.5}{red}
		\Ifigparamcol{(nd-1-2)}{1}{0.5}{red}{red}{red}{red}{red}
		\idfigparam{(nd-2-3)}{0.5}{red}
		\idfigparam{(nd-2-2)}{-0.5}{red}
		\idfigparam{(nd-2-4)}{0.5}{red}
		\polyboxscale{(0.5,1.25)}{$\alpha_s$}{1}
	\end{strings}
	+
	\begin{strings}
		\idfigparam{(0,1.5)}{0.5}{red}
		\Ifigparamcol{(nd-1-2)}{1}{0.5}{red}{red}{red}{red}{red}
		\idfigparam{(nd-2-3)}{0.5}{red}
		\idfigparam{(nd-2-2)}{-0.5}{red}
		\idfigparam{(nd-2-4)}{0.5}{red}
		\polyboxscale{(0.5,1.25)}{$\alpha_t$}{1}
	\end{strings}
	-
	\begin{strings}
		\idfigparam{(0,1.5)}{0.5}{red}
		\Ifigparamcol{(nd-1-2)}{1}{0.5}{red}{red}{red}{red}{red}
		\idfigparam{(nd-2-3)}{0.5}{red}
		\idfigparam{(nd-2-2)}{-0.5}{red}
		\idfigparam{(nd-2-4)}{0.5}{red}
		\polyboxscale{(0.5,0.25)}{$\alpha_t$}{1}
	\end{strings}
	+
	\begin{strings}
		\idfigparam{(0,1.5)}{0.5}{red}
		\Ifigparamcol{(nd-1-2)}{1}{0.5}{red}{red}{red}{red}{red}
		\idfigparam{(nd-2-3)}{0.5}{red}
		\idfigparam{(nd-2-2)}{-0.5}{red}
		\idfigparam{(nd-2-4)}{0.5}{red}
		\polyboxscale{(0.5,0.25)}{$\alpha_s$}{1}
	\end{strings}
	\right)
	\\
	=&
	\frac{1}{2}
	\left(
	\begin{strings}
		\idfigparam{(0,1.5)}{0.5}{red}
		\Ifigparamcol{(nd-1-2)}{1}{0.5}{red}{red}{red}{red}{red}
		\idfigparam{(nd-2-3)}{0.5}{red}
		\idfigparam{(nd-2-2)}{-0.5}{red}
		\idfigparam{(nd-2-4)}{0.5}{red}
		\polyboxscale{(0.5,1.25)}{$\alpha_s+\alpha_t$}{1}
	\end{strings}
	+
	\begin{strings}
		\idfigparam{(0,1.5)}{0.5}{red}
		\Ifigparamcol{(nd-1-2)}{1}{0.5}{red}{red}{red}{red}{red}
		\idfigparam{(nd-2-3)}{0.5}{red}
		\idfigparam{(nd-2-2)}{-0.5}{red}
		\idfigparam{(nd-2-4)}{0.5}{red}
		\polyboxscale{(0.5,0.25)}{$\alpha_s-\alpha_t$}{1}
	\end{strings}
	\right)
	.
	\label{eqn red Y decomp}
\end{align}
Note that we used \eqref{poly slide red blue} to slide $\alpha_t$ inside of
$
\scaledstrings[0.5]{
	\idfigparam{(0,1.5)}{0.5}{red}
	\Ifigparamcol{(nd-1-2)}{1}{0.5}{red}{red}{red}{red}{red}
	\idfigparam{(nd-2-3)}{0.5}{red}
	\idfigparam{(nd-2-2)}{-0.5}{red}
	\idfigparam{(nd-2-4)}{0.5}{red}
}
$.

Similarly,
\begin{align}
	\begin{strings}
		\idfigparam{(0,1.5)}{1.5}{blue}
		\idfigparam{(0.5,1.5)}{1.5}{blue}
	\end{strings}
	=&
	\frac{1}{2}
	\left(
	\begin{strings}
		\idfigparam{(0,1.5)}{0.5}{blue}
		\Ifigparamcol{(nd-1-2)}{1}{0.5}{blue}{blue}{blue}{blue}{blue}
		\idfigparam{(nd-2-3)}{0.5}{blue}
		\idfigparam{(nd-2-2)}{-0.5}{blue}
		\idfigparam{(nd-2-4)}{0.5}{blue}
		\polyboxscale{(0.5,1.25)}{$\alpha_s+\alpha_t$}{1}
	\end{strings}
	-
	\begin{strings}
		\idfigparam{(0,1.5)}{0.5}{blue}
		\Ifigparamcol{(nd-1-2)}{1}{0.5}{blue}{blue}{blue}{blue}{blue}
		\idfigparam{(nd-2-3)}{0.5}{blue}
		\idfigparam{(nd-2-2)}{-0.5}{blue}
		\idfigparam{(nd-2-4)}{0.5}{blue}
		\polyboxscale{(0.5,0.25)}{$\alpha_s-\alpha_t$}{1}
	\end{strings}
	\right)
	.
	\label{eqn blue Y decomp}
\end{align}

Using \eqref{eqn red Y decomp} and \eqref{eqn blue Y decomp} we get
\begin{align}
	\begin{pmatrix}
		\iddoubletextmatrix[1.125]{red}{red} & \\
		& \iddoubletextmatrix[1.125]{blue}{blue}
	\end{pmatrix}
	=&
	\begin{pmatrix}
		\frac{1}{2}
		\left(
		\scaledstrings{
			\idfigparam{(0,1.5)}{0.5}{red}
			\Ifigparamcol{(nd-1-2)}{1}{0.5}{red}{red}{red}{red}{red}
			\idfigparam{(nd-2-3)}{0.5}{red}
			\idfigparam{(nd-2-2)}{-0.5}{red}
			\idfigparam{(nd-2-4)}{0.5}{red}
			\polyboxscale{(0.5,1.25)}{$\alpha_s+\alpha_t$}{1}
		}
		+
		\scaledstrings{
			\idfigparam{(0,1.5)}{0.5}{red}
			\Ifigparamcol{(nd-1-2)}{1}{0.5}{red}{red}{red}{red}{red}
			\idfigparam{(nd-2-3)}{0.5}{red}
			\idfigparam{(nd-2-2)}{-0.5}{red}
			\idfigparam{(nd-2-4)}{0.5}{red}
			\polyboxscale{(0.5,0.25)}{$\alpha_s-\alpha_t$}{1}
		}
		\right)
		& \\
		& 
		\frac{1}{2}
		\left(
		\scaledstrings{
			\idfigparam{(0,1.5)}{0.5}{blue}
			\Ifigparamcol{(nd-1-2)}{1}{0.5}{blue}{blue}{blue}{blue}{blue}
			\idfigparam{(nd-2-3)}{0.5}{blue}
			\idfigparam{(nd-2-2)}{-0.5}{blue}
			\idfigparam{(nd-2-4)}{0.5}{blue}
			\polyboxscale{(0.5,1.25)}{$\alpha_s+\alpha_t$}{1}
		}
		-
		\scaledstrings{
			\idfigparam{(0,1.5)}{0.5}{blue}
			\Ifigparamcol{(nd-1-2)}{1}{0.5}{blue}{blue}{blue}{blue}{blue}
			\idfigparam{(nd-2-3)}{0.5}{blue}
			\idfigparam{(nd-2-2)}{-0.5}{blue}
			\idfigparam{(nd-2-4)}{0.5}{blue}
			\polyboxscale{(0.5,0.25)}{$\alpha_s-\alpha_t$}{1}
		}
		\right)
	\end{pmatrix}
	.
\end{align}

After expanding right hand side, we may cancel  
$
\scaledstrings[0.5]{
	\idfigparam{(0,1.5)}{0.5}{red}
	\Ifigparamcol{(nd-1-2)}{1}{0.5}{red}{red}{red}{red}{red}
	\idfigparam{(nd-2-3)}{0.5}{red}
	\idfigparam{(nd-2-2)}{-0.5}{red}
	\idfigparam{(nd-2-4)}{0.5}{red}
	\polyboxscale{(0.5,1.25)}{$\alpha_t$}{1}
}
$
with
$
-
\scaledstrings[0.5]{
	\idfigparam{(0,1.5)}{0.5}{red}
	\Ifigparamcol{(nd-1-2)}{1}{0.5}{red}{red}{red}{red}{red}
	\idfigparam{(nd-2-3)}{0.5}{red}
	\idfigparam{(nd-2-2)}{-0.5}{red}
	\idfigparam{(nd-2-4)}{0.5}{red}
	\polyboxscale{(0.5,0.25)}{$\alpha_t$}{1}
}
$
 and 
$
\scaledstrings[0.5]{
	\idfigparam{(0,1.5)}{0.5}{blue}
	\Ifigparamcol{(nd-1-2)}{1}{0.5}{blue}{blue}{blue}{blue}{blue}
	\idfigparam{(nd-2-3)}{0.5}{blue}
	\idfigparam{(nd-2-2)}{-0.5}{blue}
	\idfigparam{(nd-2-4)}{0.5}{blue}
	\polyboxscale{(0.5,1.25)}{$\alpha_s$}{1}
}
$
with
$
-
\scaledstrings[0.5]{
	\idfigparam{(0,1.5)}{0.5}{blue}
	\Ifigparamcol{(nd-1-2)}{1}{0.5}{blue}{blue}{blue}{blue}{blue}
	\idfigparam{(nd-2-3)}{0.5}{blue}
	\idfigparam{(nd-2-2)}{-0.5}{blue}
	\idfigparam{(nd-2-4)}{0.5}{blue}
	\polyboxscale{(0.5,0.25)}{$\alpha_s$}{1}
}
$.
In the end we get the equation
\begin{align}
	\begin{pmatrix}
		\iddoubletextmatrix[1.125]{red}{red} & \\
		& \iddoubletextmatrix[1.125]{blue}{blue}
	\end{pmatrix}
	=&	
	\frac{1}{2}
	\begin{pmatrix}
		\scaledstrings{
			\idfigparam{(0,1.5)}{0.5}{red}
			\Ifigparamcol{(nd-1-2)}{1}{0.5}{red}{red}{red}{red}{red}
			\idfigparam{(nd-2-3)}{0.5}{red}
			\idfigparam{(nd-2-2)}{-0.5}{red}
			\idfigparam{(nd-2-4)}{0.5}{red}
			\polyboxscale{(0.5,1.25)}{$\alpha_s+\alpha_t$}{1}
		}
		& \\
		& 
		\scaledstrings{
			\idfigparam{(0,1.5)}{0.5}{blue}
			\Ifigparamcol{(nd-1-2)}{1}{0.5}{blue}{blue}{blue}{blue}{blue}
			\idfigparam{(nd-2-3)}{0.5}{blue}
			\idfigparam{(nd-2-2)}{-0.5}{blue}
			\idfigparam{(nd-2-4)}{0.5}{blue}
			\polyboxscale{(0.5,1.25)}{$\alpha_s+\alpha_t$}{1}
		}
	\end{pmatrix}
	+
	\frac{1}{2}
	\begin{pmatrix}
		\scaledstrings{
			\idfigparam{(0,1.5)}{0.5}{red}
			\Ifigparamcol{(nd-1-2)}{1}{0.5}{red}{red}{red}{red}{red}
			\idfigparam{(nd-2-3)}{0.5}{red}
			\idfigparam{(nd-2-2)}{-0.5}{red}
			\idfigparam{(nd-2-4)}{0.5}{red}
			\polyboxscale{(0.5,0.25)}{$\alpha_s-\alpha_t$}{1}
		}
		& \\
		& 
		-
		\scaledstrings{
			\idfigparam{(0,1.5)}{0.5}{blue}
			\Ifigparamcol{(nd-1-2)}{1}{0.5}{blue}{blue}{blue}{blue}{blue}
			\idfigparam{(nd-2-3)}{0.5}{blue}
			\idfigparam{(nd-2-2)}{-0.5}{blue}
			\idfigparam{(nd-2-4)}{0.5}{blue}
			\polyboxscale{(0.5,0.25)}{$\alpha_s-\alpha_t$}{1}
		}
	\end{pmatrix}
	.
\end{align}
Each of these terms is a $\tau$-invariant idempotent factoring through $\Yobj$.

This shows that 
\begin{equation}
	\Yobj\otimes \Yobj \cong Y[-1]\oplus Y[1] \oplus \Ind(B_sB_t) \cong Y[-1]\oplus Y[1] \oplus \Zobj \oplus \XZobj.
\end{equation}	
\end{example}

\begin{example}\label{ex decomp YZ}
	We will now calculate the decomposition of $\Yobj \otimes \Zobj$.
	Inside $\Hcal$ we have
		\begin{align}
		(B_s\oplus B_t)\otimes(B_sB_t) \cong&
		B_s\otimes B_s \otimes B_t\oplus B_t \otimes B_s \otimes B_t
		\\
		\cong&
		B_s\otimes B_s \otimes B_t\oplus B_t \otimes B_t \otimes B_s
		\\
		\cong&
		B_sB_t[-1] \oplus B_sB_t[1] \oplus B_tB_s[-1] \oplus B_tB_s[1]
		\\
		\cong&
		(B_sB_t \oplus B_tB_s)[-1] \otimes (B_sB_t \oplus B_tB_s)[1].
	\end{align}
	This suggests that 
	\begin{equation}\label{eqn decomp YZ}
		\Yobj\otimes \Zobj \cong
		\Ind(B_sB_t)[-1] \oplus \Ind(B_sB_t)[1] \cong
		(\Zobj \oplus \XZobj)[-1] \otimes (\Zobj \oplus \XZobj)[1].
	\end{equation}
	
	We may write
	\begin{align}
		\id_{\Yobj\otimes \Zobj} =&
		\begin{pmatrix}
			\scaledstrings{
				\idfigparam{(0,1)}{1.5}{red}
				\idfigparam{(0.25,1)}{1.5}{red}
				\idfigparam{(0.5,1)}{1.5}{blue}
			} 
			& \\
			& 
			\scaledstrings{
				\idfigparam{(0,1)}{1.5}{blue}
				\idfigparam{(0.25,1)}{1.5}{red}
				\idfigparam{(0.5,1)}{1.5}{blue}
			}
		\end{pmatrix}
		\\
		=&
		\begin{pmatrix}
			\frac{1}{2}
			\left(
			\scaledstrings{
				\idfigparam{(0,1.5)}{0.5}{red}
				\Ifigparamcol{(nd-1-2)}{1}{0.5}{red}{red}{red}{red}{red}
				\idfigparam{(nd-2-3)}{0.5}{red}
				\idfigparam{(nd-2-2)}{-0.5}{red}
				\idfigparam{(nd-2-4)}{0.5}{red}
				\idfigparam{(0.75,1.5)}{1.5}{blue}
				\polyboxscale{(0.375,1.25)}{$\alpha_s$}{1}
			}
			+
			\scaledstrings{
				\idfigparam{(0,1.5)}{0.5}{red}
				\Ifigparamcol{(nd-1-2)}{1}{0.5}{red}{red}{red}{red}{red}
				\idfigparam{(nd-2-3)}{0.5}{red}
				\idfigparam{(nd-2-2)}{-0.5}{red}
				\idfigparam{(nd-2-4)}{0.5}{red}
				\idfigparam{(0.75,1.5)}{1.5}{blue}
				\polyboxscale{(0.375,0.25)}{$\alpha_s$}{1}
			}
			\right)
			& \\
			& 
			\frac{1}{2}
			\left(
			\scaledstrings{
				\idfigparam{(0,1.5)}{0.5}{blue}
				\Ifigparamcol{(nd-1-2)}{1}{0.5}{blue}{blue}{blue}{blue}{blue}
				\idfigparam{(nd-2-3)}{0.5}{blue}
				\idfigparam{(nd-2-2)}{-0.5}{blue}
				\idfigparam{(nd-2-4)}{0.5}{blue}
				\idfigparam{(0.75,1.5)}{1.5}{red}
				\polyboxscale{(0.375,1.25)}{$\alpha_t$}{1}
			}
			-
			\scaledstrings{
				\idfigparam{(0,1.5)}{0.5}{blue}
				\Ifigparamcol{(nd-1-2)}{1}{0.5}{blue}{blue}{blue}{blue}{blue}
				\idfigparam{(nd-2-3)}{0.5}{blue}
				\idfigparam{(nd-2-2)}{-0.5}{blue}
				\idfigparam{(nd-2-4)}{0.5}{blue}
				\idfigparam{(0.75,1.5)}{1.5}{red}
				\polyboxscale{(0.3755,0.25)}{$\alpha_t$}{1}
			}
			\right)
		\end{pmatrix}
		\\
		=&
		\frac{1}{4}
		\begin{pmatrix}
			\scaledstrings{
				\idfigparam{(0,1.5)}{0.5}{red}
				\Ifigparamcol{(nd-1-2)}{1}{0.5}{red}{red}{red}{red}{red}
				\idfigparam{(nd-2-3)}{0.5}{red}
				\idfigparam{(nd-2-2)}{-0.5}{red}
				\idfigparam{(nd-2-4)}{0.5}{red}
				\idfigparam{(0.75,1.5)}{1.5}{blue}
				\polyboxscale{(0.375,1.25)}{$\alpha_s$}{1}
			}
			& \\
			&
			\scaledstrings{
				\idfigparam{(0,1.5)}{0.5}{blue}
				\Ifigparamcol{(nd-1-2)}{1}{0.5}{blue}{blue}{blue}{blue}{blue}
				\idfigparam{(nd-2-3)}{0.5}{blue}
				\idfigparam{(nd-2-2)}{-0.5}{blue}
				\idfigparam{(nd-2-4)}{0.5}{blue}
				\idfigparam{(0.75,1.5)}{1.5}{red}
				\polyboxscale{(0.375,1.25)}{$\alpha_t$}{1}
			}
		\end{pmatrix}
		+
		\frac{1}{4}
		\begin{pmatrix}
			\scaledstrings{
				\idfigparam{(0,1.5)}{0.5}{red}
				\Ifigparamcol{(nd-1-2)}{1}{0.5}{red}{red}{red}{red}{red}
				\idfigparam{(nd-2-3)}{0.5}{red}
				\idfigparam{(nd-2-2)}{-0.5}{red}
				\idfigparam{(nd-2-4)}{0.5}{red}
				\idfigparam{(0.75,1.5)}{1.5}{blue}
				\polyboxscale{(0.375,0.25)}{$\alpha_s$}{1}
			}
			& \\
			&
			\scaledstrings{
				\idfigparam{(0,1.5)}{0.5}{blue}
				\Ifigparamcol{(nd-1-2)}{1}{0.5}{blue}{blue}{blue}{blue}{blue}
				\idfigparam{(nd-2-3)}{0.5}{blue}
				\idfigparam{(nd-2-2)}{-0.5}{blue}
				\idfigparam{(nd-2-4)}{0.5}{blue}
				\idfigparam{(0.75,1.5)}{1.5}{red}
				\polyboxscale{(0.375,0.25)}{$\alpha_t$}{1}
			}
		\end{pmatrix}
		.
	\end{align}
	Each of these is an idempotent factoring through $\Ind(B_sB_t)$,
	so \eqref{eqn decomp YZ} follows.	
\end{example}

\begin{example}\label{ex decomp ZZ}
	Similar to Examples \ref{ex decomp YY} and \ref{ex decomp YZ}, we can show that 
	\begin{equation}
		Z\otimes Z \cong Z[-2] \oplus Z \oplus Z[2] \oplus \XZobj.
	\end{equation}
\end{example}

\begin{rema}
	Equations \eqref{Y2 decomp}, \eqref{Z2 decomp}, and \eqref{YZ decomp} give explicit idempotent decompositions of
	$\Yobj\otimes \Yobj$, $\Zobj\otimes \Zobj$, and $\Yobj\otimes \Zobj$ in the diagrammatic presentation we propose for $\Hcaltau$.
	We may use the functor $\Fbold:\Dcal_{A_1\times A_1}\longrightarrow \Hcaltau$ to write explicit idempotent decompositions
	for these tensor products, that do not involve $\Ind(B_sB_t)$.
\end{rema}

The following lemma summarizes all the decompositions we have.
\begin{lemma}\label{lemma tensor products in Htau}
We have the following isomorphisms:
\begin{equation}\label{gr group 1}
	\Xobj\otimes \Xobj\cong \onecal,  
\end{equation}
\begin{equation}\label{gr group 2}
	\Xobj\otimes \Yobj \cong \Yobj \cong \Yobj\otimes \Xobj ,
\end{equation} 
\begin{equation}\label{gr group 3}
	\Xobj\otimes \Zobj \cong \XZobj \cong \Zobj\otimes \Xobj,
\end{equation} 
\begin{equation}\label{gr group 4}
	\Yobj\otimes \Yobj \cong \Yobj[-1]\oplus \Yobj[1]\oplus \Zobj \oplus \XZobj ,
\end{equation}
\begin{equation}\label{gr group 5}
	\Yobj\otimes \Zobj\cong \Zobj[-1]\oplus \Zobj[1] \oplus \XZobj[-1] \oplus \XZobj[1] \cong \Zobj \otimes \Yobj,
\end{equation}
\begin{equation}\label{gr group 6}
	\Zobj\otimes \Zobj\cong \Zobj[-2]\oplus \Zobj \oplus \Zobj[2] \oplus \XZobj .
\end{equation}

In particular, the tensor product in $\Hcaltau$ is commutative.
\end{lemma}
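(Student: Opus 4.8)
The plan is to assemble the six displayed isomorphisms from facts already in place and then deduce commutativity formally; the genuine content sits in Examples~\ref{ex decomp YY}, \ref{ex decomp YZ}, and \ref{ex decomp ZZ}, so the present argument is largely bookkeeping.

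First I would dispatch the three isomorphisms \eqref{gr group 1}, \eqref{gr group 2}, \eqref{gr group 3} that involve $\Xobj$. Since the tensor product of equivariant objects multiplies the structure maps, $\Xobj\otimes\Xobj=(\onecal,(-\id_\onecal)\otimes(-\id_\onecal))=(\onecal,\id_\onecal)=\onecal$, which is \eqref{gr group 1}. Tensoring a general equivariant object $(W,f)$ with $\Xobj=(\onecal,-\id_\onecal)$ on either side yields $(W,-f)$ (under the strict identification $\onecal\otimes W=W=W\otimes\onecal$), so $\Xobj$ is ``central'': the two one-sided tensorings literally agree. This makes the right-hand identifications in \eqref{gr group 2} and \eqref{gr group 3} automatic. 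For the left-hand ones: by the very definition of $\XZobj$ (negate the structure map of $\Zobj$) one has $\Xobj\otimes\Zobj=\XZobj$, and $\Xobj\otimes\Yobj\cong\Yobj$ is precisely \eqref{eqn isomorphism sngXInd} applied to $X=B_s$, recalling that $\Yobj=\Ind(B_s)$; the explicit isomorphism is the one in \eqref{splitting ind X}.

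Next I would note that \eqref{gr group 4}, \eqref{gr group 5}, and \eqref{gr group 6} are exactly the conclusions of Examples~\ref{ex decomp YY}, \ref{ex decomp YZ}, and \ref{ex decomp ZZ}. In each case one first reads off the decomposition of the underlying object in $\Hcal$ from $B_s^{\otimes 2}\cong B_s[1]\oplus B_s[-1]$, the analogous blue statement, and $B_sB_t\cong B_tB_s$ (Proposition~\ref{decomposition on HAxA} and the computations preceding it), which predicts the right-hand side; one then promotes this to a decomposition in $\Hcaltau$ by exhibiting $\tau$-invariant idempotents that sum to the identity. The right-hand equality in \eqref{gr group 5} then follows either from centrality of $\Xobj$ together with $\Yobj\otimes\Zobj\cong\Zobj\otimes\Yobj$, or directly because the underlying $\Hcal$-decomposition and the constructed idempotents are symmetric in the two factors.

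Finally, commutativity of $\otimes$ on $\Hcaltau$ follows formally: by Corollary~\ref{coro Htau is Krull schmidt} every object is a finite direct sum of grading shifts of objects of $\Ical=\{\onecal,\Xobj,\Yobj,\Zobj,\XZobj\}$, and $\otimes$ distributes over $\oplus$ and commutes with shifts, so it suffices to check $A\otimes B\cong B\otimes A$ for $A,B\in\Ical$. The unit $\onecal$ and the central object $\Xobj$ are immediate; $\Yobj$, $\Zobj$, $\XZobj$ commute with themselves trivially; $\Yobj\otimes\Zobj\cong\Zobj\otimes\Yobj$ is \eqref{gr group 5}; and any remaining product involving $\XZobj$ reduces to one of these by writing $\XZobj=\Xobj\otimes\Zobj$ and sliding the central factor $\Xobj$ past the other object. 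The main obstacle is not in this lemma but in the examples it summarizes: the subtle point, handled in Example~\ref{ex decomp YY}, is that the standard idempotents decomposing $B_s^{\otimes 2}$ in $\HcalA$ are not $\tau$-invariant, and one repairs this by symmetrizing them via the polynomial forcing relation~\eqref{polynomial forcing} — this is the role of the $\alpha_s,\alpha_t$ insertions in~\eqref{eqn red Y decomp}. Once those idempotents are in hand, everything above is routine.
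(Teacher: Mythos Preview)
Your proposal is correct and follows essentially the same approach as the paper: the paper's proof simply states that \eqref{gr group 1} and \eqref{gr group 3} are clear, that \eqref{gr group 2} follows from \eqref{eqn isomorphism sngXInd}, and that the remaining isomorphisms are Examples~\ref{ex decomp YY}, \ref{ex decomp YZ}, \ref{ex decomp ZZ}. Your version is a faithful expansion of this, with the added bonus of spelling out the commutativity argument that the paper leaves implicit.
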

\begin{proof}
The isomorphisms \eqref{gr group 1} and \eqref{gr group 3} are clear.
The equation \eqref{gr group 2} follows from \eqref{eqn isomorphism sngXInd}.
The remaining isomorphisms correspond to Examples \ref{ex decomp YY}, \ref{ex decomp YZ}, and \ref{ex decomp ZZ}.
\end{proof}

Hence, we have the following theorem.
\begin{theorem}\label{thm grothendieck ring}
The Grothendieck ring of $\Hcaltau$ is
\begin{equation}
    \Acal:= \bigrightslant{\ZZ[v^{\pm 1}][X,Y,Z]}
    {K},
\end{equation}
where $K$ is the ideal generated by the relations
\begin{equation}
    X^2= 1
\end{equation}
\begin{equation}
    XY=Y
\end{equation}
\begin{equation}
    Y^2 = (v+v^{-1})Y + Z + XZ
\end{equation}
\begin{equation}
    YZ = (v+v^{-1})(Z+XZ)
\end{equation}
\begin{equation}
    Z^2 = (v^2+1+v^{-2})Z + XZ.
\end{equation}
\end{theorem}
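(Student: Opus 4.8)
The plan is to produce a surjective ring homomorphism $\phi\colon\Acal\longrightarrow[\Hcaltau]$ onto the split Grothendieck ring of $\Hcaltau$ and then show it is an isomorphism by a rank count, exploiting that $\Hcaltau$ is graded Krull--Schmidt. First I would construct $\phi$: in $[\Hcaltau]$ the class $[\onecal[1]]$ is invertible, with inverse $[\onecal[-1]]$, so $[\Hcaltau]$ is a $\ZZ[v^{\pm1}]$-algebra with $v:=[\onecal[1]]$, and grading shift by $k$ acts as multiplication by $v^{k}$. Define a $\ZZ[v^{\pm1}]$-algebra homomorphism $\ZZ[v^{\pm1}][X,Y,Z]\longrightarrow[\Hcaltau]$ by $X\mapsto[\Xobj]$, $Y\mapsto[\Yobj]$, $Z\mapsto[\Zobj]$. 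Passing to Grothendieck classes in the isomorphisms \eqref{gr group 1}, \eqref{gr group 2}, \eqref{gr group 4}, \eqref{gr group 5}, \eqref{gr group 6} of Lemma \ref{lemma tensor products in Htau}, and using $[\XZobj]=[\Xobj][\Zobj]$ from \eqref{gr group 3}, shows that all five generators of the ideal $K$ map to $0$; hence the map descends to $\phi\colon\Acal\longrightarrow[\Hcaltau]$, which is surjective since $[\onecal],[\Xobj],[\Yobj],[\Zobj],[\XZobj]$ generate $[\Hcaltau]$.

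Next I would show that $\Acal$ is spanned as a $\ZZ[v^{\pm1}]$-module by the five elements $1,X,Y,Z,XZ$. Let $M\subseteq\Acal$ be their $\ZZ[v^{\pm1}]$-span. Using only the defining relations of $K$ one checks directly that $M\cdot X\subseteq M$, $M\cdot Y\subseteq M$, and $M\cdot Z\subseteq M$; for instance $XZ\cdot X=X^{2}Z=Z$, $XZ\cdot Y=X(YZ)=(v+v^{-1})(XZ+Z)$, and $XZ\cdot Z=XZ^{2}=(v^{2}+1+v^{-2})XZ+Z$, and the other entries are equally immediate. Since $1\in M$ and $\Acal$ is generated as a $\ZZ[v^{\pm1}]$-algebra by $X,Y,Z$, an induction on word length gives $M=\Acal$.

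Finally I would conclude. By Proposition \ref{prop full indecomposable set Htau} and Corollary \ref{coro Htau is Krull schmidt}, $\Hcaltau$ is graded Krull--Schmidt with $\Ical=\{\onecal,\Xobj,\Yobj,\Zobj,\XZobj\}$ and its grading shifts a complete irredundant list of indecomposables, so $[\Hcaltau]$ is free over $\ZZ[v^{\pm1}]$ with basis $\{[\onecal],[\Xobj],[\Yobj],[\Zobj],[\XZobj]\}$. The homomorphism $\phi$ sends the five spanning elements $1,X,Y,Z,XZ$ of $\Acal$ precisely onto this basis; in particular their images are $\ZZ[v^{\pm1}]$-linearly independent, so $1,X,Y,Z,XZ$ are themselves linearly independent and hence form a $\ZZ[v^{\pm1}]$-basis of $\Acal$. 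Thus $\phi$ carries a basis to a basis and is an isomorphism of rings. The only step that requires genuine input is the identification of the indecomposables of $\Hcaltau$ together with the Krull--Schmidt property, which is what makes $[\Hcaltau]$ free of rank $5$; this is already supplied by the cited results, and everything else is bookkeeping with the six relations of Lemma \ref{lemma tensor products in Htau}.
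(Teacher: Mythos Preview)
Your proof is correct and is exactly the standard elaboration of what the paper leaves implicit: the paper simply writes ``Hence, we have the following theorem'' after Lemma~\ref{lemma tensor products in Htau}, treating the result as an immediate consequence of the tensor decompositions together with the Krull--Schmidt property (Proposition~\ref{prop full indecomposable set Htau} and Corollary~\ref{coro Htau is Krull schmidt}). Your argument---constructing the surjection $\phi$, checking that $1,X,Y,Z,XZ$ span $\Acal$ over $\ZZ[v^{\pm1}]$, and matching ranks---is precisely the routine bookkeeping the paper omits.
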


\begin{rema}
We want to note two important specializations of $\Acal$:
\begin{itemize}
	\item Setting $X=1$, where $\Acal$ specializes to the $\tau$-invariant ring of 
	Hecke algebra of type $A_1\times A_1$.
	\item Setting $X=-1$,
	where 
	$\Acal$ specializes to Hecke algebra with unequal parameters of $A_1\times A_1$ folded by $\tau$ (see \cite{lusztig2002hecke}).
\end{itemize}
This is a special case of \cite[Prop. 5.1 \& Thm. 5.4]{elias2017folding}.
\end{rema}

\subsection{
$\Hom$ spaces in $\Hcaltau$
}\label{subsection hom spaces equivar}

In this section we will calculate the 
$\Hom$ spaces of $\Hcaltau$ from its irreducible objects to $\onecal$.
By adjunction, this will completely determine the morphisms between any two objects of $\Hcaltau$.
We will then calculate the endomorphism rings of 
$\onecal, X, Y, Z,$ and $XZ$, which were needed for the proof of 
Lemma \ref{Ind unique decomposition}.

Recall $R=\kk[\alpha_s,\alpha_t]$. Using the notation in 
Example \ref{example End1}, we get 
\begin{equation}
	\Rtau=\kk[\alpha_s+\alpha_t,\alpha_s\alpha_t]
\end{equation}
is the ring of symmetric polynomials in $\alpha_s$ and $\alpha_t$,
and that
\begin{equation}
	\Rantitau= (\alpha_s-\alpha_t)\Rtau
\end{equation}
is the $\tau$ anti-invariant polynomials in $\alpha_s$ and $\alpha_t$.
We also have that
\begin{equation}
	R= \Rtau \oplus (\alpha_s-\alpha_t)\Rtau = \Rtau\oplus \Rantitau.
\end{equation}

\begin{prop}\label{prop HomXto1 in Hcaltau}
	The following $\Hom$ spaces are free $\Rtau$-modules with the generators given below:
		\begin{equation}
			\Hom(\onecal,\onecal) = \Rtau\cdot
			\left(
			\begin{strings}
				\path (0,0) -- (0.3,1);
			\end{strings}
			\right),
			\hspace{0.25cm}
			\Hom(X,\onecal)= \Rtau\cdot \left( (\alpha_s-\alpha_t)
			\begin{strings}
				\path (0,0) -- (0.3,1);
			\end{strings}
			 \right),
		\end{equation}
		\begin{equation}
			\Hom(Y,\onecal)= 
			 \Rtau \cdot
			{
				\begingroup
				\setlength\arraycolsep{0pt}
				\begin{pmatrix}
					\begin{unboxedstrings}
						\dotuparam{(0,0)}{0.5}{0.06}{red}
					\end{unboxedstrings}
					&
					\begin{unboxedstrings}
						\dotuparam{(0,0)}{0.5}{0.06}{blue}
					\end{unboxedstrings}
				\end{pmatrix}
				\endgroup
			}
			\oplus
			\Rtau \cdot
			\left(
			(\alpha_s-\alpha_t)
			{
				\begingroup
				\setlength\arraycolsep{0pt}
				\begin{pmatrix}
					\begin{unboxedstrings}
						\dotuparam{(0,0)}{0.5}{0.06}{red}
					\end{unboxedstrings}
					&
					\begin{unboxedstrings}
						\dotuparam{(0,0)}{0.5}{0.06}{blue}
					\end{unboxedstrings}
				\end{pmatrix}
				\endgroup
			}
			\right)
			,
		\end{equation}
		\begin{equation}
				\Hom(Z,\onecal)= \Rtau \cdot
		\begin{strings}
			\dotuparam{(0,0)}{0.6}{0.06}{red}
			\dotuparam{(0.25,0)}{0.6}{0.06}{blue}
			\path (0,0) ++ (0,1);
		\end{strings}
		, \hspace{0.125cm} \text{ and }
		\Hom(XZ,\onecal)= \Rantitau 
		\begin{strings}
			\dotuparam{(0,0)}{0.6}{0.06}{red}
			\dotuparam{(0.25,0)}{0.6}{0.06}{blue}
			\path (0,0) ++ (0,1);
		\end{strings}.
		\end{equation}
	The graded dimensions of these spaces over $\Rtau$ are
	\begin{align}
		\grdim_\Rtau(\Hom(\onecal,\onecal))&=1, \\
		\grdim_\Rtau(\Hom(X,\onecal))&=v^2, \\
		\grdim_\Rtau(\Hom(Y,\onecal))&=1 +v^2,\\
		\grdim_\Rtau(\Hom(Z,\onecal))&= v^2 ,\\
		\grdim_\Rtau(\Hom(XZ,\onecal))&= v^4.
	\end{align}
\end{prop}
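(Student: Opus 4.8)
\textbf{The plan} is to identify each of these $\Hom$ spaces with an explicit submodule of the corresponding $\Hom$ space in $\Hcal$. Recall from \eqref{equivariant mor} that a morphism from an equivariant object $(A,f)$ to $\onecal=(\onecal,1)$ in $\Hcaltau$ is a morphism $\varphi\colon A\to\onecal$ of $\Hcal$ with $\varphi\circ f=\tau(\varphi)$, and that such morphisms form a left module over $\End_{\Hcaltau}(\onecal)=\Rtau$ (Example \ref{example End1}) by postcomposition. So for each object I take the known $R$-module $\Hom_{\Hcal}(A,\onecal)$ of Proposition \ref{prop HomBs,1 in AxA}, cut it down by the equivariance condition attached to the relevant structure map, and read off the $\Rtau$-module structure. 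The single algebraic input is that $\chr(\kk)\neq 2$ forces $R=\Rtau\oplus\Rantitau$ with $\Rantitau=(\alpha_s-\alpha_t)\Rtau$, so that $R$ is free of rank $2$ over $\Rtau$ on $\{1,\alpha_s-\alpha_t\}$ and $\Rantitau$ is free of rank $1$ over $\Rtau$ on $\alpha_s-\alpha_t$; together with Corollary \ref{coro free R modules AxA} this gives all freeness statements, and the graded dimensions follow from the degrees of the listed generators (each dot has degree $1$ and $\deg(\alpha_s-\alpha_t)=2$).

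\textbf{The cases $\onecal$ and $X=(\onecal,-1)$} are immediate from Example \ref{example End1}: a map $(\onecal,\varepsilon)\to(\onecal,1)$ is an $r\in R$ with $\varepsilon r=\tau_R(r)$, so $\Hom(\onecal,\onecal)=\Rtau$ and $\Hom(X,\onecal)=\Rantitau=(\alpha_s-\alpha_t)\Rtau$, with graded dimensions $1$ and $v^2$. \textbf{For $Z=(B_sB_t,c)$}, with $c\colon B_tB_s\to B_sB_t$ the crossing, I use $\Hom_{\Hcal}(B_sB_t,\onecal)=R\cdot d$ where $d$ is the two-dot map; a short diagrammatic step — slide each of the two dots of $d$ down through the $4$-valent vertex using the two-colour relations \eqref{2 color relations A1xA1} — gives $d\circ c=\tau(d)$, the two-dot map with colours exchanged. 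Hence for $\psi=f\cdot d$ the condition $\psi\circ c=\tau(\psi)$ reads $f\,\tau(d)=\tau_R(f)\,\tau(d)$, i.e.\ $f\in\Rtau$, so $\Hom(Z,\onecal)=\Rtau\cdot d$; running the same argument with the structure map $-c$ of $XZ$ replaces this by $f=-\tau_R(f)$, so $\Hom(XZ,\onecal)=\Rantitau\cdot d=(\alpha_s-\alpha_t)\Rtau\cdot d$. Since $d$ has degree $2$, these have graded dimension $v^2$ and $v^4$.

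\textbf{For $Y=\Ind(B_s)$} the cleanest route is the $\Ind\dashv\Res$ adjunction of Proposition \ref{ind-res adjunction}, which gives a natural — hence $\Rtau$-linear, as $\Rtau=\End_{\Hcaltau}(\onecal)$ acts by postcomposition on both sides — isomorphism $\Hom_{\Hcaltau}(Y,\onecal)\cong\Hom_{\Hcal}(B_s,\onecal)=R\cdot(\text{red dot})$, under which $\varphi\mapsto(\varphi,\tau(\varphi))$ as a row map $B_s\oplus B_t\to\onecal$. (Equivalently one imposes equivariance directly on a map $(\psi_1,\psi_2)\colon B_s\oplus B_t\to\onecal$; the component-swapping structure map of $\Ind(B_s)$ forces $\psi_2=\tau(\psi_1)$, recovering the same description.) Since $\Hom_{\Hcal}(B_s,\onecal)$ is free of rank $2$ over $\Rtau$ with basis $\{(\text{red dot}),\,(\alpha_s-\alpha_t)(\text{red dot})\}$, so is $\Hom(Y,\onecal)$, the images of these basis elements giving the two generators in the statement — the first with components $(\text{red dot},\text{blue dot})$, and for the second one uses $\tau_R(\alpha_s-\alpha_t)=-(\alpha_s-\alpha_t)$ — and the graded dimension is read off from their degrees.

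\textbf{Main obstacle.} Every step is short once this framework is in place; the only place genuine diagrammatic work is needed is the identity $d\circ c=\tau(d)$ used in the $Z$ and $XZ$ cases, and the point to be careful about is the bookkeeping of the $\Rtau$-module structures — in particular that $\Hom(Y,\onecal)$ has $\Rtau$-rank $2$ (inherited from $R$ being free of rank $2$ over $\Rtau$) even though $\Hom_{\Hcal}(B_s,\onecal)$ has $R$-rank $1$, and the correct signs of the $Y$-generators coming from $\tau_R$.
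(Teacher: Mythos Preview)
Your proof is correct and follows essentially the same approach as the paper's: Example \ref{example End1} for $\onecal$ and $X$, the $\Ind\dashv\Res$ adjunction (Proposition \ref{ind-res adjunction}) for $Y$, and the equivariance condition \eqref{equivariant mor} applied to $\Hom_{\Hcal}(B_sB_t,\onecal)=R\cdot d$ for $Z$ and $XZ$. You are in fact slightly more explicit than the paper in two places: you actually verify $d\circ c=\tau(d)$ via the two-colour relations \eqref{2 color relations A1xA1} (the paper just asserts that equivariance amounts to $\tau$-invariance), and you note the $\Rtau$-linearity of the adjunction isomorphism and the sign coming from $\tau_R(\alpha_s-\alpha_t)=-(\alpha_s-\alpha_t)$ in the second $Y$-generator, which the paper's statement suppresses.
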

\begin{proof}
	We already showed this for $\Hom(\onecal,\onecal)$ and
	$\Hom(X,\onecal)$ in Example \ref{example End1}.
	By Proposition \ref{ind-res adjunction} and
	Proposition \ref{prop HomBs,1 in A1}
	we have
	\begin{equation}
		\Hom(Y,\onecal)= \Hom(\Ind(B_s),\onecal) \cong \Hom(B_s,\onecal) \cong R.
	\end{equation}
	Since $\grdim_{\Rtau}(R)=1+v^2$, the morphisms
	$	
	\begin{pmatrix}
		\Dotup{red}
		&
		\Dotup{blue}
	\end{pmatrix}
	$
	 and 
	$(\alpha_s-\alpha_t)
	\begin{pmatrix}
		\Dotup{red}
		&
		\Dotup{blue}
	\end{pmatrix}
	$
	are free generators of $\Hom(Y,\onecal)$ as an $\Rtau$-module.
	
	From Proposition \ref{prop HomBs,1 in AxA}, we have that
	$\Hom(B_sB_t,\onecal)= R \cdot
	\begin{miniunboxedstrings}
		\dotuparam{(0,0)}{0.6}{0.06}{red}
		\dotuparam{(0.25,0)}{0.6}{0.06}{blue}
	\end{miniunboxedstrings}
	$.
	By \eqref{equivariant mor}, we get that 
	a morphism in $\Hom(Z,\onecal)$ must be a $\tau$-invariant
	morphism in $\Hom(B_sB_t,\onecal)$,
	hence 
	\begin{equation}
		\Hom(Z,\onecal)=\Rtau \cdot
		\begin{strings}
			\dotuparam{(0,0)}{0.6}{0.06}{red}
			\dotuparam{(0.25,0)}{0.6}{0.06}{blue}
			\path (0,0) ++ (0,1);
		\end{strings}.
	\end{equation}
	Similarly, a morphism in $\Hom(XZ,\onecal)$ must be a $\tau$-anti invariant and 
	\begin{equation}
		\Hom(XZ,\onecal)=\Rantitau \cdot
		\begin{strings}
			\dotuparam{(0,0)}{0.6}{0.06}{red}
			\dotuparam{(0.25,0)}{0.6}{0.06}{blue}
			\path (0,0) ++ (0,1);
		\end{strings}.
	\end{equation}
\end{proof}

Here we give a biadjoint to each object in $\Ical$
not equal to $\onecal$:
\begin{itemize}
	\item $\left(X,\id_{\onecal},\id_{\onecal}\right)$
	is biadjoint to $X$.
	\item $\left(Y,
	\begin{pmatrix}
		\Capp{red}
		\hspace{-0.2cm}
		&
		\Capp{blue}
	\end{pmatrix}
	,
	\begin{pmatrix}
		\Cupp{red}
		\hspace{-0.2cm}
		&
		\Cupp{blue}
	\end{pmatrix}
	\right)$
	is biadjoint to $Y$.
	\item $\left(Z,
	\begin{miniunboxedstrings}
		\capfigparam{(0.25,0)}{0.5}{0.5}{blue}
		\capfigparam{(0,0)}{0.5}{0.5}{red}
	\end{miniunboxedstrings}
	,
	\begin{miniunboxedstrings}
		\cupfigparam{(0.25,1)}{0.5}{0.5}{blue}
		\cupfigparam{(0,1)}{0.5}{0.5}{red}
	\end{miniunboxedstrings}
	\right)$
	is biadjoint to $Z$.
	\item $\left(XZ,
	\begin{miniunboxedstrings}
		\capfigparam{(0.25,0)}{0.5}{0.5}{blue}
		\capfigparam{(0,0)}{0.5}{0.5}{red}
	\end{miniunboxedstrings}
	,
	\begin{miniunboxedstrings}
		\cupfigparam{(0.25,1)}{0.5}{0.5}{blue}
		\cupfigparam{(0,1)}{0.5}{0.5}{red}
	\end{miniunboxedstrings}
	\right)$
	is biadjoint to $XZ$.
\end{itemize}

By Proposition \ref{prop adjunction general}, we have
the following result.
\begin{prop}\label{prop adjunction equiv}
	The functors $X[k]\otimes(-)$, $Y[k]\otimes(-)$, and $Z[k]\otimes(-)$ are bi-adjoint to $X[-k]\otimes(-)$, $Y[-k]\otimes(-)$, and $Z[-k]\otimes(-)$ respectively.
\end{prop}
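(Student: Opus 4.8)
The plan is to deduce this from Proposition~\ref{prop adjunction general}, the list of biadjoints recorded just above, and a short bookkeeping step for the grading shifts.

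First I would note that those triples exhibit each of $X$, $Y$, $Z$ (and also $XZ$) as \emph{self}-biadjoint, i.e.\ $W^{*}=W=\rstar{W}$ for $W\in\{X,Y,Z,XZ\}$; together with the evident self-biadjointness of $\onecal$ via identity maps, this gives a chosen biadjoint for every object of $\Ical$. By Corollary~\ref{coro Htau is Krull schmidt} every object of $\Hcaltau$ is a finite direct sum of grading shifts of objects of $\Ical$, and a direct sum (resp.\ grading shift) of biadjoint data is again biadjoint data; moreover $(W_{1}\oplus W_{2})^{*}=W_{1}^{*}\oplus W_{2}^{*}$ and $(W[k])^{*}=W^{*}[-k]$, and likewise for $\rstar{(-)}$, so the equality $(-)^{*}=\rstar{(-)}$ on $\Ical$ propagates to all of $\Hcaltau$. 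Hence $\Hcaltau$ carries a pivotal structure, and Proposition~\ref{prop adjunction general} applies: for $W\in\{X,Y,Z\}$ the functors $W\otimes(-)$ and $W^{*}\otimes(-)$ are biadjoint, and $W^{*}=W$.

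Next I would absorb the grading shift. Since $\Hcaltau$ is closed under grading shift and $W[k]\otimes M\cong(W\otimes M)[k]$, the functor $W[k]\otimes(-)$ is naturally isomorphic to the composite $(-)[k]\circ\bigl(W\otimes(-)\bigr)$. The shift functor $(-)[k]$ is an equivalence with quasi-inverse $(-)[-k]$, hence biadjoint to $(-)[-k]$; and a composite of biadjoint functors is biadjoint, with biadjoint the composite of the biadjoints taken in the reverse order. Therefore the biadjoint of $(-)[k]\circ\bigl(W\otimes(-)\bigr)$ is $\bigl(W\otimes(-)\bigr)\circ(-)[-k]$, which sends an object $M$ to $W\otimes(M[-k])\cong(W\otimes M)[-k]=W[-k]\otimes M$. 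This is precisely the functor $W[-k]\otimes(-)$, and the claim for each of $X$, $Y$, $Z$ follows.

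The only step that is not purely formal is the verification --- taken for granted in the list of biadjoints above --- that those triples really are biadjoint structures inside $\Hcaltau$: that the displayed red and blue caps and cups, and the identity maps in the case of $X=(\onecal,-1)$, are $\tau$-equivariant morphisms, and that they satisfy the two zig-zag identities. For $X$ this is immediate, the two signs cancelling. For $Y=\Ind(B_s)$ and $Z$, equivariance is the short matrix computation that the equivariant structure on $Y$ (the off-diagonal identity matrix) and on $Z$ (the red--blue crossing) intertwine the nested caps and cups with their images under $\tau$, which exchanges the two colours. The zig-zag identities, after applying the faithful forgetful functor $\Res\colon\Hcaltau\to\Hcal$, reduce to the zig-zag identities for $B_s$ with its red cap and cup and for $B_t$ with its blue cap and cup in $\Hcal$, together with their nestings and direct sums, which are already established. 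I expect this equivariance-and-zig-zag check to be the main, though routine, obstacle.
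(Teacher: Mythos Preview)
Your proof is correct and follows the same approach as the paper, which simply states ``By Proposition~\ref{prop adjunction general}, we have the following result'' immediately after listing the biadjoint data for $X$, $Y$, $Z$, and $XZ$. You are more thorough than the paper in two respects: you explicitly extend the biadjoint data on $\Ical$ to a pivotal structure on all of $\Hcaltau$ (so that Proposition~\ref{prop adjunction general} literally applies), and you spell out the grading-shift bookkeeping; note, however, that the full pivotal structure is not strictly needed---having a biadjoint object $W^{*}=W$ already yields biadjoint functors $W\otimes(-)$ and $W^{*}\otimes(-)$ directly via Proposition~\ref{prop tensor adjunction 1} applied to both the left and right adjoint data.
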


\begin{prop}\label{prop end spaces Htau}
	We have that 
		\begin{equation}
			\End(\onecal)\cong
			\End(X)\cong \Rtau,
		\end{equation}
		\begin{equation}
			\End(Y)\cong R \oplus R^{\oplus 2}[2],
		\end{equation}
		\begin{equation}
			\End(Z)\cong \End(XZ)\cong \Rtau \oplus \Rtau[2] \oplus \Rtau[4]^{\oplus 2}.
		\end{equation}	
		As free graded $\Rtau$-modules.
		They are all graded local rings.
\end{prop}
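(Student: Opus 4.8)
The plan is to reduce each of the five endomorphism rings to a direct sum of spaces of the form $\Hom(-,\onecal)$, which are already computed in Proposition \ref{prop HomXto1 in Hcaltau}, and then to read off freeness, non-negativity of the grading, and locality of the degree-zero part. For $\onecal$ and $X$ there is nothing new to do: Example \ref{example End1} already identifies $\End(\onecal)\cong\End(X)\cong\Rtau$, and since $\Rtau$ is non-negatively graded with $\Rtau^{0}=\kk$ a field, Remark \ref{local graded ring} shows it is a graded local ring.

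For $W\in\{Y,Z,XZ\}$ I would first use that $W$ is its own two-sided dual under the adjoint structure fixed just before Proposition \ref{prop adjunction equiv}; by the tensor adjunction (as in the proof of Proposition \ref{prop criterion for equivalence}, or via Proposition \ref{prop adjunction equiv}) this gives a degree-zero isomorphism of graded $\Rtau$-modules $\End(W)\cong\Hom(W\otimes W,\onecal)$. Then I would substitute the tensor decompositions of Lemma \ref{lemma tensor products in Htau}. From \eqref{gr group 4},
\[
	\End(Y)\cong\Hom(Y,\onecal)[1]\oplus\Hom(Y,\onecal)[-1]\oplus\Hom(Z,\onecal)\oplus\Hom(XZ,\onecal);
\]
and since $X\otimes X\cong\onecal$ by \eqref{gr group 1}, tensoring with $X$ is an autoequivalence, so $\End(XZ)\cong\End(Z)\cong\Hom(Z\otimes Z,\onecal)$, and \eqref{gr group 6} gives
\[
	\End(Z)\cong\End(XZ)\cong\Hom(Z,\onecal)[2]\oplus\Hom(Z,\onecal)\oplus\Hom(Z,\onecal)[-2]\oplus\Hom(XZ,\onecal).
\]
Plugging in Proposition \ref{prop HomXto1 in Hcaltau} --- where $\Hom(Y,\onecal)$ is $\Rtau$-free on the dot vector and its $(\alpha_{s}-\alpha_{t})$-multiple, hence isomorphic to a grading shift of $R\cong\Rtau\oplus\Rtau[2]$, while $\Hom(Z,\onecal)$ and $\Hom(XZ,\onecal)$ are $\Rtau$-free of rank one on the red-blue double dot, resp. its $(\alpha_{s}-\alpha_{t})$-multiple --- and then collecting grading shifts with the help of $R\cong\Rtau\oplus\Rtau[2]$, the two displays collapse to $\End(Y)\cong R\oplus R^{\oplus 2}[2]$ and $\End(Z)\cong\End(XZ)\cong\Rtau\oplus\Rtau[2]\oplus\Rtau[4]^{\oplus 2}$, which are manifestly free graded $\Rtau$-modules.

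Finally, each of these decompositions is non-negatively graded with degree-zero part $\kk$, so Remark \ref{local graded ring} applies once more and shows that all five endomorphism rings are graded local (and in particular reproves that $\onecal,X,Y,Z,XZ$ are indecomposable). The only point that needs real care is the grading bookkeeping: one must check that the isomorphism $\End(W)\cong\Hom(W\otimes W,\onecal)$, combined with the idempotent splittings underlying Lemma \ref{lemma tensor products in Htau}, is compatible with the grading, so that a summand $W[k]$ of $W\otimes W$ contributes exactly $\Hom(W,\onecal)[-k]$ with the correct sign of the shift; everything else is routine. I would also note that there is no circularity here: Proposition \ref{prop HomXto1 in Hcaltau} and Lemma \ref{lemma tensor products in Htau} are both established without reference to this proposition, whereas Lemma \ref{Ind unique decomposition}, which invokes it, is only used afterwards.
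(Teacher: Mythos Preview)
Your argument is correct and your circularity check is on point: Proposition \ref{prop HomXto1 in Hcaltau} and Lemma \ref{lemma tensor products in Htau} are proved from explicit idempotent decompositions and the $\Ind$--$\Res$ adjunction, without invoking Proposition \ref{prop end spaces Htau}, so you may freely use them. Your grading bookkeeping is also fine, since the cups and caps defining the biadjoint structure on $Y$, $Z$, $XZ$ have degree~$0$.

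Where you differ from the paper is in the route taken for $\End(Y)$. The paper exploits the special form $Y=\Ind(B_s)$ and applies the $\Ind$--$\Res$ adjunction (Proposition \ref{ind-res adjunction}) to push the computation down to the non-equivariant category:
\[
\Hom(Y,Y)=\Hom(\Ind(B_s),\Ind(B_s))\cong\Hom(B_s,B_s\oplus B_t)\cong R\oplus R[2]\oplus R[2],
\]
which is a one-line calculation in $\Hcal$ requiring no knowledge of the tensor product structure of $\Hcaltau$. Your approach instead stays inside $\Hcaltau$: self-duality gives $\End(W)\cong\Hom(W\otimes W,\onecal)$, and then the tensor decompositions of Lemma \ref{lemma tensor products in Htau} reduce everything to Proposition \ref{prop HomXto1 in Hcaltau}. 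The paper's route is shorter for $Y$ but does not obviously generalize; indeed the paper's proof omits the computation of $\End(Z)$ and $\End(XZ)$ entirely, whereas your method handles all three cases $Y$, $Z$, $XZ$ uniformly and actually supplies the missing verification for $Z$ and $XZ$.
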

\begin{proof}
	In Example \ref{example End1}, we calculated that
	\begin{equation}
		\End(\onecal)\cong
		\End(X)\cong \Rtau.
	\end{equation}
	Using Proposition \ref{ind-res adjunction},
	we get that
	\begin{align}
		\Hom(Y,Y)&=\Hom(\Ind(B_s),\Ind(B_s)) \\
		&\cong \Hom(B_s,B_s\oplus B_t)\\
		&\cong \Hom(B_s,B_s)\oplus \Hom(B_s,B_t) \\
		&\cong R \oplus R[2] \oplus R[2] \\
		&= R \oplus R^{\oplus 2}[2].
	\end{align}
			
	Since, all these endomorphism rings are non-negatively graded and their degree $0$ component is $\kk$, they are graded local rings.
\end{proof}

\excludeDiagrams{0}
{
The graded dimension of these $\Hom$ spaces as free $\Rtau$-modules is as follows.
\[
\grdim(\Hom(\onecal,\onecal))=1
\]
\[
\grdim(\Hom(\onecal,X))=v^2
\]
\[
\grdim(\Hom(\onecal,Y))=v+v^3
\]
\[
\grdim(\Hom(\onecal,Z))=v^2
\]
\[
\grdim(\Hom(\onecal,XZ))=v^4.
\]

\begin{defi}
Define the standard trace $\epsilon:\Acal\longrightarrow \ZZ[v^{\pm 1}]$
to be the linear map sending $\onecal\mapsto 1$, 
$X\mapsto v^2$,
$Y\mapsto v+v^3$,
$Z\mapsto v^2$,
and 
$XZ\mapsto v^4$.
\end{defi}

\begin{prop}
Let $A,B\in \Hcaltau$
The graded dimension
\[
\grdim(\Hom(A,B))=\epsilon(AB).
\]
\end{prop}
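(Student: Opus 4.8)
The plan is to reduce the statement, via the pivotal structure on $\Hcaltau$, to the computation of $\Hom(W,\onecal)$ for the five indecomposables $W\in\Ical$ already carried out in Proposition~\ref{prop HomXto1 in Hcaltau}, and then to propagate it through arbitrary direct sums and tensor products using that $\Hcaltau$ is graded Krull--Schmidt with indecomposables $\Ical$ (Corollary~\ref{coro Htau is Krull schmidt}) and that its Grothendieck ring is $\Acal$ (Theorem~\ref{thm grothendieck ring}). First note that, by Corollary~\ref{coro Hom spaces general} applied to the pivotal category $\Hcaltau$ together with Proposition~\ref{prop HomXto1 in Hcaltau}, every $\Hom$ space of $\Hcaltau$ is a free graded $\Rtau$-module, so $\grdim_{\Rtau}$ is well defined, additive over $\oplus$, and rescales by the appropriate power of $v$ under a grading shift; and the standard trace $\epsilon$ is $\ZZ[v^{\pm1}]$-linear on $\Acal$.

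The base case is $B=\onecal$. The trace $\epsilon$ is defined precisely so that $\epsilon(W)=\grdim_{\Rtau}\Hom(W,\onecal)$ for all $W\in\Ical$: reading off the free $\Rtau$-module generators exhibited in Proposition~\ref{prop HomXto1 in Hcaltau} (which lie in degrees $0$; $2$; $1$ and $3$; $2$; $4$ respectively) gives $\epsilon(\onecal)=1$, $\epsilon(X)=v^{2}$, $\epsilon(Y)=v+v^{3}$, $\epsilon(Z)=v^{2}$, $\epsilon(XZ)=v^{4}$. Since any $M\in\Hcaltau$ decomposes uniquely, up to reordering and shift, as $\bigoplus_i W_i[k_i]$ with $W_i\in\Ical$, additivity of $\grdim_{\Rtau}$ and linearity of $\epsilon$ yield $\grdim_{\Rtau}\Hom(M,\onecal)=\epsilon([M])$, where $[M]\in\Acal$ is the class of $M$.

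For the general case, observe that each object of $\Ical$ is its own biadjoint (this is the list of cup/cap data recorded just before Proposition~\ref{prop adjunction equiv}), and that each of those cups and caps has degree $0$, being an identity, a two-color crossing, or a composite of a degree $(-1)$ trivalent vertex with a degree $(+1)$ dot. Hence every $B\in\Hcaltau$ has a biadjoint $B^{*}$, and the right tensor adjunction (Proposition~\ref{prop adjunction general} and~\ref{prop adjunction equiv}, or directly Corollary~\ref{coro Hom spaces general}) provides a degree-$0$ isomorphism of $\Rtau$-modules
\[
\Hom(A,B)\;\cong\;\Hom\!\left(A\otimes B^{*},\,\onecal\right).
\]
Now $A\otimes B^{*}$ decomposes into grading-shifted objects of $\Ical$ according to the product $[A][B^{*}]$ in $\Acal$ (Lemma~\ref{lemma tensor products in Htau}), so the base case gives $\grdim_{\Rtau}\Hom(A,B)=\epsilon([A][B^{*}])$. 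Since each $W\in\Ical$ is self-dual, $[B^{*}]=[B]$ when $B$ is an unshifted object of $\Ical$ (and in general $[B^{*}]$ is the image of $[B]$ under the bar involution $v\mapsto v^{-1}$), so $\grdim_{\Rtau}\Hom(A,B)=\epsilon(AB)$, as claimed. The only point requiring care is the grading bookkeeping: verifying that the displayed adjunction isomorphism really is degree $0$ (equivalently, that the self-biadjunction data for $\Ical$ lies in degree $0$) and tracking how grading shifts interact with dualization so as to locate where the bar enters; everything else is formal additivity of $\grdim_{\Rtau}$ and $\ZZ[v^{\pm1}]$-linearity of $\epsilon$.
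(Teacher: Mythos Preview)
Your argument has a genuine gap in the handling of grading shifts, and in fact the identity $\grdim_{\Rtau}\Hom(A,B)=\epsilon([A][B])$ cannot hold for arbitrary $A,B\in\Hcaltau$. The problem already appears in your base case: you assert that for $M=\bigoplus_i W_i[k_i]$ with $W_i\in\Ical$, additivity and $\ZZ[v^{\pm1}]$-linearity give $\grdim_{\Rtau}\Hom(M,\onecal)=\epsilon([M])$. But the two sides scale oppositely under shifts. Using the paper's convention $\Hom(X[k],Y[m])=\Hom(X,Y)[m-k]$ one has $\grdim_{\Rtau}\Hom(W[k],\onecal)=v^{-k}\,\epsilon(W)$, whereas $\epsilon([W[k]])=v^{k}\,\epsilon(W)$. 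Concretely, $\grdim_{\Rtau}\Hom(\onecal[1],\onecal)=v^{-1}$ while $\epsilon([\onecal[1]])=v$. So the functional $M\mapsto\grdim_{\Rtau}\Hom(M,\onecal)$ is $\ZZ[v^{\pm1}]$-\emph{antilinear}, not linear, with respect to the bar $v\mapsto v^{-1}$.

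What your adjunction argument actually establishes is
\[
\grdim_{\Rtau}\Hom(A,B)\;=\;\epsilon\bigl(\overline{[A]}\,[B]\bigr),
\]
where the bar is the $\ZZ$-linear ring involution on $\Acal$ sending $v\mapsto v^{-1}$ and fixing the classes of the self-dual generators in $\Ical$ (this is well defined because all the defining relations of $\Acal$ are bar-invariant). Indeed, $\grdim_{\Rtau}\Hom(M,\onecal)=\epsilon(\overline{[M]})$ by the computation above, and combining with the degree-$0$ adjunction $\Hom(A,B)\cong\Hom(A\otimes B^{*},\onecal)$ and $[B^{*}]=\overline{[B]}$ gives the displayed formula. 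You yourself note that ``in general $[B^{*}]$ is the image of $[B]$ under the bar involution'' but then conclude $\epsilon(AB)$ regardless; that is precisely the step that fails. The unbarred formula $\epsilon([A][B])$ is correct only when at least one of $[A],[B]$ is bar-invariant, e.g.\ when $A$ or $B$ lies in the additive closure of $\Ical$ with no grading shifts. For the record, the paper states this proposition without proof (it sits in a block that is suppressed in the compiled version); the natural fix is to insert the bar on one argument, after which your argument goes through.
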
}

\section{The Diagrammatic Folded Hecke Category of type $A_1\times A_1$}\label{sec 3 diagrammatic folded category}

In Section \ref{subsection Equivariantization of Soergel} we showed that the category $\Hcaltau$ has indecomposable objects
$\onecal:=(\onecal, 1)$,
$\Xobj=(\onecal, -1)$, $\Yobj=(B_s\oplus B_t, {\tiny \begin{pmatrix}
0&1 \\ 1&0
\end{pmatrix}} )$,
 $\Zobj=
 \left(B_s B_t,
 \Cross{blue}{red}
 \right)$
 ,
  and
$\XZobj=\left(B_s B_t,
- \Cross{blue}{red} \right)$
 (up to grading shift).
We also showed $\Hcaltau$ is a Krull-Schmidt category.

In Section \ref{subsection free category}, we will define a `free' model for our category $\Dcalpre$ and a functor $\Fbold: \Dcalpre\longrightarrow \Hcal^\tau$.
In Section \ref{subsection-presentation} we define our diagrammatic category $\Dcal_{A_1\times A_1}$, as a quotient of $\Dcalpre$ by a given set of relations.
We then show that the relations defining $\Dcal_{A_1\times A_1}$ are satisfied through the functor $\Fbold$, so this functor descends to a functor $\Fboldtilde:\Dcal_{A_1\times A_1}\longrightarrow \Hcaltau$.
In Sections \ref{subsection poly forcing 1} and \ref{subsection idempotent decomposition} we show the general polynomial forcing formulas, as well as the idempotent decompositions in $\Dcal_{A_1\times A_1}$.
In Section \ref{subsection orange strand}, we show some manipulations to be able to slide orange strands (corresponding to the identity of the invertible element $\Xobjbar$),
which simplify our diagrams.
Finally, in Section \ref{subsection-diagram reduction} we prove Theorem \ref{diagram reduction}.
Some proofs may require the relations derived in Appendix \ref{appendix-relations}, derived from the ones given in
Definition \ref{relations equiv category}.

\subsection{The Free Diagrammatic Category $\Dcalpre$}\label{subsection free category}

\begin{defi}
Define the \textit{free diagrammatic category}, $\Dcalpre$ 
to be the graded strict monoidal $\Rtau$-linear category,
generated by the self-adjoint objects
$\Xobjbar$, $\Yobjbar$, and $\Zobjbar$, 
with respect to direct sums, tensor products and degree shifts.
The unit object of $\Dcalpre$ will be denoted by $\onecalbar$.
The identity morphisms of  $\Xobjbar$, $\Yobjbar$, and $\Zobjbar$ are represented
by the diagrams $\idtext{orange}$, $\idtext{green}$, and $\idtext{brown}$ respectively.
The cup morphisms are represented by a cup diagram in the appropriate color,
and so are the cap morphisms.
The following is a list of the local generators for the morphism diagrams of $\Dcalpre$ listed by their degree:
\excludeDiagrams{1}
{
\begin{center}
	\begin{minipage}[t]{0.44\textwidth}
		Degree -2:
		\begin{equation}
			\begin{boxedstrings}
				\trivu{(0,0)}{brown}
				\idfigparam{(nd-1-1)}{-0.25}{brown}
				\idfigparam{(nd-1-2)}{0.25}{brown}
				\idfigparam{(nd-1-3)}{0.25}{brown}
			\end{boxedstrings}.
		\end{equation}
		
		Degree -1:
		\begin{equation}
			\begin{boxedstrings}
				\trivu{(0,0)}{green}
				\idfigparam{(nd-1-1)}{-0.25}{green}
				\idfigparam{(nd-1-2)}{0.25}{green}
				\idfigparam{(nd-1-3)}{0.25}{green}
			\end{boxedstrings},
			\begin{boxedstrings}
				\trivuparamcol{(0,0)}{0.5}{0.5}{green}{brown}{brown}
				\idfigparam{(nd-1-1)}{-0.25}{green}
				\idfigparam{(nd-1-2)}{0.25}{brown}
				\idfigparam{(nd-1-3)}{0.25}{brown}
			\end{boxedstrings}.
		\end{equation}

		Degree 0:
		\begin{equation}\label{trivalent vetices simple}
			\begin{boxedstrings}
				\trivuparamcol{(0,0)}{0.5}{0.5}{brown}{green}{green}
				\idfigparam{(nd-1-1)}{-0.25}{brown}
				\idfigparam{(nd-1-2)}{0.25}{green}
				\idfigparam{(nd-1-3)}{0.25}{green}
			\end{boxedstrings}
			,
			\begin{boxedstrings}
				\trivuparamcol{(0,0)}{0.5}{0.5}{orange}{green}{green}
				\idfigparam{(nd-1-1)}{-0.25}{orange}
				\idfigparam{(nd-1-2)}{0.25}{green}
				\idfigparam{(nd-1-3)}{0.25}{green}
			\end{boxedstrings}
			,
		\end{equation}
		\begin{equation}\label{crossings vertices}
			\begin{boxedstrings}
				\Xfig{(0,0)}{brown}{orange}
				\idfigparam{(nd-1-1)}{-0.25}{brown}
				\idfigparam{(nd-1-2)}{-0.25}{orange}
				\idfigparam{(nd-1-4)}{0.25}{brown}
				\idfigparam{(nd-1-3)}{0.25}{orange}
			\end{boxedstrings}
			,
			\begin{boxedstrings}
				\Xfig{(0,0)}{green}{orange}
				\idfigparam{(nd-1-1)}{-0.25}{green}
				\idfigparam{(nd-1-2)}{-0.25}{orange}
				\idfigparam{(nd-1-4)}{0.25}{green}
				\idfigparam{(nd-1-3)}{0.25}{orange}
			\end{boxedstrings}
			,
			\begin{boxedstrings}
				\Xfig{(0,0)}{orange}{orange}
				\idfigparam{(nd-1-1)}{-0.25}{orange}
				\idfigparam{(nd-1-2)}{-0.25}{orange}
				\idfigparam{(nd-1-4)}{0.25}{orange}
				\idfigparam{(nd-1-3)}{0.25}{orange}
			\end{boxedstrings}
			.
		\end{equation}
	\end{minipage}
	\hspace{0.5cm}
	\begin{minipage}[t]{0.44\textwidth}

		Degree 1:
		\begin{equation}\label{bivalent vertices}
			\begin{boxedstrings}
				\dotuparam{(0,0)}{0.6}{0.06}{green}
				\path (nd-1-1) ++ (0,1);
			\end{boxedstrings}
			,
			\begin{boxedstrings}
				\idfigparam{(0,1)}{0.5}{green}
				\idfigparam{(nd-1-2)}{0.5}{brown}
			\end{boxedstrings}
			,
			\begin{boxedstrings}
				\idfigparam{(0,1)}{0.5}{orange}
				\idfigparam{(nd-1-2)}{0.5}{green}
			\end{boxedstrings}
			.      
		\end{equation}

		Degree 2:
		\begin{equation}
			\begin{boxedstrings}
				\dotuparam{(0,0)}{0.6}{0.06}{orange}
				\path (nd-1-1) ++ (0,1);
			\end{boxedstrings},
			\begin{boxedstrings}
				\dotuparam{(0,0)}{0.6}{0.06}{brown}
				\path (nd-1-1) ++ (0,1);
			\end{boxedstrings}.
		\end{equation}
		
		Degree $\deg(f)$ for $f\in \Rtau$:
		\begin{equation}
			\begin{boxedstrings}
				\polybox{(0,0.5)}{$f$}
				\path (0,0) ++ (0,1);
			\end{boxedstrings}.
		\end{equation}
	\end{minipage}
\end{center}

}
We also require polynomial boxes to satisfy relations \eqref{polynomial sum} and \eqref{polynomial mult}.
We set  $\deg(\alpha_s)=2$ and $\deg(\alpha_t)=2$.

The diagrams in \eqref{bivalent vertices} joining two different colored strands will be called \textit{bivalent vertices}.
The green and orange trivalent vertex in \eqref{trivalent vetices simple} will
be called an \textit{orange landing}.
The diagrams in \eqref{crossings vertices} will be called \textit{orange crossings}.
The trivalent (resp. bivalent) vertices in green or brown (with no orange strands) will be called \textit{GB trivalent (resp. bivalent) vertices}.

Here composition is given by vertical stacking and tensor product by horizontal pasting of diagrams.

A diagram in $\Dcalpre$ will be called an \textit{equivariant Hecke diagram of type $A_1\times A_1$}.
\end{defi}

\begin{rema}\label{rema not needed generators}
	In Section \ref{subsection-presentation} we will see that the generators 
	\begin{equation}
		\begin{boxedstrings}
			\trivuparamcol{(0,0)}{0.5}{0.5}{green}{brown}{brown}
			\idfigparam{(nd-1-1)}{-0.25}{green}
			\idfigparam{(nd-1-2)}{0.25}{brown}
			\idfigparam{(nd-1-3)}{0.25}{brown}
		\end{boxedstrings}
	,
		\begin{boxedstrings}
			\Xfig{(0,0)}{brown}{orange}
			\idfigparam{(nd-1-1)}{-0.25}{brown}
			\idfigparam{(nd-1-2)}{-0.25}{orange}
			\idfigparam{(nd-1-4)}{0.25}{brown}
			\idfigparam{(nd-1-3)}{0.25}{orange}
		\end{boxedstrings}
	,
		\begin{boxedstrings}
			\Xfig{(0,0)}{green}{orange}
			\idfigparam{(nd-1-1)}{-0.25}{green}
			\idfigparam{(nd-1-2)}{-0.25}{orange}
			\idfigparam{(nd-1-4)}{0.25}{green}
			\idfigparam{(nd-1-3)}{0.25}{orange}
		\end{boxedstrings}
	,
	\begin{boxedstrings}
		\Xfig{(0,0)}{orange}{orange}
		\idfigparam{(nd-1-1)}{-0.25}{orange}
		\idfigparam{(nd-1-2)}{-0.25}{orange}
		\idfigparam{(nd-1-4)}{0.25}{orange}
		\idfigparam{(nd-1-3)}{0.25}{orange}
	\end{boxedstrings}
	,
		\begin{boxedstrings}
			\idfigparam{(0,1)}{0.5}{green}
			\idfigparam{(nd-1-2)}{0.5}{brown}
		\end{boxedstrings}
	,
		\text{ and }
		\begin{boxedstrings}
			\idfigparam{(0,1)}{0.5}{orange}
			\idfigparam{(nd-1-2)}{0.5}{green}
		\end{boxedstrings},
	\end{equation}
	can be expressed in terms of the other generators in the quotient category $\Dcal_{A_1\times A_1}$. 
	In fact, their definition is given in relations \ref{vertex definitions} and \ref{bivalent definition right}
	.
	We choose to include these generators because they make the notation easier and facilitate the diagrammatic argument used in Section \ref{subsection-diagram reduction}.
\end{rema}

From now on, we will use the hat notation to differentiate the objects $\onecalbar,\Xobjbar,\Yobjbar,\Zobjbar\in \Dcalpre$  from the objects $\onecal,\Xobj,\Yobj,\Zobj\in \Hcaltau$. We will also define $\XZobjbar:=\Xobjbar \otimes \Zobjbar$ and 
$\Icalbar:=\{\onecalbar,\Xobjbar,\Yobjbar,\Zobjbar,\XZobjbar\}$.

\begin{defi}
We can now define a strict monoidal functor $\Fboldtilde: \Dcalpre\longrightarrow \Hcal^\tau$
that sends 
$\Xobjbar\mapsto \Xobj$, $\Yobjbar\mapsto \Yobj$, and $\Zobjbar\mapsto \Zobj$, and sends \vspace{-0.2cm}
\excludeDiagrams{1}
{

\begin{center}
	\begin{minipage}[t]{0.45\textwidth}
		\begin{align}
		\begin{boxedstrings}
			\capfigparam{(0,0)}{0.5}{0.5}{orange}
			\path (0,1)-- ++(0,-1);
		\end{boxedstrings}
			&\mapsto
			\begin{boxedstrings}
				\path (0,0) -- (0.3,1);
			\end{boxedstrings}
			\\
		\begin{boxedstrings}
			\capfigparam{(0,0)}{0.5}{0.5}{green}
			\path (0,1)-- ++(0,-1);
		\end{boxedstrings}
			&\mapsto
			{
				\begingroup
				\setlength\arraycolsep{0pt}
			\begin{pmatrix}
				\begin{unboxedstrings}
					\capfigparam{(0,0)}{0.5}{0.5}{red}
				\end{unboxedstrings}
				&
				\begin{unboxedstrings}
					\capfigparam{(0,0)}{0.5}{0.5}{blue}
				\end{unboxedstrings}
			\end{pmatrix}
				\endgroup
		}
			\\
		\begin{boxedstrings}
			\capfigparam{(0,0)}{0.5}{0.5}{brown}
			\path (0,1)-- ++(0,-1);
		\end{boxedstrings}
		&\mapsto
		\begin{boxedstrings}
			\capfigparam{(0.25,0)}{0.5}{0.5}{blue}
			\capfigparam{(0,0)}{0.5}{0.5}{red}
			\path (0,1) -- ++(0,-1);
		\end{boxedstrings}
			\\
		\begin{boxedstrings}
				\trivu{(0,0)}{brown}
				\idfigparam{(nd-1-1)}{-0.25}{brown}
				\idfigparam{(nd-1-2)}{0.25}{brown}
				\idfigparam{(nd-1-3)}{0.25}{brown}
			\end{boxedstrings}
			&\mapsto
		\begin{boxedstrings}
				\trivu{(0,0)}{red}
				\trivu{(0.25,0)}{blue}
				\idfigparam{(nd-1-1)}{-0.25}{red}
				\idfigparam{(nd-1-2)}{0.25}{red}
				\idfigparam{(nd-1-3)}{0.25}{red}
				\idfigparam{(nd-2-1)}{-0.25}{blue}
				\idfigparam{(nd-2-2)}{0.25}{blue}
				\idfigparam{(nd-2-3)}{0.25}{blue}
			\end{boxedstrings}
			\\
		\begin{boxedstrings}
				\trivu{(0,0)}{green}
				\idfigparam{(nd-1-1)}{-0.25}{green}
				\idfigparam{(nd-1-2)}{0.25}{green}
				\idfigparam{(nd-1-3)}{0.25}{green}
			\end{boxedstrings}
			&\mapsto
			{
			\begingroup
			\setlength\arraycolsep{0pt}
			\begin{pmatrix}
				\hspace{-0.1cm}
				\scaledfitstrings{
					\trivu{(0,0)}{red}
					\idfigparam{(nd-1-1)}{-0.25}{red}
					\idfigparam{(nd-1-2)}{0.25}{red}
					\idfigparam{(nd-1-3)}{0.25}{red}
				}
				\\ &
				\scaledfitstrings{
					\trivu{(0,0)}{blue}
					\idfigparam{(nd-1-1)}{-0.25}{blue}
					\idfigparam{(nd-1-2)}{0.25}{blue}
					\idfigparam{(nd-1-3)}{0.25}{blue}
				}
			\end{pmatrix}
			\endgroup
		}
		\\
		\begin{boxedstrings}
			\trivuparamcol{(0,0)}{0.5}{0.5}{green}{brown}{brown}
			\idfigparam{(nd-1-1)}{-0.25}{green}
			\idfigparam{(nd-1-2)}{0.25}{brown}
			\idfigparam{(nd-1-3)}{0.25}{brown}
		\end{boxedstrings}
		&\mapsto
		{
		\begingroup
		\setlength\arraycolsep{0pt}
		\def\arraystretch{1.2}
		\begin{pmatrix}
			\scaledfitstrings{
				\trivu{(0,0)}{red}
				\capfigparam{(0,-0.75)}{0.5}{0.4}{blue}
				\idfigparam{(nd-1-1)}{-0.25}{red}
				\idfigparam{(nd-1-2)}{0.25}{red}
				\idfigparam{(nd-1-3)}{0.25}{red}
			}
			\\ 
			\scaledfitstrings{
				\trivu{(0,0)}{blue}
				\capfigparam{(0,-0.75)}{0.5}{0.4}{red}
				\idfigparam{(nd-1-1)}{-0.25}{blue}
				\idfigparam{(nd-1-2)}{0.25}{blue}
				\idfigparam{(nd-1-3)}{0.25}{blue}
			}
		\end{pmatrix}
		\endgroup
		}
		\\
		\begin{boxedstrings}
			\trivuparamcol{(0,0)}{0.5}{0.5}{green}{orange}{green}
			\idfigparam{(nd-1-1)}{-0.25}{green}
			\idfigparam{(nd-1-2)}{0.25}{orange}
			\idfigparam{(nd-1-3)}{0.25}{green}
		\end{boxedstrings}
		&\mapsto
		{
		\begingroup
			\setlength\arraycolsep{2pt}
			\begin{pmatrix}
				\hspace{-0.1cm}
				\scaledfitstrings{
					\idfigparam{(0,1)}{1}{red}
				}
				\\ &
				-
				\hspace{-0.15cm}
				\scaledfitstrings{
					\idfigparam{(0,1)}{1}{blue}
				}
			\end{pmatrix}
			\endgroup
		}
		\\
		\begin{boxedstrings}
			\trivuparamcol{(0,0)}{0.5}{0.5}{brown}{green}{green}
			\idfigparam{(nd-1-1)}{-0.25}{brown}
			\idfigparam{(nd-1-2)}{0.25}{green}
			\idfigparam{(nd-1-3)}{0.25}{green}
		\end{boxedstrings}
		&\mapsto
		{
		\begingroup
		\setlength\arraycolsep{1pt}
		\begin{pmatrix}
			\hspace{-0.1cm}
			\begin{unboxedstrings}
				\idfigparam{(0,0)}{0.6}{red}
				\idfigparam{(0.25,0)}{0.6}{blue}
			\end{unboxedstrings}
			&
			\begin{unboxedstrings}
				\Xfigparam{(0,0)}{0.25}{0.25}{red}{blue}
				\idfigparam{(nd-1-1)}{-0.2}{red}
				\idfigparam{(nd-1-2)}{-0.2}{blue}
				\idfigparam{(nd-1-4)}{0.2}{red}
				\idfigparam{(nd-1-3)}{0.2}{blue}
			\end{unboxedstrings}
		\end{pmatrix}
		\endgroup
		}
		\\
		\begin{boxedstrings}
			\Xfig{(0,0)}{brown}{orange}
			\idfigparam{(nd-1-1)}{-0.25}{brown}
			\idfigparam{(nd-1-2)}{-0.25}{orange}
			\idfigparam{(nd-1-4)}{0.25}{brown}
			\idfigparam{(nd-1-3)}{0.25}{orange}
		\end{boxedstrings}
		&\mapsto
		\begin{boxedstrings}
			\idfig{(0,0)}{red}
			\idfig{(0.25,0)}{blue}
		\end{boxedstrings}
		\end{align}
	\end{minipage}
	\begin{minipage}[t]{0.45\textwidth}
	\begin{align}
		\begin{boxedstrings}
			\Xfig{(0,0)}{green}{orange}
			\idfigparam{(nd-1-1)}{-0.25}{green}
			\idfigparam{(nd-1-2)}{-0.25}{orange}
			\idfigparam{(nd-1-4)}{0.25}{green}
			\idfigparam{(nd-1-3)}{0.25}{orange}
		\end{boxedstrings}
		&\mapsto
		{
			\begingroup
			\setlength\arraycolsep{2pt}
			\begin{pmatrix}
				\hspace{-0.1cm}
				\scaledfitstrings{
					\idfigparam{(0,1)}{1}{red}
				}
				\\ &
				\scaledfitstrings{
					\idfigparam{(0,1)}{1}{blue}
				}
			\end{pmatrix}
			\endgroup
		}
		\\
		\begin{boxedstrings}
			\Xfig{(0,0)}{orange}{orange}
			\idfigparam{(nd-1-1)}{-0.25}{orange}
			\idfigparam{(nd-1-2)}{-0.25}{orange}
			\idfigparam{(nd-1-4)}{0.25}{orange}
			\idfigparam{(nd-1-3)}{0.25}{orange}
		\end{boxedstrings}
		&\mapsto
		\begin{boxedstrings}
			\path (0,0) -- (0.3,1);
		\end{boxedstrings}
		\\
		\begin{boxedstrings}
			\dotuparam{(0,0)}{0.6}{0.06}{green}
			\path (nd-1-1) ++ (0,1);
		\end{boxedstrings}
		&\mapsto
		{
			\begingroup
			\setlength\arraycolsep{0pt}
			\begin{pmatrix}
				\begin{unboxedstrings}
					\dotuparam{(0,0)}{0.5}{0.06}{red}
				\end{unboxedstrings}
				&
				\begin{unboxedstrings}
					\dotuparam{(0,0)}{0.5}{0.06}{blue}
				\end{unboxedstrings}
			\end{pmatrix}
			\endgroup
		}
		\\
		\begin{boxedstrings}
			\idfigparam{(0,1)}{0.5}{green}
			\idfigparam{(nd-1-2)}{0.5}{brown}
		\end{boxedstrings}
		&\mapsto
		{
			\begingroup
			\setlength\arraycolsep{0pt}
			\def\arraystretch{1.2}
			\begin{pmatrix}
				\scaledfitstrings{
					\idfigparam{(0,0)}{-1}{red}
					\dotuparam{(0.25,0)}{0.6}{0.06}{blue}
				}
				\\ 
				\scaledfitstrings{
					\idfigparam{(0.25,0)}{-1}{blue}
					\dotuparam{(0,0)}{0.6}{0.06}{red}
				}
			\end{pmatrix}
			\endgroup
			\hspace{0.1cm}
		}
		\\
		\begin{boxedstrings}
			\idfigparam{(0,1)}{0.5}{orange}
			\idfigparam{(nd-1-2)}{0.5}{green}
		\end{boxedstrings}
		&\mapsto
		{
			\begingroup
			\setlength\arraycolsep{0pt}
			\begin{pmatrix}
				\begin{unboxedstrings}
					\dotuparam{(0,0)}{0.5}{0.06}{red}
				\end{unboxedstrings}
				&
				-\hspace{-0.15cm}
				\begin{unboxedstrings}
					\dotuparam{(0,0)}{0.5}{0.06}{blue}
				\end{unboxedstrings}
			\end{pmatrix}
			\endgroup
		}
		\\
		\begin{boxedstrings}
			\dotuparam{(0,0)}{0.6}{0.06}{orange}
			\path (nd-1-1) ++ (0,1);
		\end{boxedstrings}
		&\mapsto
		\begin{boxedstrings}
			\polybox{(0,0.5)}{{$\alpha_s \! -\! \alpha_t$}}
			\path (0,0) ++ (0,1);
		\end{boxedstrings}
		\\
		\begin{boxedstrings}
			\dotuparam{(0,0)}{0.6}{0.06}{brown}
			\path (nd-1-1) ++ (0,1);
		\end{boxedstrings}
		&\mapsto
		\begin{boxedstrings}
			\dotuparam{(0,0)}{0.6}{0.06}{red}
			\dotuparam{(0.25,0)}{0.6}{0.06}{blue}
			\path (nd-1-1) ++ (0,1);
		\end{boxedstrings}
		\\
		\begin{boxedstrings}
			\polybox{(0,0.5)}{$f$}
			\path (0,0) ++ (0,1);
		\end{boxedstrings}
		&\mapsto
		\begin{boxedstrings}
			\polybox{(0,0.5)}{$f$}
			\path (0,0) ++ (0,1);
		\end{boxedstrings} 
	\text{ for }
	f\in\Rtau
	.
	\end{align}
	\end{minipage}
	\end{center}

}

\end{defi}

This functor is well defined since, by construction, the isotopy relations of $\Dcalpre$ also hold in $\Hcaltau$.

\subsection{Presentation}\label{subsection-presentation}

\begin{defi}\label{relations equiv category}
	
	Define the \textit{diagrammatic equivariant Hecke category of type $A_1\times A_1$}, which we denote by $\Dcal_{A_1\times A_1}$ (and abbreviate to $\Dcal$), to be the quotient of $\Dcalpre$ by the following relations.
	
	Barbell relations:
	\excludeDiagrams{1}
	{
		\begin{equation}\label{barbell rels}
			\begin{boxedstrings}
				\vbarb{(0,0.7)}{green}
				\path (0,0) ++ (0,1);
			\end{boxedstrings}
			=
			\begin{boxedstrings}
				\polybox{(0,0.5)}{$\alpha_s+\alpha_t$}
				\path (0,0) ++ (0,1);
			\end{boxedstrings},   
			\begin{boxedstrings}
				\vbarb{(0,0.7)}{brown}
				\path (0,0) ++ (0,1);
			\end{boxedstrings}
			=
			\begin{boxedstrings}
				\polybox{(0,0.5)}{$\alpha_s \alpha_t$}
				\path (0,0) ++ (0,1);
			\end{boxedstrings},   
			\begin{boxedstrings}
				\vbarb{(0,0.7)}{orange}
				\path (0,0) ++ (0,1);
			\end{boxedstrings}
			=
			\begin{boxedstrings}
				\polybox{(0,0.5)}{$(\alpha_s-\alpha_t)^2$}
				\path (0,0) ++ (0,1);
			\end{boxedstrings}.
		\end{equation}
	}
	
	Vertex definitions:
	\excludeDiagrams{1}
	{
		\begin{equation}\label{vertex definitions}
			\begin{boxedstrings}
				\trivuparamcol{(0,0)}{0.5}{0.5}{green}{brown}{brown}
				\idfigparam{(nd-1-1)}{-0.25}{green}
				\idfigparam{(nd-1-2)}{0.25}{brown}
				\idfigparam{(nd-1-3)}{0.25}{brown}
			\end{boxedstrings}
			   =
			\begin{boxedstrings}
				\Triangucol{(0,0)}{green}{green}{green}{green}{brown}{brown}
			\end{boxedstrings}
		,
			\begin{boxedstrings}
				\Xfig{(0,0)}{green}{orange}
				\idfigparam{(nd-1-1)}{-0.25}{green}
				\idfigparam{(nd-1-2)}{-0.25}{orange}
				\idfigparam{(nd-1-4)}{0.25}{green}
				\idfigparam{(nd-1-3)}{0.25}{orange}
			\end{boxedstrings}
			=
			\begin{boxedstrings}
				\trivdparamcol{(0,0)}{0.5}{0.5}{green}{orange}{green}
				\trivuparamcol{(nd-1-3)}{0.5}{0.5}{green}{orange}{green}
			\end{boxedstrings}
			,
		\end{equation}
		\begin{equation}\label{vertex definitions 2}
			\begin{boxedstrings}
				\Xfig{(0,0)}{orange}{orange}
				\idfigparam{(nd-1-1)}{-0.25}{orange}
				\idfigparam{(nd-1-2)}{-0.25}{orange}
				\idfigparam{(nd-1-4)}{0.25}{orange}
				\idfigparam{(nd-1-3)}{0.25}{orange}
			\end{boxedstrings}
		=
			\begin{boxedstrings}
				\idfigparam{(0,1)}{1}{orange}
				\idfigparam{(0.5,1)}{1}{orange}
			\end{boxedstrings}
			,
			\begin{boxedstrings}
				\Xfig{(0,0)}{brown}{orange}
				\idfigparam{(nd-1-1)}{-0.25}{brown}
				\idfigparam{(nd-1-2)}{-0.25}{orange}
				\idfigparam{(nd-1-4)}{0.25}{brown}
				\idfigparam{(nd-1-3)}{0.25}{orange}
			\end{boxedstrings}=
			-\frac{1}{2}
			\begin{boxedstrings}
				\Hfigparamcol{(0,1)}{0.5}{0.5}{brown}{orange}{green}{green}{green}
				\Hfigparamcol{(nd-1-3)}{0.5}{0.5}{green}{green}{green}{orange}{brown}
			\end{boxedstrings}.
		\end{equation}
		\begin{equation}\label{bivalent definition right}
		\begin{boxedstrings}
			\idfigparam{(0,1)}{0.5}{brown}
			\idfigparam{(nd-1-2)}{0.5}{green}
		\end{boxedstrings} =
		\begin{boxedstrings}
			\idfigparam{(0,1)}{0.5}{brown}
			\idfigparam{(nd-1-2)}{0.5}{green}
			\dotr{(nd-1-2)}{green}
		\end{boxedstrings}
		,
		\hspace{0.2cm}
		\begin{boxedstrings}
			\idfigparam{(0,1)}{0.5}{orange}
			\idfigparam{(nd-1-2)}{0.5}{green}
		\end{boxedstrings} =
		\begin{boxedstrings}
			\idfigparam{(0,1)}{0.5}{orange}
			\idfigparam{(nd-1-2)}{0.5}{green}
			\dotr{(nd-1-2)}{green}
		\end{boxedstrings}
		.
	\end{equation}
	
	}

	Circle and Needle relations:
	\excludeDiagrams{1}
	{
		\begin{equation}
			\label{needle relations}
				\begin{boxedstrings}
					\path (0,1) -- (0,0);
					\node (a) at (-0.2,0.5) {};
					\def\coltop{orange}
					\capfigparam{(a)}{0.4}{0.2}{\coltop}
					\cupfigparam{(a)}{0.4}{0.2}{\coltop}
				\end{boxedstrings}
				= 1
				,
				\hspace{0.2cm}
				\begin{boxedstrings}
					\node at (0,1) {};
					\node (a) at (-0.2,0.5) {};
					\def\coltop{green}
					\def\colbot{green}
					\capfigparam{(a)}{0.4}{0.2}{\coltop}
					\cupfigparam{(a)}{0.4}{0.2}{\coltop}
					\idfigparam {(nd-2-3)}{0.3}{\colbot}
				\end{boxedstrings}
				= 0
				,
				\hspace{0.2cm}
				\begin{boxedstrings}
					\node at (0,1) {};
					\node (a) at (-0.2,0.5) {};
					\def\coltop{green}
					\def\colbot{brown}
					\capfigparam{(a)}{0.4}{0.2}{\coltop}
					\cupfigparam{(a)}{0.4}{0.2}{\coltop}
					\idfigparam {(nd-2-3)}{0.3}{\colbot}
				\end{boxedstrings}
				= 0
				,
				\hspace{0.2cm}
				\begin{boxedstrings}
					\node at (0,1) {};
					\node (a) at (-0.2,0.5) {};
					\def\coltop{brown}
					\def\colbot{brown}
					\capfigparam{(a)}{0.4}{0.2}{\coltop}
					\cupfigparam{(a)}{0.4}{0.2}{\coltop}
					\idfigparam {(nd-2-3)}{0.3}{\colbot}
				\end{boxedstrings}
				= 0
				.
		\end{equation}
	}
	
	Unit relations:
	\excludeDiagrams{1}
	{

		\begin{equation}\label{one color unit rel}
			\begin{boxedstrings}
				\idfig{(1,1)}{green}
				\dotl{(nd-1-3)}{green}
			\end{boxedstrings}
			=
			\begin{boxedstrings}
				\idfig{(1,1)}{green}
			\end{boxedstrings}
		,
			\hspace{0.2cm}
			\begin{boxedstrings}
				\idfig{(1,1)}{brown}
				\dotl{(nd-1-3)}{brown}
			\end{boxedstrings}
			=
			\begin{boxedstrings}
				\idfig{(1,1)}{brown}
			\end{boxedstrings}  
			,
		\end{equation}
		
		\begin{equation}\label{bivalent definition left}
			\begin{boxedstrings}
				\idfigparam{(0,1)}{0.5}{brown}
				\idfigparam{(nd-1-2)}{0.5}{green}
				\dotl{(nd-1-2)}{green}
			\end{boxedstrings} 
			=
			\begin{boxedstrings}
				\idfigparam{(0,1)}{0.5}{brown}
				\idfigparam{(nd-1-2)}{0.5}{green}
			\end{boxedstrings} 
		,
			\begin{boxedstrings}
				\idfigparam{(0,1)}{0.5}{orange}
				\idfigparam{(nd-1-2)}{0.5}{green}
				\dotl{(nd-1-2)}{green}
			\end{boxedstrings} 
			=
			\begin{boxedstrings}
				\idfigparam{(0,1)}{0.5}{orange}
				\idfigparam{(nd-1-2)}{0.5}{green}
			\end{boxedstrings}
		,
		\end{equation}

		\begin{equation}\label{brown green unit rel}
			2
			\begin{boxedstrings}
				\idfig{(1,1)}{green}
				\dotl{(nd-1-3)}{brown}
			\end{boxedstrings}
			=
			\begin{boxedstrings}
				\dotd{(1,1)}{green}
				\dotu{(1,0)}{green}
			\end{boxedstrings}
			-
			\begin{boxedstrings}
				\idfigparam{(0,1)}{0.3}{green}
				\idfigparam{(nd-1-2)}{0.4}{orange}
				\idfigparam{(nd-2-2)}{0.3}{green}
			\end{boxedstrings},		
		\end{equation}

		\begin{equation}\label{orange green green =0 unit rel}
			\begin{boxedstrings}
				\idfigparam{(0,1)}{0.3}{orange}
				\idfigparam{(nd-1-2)}{0.4}{green}
				\idfigparam{(nd-2-2)}{0.3}{brown}
			\end{boxedstrings}
			=
			0 ,
		\end{equation}

		\begin{equation}\label{green orange unit rel}
			\begin{boxedstrings}
				\idfigparam{(0,0)}{-0.35}{orange}
				\idfigparam{(nd-1-2)}{-0.35}{green}
				\dotfig{(nd-2-2)}{green}
				\path (nd-1-1) ++ (0,1);
			\end{boxedstrings}
			=
			\begin{boxedstrings}
				\dotuparam{(0,0)}{0.4}{0.06}{orange}
				\path (nd-1-1) ++ (0,1);
			\end{boxedstrings}
			.
		\end{equation}
		Note that \eqref{bivalent definition right} and \eqref{bivalent definition left} are mirror images of each other (and we need them both).
	}
	
	Polynomial forcing relation:
	\excludeDiagrams{1}
	{
		\begin{equation}\label{poly forcing green 1}
			\begin{boxedstrings}
				\vbarb{(0.75,0.7)}{green}
				\idfig{(1,1)}{green}
			\end{boxedstrings}
			+
			\begin{boxedstrings}
				\idfig{(1,1)}{green}
				\dotr{(nd-1-3)}{orange}
			\end{boxedstrings}
			=
			\begin{boxedstrings}
				\dotd{(1,1)}{green}
				\dotu{(1,0)}{green}
			\end{boxedstrings}
			+
			\begin{boxedstrings}
				\idfigparam{(0,1)}{0.3}{green}
				\idfigparam{(nd-1-2)}{0.4}{orange}
				\idfigparam{(nd-2-2)}{0.3}{green}
			\end{boxedstrings}
			.
		\end{equation}
	
	}
	
	Sliding of the orange landing:
	\excludeDiagrams{1}
	{
		
		\begin{equation}\label{XY idempotent decomp}
			\begin{boxedstrings}
				\Ilongparam{(0,1)}{0.5}{1}{orange}{green}{green}{orange}{green}
			\end{boxedstrings}
			=
			\begin{boxedstrings}
				\idfigparam{(0,1)}{1}{orange}
				\idfigparam{(0.25,1)}{1}{green}
			\end{boxedstrings}
			,
		\end{equation}
		
		\begin{equation}\label{GGG orange}
			\begin{boxedstrings}
				\trivu{(0.25,0.75)}{green}
				\idfigparam{(nd-1-1)}{-0.25}{green}
				\idfigparam{(nd-1-2)}{0.25}{green}
				\idfigparam{(nd-1-3)}{0.25}{green}
				\draw[orange] (nd-1-1) -- (0.75,0.75) -- (0.75,1);
			\end{boxedstrings}
			=
			\begin{boxedstrings}
				\trivu{(0.25,0.75)}{green}
				\idfigparam{(nd-1-1)}{-0.25}{green}
				\idfigparam{(nd-1-2)}{0.25}{green}
				\idfigparam{(nd-1-3)}{0.25}{green}
				\draw[orange] (nd-1-3) -- (0.75,0.5) -- (0.75,1);
			\end{boxedstrings},
		\end{equation}
		
		\begin{equation}\label{GGB orange 1}
			\begin{boxedstrings}
				\trivuparamcol{(0.25,0.75)}{0.5}{0.5}{brown}{green}{green}
				\idfigparam{(nd-1-1)}{-0.25}{brown}
				\idfigparam{(nd-1-2)}{0.25}{green}
				\idfigparam{(nd-1-3)}{0.25}{green}
				\draw[orange] (nd-1-2) -- (0.25,0);
			\end{boxedstrings}
			=-
			\begin{boxedstrings}
				\trivuparamcol{(0.25,0.75)}{0.5}{0.5}{brown}{green}{green}
				\idfigparam{(nd-1-1)}{-0.25}{brown}
				\idfigparam{(nd-1-2)}{0.25}{green}
				\idfigparam{(nd-1-3)}{0.25}{green}
				\draw[orange] (nd-1-3) -- (0.25,0);
			\end{boxedstrings},
		\end{equation}
		
		\begin{equation}\label{GGB triv orange 2}
			\begin{boxedstrings}
				\trivuparamcol{(0.25,0.75)}{0.5}{0.5}{brown}{green}{green}
				\idfigparam{(nd-1-1)}{-0.25}{brown}
				\idfigparam{(nd-1-2)}{0.25}{green}
				\idfigparam{(nd-1-3)}{0.25}{green}
				\draw[orange] (nd-1-2) -- (0,0.5) -- (nd-1-1) -- (0.75,0.75) -- (0.75,1);
			\end{boxedstrings}
			=
			-
			\begin{boxedstrings}
				\trivuparamcol{(0.25,0.75)}{0.5}{0.5}{brown}{green}{green}
				\idfigparam{(nd-1-1)}{-0.25}{brown}
				\idfigparam{(nd-1-2)}{0.25}{green}
				\idfigparam{(nd-1-3)}{0.25}{green}
				\draw[orange] (nd-1-3) -- (0.75,0.5) -- (0.75,1);
			\end{boxedstrings},
		\end{equation}

	}
	
	Bigon and triangle relations:
	\excludeDiagrams{1}
	{
		\begin{equation}\label{bigon brown}
			\begin{boxedstrings}
				\node at (0,1) {};
				\node (a) at (-0.2,0.5) {};
				\def\coltop{green}
				\def\colbot{brown}
				\def\colside{orange}
				\capfigparam{(a)}{0.4}{0.2}{\coltop}
				\cupfigparam{(a)}{0.4}{0.2}{\coltop}
				\idfigparam {(0,0.3)}{0.3}{\colbot}
				\idfigparam {(0,0.7)}{-0.3}{\colbot}
			\end{boxedstrings}
			= 
			2
			\begin{boxedstrings}
				\idfig{(0,0)}{brown}
			\end{boxedstrings},
		\end{equation}
		
		\begin{equation}\label{bigon brown orange}
			\begin{boxedstrings}
				\Triangucol{(0,0)}{orange}{green}{green}{green}{brown}{brown}
			\end{boxedstrings}
			= 0
			.
		\end{equation}
	}
	
	H=I relations:
\excludeDiagrams{1}
{

	\begin{equation}\label{H=I one color}
		\begin{boxedstrings}
			\Hlongparam{(0,0)}{0.5}{1}{green}{green}{green}{green}{green}
		\end{boxedstrings}
		=
		\begin{boxedstrings}
			\Ilongparam{(0,0)}{0.5}{1}{green}{green}{green}{green}{green}
		\end{boxedstrings},
		\hspace{0.5cm}
		\begin{boxedstrings}
			\Hlongparam{(0,0)}{0.5}{1}{brown}{brown}{brown}{brown}{brown}
		\end{boxedstrings}
		=
		\begin{boxedstrings}
			\Ilongparam{(0,0)}{0.5}{1}{brown}{brown}{brown}{brown}{brown}
		\end{boxedstrings},
	\end{equation}

	\begin{equation}\label{H=I bicolor associativity}
		\begin{boxedstrings}
			\Hlongparam{(0,0)}{0.5}{1}{green}{brown}{brown}{brown}{green}
		\end{boxedstrings}
		=
		\begin{boxedstrings}
			\Ilongparam{(0,0)}{0.5}{1}{green}{brown}{brown}{brown}{green}
		\end{boxedstrings},
		\hspace{0.5cm}
		\begin{boxedstrings}
			\Hlongparam{(0,0)}{0.5}{1}{green}{brown}{brown}{brown}{brown}
		\end{boxedstrings}
		=
		\begin{boxedstrings}
			\Ilongparam{(0,0)}{0.5}{1}{green}{brown}{brown}{brown}{brown}
		\end{boxedstrings},
	\end{equation}

	\begin{equation}\label{H=I top brown}
		\begin{boxedstrings}
			\Hlongparam{(0,0)}{0.5}{1}{brown}{brown}{brown}{green}{green}
		\end{boxedstrings}
		=
		\begin{boxedstrings}
			\Ilongparam{(0,0)}{0.5}{1}{brown}{brown}{green}{green}{green}
		\end{boxedstrings} 
		+
		\begin{boxedstrings}
			\Ilongparam{(0,0)}{0.5}{1}{brown}{brown}{brown}{green}{green}
		\end{boxedstrings}    
	\end{equation}
	
	\begin{equation}\label{H=I L brown leg}
		2
		\begin{boxedstrings}
			\Hlongparam{(0,0)}{0.5}{1}{green}{green}{brown}{green}{brown}
		\end{boxedstrings}
		=
		2
		\begin{boxedstrings}
			\Ilongparam{(0,0)}{0.5}{1}{green}{green}{green}{green}{brown}
		\end{boxedstrings}
		+
		\begin{boxedstrings}
			\Ilongparam{(0,0)}{0.5}{1}{green}{green}{brown}{green}{brown}
		\end{boxedstrings}
		-
		\begin{boxedstrings}
			\Ilongparam{(0,0)}{0.5}{1}{green}{green}{brown}{green}{brown}
			\draw[orange] (nd-1-8) -- (nd-1-9);
		\end{boxedstrings}
	\end{equation}

	\begin{equation} \label{H=I middle brown} 
		2
		\begin{boxedstrings}
			\Hlongparam{(0,0)}{0.5}{1}{green}{green}{brown}{green}{green}
		\end{boxedstrings}
		=
		\begin{boxedstrings}
			\capfigparam{(0,0)}{0.5}{0.3}{green}
			\cupfigparam{(0,1)}{0.5}{0.3}{green}
		\end{boxedstrings}
		-
		\begin{boxedstrings}
			\capfigparam{(0,0)}{0.5}{0.3}{green}
			\cupfigparam{(0,1)}{0.5}{0.3}{green}
			\draw[orange] (nd-1-3) -- (nd-2-3);
		\end{boxedstrings}
		+
		\begin{boxedstrings}
			\Ilongparam{(0,0)}{0.5}{1}{green}{green}{brown}{green}{green}
		\end{boxedstrings}
		-
		\begin{boxedstrings}
			\Ilongparam{(0,0)}{0.5}{1}{green}{green}{brown}{green}{green}
			\draw[orange] (nd-1-8) -- (nd-1-9);
		\end{boxedstrings}
	\end{equation}

}
	
\end{defi}

\begin{lemma}\label{lemma F is well defined}
The relations in Definition \ref{relations equiv category}
hold after we apply the functor $\Fboldtilde:\Dcalpre\longrightarrow \Hcaltau$ 
(defined in Section \ref{subsection free category}).
Thus, $\Fboldtilde$ descends to an essentially surjective functor
$\Fbold:\Dcal\longrightarrow \Hcaltau$.
\end{lemma}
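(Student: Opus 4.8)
The plan is to verify, one relation at a time, that the image under $\Fboldtilde$ of each defining relation in Definition~\ref{relations equiv category} is a valid identity of morphisms in $\Hcaltau$. Since $\Fboldtilde$ is already known to be a well-defined strict monoidal functor on the free category $\Dcalpre$ (it respects the isotopy relations and the polynomial relations \eqref{polynomial sum}, \eqref{polynomial mult} by construction), what remains is exactly the list of ``new'' relations imposed in the quotient. Once all of these are checked, $\Fboldtilde$ factors through the quotient to give $\Fbold:\Dcal\longrightarrow\Hcaltau$; essential surjectivity then follows because the objects $\onecal, \Xobj, \Yobj, \Zobj, \XZobj$ (the images of $\onecalbar,\Xobjbar,\Yobjbar,\Zobjbar,\XZobjbar$ and $\Xobjbar\otimes\Zobjbar$) already form a full set of indecomposables of $\Hcaltau$ by Proposition~\ref{prop full indecomposable set Htau}, so every object of $\Hcaltau$ is a direct sum of grading shifts of objects in the image.

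Concretely, I would organize the verification into the groups in which the relations are presented. First the barbell relations \eqref{barbell rels}: each reduces, after applying the definition of $\Fboldtilde$ on dots and using \eqref{red barbell} (in both colors) together with \eqref{poly slide red blue}, to a polynomial identity in $R$ such as $\alpha_s+\alpha_t$, $\alpha_s\alpha_t$, $(\alpha_s-\alpha_t)^2$ matching the images of the orange/green/brown barbells. Next, the vertex definitions \eqref{vertex definitions}--\eqref{bivalent definition right}: these are essentially bookkeeping, since several of the generators (the GB trivalent vertex, the orange crossings, the bivalent vertices) were \emph{defined} via $\Fboldtilde$ to be the matrices that make these identities hold; one checks the matrix products agree using the one-color relations in red and blue and the two-color relations \eqref{2 color relations A1xA1}. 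The circle and needle relations \eqref{needle relations} follow from the fact that an empty cycle in $\HcalA$ is zero (a needle, via repeated \eqref{red H=I}) together with the computation $\Fboldtilde$ of the orange cap-cup, which evaluates to $1$. The unit relations \eqref{one color unit rel}--\eqref{green orange unit rel} and the polynomial forcing relation \eqref{poly forcing green 1} unpack, after substituting the matrix images, into identities among $1\times 2$ and $2\times 1$ matrices of diagrams in $\Hcal$; each entry is checked using \eqref{red unit rel}, \eqref{2 color relations A1xA1}, \eqref{polynomial forcing}, and \eqref{poly slide red blue}. The sliding relations \eqref{XY idempotent decomp}--\eqref{GGB triv orange 2} and the bigon/triangle relations \eqref{bigon brown}, \eqref{bigon brown orange} are similar matrix computations; the factors of $2$ and $\tfrac12$ (legitimate since $\chr(\kk)\neq 2$) should fall out correctly. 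Finally the $H{=}I$ relations \eqref{H=I one color}--\eqref{H=I middle brown}: the one-color ones are just \eqref{red H=I} in red and blue; the bicolor ones reduce to associativity-type identities in $\Hcal$, and the ``brown on top / brown in the middle'' versions, after expanding $B_{\text{brown}}\mapsto B_sB_t$, become combinations of the $\HcalA$ relations applied separately in the two colors, glued by crossings.

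The main obstacle I expect is sheer volume and the care required in the relations involving the brown ($=B_sB_t$) strand and the orange crossings, especially \eqref{H=I top brown}, \eqref{H=I L brown leg}, and \eqref{H=I middle brown}: once $\Zobjbar$ is unfolded into $B_s\otimes B_t$, every diagram becomes a $2$-layer red/blue diagram, the equivariant structures are recorded as matrices over $\{s,t\}$-indexed tuples, and one must track sign conventions (the $-1$'s appearing in the images of the orange landing and the $\Xobjbar$-related bivalent vertices) and the normalizing scalars consistently through composition. There is no conceptual difficulty — each identity is a finite computation in the explicitly presented category $\Hcal$ — but it is the step where an error is most likely to creep in, so I would do it systematically, checking each matrix entry against the one-color and two-color relations of $\Hcal$ and recording the bookkeeping of which $\tau$-equivariant summand each entry lives in.
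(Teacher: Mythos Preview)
Your proposal is correct and follows essentially the same approach as the paper: verify each defining relation by direct matrix/diagrammatic computation in $\Hcaltau$, then invoke the classification of indecomposables for essential surjectivity. The paper carries out only a representative sample of the verifications explicitly (namely \eqref{poly forcing green 1}, \eqref{bigon brown}, \eqref{H=I L brown leg}, and \eqref{H=I middle brown}) and leaves the rest to the reader, but the strategy is identical to yours.
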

\begin{proof}
The fact that the relations hold comes down to diagrammatic calculations in $\Hcaltau$.
Let us show \eqref{poly forcing green 1},\eqref{bigon brown},\eqref{H=I L brown leg}, and \eqref{H=I middle brown}. We will leave the rest for the reader to verify.

\eqref{bigon brown} Since
\begin{align}
	\begin{boxedstrings}
		\trivuparamcol{(0,0)}{0.5}{0.5}{brown}{green}{green}
		\idfigparam{(nd-1-1)}{-0.25}{brown}
		\idfigparam{(nd-1-2)}{0.25}{green}
		\idfigparam{(nd-1-3)}{0.25}{green}
	\end{boxedstrings}
	&\mapsto
	{
		\begingroup
		\setlength\arraycolsep{1pt}
		\def\arraystretch{1.3}
		\begin{pmatrix}
			\hspace{-0.1cm}
			\begin{unboxedstrings}
				\idfigparam{(0,0)}{0.6}{red}
				\idfigparam{(0.25,0)}{0.6}{blue}
			\end{unboxedstrings}
			&
			\begin{unboxedstrings}
				\Xfigparam{(0,0)}{0.25}{0.25}{red}{blue}
				\idfigparam{(nd-1-1)}{-0.2}{red}
				\idfigparam{(nd-1-2)}{-0.2}{blue}
				\idfigparam{(nd-1-4)}{0.2}{red}
				\idfigparam{(nd-1-3)}{0.2}{blue}
			\end{unboxedstrings}
		\end{pmatrix}
		\endgroup
	}
\end{align}
and
\begin{align}
	\begin{boxedstrings}
		\trivdparamcol{(0,0)}{0.5}{0.5}{green}{green}{brown}
		\idfigparam{(nd-1-3)}{0.25}{brown}
		\idfigparam{(nd-1-2)}{-0.25}{green}
		\idfigparam{(nd-1-1)}{-0.25}{green}
	\end{boxedstrings}
	&\mapsto
	{
		\begingroup
		\setlength\arraycolsep{1pt}
		\def\arraystretch{1.3}
		\begin{pmatrix}
			\hspace{-0.2cm}
			\scaledstrings[0.75]{
				\idfigparam{(0,0)}{0.6}{red}
				\idfigparam{(0.25,0)}{0.6}{blue}
			}
			\\
			\hspace{-0.2cm}
			\scaledstrings[0.75]{
				\Xfigparam{(0,0)}{0.25}{0.25}{blue}{red}
				\idfigparam{(nd-1-1)}{-0.2}{blue}
				\idfigparam{(nd-1-2)}{-0.2}{red}
				\idfigparam{(nd-1-4)}{0.2}{blue}
				\idfigparam{(nd-1-3)}{0.2}{red}
			}
		\end{pmatrix}
		\endgroup
	}
	,
\end{align}
then
\begin{align}
	\begin{boxedstrings}
		\node at (0,1) {};
		\node (a) at (-0.2,0.5) {};
		\def\coltop{green}
		\def\colbot{brown}
		\def\colside{orange}
		\capfigparam{(a)}{0.4}{0.2}{\coltop}
		\cupfigparam{(a)}{0.4}{0.2}{\coltop}
		\idfigparam {(0,0.3)}{0.3}{\colbot}
		\idfigparam {(0,0.7)}{-0.3}{\colbot}
	\end{boxedstrings}
	&\mapsto
	{
		\begingroup
		\setlength\arraycolsep{1pt}
		\def\arraystretch{1.3}
		\begin{pmatrix}
			\hspace{-0.1cm}
			\begin{unboxedstrings}
				\idfigparam{(0,0)}{0.6}{red}
				\idfigparam{(0.25,0)}{0.6}{blue}
			\end{unboxedstrings}
			&
			\begin{unboxedstrings}
				\Xfigparam{(0,0)}{0.25}{0.25}{red}{blue}
				\idfigparam{(nd-1-1)}{-0.2}{red}
				\idfigparam{(nd-1-2)}{-0.2}{blue}
				\idfigparam{(nd-1-4)}{0.2}{red}
				\idfigparam{(nd-1-3)}{0.2}{blue}
			\end{unboxedstrings}
		\end{pmatrix}
		\endgroup
	}
	{
		\begingroup
		\setlength\arraycolsep{1pt}
		\def\arraystretch{1.3}
		\begin{pmatrix}
			\hspace{-0.2cm}
			\scaledstrings[0.75]{
				\idfigparam{(0,0)}{0.6}{red}
				\idfigparam{(0.25,0)}{0.6}{blue}
			}
			\\
			\hspace{-0.2cm}
			\scaledstrings[0.75]{
				\Xfigparam{(0,0)}{0.25}{0.25}{blue}{red}
				\idfigparam{(nd-1-1)}{-0.2}{blue}
				\idfigparam{(nd-1-2)}{-0.2}{red}
				\idfigparam{(nd-1-4)}{0.2}{blue}
				\idfigparam{(nd-1-3)}{0.2}{red}
			}
		\end{pmatrix}
		\endgroup
	}
	=
	2 
	\begin{boxedstrings}
		\idfigparam{(0,0)}{1}{red}
		\idfigparam{(0.25,0)}{1}{blue}
	\end{boxedstrings}
	.
\end{align}

\eqref{poly forcing green 1}
The left hand side of the equation is sent to
\begin{align}
	\begin{boxedstrings}
		\vbarb{(0.75,0.7)}{green}
		\idfig{(1,1)}{green}
	\end{boxedstrings}
	+
	\begin{boxedstrings}
		\idfig{(1,1)}{green}
		\dotr{(nd-1-3)}{orange}
	\end{boxedstrings}
	&\mapsto
	{
		\begingroup
		\setlength\arraycolsep{0pt}
		\def\arraystretch{1.3}
		\begin{pmatrix}
			\hspace{-0.2cm}
			\scaledstrings{
				\polyboxscale{(-1,0.5)}{$\alpha_s+\alpha_t$}{2}
				\draw[red, line width=1mm] (0,0)-- ++(0,1);
			}
			+
			\scaledstrings{
				\polyboxscale{(1,0.5)}{$\alpha_s-\alpha_t$}{2}
				\draw[red, line width=1mm] (0,0)-- ++(0,1);
			}
			\\
			&
			\scaledstrings{
				\polyboxscale{(-1,0.5)}{$\alpha_s+\alpha_t$}{2}
				\draw[blue, line width=1mm] (0,0)-- ++(0,1);
			}
			-
			\scaledstrings{
				\polyboxscale{(1,0.5)}{$\alpha_s-\alpha_t$}{2}
				\draw[blue, line width=1mm] (0,0)-- ++(0,1);
			}
		\end{pmatrix}
		\endgroup
	}
	\\
	&=
	{
		\begingroup
		\setlength\arraycolsep{0pt}
		\def\arraystretch{1.3}
		\begin{pmatrix}
			\hspace{-0.2cm}
			\scaledstrings{
				\polyboxscale{(-0.5,0.5)}{$\alpha_s$}{2}
				\draw[red, line width=1mm] (0,0)-- ++(0,1);
			}
			+
			\scaledstrings{
				\polyboxscale{(0.5,0.5)}{$\alpha_s$}{2}
				\draw[red, line width=1mm] (0,0)-- ++(0,1);
			}
			\\
			&
			\scaledstrings{
				\polyboxscale{(-0.5,0.5)}{$\alpha_t$}{2}
				\draw[blue, line width=1mm] (0,0)-- ++(0,1);
			}
			+
			\scaledstrings{
				\polyboxscale{(0.5,0.5)}{$\alpha_t$}{2}
				\draw[blue, line width=1mm] (0,0)-- ++(0,1);
			}
		\end{pmatrix}
		\endgroup
	}
	\\
	&=
	{
	\begingroup
	\setlength\arraycolsep{0pt}
	\def\arraystretch{1.3}
	2
	\begin{pmatrix}
		\hspace{-0.2cm}
		\scaledstrings{
			\tikzset{every path/.style={line width=1 mm}}
			\dotdparam{(0.75,1)}{0.3}{0.08}{red}
			\dotuparam{(0.75,0)}{0.3}{0.08}{red}
		}
		\\
		&
		\scaledstrings{
			\tikzset{every path/.style={line width=1 mm}}
			\dotdparam{(0.75,1)}{0.3}{0.08}{blue}
			\dotuparam{(0.75,0)}{0.3}{0.08}{blue}
		}
	\end{pmatrix}
	\endgroup
	}
	.
\end{align}

The right hand side of the equation is sent to
\begin{align}
	\begin{boxedstrings}
		\dotd{(1,1)}{green}
		\dotu{(1,0)}{green}
	\end{boxedstrings}
	+
	\begin{boxedstrings}
		\idfigparam{(0,1)}{0.3}{green}
		\idfigparam{(nd-1-2)}{0.4}{orange}
		\idfigparam{(nd-2-2)}{0.3}{green}
	\end{boxedstrings}
	&\mapsto
	{
		\begingroup
		\setlength\arraycolsep{0pt}
		\def\arraystretch{1.3}
		\begin{pmatrix}
			\hspace{-0.2cm}
			\scaledstrings{
				\tikzset{every path/.style={line width=1 mm}}
				\dotdparam{(0.75,1)}{0.3}{0.08}{red}
				\dotuparam{(0.75,0)}{0.3}{0.08}{red}
			}
			&
			\scaledstrings{
				\tikzset{every path/.style={line width=1 mm}}
				\dotdparam{(0.75,1)}{0.3}{0.08}{red}
				\dotuparam{(0.75,0)}{0.3}{0.08}{blue}
			}
			\\
			\hspace{-0.2cm}
			\scaledstrings{
				\tikzset{every path/.style={line width=1 mm}}
				\dotdparam{(0.75,1)}{0.3}{0.08}{blue}
				\dotuparam{(0.75,0)}{0.3}{0.08}{red}
			}
			&
			\scaledstrings{
				\tikzset{every path/.style={line width=1 mm}}
				\dotdparam{(0.75,1)}{0.3}{0.08}{blue}
				\dotuparam{(0.75,0)}{0.3}{0.08}{blue}
			}
		\end{pmatrix}
		+
		\begin{pmatrix}
			\hspace{-0.2cm}
			\scaledstrings{
				\tikzset{every path/.style={line width=1 mm}}
				\dotdparam{(0.75,1)}{0.3}{0.08}{red}
				\dotuparam{(0.75,0)}{0.3}{0.08}{red}
			}
			&
			-\hspace{-0.2cm}
			\scaledstrings{
				\tikzset{every path/.style={line width=1 mm}}
				\dotdparam{(0.75,1)}{0.3}{0.08}{red}
				\dotuparam{(0.75,0)}{0.3}{0.08}{blue}
			}
			\\
			-\hspace{-0.2cm}
			\scaledstrings{
				\tikzset{every path/.style={line width=1 mm}}
				\dotdparam{(0.75,1)}{0.3}{0.08}{blue}
				\dotuparam{(0.75,0)}{0.3}{0.08}{red}
			}
			&
			\scaledstrings{
				\tikzset{every path/.style={line width=1 mm}}
				\dotdparam{(0.75,1)}{0.3}{0.08}{blue}
				\dotuparam{(0.75,0)}{0.3}{0.08}{blue}
			}
		\end{pmatrix}
		\endgroup
	}
	=
	{
		\begingroup
		\setlength\arraycolsep{0pt}
		\def\arraystretch{1.3}
		2
		\begin{pmatrix}
			\hspace{-0.2cm}
			\scaledstrings{
				\tikzset{every path/.style={line width=1 mm}}
				\dotdparam{(0.75,1)}{0.3}{0.08}{red}
				\dotuparam{(0.75,0)}{0.3}{0.08}{red}
			}
			\\
			&
			\scaledstrings{
				\tikzset{every path/.style={line width=1 mm}}
				\dotdparam{(0.75,1)}{0.3}{0.08}{blue}
				\dotuparam{(0.75,0)}{0.3}{0.08}{blue}
			}
		\end{pmatrix}
		\endgroup
	}
	.
\end{align}

\eqref{H=I L brown leg}
The summands of the right hand side are sent to
\begin{align}
	\label{eqn mapsto Ileg}
	2
	\begin{boxedstrings}
		\Ilongparam{(0,0)}{0.5}{1}{green}{green}{green}{green}{brown}
	\end{boxedstrings}
	&\mapsto
	{
	\begingroup
	\setlength\arraycolsep{0pt}
	\def\arraystretch{1.2}
	2
	\begin{pmatrix}
		&
		\scaledstrings{
			\trivdparam{(0,1)}{0.5}{0.5}{red}
			\idfigparam{(nd-1-3)}{0.5}{red}
			\capfigparam{(0,0)}{0.5}{0.3}{blue}
		}
		\\
		\hspace{-0.2cm}
		\scaledstrings{
			\trivdparam{(0.25,1)}{0.5}{0.5}{blue}
			\idfigparam{(nd-1-3)}{0.5}{blue}
			\capfigparam{(0,0)}{0.3}{0.3}{red}
		}		
	\end{pmatrix}
	\endgroup
	}
	\\
	\label{eqn I send L}
	\begin{boxedstrings}
		\Ilongparam{(0,0)}{0.5}{1}{green}{green}{brown}{green}{brown}
	\end{boxedstrings}
	&\mapsto
	{
		\begingroup
		\setlength\arraycolsep{0pt}
		\def\arraystretch{1.2}
		\begin{pmatrix}
			\hspace{-0.2cm}
			\scaledstrings{
				\trivuparam{(0.25,0.5)}{0.5}{0.5}{red}
				\idfigparam{(nd-1-1)}{-0.5}{red}
				\idfigparam{(0.75,0)}{-1}{blue}
			}
			&
			\scaledstrings{
				\Xfigparam{(0.5,1)}{0.25}{0.5}{red}{blue}
				\trivuparam{(nd-1-3)}{1}{0.5}{blue}
				\idfigparam{(nd-1-4)}{0.5}{red}
			}
			\\
			\hspace{-0.2cm}
			\scaledstrings{
				\Xfigparam{(0.25,1)}{0.5}{0.5}{blue}{red}
				\trivuparam{(nd-1-3)}{0.5}{0.5}{red}
				\idfigparam{(nd-1-4)}{0.5}{blue}
			}		
			&
			\scaledstrings{
				\trivuparam{(0.5,0.5)}{1}{0.5}{blue}
				\idfigparam{(nd-1-1)}{-0.5}{blue}
				\idfigparam{(0.75,0)}{-1}{red}
			}
		\end{pmatrix}
		\endgroup
	}
	\\
	-
	\begin{boxedstrings}
		\Ilongparam{(0,0)}{0.5}{1}{green}{green}{brown}{green}{brown}
		\draw[orange] (nd-1-8) -- (nd-1-9);
	\end{boxedstrings}
	&\mapsto
	{
		\begingroup
		\setlength\arraycolsep{0pt}
		\def\arraystretch{1.2}
		\begin{pmatrix}
			-
			\hspace{-0.2cm}
			\scaledstrings{
				\trivuparam{(0.25,0.5)}{0.5}{0.5}{red}
				\idfigparam{(nd-1-1)}{-0.5}{red}
				\idfigparam{(0.75,0)}{-1}{blue}
			}
			&
			\scaledstrings{
				\Xfigparam{(0.5,1)}{0.25}{0.5}{red}{blue}
				\trivuparam{(nd-1-3)}{1}{0.5}{blue}
				\idfigparam{(nd-1-4)}{0.5}{red}
			}
			\\
			\hspace{-0.2cm}
			\scaledstrings{
				\Xfigparam{(0.25,1)}{0.5}{0.5}{blue}{red}
				\trivuparam{(nd-1-3)}{0.5}{0.5}{red}
				\idfigparam{(nd-1-4)}{0.5}{blue}
			}		
			&
			-
			\hspace{-0.2cm}
			\scaledstrings{
				\trivuparam{(0.5,0.5)}{1}{0.5}{blue}
				\idfigparam{(nd-1-1)}{-0.5}{blue}
				\idfigparam{(0.75,0)}{-1}{red}
			}
		\end{pmatrix}
		\endgroup
	}
	\label{eqn I orange to L}
\end{align}
Rotating equation \eqref{eqn I send L} we get
\begin{equation}
	2
	\begin{boxedstrings}
		\Hlongparam{(0,0)}{0.5}{1}{green}{green}{brown}{green}{brown}
	\end{boxedstrings}
	\mapsto
	{
		\begingroup
		\setlength\arraycolsep{0pt}
		\def\arraystretch{1.2}
		2
		\begin{pmatrix}
			&
			\scaledstrings{
				\trivdparam{(0,1)}{0.5}{0.5}{red}
				\idfigparam{(nd-1-3)}{0.5}{red}
				\capfigparam{(0,0)}{0.5}{0.3}{blue}
			}
			\\
			&
			\scaledstrings{
				\Xfigparam{(0.5,1)}{0.25}{0.5}{red}{blue}
				\trivuparam{(nd-1-3)}{1}{0.5}{blue}
				\idfigparam{(nd-1-4)}{0.5}{red}
			}
			\\
			\hspace{-0.2cm}
			\scaledstrings{
				\Xfigparam{(0.25,1)}{0.5}{0.5}{blue}{red}
				\trivuparam{(nd-1-3)}{0.5}{0.5}{red}
				\idfigparam{(nd-1-4)}{0.5}{blue}
			}	
			\\
			\hspace{-0.2cm}
			\scaledstrings{
				\trivdparam{(0.25,1)}{0.5}{0.5}{blue}
				\idfigparam{(nd-1-3)}{0.5}{blue}
				\capfigparam{(0,0)}{0.3}{0.3}{red}
			}
			&
		\end{pmatrix}
		\endgroup
	}
	,
\end{equation}
which is exactly the sum of equations \eqref{eqn mapsto Ileg}-\eqref{eqn I orange to L}.

\eqref{H=I middle brown}
The summands of the right hand side are sent to
\begin{align}
	\label{cupcap mapsto}
	\begin{boxedstrings}
		\capfigparam{(0,0)}{0.5}{0.3}{green}
		\cupfigparam{(0,1)}{0.5}{0.3}{green}
	\end{boxedstrings}
	&\mapsto
	{
		\begingroup
		\setlength\arraycolsep{2pt}
		\def\arraystretch{1.2}
		\begin{pmatrix}
			\scaledfitstrings{
				\capfigparam{(0,0)}{0.5}{0.3}{red}
				\cupfigparam{(0,1)}{0.5}{0.3}{red}
			}
			&
			\scaledfitstrings{
				\capfigparam{(0,0)}{0.5}{0.3}{blue}
				\cupfigparam{(0,1)}{0.5}{0.3}{red}
			}
			\\
			\scaledfitstrings{
				\capfigparam{(0,0)}{0.5}{0.3}{red}
				\cupfigparam{(0,1)}{0.5}{0.3}{blue}
			}
			&
			\scaledfitstrings{
				\capfigparam{(0,0)}{0.5}{0.3}{blue}
				\cupfigparam{(0,1)}{0.5}{0.3}{blue}
			}
		\end{pmatrix}
		\endgroup
	}
	\\
	-
	\begin{boxedstrings}
		\capfigparam{(0,0)}{0.5}{0.3}{green}
		\cupfigparam{(0,1)}{0.5}{0.3}{green}
		\draw[orange] (nd-1-3) -- (nd-2-3);
	\end{boxedstrings}
	&\mapsto
	{
		\begingroup
		\setlength\arraycolsep{2pt}
		\def\arraystretch{1.2}
		\begin{pmatrix}
			-\hspace{-0.15cm}
			\scaledfitstrings{
				\capfigparam{(0,0)}{0.5}{0.3}{red}
				\cupfigparam{(0,1)}{0.5}{0.3}{red}
			}
			&
			\scaledfitstrings{
				\capfigparam{(0,0)}{0.5}{0.3}{blue}
				\cupfigparam{(0,1)}{0.5}{0.3}{red}
			}
			\\
			\scaledfitstrings{
				\capfigparam{(0,0)}{0.5}{0.3}{red}
				\cupfigparam{(0,1)}{0.5}{0.3}{blue}
			}
			&
			-\hspace{-0.15cm}
			\scaledfitstrings{
				\capfigparam{(0,0)}{0.5}{0.3}{blue}
				\cupfigparam{(0,1)}{0.5}{0.3}{blue}
			}
		\end{pmatrix}
		\endgroup
	}
	\\ \label{I middle brown mapsto}
	\begin{boxedstrings}
		\Ilongparam{(0,0)}{0.5}{1}{green}{green}{brown}{green}{green}
	\end{boxedstrings}
	&\mapsto
	{
		\begingroup
		\setlength\arraycolsep{2pt}
		\def\arraystretch{1.2}
		\begin{pmatrix}
			\scaledfitstrings{
				\idfig{(0,0)}{red}
				\idfig{(0.25,0)}{blue}
			}
			&
			\scaledfitstrings{
				\Xfigparam{(0,0)}{0.25}{0.5}{blue}{red}
				\idfigparam{(nd-1-1)}{-0.25}{blue}
				\idfigparam{(nd-1-2)}{-0.25}{red}
				\idfigparam{(nd-1-4)}{0.25}{blue}
				\idfigparam{(nd-1-3)}{0.25}{red}
			}
			\\
			\scaledfitstrings{
			\Xfigparam{(0,0)}{0.25}{0.5}{red}{blue}
			\idfigparam{(nd-1-1)}{-0.25}{red}
			\idfigparam{(nd-1-2)}{-0.25}{blue}
			\idfigparam{(nd-1-4)}{0.25}{red}
			\idfigparam{(nd-1-3)}{0.25}{blue}
			}
			&
			\scaledfitstrings{
				\idfig{(0,0)}{blue}
				\idfig{(0.25,0)}{red}
			}
		\end{pmatrix}
		\endgroup
	}
	\\
	-
	\begin{boxedstrings}
		\Ilongparam{(0,0)}{0.5}{1}{green}{green}{brown}{green}{green}
		\draw[orange] (nd-1-8) -- (nd-1-9);
	\end{boxedstrings}
	&\mapsto
	{
		\begingroup
		\setlength\arraycolsep{2pt}
		\def\arraystretch{1.2}
		\begin{pmatrix}
			-\hspace{-0.15cm}
			\scaledfitstrings{
				\idfig{(0,0)}{red}
				\idfig{(0.25,0)}{blue}
			}
			&
			\scaledfitstrings{
				\Xfigparam{(0,0)}{0.25}{0.5}{blue}{red}
				\idfigparam{(nd-1-1)}{-0.25}{blue}
				\idfigparam{(nd-1-2)}{-0.25}{red}
				\idfigparam{(nd-1-4)}{0.25}{blue}
				\idfigparam{(nd-1-3)}{0.25}{red}
			}
			\\
			\scaledfitstrings{
				\Xfigparam{(0,0)}{0.25}{0.5}{red}{blue}
				\idfigparam{(nd-1-1)}{-0.25}{red}
				\idfigparam{(nd-1-2)}{-0.25}{blue}
				\idfigparam{(nd-1-4)}{0.25}{red}
				\idfigparam{(nd-1-3)}{0.25}{blue}
			}
			&
			-\hspace{-0.15cm}
			\scaledfitstrings{
				\idfig{(0,0)}{blue}
				\idfig{(0.25,0)}{red}
			}
		\end{pmatrix}
		\endgroup
	}
	\label{I mid brown orange mapsto}
\end{align}

Rotating equation \eqref{I middle brown mapsto}, we get
\begin{equation}
	2	
	\begin{boxedstrings}
		\Hlongparam{(0,0)}{0.5}{1}{green}{green}{brown}{green}{green}
	\end{boxedstrings}
	\mapsto	
	2
	{
		\begingroup 
		\setlength\arraycolsep{0pt}
	\begin{pmatrix}
	&&&
	\scaledstrings{
		\capfigparam{(0,0)}{0.5}{0.3}{blue}
		\cupfigparam{(0,1)}{0.5}{0.3}{red}
		}	
		\\
	&&
		\scaledstrings{
			\Xfigparam{(0,0)}{0.25}{0.5}{red}{blue}
		\idfigparam{(nd-1-1)}{-0.25}{red}
		\idfigparam{(nd-1-2)}{-0.25}{blue}
		\idfigparam{(nd-1-4)}{0.25}{red}
		\idfigparam{(nd-1-3)}{0.25}{blue}
		}
		\\
	&
		\scaledstrings{
		\Xfigparam{(0,0)}{0.25}{0.5}{blue}{red}
		\idfigparam{(nd-1-1)}{-0.25}{blue}
		\idfigparam{(nd-1-2)}{-0.25}{red}
		\idfigparam{(nd-1-4)}{0.25}{blue}
		\idfigparam{(nd-1-3)}{0.25}{red}
		}
		\\
		\scaledstrings{
			\capfigparam{(0,0)}{0.5}{0.3}{red}
			\cupfigparam{(0,1)}{0.5}{0.3}{blue}
		}
	\end{pmatrix}
	\endgroup
	}
	,
\end{equation}
which is exactly the sum of equations \eqref{cupcap mapsto}-\eqref{I mid brown orange mapsto}.

Since the objects $\onecal,\Xobj,\Yobj,\Zobj,$ and $\XZobj$ are in the image of $\Fbold$, by Corollary \ref{coro Htau is Krull schmidt},  $\Fbold$ is essentially surjective.
\end{proof}

We will now prove that the functor $\Fbold$ is an equivalence. This will require the use of Theorem \ref{diagram reduction},
as well as Theorem \ref{thm idempotent decompositions}.

\begin{prop}\label{prop F is fully faithful Hom to 1}
	The functor $\Fbold$ induces an isomorphism on
	$\Hom\left(\onecalbar,\onecalbar\right)$,
	$\Hom\left(\Xobjbar,\onecalbar\right)$, $\Hom\left(\Yobjbar,\onecalbar\right)$, $\Hom\left(\Zobjbar,\onecalbar\right)$, and $\Hom\left(\XZobjbar,\onecalbar\right)$.
	In particular, these are all free $\Rtau$-modules.
\end{prop}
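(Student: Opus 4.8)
The plan is to check the statement one object at a time for $\widehat{A}\in\Icalbar$, setting the explicit description of $\Hom(A,\onecal)$ inside $\Hcaltau$ from Proposition \ref{prop HomXto1 in Hcaltau} against the spanning set for $\Hom\bigl(\widehat{A},\onecalbar\bigr)$ delivered by Theorem \ref{diagram reduction}. Since $\Dcal$ is $\Rtau$-linear and $\Fbold$ sends the polynomial box $f\in\Rtau$ to $f$, the induced map $\Fbold\colon\Hom\bigl(\widehat{A},\onecalbar\bigr)\to\Hom(A,\onecal)$ is $\Rtau$-linear and degree-preserving, and by Proposition \ref{prop HomXto1 in Hcaltau} its target is graded free over $\Rtau$, of graded ranks $1,v^{2},1+v^{2},v^{2},v^{4}$ for $\widehat{A}=\onecalbar,\Xobjbar,\Yobjbar,\Zobjbar,\XZobjbar$ respectively.

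First I would prove that $\Fbold$ is surjective on each of these $\Hom$ spaces by naming preimages of the $\Rtau$-module generators in Proposition \ref{prop HomXto1 in Hcaltau}, reading the values of $\Fbold$ on generators off Section \ref{subsection free category}. The empty diagram and polynomial boxes already hit $\Hom(\onecal,\onecal)=\Rtau$. The orange dot maps to $(\alpha_s-\alpha_t)\id_{\onecal}$, which freely generates $\Hom(\Xobj,\onecal)$. The brown dot maps to the red--blue double dot, which freely generates $\Hom(\Zobj,\onecal)$. The morphism $\Dotup{orange}\otimes\Dotup{brown}\colon\XZobjbar\to\onecalbar$ maps to $(\alpha_s-\alpha_t)$ times the red--blue double dot, which freely generates $\Hom(\XZobj,\onecal)=\Rantitau\cdot(\text{red--blue double dot})$ over $\Rtau$ because $\Rantitau=(\alpha_s-\alpha_t)\Rtau$. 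For $\widehat{Y}$ the green dot maps to the degree-$1$ generator $g$ of $\Hom(\Yobj,\onecal)$, while the degree-$3$ generator $(\alpha_s-\alpha_t)g$ is the image of the composite of the isomorphism $\Yobjbar\cong\Xobjbar\otimes\Yobjbar$ (furnished by relation \eqref{XY idempotent decomp}, whose image under $\Fbold$ is the isomorphism $\Yobj\cong\Xobj\otimes\Yobj$ of \eqref{eqn isomorphism sngXInd}) with $\Dotup{orange}\otimes\Dotup{green}\colon\Xobjbar\otimes\Yobjbar\to\onecalbar$; applying $\Fbold$ multiplies $g$ by $\alpha_s-\alpha_t$ up to an $\Rtau$-unit. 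So $\Fbold$ is surjective on $\Hom\bigl(\widehat{A},\onecalbar\bigr)$ for every $\widehat{A}\in\Icalbar$.

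Next comes a graded rank count. By Theorem \ref{diagram reduction}, $\Hom\bigl(\widehat{A},\onecalbar\bigr)$ is generated over $\Rtau$ by an explicit finite set $S_{\widehat{A}}$ whose graded size is bounded above, degree by degree, by $\grdim_{\Rtau}\Hom(A,\onecal)$. Let $P_{\widehat{A}}:=\bigoplus_{b\in S_{\widehat{A}}}\Rtau[-\deg b]$; then there are graded $\Rtau$-linear surjections $P_{\widehat{A}}\twoheadrightarrow\Hom\bigl(\widehat{A},\onecalbar\bigr)\twoheadrightarrow\Hom(A,\onecal)$, the second by the previous step. As $\Hom(A,\onecal)$ is graded free, the composite splits; comparing graded ranks of the split pieces forces $\grdim P_{\widehat{A}}\geq\grdim\Hom(A,\onecal)$, and together with the bound from Theorem \ref{diagram reduction} we get $\grdim P_{\widehat{A}}=\grdim\Hom(A,\onecal)$, so the composite is an isomorphism. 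Hence both intermediate maps are isomorphisms: $\Hom\bigl(\widehat{A},\onecalbar\bigr)$ is free over $\Rtau$, $S_{\widehat{A}}$ is an $\Rtau$-basis, and $\Fbold$ carries it to a basis of $\Hom(A,\onecal)$, so $\Fbold$ is an isomorphism.

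I expect the crux to lie not in this bookkeeping but in the two diagrammatic inputs it uses: that the spanning set $S_{\widehat{A}}$ from Theorem \ref{diagram reduction} is genuinely no larger than $\grdim_{\Rtau}\Hom(A,\onecal)$ in every degree --- this is exactly what makes the rank comparison bite --- and the precise identification of the isomorphism $\Yobjbar\cong\Xobjbar\otimes\Yobjbar$ and of its image under $\Fbold$, which is what pins down the degree-$3$ generator of $\Hom(\Yobjbar,\onecalbar)$. Both are settled by calculation with the relations of Definition \ref{relations equiv category} and their consequences collected in Appendix \ref{appendix-relations}.
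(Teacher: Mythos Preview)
Your proposal is correct and follows essentially the paper's approach: both invoke the spanning sets of Theorem \ref{diagram reduction} and the explicit bases of Proposition \ref{prop HomXto1 in Hcaltau}, then verify that $\Fbold$ carries the former to the latter. The paper's argument is marginally more direct---it simply observes that the spanning elements map to basis elements, hence are themselves a basis, so the rank count is unnecessary---and your degree-$3$ generator for $\Yobjbar$ (the composite through $\Yobjbar\cong\Xobjbar\otimes\Yobjbar$) is, after unwinding relation \eqref{bivalent definition right}, exactly the morphism the paper writes down.
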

\begin{proof}
	Recall that $\Ical=\{\onecal,\Xobj,\Yobj,\Zobj,\XZobj\}$ and $\Icalbar=\{\onecalbar,\Xobjbar,\Yobjbar,\Zobjbar,\XZobjbar\}$.
	Let $A\in\Ical$ and $\widehat{A}\in\Icalbar$.
	Proposition \ref{prop HomXto1 in Hcaltau} shows that $\Hom(A,\onecal)$ is a free $\Rtau$-module and gives an $\Rtau$-basis of it.
	In Theorem \ref{diagram reduction} we will give a generating set of $\Hom(\widehat{A},\onecalbar)$.
	We have that $\Fbold$ sends
	\begin{align}
		\begin{boxedstrings}
			\path (0,0) -- (0.3,1);
		\end{boxedstrings}
		&\mapsto
		\begin{boxedstrings}
			\path (0,0) -- (0.3,1);
		\end{boxedstrings},
		\\
		\begin{boxedstrings}
			\dotuparam{(0,0)}{0.6}{0.06}{orange}
			\path (nd-1-1) ++ (0,1);
		\end{boxedstrings}
		&\mapsto
		\begin{boxedstrings}
			\polybox{(0,0.5)}{{$\alpha_s \! -\! \alpha_t$}}
			\path (0,0) ++ (0,1);
		\end{boxedstrings},
		\\
		\begin{boxedstrings}
			\dotuparam{(0,0)}{0.6}{0.06}{green}
			\path (nd-1-1) ++ (0,1);
		\end{boxedstrings}
		&\mapsto
		\begin{pmatrix}
			\Dotup{red}
			&
			\Dotup{blue}
		\end{pmatrix}
		,
		\\
		\begin{boxedstrings}
			\trivdparamcol{(0,0)}{0.5}{0.7}{orange}{green}{green}
			\dotfig{(nd-1-1)}{orange}
			\dotfig{(nd-1-2)}{green}
			\path (nd-1-3) ++ (0,1);
		\end{boxedstrings}
		&\mapsto
		(\alpha_s-\alpha_t)
		\begin{pmatrix}
			\Dotup{red}
			&
			\Dotup{blue}
		\end{pmatrix}
		,
		\\
		\begin{boxedstrings}
			\dotuparam{(0,0)}{0.6}{0.06}{brown}
			\path (nd-1-1) ++ (0,1);
		\end{boxedstrings}
		&\mapsto
		\begin{boxedstrings}
			\dotuparam{(0,0)}{0.6}{0.06}{red}
			\dotuparam{(0.25,0)}{0.6}{0.06}{blue}
			\path (nd-1-1) ++ (0,1);
		\end{boxedstrings},
		\\
		\begin{boxedstrings}
			\dotuparam{(0,0)}{0.6}{0.06}{orange}
			\dotuparam{(0.25,0)}{0.6}{0.06}{brown}
			\path (nd-1-1) ++ (0,1);
		\end{boxedstrings}
		&\mapsto
		(\alpha_s-\alpha_t)
		\begin{boxedstrings}
			\dotuparam{(0,0)}{0.6}{0.06}{red}
			\dotuparam{(0.25,0)}{0.6}{0.06}{blue}
			\path (nd-1-1) ++ (0,1);
		\end{boxedstrings}.
	\end{align}
	That is, $\Fbold$ sends the generators in Theorem \ref{diagram reduction} to the basis elements in Proposition \ref{prop HomXto1 in Hcaltau}.
	Hence, $\Fbold$ induces an isomorphism on $\Hom(\widehat{A},\onecalbar)$.
\end{proof}

\begin{theorem}\label{thm F is an equivalence}
	The functor $\Fbold$ is an equivalence of categories.
\end{theorem}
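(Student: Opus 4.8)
The plan is to deduce this from the general criterion of Proposition~\ref{prop criterion for equivalence}(c), applied with source category $\Ccal = \Dcal$, target category $\Hcaltau$, and generating family $\Icalbar = \{\onecalbar, \Xobjbar, \Yobjbar, \Zobjbar, \XZobjbar\}$ together with all of its grading shifts. Thus it suffices to verify four things: that $\Fbold$ is essentially surjective; that $\Dcal$ carries a right (equivalently, left) adjunction structure; that every object of $\Dcal$ is a finite direct sum of grading shifts of objects of $\Icalbar$; and that $\Fbold$ restricts to a bijection $\Hom(\widehat{A}, \onecalbar) \to \Hom(\Fbold(\widehat{A}), \onecal)$ for every $\widehat{A} \in \Icalbar$.

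Two of these inputs are already available. Essential surjectivity is the content of Lemma~\ref{lemma F is well defined}, since the images under $\Fbold$ of $\onecalbar, \Xobjbar, \Yobjbar, \Zobjbar, \XZobjbar$ are precisely the indecomposable objects $\onecal, \Xobj, \Yobj, \Zobj, \XZobj$ of $\Hcaltau$, and every object of $\Hcaltau$ is a finite direct sum of these up to shift (Corollary~\ref{coro Htau is Krull schmidt}). The bijectivity on $\Hom(\widehat{A}, \onecalbar)$ is exactly Proposition~\ref{prop F is fully faithful Hom to 1}, which itself rests on the diagrammatic induction of Theorem~\ref{diagram reduction} and the explicit $\Rtau$-bases of Proposition~\ref{prop HomXto1 in Hcaltau}: one checks that $\Fbold$ carries the spanning set of $\Hom(\widehat{A}, \onecalbar)$ produced by Theorem~\ref{diagram reduction} termwise onto that basis. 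For the adjunction structure, I would observe that $\Dcalpre$, and hence its quotient $\Dcal$, is generated by the self-adjoint objects $\Xobjbar, \Yobjbar, \Zobjbar$, with the cup and cap morphisms among the generators and the snake identities imposed among the isotopy relations; nesting cups and caps then extends this to a two-sided (in fact pivotal) adjoint structure on all of $\Dcal$, exactly as in Propositions~\ref{prop HA1 is pivotal} and~\ref{prop HAxA is pivotal}. Only a one-sided structure is needed here.

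The remaining hypothesis --- that $\Icalbar$ and its shifts generate $\Dcal$ under direct sums --- is where the idempotent decompositions enter. From the relations of $\Dcal$ one extracts, in Section~\ref{subsection idempotent decomposition} (Theorem~\ref{thm idempotent decompositions}), explicit decompositions of the identity of $\Yobjbar \otimes \Yobjbar$, $\Yobjbar \otimes \Zobjbar$, and $\Zobjbar \otimes \Zobjbar$ into idempotents that factor through grading shifts of objects of $\Icalbar$; together with the immediate splittings $\Xobjbar \otimes \Xobjbar \cong \onecalbar$, $\Xobjbar \otimes \Yobjbar \cong \Yobjbar$, and $\Xobjbar \otimes \Zobjbar \cong \XZobjbar$, these reproduce the isomorphisms~\eqref{gr group 1}--\eqref{gr group 6} of Lemma~\ref{lemma tensor products in Htau} at the diagrammatic level. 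Since every object of $\Dcal$ is by construction obtained from $\Xobjbar, \Yobjbar, \Zobjbar$ by direct sums, tensor products, and shifts, an induction on the number of tensor factors then shows that every object of $\Dcal$ is a finite direct sum of grading shifts of objects of $\Icalbar$. With all four hypotheses verified, Proposition~\ref{prop criterion for equivalence}(c) gives that $\Fbold$ is full, faithful, and essentially surjective, hence an equivalence.

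I expect this final assembly to be entirely formal; the substance lies in the two supporting theorems. The hard part is Theorem~\ref{diagram reduction}: the diagrammatic induction must reduce $\Hom(\widehat{A}, \onecalbar)$ to a spanning set of the right size, and the delicate point is not that it spans but that it is not further collapsed by some relation of $\Dcal$ --- something one can only confirm a posteriori, via the fact that $\Fbold$ sends it to an honest $\Rtau$-basis of $\Hom(\Fbold(\widehat{A}), \onecal)$. A secondary technical burden is the $\tau$-(anti)invariant bookkeeping needed for the idempotent decompositions of Theorem~\ref{thm idempotent decompositions} (and for the auxiliary relations of Appendix~\ref{appendix-relations} on which it relies), guided by the decompositions of $\Yobj \otimes \Yobj$, $\Yobj \otimes \Zobj$, and $\Zobj \otimes \Zobj$ worked out in Examples~\ref{ex decomp YY}--\ref{ex decomp ZZ}.
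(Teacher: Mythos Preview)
Your proposal is correct and follows essentially the same route as the paper: invoke Proposition~\ref{prop criterion for equivalence}(c) with $\Ical=\Icalbar$ (and its shifts), feeding in essential surjectivity from Lemma~\ref{lemma F is well defined}, the decomposition of every object of $\Dcal$ from Theorem~\ref{thm idempotent decompositions}, and bijectivity on $\Hom(\widehat{A},\onecalbar)$ from Proposition~\ref{prop F is fully faithful Hom to 1}. If anything you are more careful than the paper, which applies Proposition~\ref{prop criterion for equivalence} without explicitly noting the adjunction structure on $\Dcal$ that you spell out.
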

\begin{proof} 
	We have to show that $\Fbold$ is full and faithful (we already have that $\Fbold$ is essentially surjective).
	By Theorem \ref{thm idempotent decompositions}, every object of $\Dcal$ is isomorphic to a direct sum of grading shifts of objects in $\Icalbar$.
	By Proposition \ref{prop F is fully faithful Hom to 1}, $\Fbold$ is bijective on $\Hom(\widehat{A},\onecalbar)$ 
	for all $A\in\Icalbar$.
	By Proposition \ref{prop criterion for equivalence},
	$\Fbold$ is an equivalence.
\end{proof}

	\subsection{Polynomial forcing relations}\label{subsection poly forcing 1}
	
	In this section we will show the basic polynomial forcing relations in our diagrammatic category.
	Corollary \ref{polynomial forcing under orange}
	will show that polynomial boxes slide across orange strands, so we will only show polynomial forcing relations for the green and brown strands. 
	The proof of Corollary \ref{polynomial forcing under orange} does not rely on
	the relations deduced here, so we may use it going forward.
	We will show the general polynomial forcing and needle relations 
	in Appendix \ref{subsection poly forcing 2}.

	Recall that $\Rtau=\kk[\alpha_s+\alpha_t, \alpha_s\alpha_t]$. 
	By \eqref{barbell rels}, the generators of $\Rtau$ correspond to the green and brown barbells.
	We also have the following algebraic dependence of the brown barbell in terms of the green and orange ones:
	\begin{equation}\label{polynomial dependence barbells}
		4
		\begin{boxedstrings}
			\vbarb{(0.75,0.7)}{brown}
			\path (0.75,0) -- (0.75,1);
		\end{boxedstrings}
		=
		\begin{boxedstrings}
			\polybox{(0,0.5)}{$4\alpha_s\alpha_t$}
			\path (0,0) ++ (0,1);
		\end{boxedstrings}
		=
		\begin{boxedstrings}
			\polybox{(0,0.5)}{$(\alpha_s+\alpha_t)^2$}
			\path (0,0) ++ (0,1);
		\end{boxedstrings}
		-
		\begin{boxedstrings}
			\polybox{(0,0.5)}{$(\alpha_s-\alpha_t)^2$}
			\path (0,0) ++ (0,1);
		\end{boxedstrings}
		=
		\begin{boxedstrings}
			\vbarb{(0.65,0.7)}{green}
			\vbarb{(0.85,0.7)}{green}
			\path (0.75,0) -- (0.75,1);
		\end{boxedstrings}
		-
		\begin{boxedstrings}
			\vbarb{(0.75,0.7)}{orange}
			\path (0.75,0) -- (0.75,1);
		\end{boxedstrings}.
	\end{equation}
	Altogether, we have the following proposition.
	\begin{prop}\label{prop polynomials are barbells}
		Any polynomial box
		\hspace{-0.4cm}
		\begin{fitunboxedstrings}
			\polybox{(0,0.5)}{$f$}
		\end{fitunboxedstrings}
		\hspace{-0.1cm},
		 for $f\in \Rtau$,
		 may be written as a polynomial
		on the green and brown barbells.
		Alternatively, 
		\hspace{-0.4cm}
		\begin{fitunboxedstrings}
			\polybox{(0,0.5)}{$f$}
		\end{fitunboxedstrings}
		\hspace{-0.1cm}
		may also be written as a polynomial on
		the green and orange barbells.
	\end{prop}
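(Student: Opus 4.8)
The plan is to reduce the statement to the identification of the green and brown barbells with the two algebra generators of $\Rtau$ furnished by the barbell relations \eqref{barbell rels}, together with the additivity and multiplicativity of polynomial boxes, \eqref{polynomial sum} and \eqref{polynomial mult}. Concretely, I want to exploit that $\Rtau$ is a polynomial ring in two generators in two different ways, and that the passage from a polynomial identity in $\Rtau$ to a diagrammatic identity is exactly what the polynomial box relations provide.

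First I would recall that $\Rtau$ is the ring of symmetric polynomials, $\Rtau=\kk[\alpha_s+\alpha_t,\alpha_s\alpha_t]$, so every $f\in\Rtau$ can be written as $f=P(\alpha_s+\alpha_t,\alpha_s\alpha_t)$ for some $P$ with coefficients in $\kk$. Relations \eqref{polynomial sum} and \eqref{polynomial mult} say precisely that the assignment $g\mapsto(\text{polynomial box labeled }g)$ is a $\kk$-algebra homomorphism $\Rtau\to\End(\onecalbar)$, where the sum of two boxes is formed by \eqref{polynomial sum} and the product by juxtaposing boxes in a common region; note that boxes in disjoint regions commute by isotopy, so the phrase ``polynomial on the barbells'' is unambiguous. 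Consequently the box labeled $f$ equals the image of $P$ under this homomorphism, that is, the diagram obtained by substituting for the two variables the boxes labeled $\alpha_s+\alpha_t$ and $\alpha_s\alpha_t$. By the first two relations in \eqref{barbell rels} these are the green and brown barbells, which gives the first assertion.

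For the alternative statement I would change generators. Since $\chr(\kk)\neq2$ the scalar $4$ is invertible, and \eqref{polynomial dependence barbells} (equivalently the identity $4\alpha_s\alpha_t=(\alpha_s+\alpha_t)^2-(\alpha_s-\alpha_t)^2$) shows $\alpha_s\alpha_t\in\kk[\alpha_s+\alpha_t,(\alpha_s-\alpha_t)^2]$; as $(\alpha_s-\alpha_t)^2\in\Rtau$ as well, we get $\Rtau=\kk[\alpha_s+\alpha_t,(\alpha_s-\alpha_t)^2]$. Then I would write $f=Q(\alpha_s+\alpha_t,(\alpha_s-\alpha_t)^2)$ and repeat the previous paragraph verbatim, now invoking the first and third relations of \eqref{barbell rels} to identify the boxes labeled $\alpha_s+\alpha_t$ and $(\alpha_s-\alpha_t)^2$ with the green and orange barbells. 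There is no real obstacle here beyond bookkeeping: the whole content is that $\Rtau$ is freely generated as a $\kk$-algebra by either of these two pairs, together with the dictionary between generators and barbell diagrams, and the only hypothesis genuinely used is $\chr(\kk)\neq2$, needed to invert $4$ when moving between the two generating sets.
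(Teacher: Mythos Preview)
Your proposal is correct and follows exactly the approach the paper takes: the proposition is stated immediately after the discussion recalling $\Rtau=\kk[\alpha_s+\alpha_t,\alpha_s\alpha_t]$, the barbell relations \eqref{barbell rels}, and the identity \eqref{polynomial dependence barbells}, with the paper simply saying ``Altogether, we have the following proposition'' rather than giving a formal proof. Your write-up is a faithful expansion of that implicit argument.
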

	
	The relation \eqref{poly forcing green 1} lets us `force' green barbels and orange dots across a green strand.
	By Proposition \ref{prop polynomials are barbells} (and Corollary \ref{polynomial forcing under orange}), we may `force' an entire polynomial across a green strand
	by `forcing' one green barbell or orange dot at a time.
	We will now show the rest of the forcing relations for green barbells,
	brown barbells, and orange dots.	
	
	\begin{prop}[Polynomial forcing]\label{polynomial forcing all}
		The following relations follow from the polynomial forcing relation
		\eqref{poly forcing green 1}
		and the bigon relations
		\eqref{bigon brown} and \eqref{bigon brown orange}.
		{

			\begin{equation}\label{poly forcing green 2}
				2 \left( \ 
				\begin{boxedstrings}
					\vbarb{(0.75,0.7)}{brown}
					\idfig{(1,1)}{green}
				\end{boxedstrings}
				+
				\begin{boxedstrings}
					\idfig{(1,1)}{green}
					\vbarb{(1.25,0.7)}{brown}
				\end{boxedstrings} \ 
				\right)
				=
				\begin{boxedstrings}
					\vbarb{(0.7,0.7)}{green}
					\dotd{(1,1)}{green}
					\dotu{(1,0)}{green}
				\end{boxedstrings}
				+
				\begin{boxedstrings}
					\idfigparam{(0,1)}{0.3}{green}
					\idfigparam{(nd-1-2)}{0.4}{orange}
					\idfigparam{(nd-2-2)}{0.3}{green}
					\vbarb{(-0.3,0.7)}{green}
				\end{boxedstrings}
				-
				\begin{boxedstrings}
					\idfigparam{(1,1)}{0.25}{green}
					\dotdparam{(nd-1-2)}{0.2}{0.06}{orange}
					\dotuparam{(1,0)}{0.25}{0.06}{green}
				\end{boxedstrings}
				-
				\begin{boxedstrings}
					\idfigparam{(1,0)}{-0.25}{green}
					\dotuparam{(nd-1-2)}{0.2}{0.06}{orange}
					\dotdparam{(1,1)}{0.25}{0.06}{green}
				\end{boxedstrings}
			\end{equation}

			\begin{equation}\label{poly forcing brown 1}
				\begin{boxedstrings}
					\vbarb{(0.75,0.7)}{green}
					\idfig{(1,1)}{brown}
				\end{boxedstrings}
				+
				\begin{boxedstrings}
					\idfig{(1,1)}{brown}
					\vbarb{(1.25,0.7)}{green}
				\end{boxedstrings}
				=
				2
				\begin{boxedstrings}
					\idfigparam{(0,1)}{0.3}{brown}
					\idfigparam{(nd-1-2)}{0.4}{green}
					\idfigparam{(nd-2-2)}{0.3}{brown}
				\end{boxedstrings}
			\end{equation}

			\begin{equation}\label{poly forcing brown 2}
				\begin{boxedstrings}
					\Xfig{(0,0)}{brown}{orange}
					\idfigparam{(nd-1-1)}{-0.25}{brown}
					\idfigparam{(nd-1-3)}{0.25}{orange}
					\idfigparam{(nd-1-4)}{0.25}{brown}
					\dotfig{(nd-1-2)}{orange}
				\end{boxedstrings}
				+
				\begin{boxedstrings}
					\idfig{(1,1)}{brown}
					\dotu{(0.75,0)}{orange}
				\end{boxedstrings}
				=
				-
				2
				\begin{boxedstrings}
					\idfigparam{(0,1)}{0.3}{brown}
					\idfigparam{(nd-1-2)}{0.4}{green}
					\idfigparam{(nd-2-2)}{0.3}{brown}
					\draw[orange] (nd-2-3) -- ++(-0.25,0) -- ++(0,-0.5);
				\end{boxedstrings}
			\end{equation}

			\begin{equation}\label{poly forcing brown 3}
				\begin{boxedstrings}
					\vbarb{(0.75,0.7)}{brown}
					\idfig{(1,1)}{brown}
				\end{boxedstrings}
				+
				\begin{boxedstrings}
					\idfigparam{(0,1)}{0.3}{brown}
					\idfigparam{(nd-1-2)}{0.4}{green}
					\idfigparam{(nd-2-2)}{0.3}{brown}
					\vbarb{(0.25,0.7)}{green}
				\end{boxedstrings}
				+
				\begin{boxedstrings}
					\idfigparam{(0,1)}{0.3}{brown}
					\idfigparam{(nd-1-2)}{0.4}{green}
					\idfigparam{(nd-2-2)}{0.3}{brown}
					\dotrparam{(nd-2-3)}{0.25}{0.06}{orange}
				\end{boxedstrings}
				=
				4
				\begin{boxedstrings}
					\dotd{(1,1)}{brown}
					\dotu{(1,0)}{brown}
				\end{boxedstrings}
				+
				\begin{boxedstrings}
					\vbarb{(1.25,0.7)}{brown}
					\idfig{(1,1)}{brown}
				\end{boxedstrings}
			\end{equation}

		}
	\end{prop}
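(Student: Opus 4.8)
The plan is to deduce all four relations from the single green‑strand forcing relation \eqref{poly forcing green 1}, using two bookkeeping devices: the barbell dependency \eqref{polynomial dependence barbells} (a consequence of \eqref{barbell rels}), which expresses a brown barbell as $\tfrac14$ of two green barbells minus $\tfrac14$ of an orange barbell, and the bigon relations \eqref{bigon brown}, \eqref{bigon brown orange}, which let us resolve the identity of a brown ($\Zobjbar$) strand as $\tfrac12$ of the composite of two green--brown trivalent vertices joined by a pair of green strands and, crucially, annihilate any orange strand that gets attached to such a trivalent vertex. Together with the structural relations of $\Dcal$ (isotopy, the barbell and vertex definitions, the unit relations \eqref{one color unit rel}, \eqref{brown green unit rel}, \eqref{green orange unit rel}, the bivalent‑vertex definitions \eqref{bivalent definition left}, \eqref{bivalent definition right}, the needle relations \eqref{needle relations}, and the orange‑landing relations of Definition~\ref{relations equiv category}) these are the only inputs needed.

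First I would prove the two ``degree‑one'' cases, \eqref{poly forcing brown 1} (a green barbell past a brown strand) and \eqref{poly forcing brown 2} (an orange dot past a brown strand). For each, start from $2$ times the right‑hand side, rewrite the brown identity strand (which appears between the two bivalent vertices, resp.\ attached to the brown strand) via \eqref{bigon brown} as $\tfrac12$ of the composite of two green--brown trivalent vertices joined by a pair of green strands, and then push the green barbell (resp.\ orange dot) across each green strand of this configuration using \eqref{poly forcing green 1}. Each push replaces the green barbell (resp.\ orange landing) on the near side by the same decoration on the far side, plus correction terms that carry an orange strand, namely a green--orange--green detour and an orange landing bearing an orange dot. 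When the configuration is re‑closed, every stray orange strand runs into a green--brown trivalent vertex and vanishes by \eqref{bigon brown orange}; collapsing the surviving diagrams with the unit, needle and bivalent relations leaves exactly the asserted identity, the coefficient $2$ (resp.\ the coefficient appearing in \eqref{poly forcing brown 2}) being the reciprocal of the $\tfrac12$ from \eqref{bigon brown}.

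Next, for the two ``brown barbell'' cases, \eqref{poly forcing green 2} (past a green strand) and \eqref{poly forcing brown 3} (past a brown strand), I would first use \eqref{polynomial dependence barbells} to replace the brown barbell on the left by $\tfrac14$ of two green barbells minus $\tfrac14$ of an orange barbell; by \eqref{barbell rels} the orange barbell is realized by a vertical pair of orange dots, so in either case we are reduced to forcing green barbells and orange dots one at a time across the strand. Forcing a green barbell across a green strand is \eqref{poly forcing green 1} itself (applied twice for the pair), with the orange pieces moved by the orange‑landing relations of Definition~\ref{relations equiv category}; forcing across a brown strand is the just‑proved \eqref{poly forcing brown 1} and \eqref{poly forcing brown 2} applied twice. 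Collecting the output and then using \eqref{polynomial dependence barbells} once more to repackage the pairs of green barbells and of orange dots that appear on the far side into a single brown barbell yields \eqref{poly forcing green 2} and \eqref{poly forcing brown 3}, with the coefficients $2$ and $4$ precisely the ones produced by this repackaging.

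The main obstacle is keeping the orange‑decorated correction terms under control. A single forcing move across a green strand already spawns a green--orange--green detour and an orange landing carrying an orange dot, and doing this twice — as \eqref{poly forcing green 2} and \eqref{poly forcing brown 3} require — leaves a barbell stranded on the far side of one of these detours, so one must push that barbell through \eqref{poly forcing green 1} (or through a bivalent vertex) once more and verify that every orange strand thereby produced can be routed into a green--brown trivalent vertex, where \eqref{bigon brown orange} kills it, or collapsed by \eqref{brown green unit rel}, \eqref{green orange unit rel} and \eqref{needle relations}. This is where a sign or a dropped term is easiest to introduce, so I would settle \eqref{poly forcing brown 1} and \eqref{poly forcing brown 2} in full first and then feed them mechanically into the two barbell‑squared identities, rather than attacking \eqref{poly forcing green 2} and \eqref{poly forcing brown 3} head‑on.
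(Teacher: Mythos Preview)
Your plan is essentially the paper's: decompose the brown barbell via \eqref{polynomial dependence barbells} and force green barbells and orange dots through one at a time using \eqref{poly forcing green 1} for the green strand, or, after resolving the brown identity as a green bigon via \eqref{bigon brown}, through each of those green strands for the brown strand. One small correction: for \eqref{poly forcing brown 1} the paper starts from the \emph{left}-hand side, not the right --- it expands the bare brown strand sitting next to the green barbell as a green bigon, pushes the barbell through one green side via \eqref{poly forcing green 1}, does the same from the other side, and adds; the orange-landing correction terms then cancel pairwise, so \eqref{bigon brown orange} is not actually invoked there (though it is used elsewhere, as you anticipate).
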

	\begin{proof}
		We will show \eqref{poly forcing green 2} and \eqref{poly forcing brown 1}. The others are similar and will be left to the reader as an exercise.
		
		\eqref{poly forcing green 2}
		By \eqref{polynomial dependence barbells} we have
		\begin{equation}
			4
			\begin{boxedstrings}
				\vbarb{(0.75,0.7)}{brown}
				\idfig{(1,1)}{green}
			\end{boxedstrings}
			=
			\begin{boxedstrings}
				\vbarb{(0.65,0.7)}{green}
				\vbarb{(0.85,0.7)}{green}
				\idfig{(1,1)}{green}
			\end{boxedstrings}
			-
			\begin{boxedstrings}
				\vbarb{(0.75,0.7)}{orange}
				\idfig{(1,1)}{green}
			\end{boxedstrings},
		\end{equation}
		and
		\begin{equation}
			4
			\begin{boxedstrings}
				\vbarb{(0.75,0.7)}{brown}
				\idfig{(0.5,1)}{green}
			\end{boxedstrings}
			=
			\begin{boxedstrings}
				\vbarb{(0.65,0.7)}{green}
				\vbarb{(0.85,0.7)}{green}
				\idfig{(0.5,1)}{green}
			\end{boxedstrings}
			-
			\begin{boxedstrings}
				\vbarb{(0.75,0.7)}{orange}
				\idfig{(0.5,1)}{green}
			\end{boxedstrings}.
		\end{equation}
		
		Using \eqref{poly forcing green 1} (and orange landing slide relations) we get
		\excludeDiagrams{1}{
			\begin{equation}
				\begin{boxedstrings}
					\vbarb{(0.65,0.7)}{green}
					\vbarb{(0.85,0.7)}{green}
					\idfig{(1,1)}{green}
				\end{boxedstrings}
				=
				\begin{boxedstrings}
					\dotd{(1,1)}{green}
					\dotu{(1,0)}{green}
					\vbarb{(0.75,0.7)}{green}
				\end{boxedstrings}
				+
				\begin{boxedstrings}
					\idfigparam{(0,1)}{0.3}{green}
					\idfigparam{(nd-1-2)}{0.4}{orange}
					\idfigparam{(nd-2-2)}{0.3}{green}
					\vbarb{(-0.3,0.7)}{green}
				\end{boxedstrings}
				-
				\begin{boxedstrings}
					\idfig{(1,1)}{green}
					\dotr{(nd-1-3)}{orange}
					\vbarb{(0.75,0.7)}{green}
				\end{boxedstrings},
			\end{equation}
			\begin{equation}
				\begin{boxedstrings}
					\vbarb{(0.65,0.7)}{green}
					\vbarb{(0.85,0.7)}{green}
					\idfig{(0.5,1)}{green}
				\end{boxedstrings}
				=
				\begin{boxedstrings}
					\dotd{(1,1)}{green}
					\dotu{(1,0)}{green}
					\vbarb{(0.75,0.7)}{green}
				\end{boxedstrings}
				+
				\begin{boxedstrings}
					\idfigparam{(0,1)}{0.3}{green}
					\idfigparam{(nd-1-2)}{0.4}{orange}
					\idfigparam{(nd-2-2)}{0.3}{green}
					\vbarb{(-0.3,0.7)}{green}
				\end{boxedstrings}
				-
				\begin{boxedstrings}
					\idfig{(1,1)}{green}
					\dotl{(nd-1-3)}{orange}
					\vbarb{(1.25,0.7)}{green}
				\end{boxedstrings},
			\end{equation}
			\begin{equation}
				-
				\begin{boxedstrings}
					\vbarb{(0.75,0.7)}{orange}
					\idfig{(1,1)}{green}
				\end{boxedstrings}
				=
				-
				\begin{boxedstrings}
					\idfig{(1,1)}{green}
					\dotl{(1,0.25)}{orange}
					\dotl{(1,0.75)}{orange}
				\end{boxedstrings}
				=
				-
				\begin{boxedstrings}
					\idfigparam{(1,1)}{0.25}{green}
					\dotdparam{(nd-1-2)}{0.2}{0.06}{orange}
					\dotuparam{(1,0)}{0.25}{0.06}{green}
				\end{boxedstrings}
				-
				\begin{boxedstrings}
					\idfigparam{(1,0)}{-0.25}{green}
					\dotuparam{(nd-1-2)}{0.2}{0.06}{orange}
					\dotdparam{(1,1)}{0.25}{0.06}{green}
				\end{boxedstrings}
				+
				\begin{boxedstrings}
					\idfig{(1,1)}{green}
					\dotl{(nd-1-3)}{orange}
					\vbarb{(1.25,0.7)}{green}
				\end{boxedstrings},
			\end{equation}
			\begin{equation}
				-
				\begin{boxedstrings}
					\vbarb{(0.75,0.7)}{orange}
					\idfig{(0.5,1)}{green}
				\end{boxedstrings}
				=
				-
				\begin{boxedstrings}
					\idfig{(1,1)}{green}
					\dotr{(1,0.25)}{orange}
					\dotr{(1,0.75)}{orange}
				\end{boxedstrings}
				=
				-
				\begin{boxedstrings}
					\idfigparam{(1,1)}{0.25}{green}
					\dotdparam{(nd-1-2)}{0.2}{0.06}{orange}
					\dotuparam{(1,0)}{0.25}{0.06}{green}
				\end{boxedstrings}
				-
				\begin{boxedstrings}
					\idfigparam{(1,0)}{-0.25}{green}
					\dotuparam{(nd-1-2)}{0.2}{0.06}{orange}
					\dotdparam{(1,1)}{0.25}{0.06}{green}
				\end{boxedstrings}
				+
				\begin{boxedstrings}
					\idfig{(1,1)}{green}
					\dotr{(nd-1-3)}{orange}
					\vbarb{(0.75,0.7)}{green}
				\end{boxedstrings}.
			\end{equation}
			Combining the previous equations we get
			\begin{equation}
				4 \left( \ 
				\begin{boxedstrings}
					\vbarb{(0.75,0.7)}{brown}
					\idfig{(1,1)}{green}
				\end{boxedstrings}
				+
				\begin{boxedstrings}
					\idfig{(1,1)}{green}
					\vbarb{(1.25,0.7)}{brown}
				\end{boxedstrings} \ 
				\right)
				=
				2
				\begin{boxedstrings}
					\vbarb{(0.7,0.7)}{green}
					\dotd{(1,1)}{green}
					\dotu{(1,0)}{green}
				\end{boxedstrings}
				+2
				\begin{boxedstrings}
					\idfigparam{(0,1)}{0.3}{green}
					\idfigparam{(nd-1-2)}{0.4}{orange}
					\idfigparam{(nd-2-2)}{0.3}{green}
					\vbarb{(-0.3,0.7)}{green}
				\end{boxedstrings}
				-2
				\begin{boxedstrings}
					\idfigparam{(1,1)}{0.25}{green}
					\dotdparam{(nd-1-2)}{0.2}{0.06}{orange}
					\dotuparam{(1,0)}{0.25}{0.06}{green}
				\end{boxedstrings}
				-2
				\begin{boxedstrings}
					\idfigparam{(1,0)}{-0.25}{green}
					\dotuparam{(nd-1-2)}{0.2}{0.06}{orange}
					\dotdparam{(1,1)}{0.25}{0.06}{green}
				\end{boxedstrings}
			\end{equation}
		}
		
		\eqref{poly forcing brown 1}
		By \eqref{bigon brown},
		we have
		\begin{equation}
			2
			\begin{boxedstrings}
				\vbarb{(0.75,0.7)}{green}
				\idfig{(1,1)}{brown}
			\end{boxedstrings}
			=
			\begin{boxedstrings}
				\node at (0,1) {};
				\node (a) at (-0.2,0.5) {};
				\def\coltop{green}
				\def\colbot{brown}
				\def\colside{orange}
				\capfigparam{(a)}{0.4}{0.2}{\coltop}
				\cupfigparam{(a)}{0.4}{0.2}{\coltop}
				\idfigparam {(0,0.3)}{0.3}{\colbot}
				\idfigparam {(0,0.7)}{-0.3}{\colbot}
				\vbarb{(-0.5,0.7)}{green}
			\end{boxedstrings}
		\end{equation}
		and 
		\begin{equation}
			2
			\begin{boxedstrings}
				\vbarb{(1.25,0.7)}{green}
				\idfig{(1,1)}{brown}
			\end{boxedstrings}
			=
			\begin{boxedstrings}
				\node at (0,1) {};
				\node (a) at (-0.2,0.5) {};
				\def\coltop{green}
				\def\colbot{brown}
				\def\colside{orange}
				\capfigparam{(a)}{0.4}{0.2}{\coltop}
				\cupfigparam{(a)}{0.4}{0.2}{\coltop}
				\idfigparam {(0,0.3)}{0.3}{\colbot}
				\idfigparam {(0,0.7)}{-0.3}{\colbot}
				\vbarb{(0.5,0.7)}{green}
			\end{boxedstrings}.
		\end{equation}
		By \eqref{poly forcing green 1} (and orange landing slide relations) we have
		\begin{align}
			\begin{boxedstrings}
				\node at (0,1) {};
				\node (a) at (-0.2,0.5) {};
				\def\coltop{green}
				\def\colbot{brown}
				\def\colside{orange}
				\capfigparam{(a)}{0.4}{0.2}{\coltop}
				\cupfigparam{(a)}{0.4}{0.2}{\coltop}
				\idfigparam {(0,0.3)}{0.3}{\colbot}
				\idfigparam {(0,0.7)}{-0.3}{\colbot}
				\vbarb{(-0.35,0.7)}{green}
			\end{boxedstrings}
			&=
			\begin{boxedstrings}
				\idfigparam{(0,1)}{0.3}{brown}
				\idfigparam{(nd-1-2)}{0.4}{green}
				\idfigparam{(nd-2-2)}{0.3}{brown}
			\end{boxedstrings}
			+
			\begin{boxedstrings}
				\trivuparamcol{(1.25,1)}{0.5}{0.35}{brown}{green}{green}
				\trivdparamcol{(1,0.35)}{0.5}{0.35}{green}{green}{brown}
				\dotfig{(nd-1-2)}{green};
				\dotfig{(nd-2-1)}{green};
				\draw[green] (nd-1-3) -- (nd-2-2);
				\draw[orange] (1.125,0.725) -- (1.125,0.275);
			\end{boxedstrings}
			-
			\begin{boxedstrings}
				\trivuparamcol{(1.25,1)}{0.5}{0.35}{brown}{green}{green}
				\trivdparamcol{(1,0.35)}{0.5}{0.35}{green}{green}{brown}
				\draw[green] (nd-1-3) -- (nd-2-2);
				\draw[green] (nd-1-2) -- (nd-2-1);
				\dotr{(1,0.5)}{orange}
			\end{boxedstrings}
			\\
			&=
			2
			\begin{boxedstrings}
				\idfigparam{(0,1)}{0.3}{brown}
				\idfigparam{(nd-1-2)}{0.4}{green}
				\idfigparam{(nd-2-2)}{0.3}{brown}
			\end{boxedstrings}
			+
			\begin{boxedstrings}
				\trivuparamcol{(1.25,1)}{0.5}{0.35}{brown}{green}{green}
				\trivdparamcol{(1,0.35)}{0.5}{0.35}{green}{green}{brown}
				\draw[green] (nd-1-3) -- (nd-2-2);
				\draw[green] (nd-1-2) -- (nd-2-1);
				\dotl{(1.5,0.5)}{orange}
			\end{boxedstrings}
			.
		\end{align}
		Similarly,
		\begin{equation}
			\begin{boxedstrings}
				\node at (0,1) {};
				\node (a) at (-0.2,0.5) {};
				\def\coltop{green}
				\def\colbot{brown}
				\def\colside{orange}
				\capfigparam{(a)}{0.4}{0.2}{\coltop}
				\cupfigparam{(a)}{0.4}{0.2}{\coltop}
				\idfigparam {(0,0.3)}{0.3}{\colbot}
				\idfigparam {(0,0.7)}{-0.3}{\colbot}
				\vbarb{(0.35,0.7)}{green}
			\end{boxedstrings}
			=
			2
			\begin{boxedstrings}
				\idfigparam{(0,1)}{0.3}{brown}
				\idfigparam{(nd-1-2)}{0.4}{green}
				\idfigparam{(nd-2-2)}{0.3}{brown}
			\end{boxedstrings}
			-
			\begin{boxedstrings}
				\trivuparamcol{(1.25,1)}{0.5}{0.35}{brown}{green}{green}
				\trivdparamcol{(1,0.35)}{0.5}{0.35}{green}{green}{brown}
				\draw[green] (nd-1-3) -- (nd-2-2);
				\draw[green] (nd-1-2) -- (nd-2-1);
				\dotl{(1.5,0.5)}{orange}
			\end{boxedstrings}.
		\end{equation}
		Combining the last two equations, we get
		\begin{equation}
			2\left(
			\begin{boxedstrings}
				\vbarb{(0.75,0.7)}{green}
				\idfig{(1,1)}{brown}
			\end{boxedstrings}
			+
			\begin{boxedstrings}
				\idfig{(1,1)}{brown}
				\vbarb{(1.25,0.7)}{green}
			\end{boxedstrings}
			\right)
			=
			4
			\begin{boxedstrings}
				\idfigparam{(0,1)}{0.3}{brown}
				\idfigparam{(nd-1-2)}{0.4}{green}
				\idfigparam{(nd-2-2)}{0.3}{brown}
			\end{boxedstrings}
			.
		\end{equation}
	\end{proof}

	\subsection{Idempotent decompositions in $\Hcaltau$}\label{subsection idempotent decomposition}
	
	The following proposition gives explicit idempotent decompositions for the objects $\Yobjbar\otimes\Yobjbar$, $\Yobjbar\otimes \Zobjbar$ and
	$\Zobjbar\otimes \Zobjbar$. 
	
	\begin{prop}[Idempotent decompositions]\label{prop idemp decomp}
		\label{thm idempotent decompositions}
		The following are decompositions into orthogonal idempotents of $\Yobjbar\otimes \Yobjbar$, $\Zobjbar\otimes \Zobjbar$, and $\Yobjbar\otimes\Zobjbar$, that factor through their direct summands.
		\excludeDiagrams{1}
		{
			\begin{equation}\label{Y2 decomp}
				\begin{boxedstrings}
					\idfigparam{(0,2)}{1}{green}
					\idfigparam{(0.5,2)}{1}{green}
				\end{boxedstrings}
				=
				\frac{1}{2}
				\begin{boxedstrings}
					\idfigparam{(0,1)}{0.25}{green}
					\Ifig{(nd-1-2)}{green}
					\idfigparam{(nd-2-3)}{0.25}{green}
					\idfigparam{(nd-2-2)}{-0.25}{green}
					\idfigparam{(nd-2-4)}{0.25}{green}
					\hbarbparam{(0.125,0.875)}{0.25}{0.04}{green}
				\end{boxedstrings}
				+\frac{1}{2}
				\begin{boxedstrings}
					\idfigparam{(0,1)}{0.25}{green}
					\Ifig{(nd-1-2)}{green}
					\idfigparam{(nd-2-3)}{0.25}{green}
					\idfigparam{(nd-2-2)}{-0.25}{green}
					\idfigparam{(nd-2-4)}{0.25}{green}
					\dotrparam{(nd-3-3)}{0.25}{0.04}{orange}
				\end{boxedstrings}
				+\frac{1}{2}
				\begin{boxedstrings}
					\idfigparam{(0,1)}{0.25}{green}
					\Ifigcol{(nd-1-2)}{green}{green}{brown}{green}{green}
					\idfigparam{(nd-2-3)}{0.25}{green}
					\idfigparam{(nd-2-2)}{-0.25}{green}
					\idfigparam{(nd-2-4)}{0.25}{green}
				\end{boxedstrings}
				+\frac{1}{2}
				\begin{boxedstrings}
					\idfigparam{(0,1)}{0.25}{green}
					\Ifigcol{(nd-1-2)}{green}{green}{brown}{green}{green}
					\idfigparam{(nd-2-3)}{0.25}{green}
					\idfigparam{(nd-2-2)}{-0.25}{green}
					\idfigparam{(nd-2-4)}{0.25}{green}
					\idfigparam{(0.125,0.75)}{0.5}{orange}
				\end{boxedstrings}
				,	
			\end{equation}

			\begin{equation}\label{Z2 decomp}
				\begin{boxedstrings}
					\idfigparam{(0,2)}{1}{brown}
					\idfigparam{(0.5,2)}{1}{brown}
				\end{boxedstrings}
				=
				\frac{1}{4}
				\begin{boxedstrings}
					\idfigparam{(0,1)}{0.25}{brown}
					\Ifig{(nd-1-2)}{brown}
					\idfigparam{(nd-2-3)}{0.25}{brown}
					\idfigparam{(nd-2-2)}{-0.25}{brown}
					\idfigparam{(nd-2-4)}{0.25}{brown}
					\hbarbparam{(0.125,0.875)}{0.25}{0.04}{brown}
				\end{boxedstrings}
				+\frac{1}{4}
				\begin{boxedstrings}
					\idfigparam{(0,1)}{0.25}{brown}
					\Ifig{(nd-1-2)}{brown}
					\idfigparam{(nd-2-3)}{0.25}{brown}
					\idfigparam{(nd-2-2)}{-0.25}{brown}
					\idfigparam{(nd-2-4)}{0.25}{brown}
					\hbarbparam{(0.125,0.125)}{0.25}{0.04}{brown}
				\end{boxedstrings}
				+\frac{1}{8}
				\begin{boxedstrings}
					\idfigparam{(0,1)}{0.25}{brown}
					\Ifig{(nd-1-2)}{brown}
					\idfigparam{(nd-2-3)}{0.25}{brown}
					\idfigparam{(nd-2-2)}{-0.25}{brown}
					\idfigparam{(nd-2-4)}{0.25}{brown}
					\hbarbparam{(0.125,0.125)}{0.25}{0.04}{green}
					\hbarbparam{(0.125,0.875)}{0.25}{0.04}{green}
				\end{boxedstrings}
				-\frac{1}{8}
				\begin{boxedstrings}
					\idfigparam{(0,1)}{0.25}{brown}
					\Ifig{(nd-1-2)}{brown}
					\idfigparam{(nd-2-3)}{0.25}{brown}
					\idfigparam{(nd-2-2)}{-0.25}{brown}
					\idfigparam{(nd-2-4)}{0.25}{brown}
					\idfigparam{(0.125,0.75)}{0.5}{orange}
					\dotuparam{(nd-6-1)}{0.125}{0.04}{orange}
					\dotdparam{(nd-6-2)}{0.125}{0.04}{orange}
				\end{boxedstrings}
				,		
			\end{equation}

			\begin{equation}\label{YZ decomp}
				\begin{split}
					\begin{boxedstrings}
						\idfigparam{(0,2)}{1}{green}
						\idfigparam{(0.5,2)}{1}{brown}
					\end{boxedstrings}
					= &
					\frac{1}{8}
					\begin{boxedstrings}
						\idfigparam{(0,1)}{0.25}{green}
						\Ifigcol{(nd-1-2)}{green}{brown}{brown}{green}{brown}
						\idfigparam{(nd-2-3)}{0.25}{green}
						\idfigparam{(nd-2-2)}{-0.25}{brown}
						\idfigparam{(nd-2-4)}{0.25}{brown}
						\hbarbparam{(0.125,0.875)}{0.25}{0.04}{green}
					\end{boxedstrings}
					+ \frac{1}{8}
					\begin{boxedstrings}
						\idfigparam{(0,1)}{0.25}{green}
						\Ifigcol{(nd-1-2)}{green}{brown}{brown}{green}{brown}
						\idfigparam{(nd-2-3)}{0.25}{green}
						\idfigparam{(nd-2-2)}{-0.25}{brown}
						\idfigparam{(nd-2-4)}{0.25}{brown}
						\idfigparam{(0.125,0.75)}{0.5}{orange}
						\hbarbparam{(0.125,0.875)}{0.25}{0.04}{green}
					\end{boxedstrings}
					+\frac{1}{8}
					\begin{boxedstrings}
						\idfigparam{(0,1)}{0.25}{green}
						\Ifigcol{(nd-1-2)}{green}{brown}{brown}{green}{brown}
						\idfigparam{(nd-2-3)}{0.25}{green}
						\idfigparam{(nd-2-2)}{-0.25}{brown}
						\idfigparam{(nd-2-4)}{0.25}{brown}
						\hbarbparam{(0.125,0.125)}{0.25}{0.04}{green}
					\end{boxedstrings}
					+\frac{1}{8}
					\begin{boxedstrings}
						\idfigparam{(0,1)}{0.25}{green}
						\Ifigcol{(nd-1-2)}{green}{brown}{brown}{green}{brown}
						\idfigparam{(nd-2-3)}{0.25}{green}
						\idfigparam{(nd-2-2)}{-0.25}{brown}
						\idfigparam{(nd-2-4)}{0.25}{brown}
						\idfigparam{(0.125,0.75)}{0.5}{orange}
						\hbarbparam{(0.125,0.125)}{0.25}{0.04}{green}
					\end{boxedstrings}
					\\
					+ & \frac{1}{8}
					\begin{boxedstrings}
						\idfigparam{(0,1)}{0.25}{green}
						\Ifigcol{(nd-1-2)}{green}{brown}{brown}{green}{brown}
						\idfigparam{(nd-2-3)}{0.25}{green}
						\idfigparam{(nd-2-2)}{-0.25}{brown}
						\idfigparam{(nd-2-4)}{0.25}{brown}
						\dotrparam{(nd-1-3)}{0.25}{0.04}{orange}
					\end{boxedstrings}
					+ \frac{1}{8}
					\begin{boxedstrings}
						\idfigparam{(0,1)}{0.25}{green}
						\Ifigcol{(nd-1-2)}{green}{brown}{brown}{green}{brown}
						\idfigparam{(nd-2-3)}{0.25}{green}
						\idfigparam{(nd-2-2)}{-0.25}{brown}
						\idfigparam{(nd-2-4)}{0.25}{brown}
						\idfigparam{(0.125,0.75)}{0.5}{orange}
						\dotrparam{(nd-1-3)}{0.25}{0.04}{orange}
					\end{boxedstrings}
					+ \frac{1}{8}
					\begin{boxedstrings}
						\idfigparam{(0,1)}{0.25}{green}
						\Ifigcol{(nd-1-2)}{green}{brown}{brown}{green}{brown}
						\idfigparam{(nd-2-3)}{0.25}{green}
						\idfigparam{(nd-2-2)}{-0.25}{brown}
						\idfigparam{(nd-2-4)}{0.25}{brown}
						\dotrparam{(nd-3-3)}{0.25}{0.04}{orange}
					\end{boxedstrings}
					+ \frac{1}{8}
					\begin{boxedstrings}
						\idfigparam{(0,1)}{0.25}{green}
						\Ifigcol{(nd-1-2)}{green}{brown}{brown}{green}{brown}
						\idfigparam{(nd-2-3)}{0.25}{green}
						\idfigparam{(nd-2-2)}{-0.25}{brown}
						\idfigparam{(nd-2-4)}{0.25}{brown}
						\idfigparam{(0.125,0.75)}{0.5}{orange}
						\dotrparam{(nd-3-3)}{0.25}{0.04}{orange}
					\end{boxedstrings}
				\end{split}
				.
			\end{equation}
			In this equation, the idempotents are the sum of the terms in each column.
		}
		
	\end{prop}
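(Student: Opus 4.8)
The plan is, for each of the three displayed equations, to verify directly inside $\Dcal$ --- using the relations of Definition \ref{relations equiv category}, the polynomial forcing relations of Proposition \ref{polynomial forcing all}, and Proposition \ref{prop polynomials are barbells} --- that the listed terms form a complete set of orthogonal idempotents, splitting $\Yobjbar\otimes\Yobjbar$, $\Zobjbar\otimes\Zobjbar$, and $\Yobjbar\otimes\Zobjbar$ respectively into the summands predicted by Lemma \ref{lemma tensor products in Htau}. Write the four terms of \eqref{Y2 decomp} as $e_1,\dots,e_4$, the four terms of \eqref{Z2 decomp} as $f_1,\dots,f_4$, and the four \emph{columns} of \eqref{YZ decomp} as $g_1,\dots,g_4$. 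By inspection each of these terms factors as $e_i=\iota_i\circ p_i$, where $p_i$ is a composite of generating morphisms whose target is one of $\Yobjbar[-1],\Yobjbar[1],\Zobjbar,\XZobjbar$ (for \eqref{Y2 decomp}), $\Zobjbar[-2],\Zobjbar,\Zobjbar[2],\XZobjbar$ (for \eqref{Z2 decomp}), or $\Zobjbar[-1],\Zobjbar[1],\XZobjbar[-1],\XZobjbar[1]$ (for \eqref{YZ decomp}), and $\iota_i$ is the $180^\circ$ rotation of $p_i$ --- legitimate since $\Dcal$ is pivotal, its generators being self-adjoint. It therefore suffices to establish: (a) $p_i\circ\iota_j=\delta_{ij}\,\id$; and (b) $\sum_i\iota_i\circ p_i=\id$. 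Indeed (a) shows each $e_i$ is idempotent, that $e_ie_j=\iota_i\,(p_i\circ\iota_j)\,p_j=0$ for $i\neq j$, and that $e_i$ factors through the named object with splitting $(\iota_i,p_i)$, while (b) is completeness. The \emph{shape} of each decomposition is forced by Lemma \ref{lemma tensor products in Htau}; the content of the proposition is to realise these splittings by relations in $\Dcal$ rather than to deduce them from $\Hcaltau$.

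Fact (a) consists of a finite collection of local bigon computations. For the two $\Yobjbar$-summands in \eqref{Y2 decomp}, the composite $p_i\circ\iota_i$ is a single green strand carrying either a green barbell or an orange dot, clamped between a green GB-trivalent vertex and its rotation; it collapses to the identity using the green $H{=}I$ relation \eqref{H=I one color}, the unit relation \eqref{one color unit rel}, the barbell relations \eqref{barbell rels}, and the green polynomial forcing relation \eqref{poly forcing green 1}, which moves the barbell or dot through the trivalent vertex at the cost of a correction term that is then annihilated by a needle \eqref{needle relations}. For the $\Zobjbar$- and $\XZobjbar$-summands --- the terms with a brown strand (and, in one case, an orange strand) through the middle --- the analogous collapses use the brown and mixed bigon relations \eqref{bigon brown} and \eqref{bigon brown orange}, the needle relations \eqref{needle relations}, the green--brown unit relations \eqref{brown green unit rel}--\eqref{green orange unit rel}, and the orange-landing slides \eqref{XY idempotent decomp}--\eqref{GGB triv orange 2}. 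The orthogonality $p_i\circ\iota_j=0$ for $i\neq j$ falls out of the same relations: every mismatched composite contains either a needle, the vanishing triangle \eqref{bigon brown orange}, or (by \eqref{GGB orange 1}) a diagram forced to equal its own negative. The verifications for the $f_i$ and $g_i$ are of the same flavour, additionally drawing on the brown $H{=}I$ relation \eqref{H=I one color}, the green--brown associativity \eqref{H=I bicolor associativity}, and the barbell dependence \eqref{polynomial dependence barbells}.

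Fact (b) is the heart of the argument, and the completeness identities are where I expect the real work to lie. For $\Yobjbar\otimes\Yobjbar$ the computation parallels the one-colour identity \eqref{idem decomp Bs}, except that the idempotents factoring through $\Zobjbar$ and $\XZobjbar$ must be produced out of purely green data; the crucial move is to rewrite the relevant green cap--cup and $I$-shaped pieces using the green--brown resolution \eqref{brown green unit rel} together with the $H{=}I$ relations \eqref{H=I top brown}, \eqref{H=I L brown leg}, and \eqref{H=I middle brown}, the orange contributions being managed by \eqref{poly forcing green 1} and the slides of Section \ref{subsection orange strand}. For $\Zobjbar\otimes\Zobjbar$ one first applies the brown $H{=}I$ relation \eqref{H=I one color} to collapse the two brown strands to a single brown strand carrying a barbell, and then uses the algebraic dependence \eqref{polynomial dependence barbells} of the brown barbell on the green and orange barbells to resolve that barbell --- this is exactly what separates the four pieces $\Zobjbar[-2],\Zobjbar,\Zobjbar[2],\XZobjbar$. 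For $\Yobjbar\otimes\Zobjbar$ one combines the green decomposition just obtained with the green--brown $H{=}I$ relations, whose correction terms (unlike in the one-colour setting) must be tracked carefully; this is the long but mechanical bookkeeping that the relations of Definition \ref{relations equiv category}, supplemented by Appendix \ref{appendix-relations}, are designed to support. Once (a) and (b) are verified, each displayed equation exhibits the relevant tensor product as a sum of orthogonal idempotents, each factoring through the asserted direct summand, which is precisely the assertion of the proposition.
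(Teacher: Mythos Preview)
Your proposal is correct and follows essentially the same approach as the paper: factor each term as $\iota_i\circ p_i$, verify orthogonality $p_i\circ\iota_j=\delta_{ij}\id$ via bigon reductions (which the paper also leaves to the reader, noting only that ``all of them are bigons that can be reduced with an $H{=}I$ relation''), and verify completeness $\sum_i e_i=\id$ by direct diagrammatic manipulation using polynomial forcing and $H{=}I$ relations.

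One small refinement worth noting: for the completeness of \eqref{Y2 decomp} the paper's argument is slightly more structured than your outline suggests, and uses a narrower set of relations. It pairs the four terms as $(e_1+e_2)$ and $(e_3+e_4)$. For $e_1+e_2$ it applies \eqref{poly forcing green 1} followed by the green $H{=}I$ relation \eqref{H=I one color}, obtaining $\id_{\Yobjbar\otimes\Yobjbar}$ plus a single correction term (an $H$-diagram with orange crossbar). For $e_3+e_4$ it applies \eqref{H=I middle brown} and orange-landing slides, obtaining $\id_{\Yobjbar\otimes\Yobjbar}$ minus the \emph{same} correction term. Summing gives $2\,\id$. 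The relations \eqref{brown green unit rel}, \eqref{H=I top brown}, \eqref{H=I L brown leg} that you cite for this case are not actually needed here; \eqref{H=I top brown} is instead the key input for \eqref{YZ decomp}, where the paper similarly groups the eight terms into four pairs and shows each pair equals $2\,\id$ plus corrections that cancel upon summing.
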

	\begin{proof}
		We will show \eqref{Y2 decomp} and \eqref{YZ decomp}. Showing \eqref{Z2 decomp} is similar and relies on using an H=I relation and one of the polynomial forcing relations in Proposition \ref{polynomial forcing all}.
		We will also leave to the reader to check that the summands are, in fact, orthogonal idempotents.
		All of them are bigons that can be reduced with an $H=I$ relation 
		(see Appendix \ref{appendix-relations} for the full list of relations).
		
		\eqref{Y2 decomp}
		Start by using \eqref{poly forcing green 1} and \eqref{H=I one color} on
		\begin{equation}\label{Y2 part1}
			\begin{boxedstrings}
				\idfigparam{(0,1)}{0.25}{green}
				\Ifig{(nd-1-2)}{green}
				\idfigparam{(nd-2-3)}{0.25}{green}
				\idfigparam{(nd-2-2)}{-0.25}{green}
				\idfigparam{(nd-2-4)}{0.25}{green}
				\hbarbparam{(0.125,0.875)}{0.25}{0.04}{green}
			\end{boxedstrings}
			+
			\begin{boxedstrings}
				\idfigparam{(0,1)}{0.25}{green}
				\Ifig{(nd-1-2)}{green}
				\idfigparam{(nd-2-3)}{0.25}{green}
				\idfigparam{(nd-2-2)}{-0.25}{green}
				\idfigparam{(nd-2-4)}{0.25}{green}
				\dotrparam{(nd-3-3)}{0.25}{0.04}{orange}
			\end{boxedstrings}
			=
			\begin{boxedstrings}
				\idfigparam{(0,1)}{0.25}{green}
				\Hfig{(nd-1-2)}{green}
				\idfigparam{(nd-2-3)}{0.25}{green}
				\idfigparam{(nd-2-2)}{-0.25}{green}
				\idfigparam{(nd-2-4)}{0.25}{green}
				\hbarbparam{(0.125,0.75)}{0.25}{0.04}{green}
			\end{boxedstrings}
			+
			\begin{boxedstrings}
				\idfigparam{(0,1)}{0.25}{green}
				\Hfig{(nd-1-2)}{green}
				\idfigparam{(nd-2-3)}{0.25}{green}
				\idfigparam{(nd-2-2)}{-0.25}{green}
				\idfigparam{(nd-2-4)}{0.25}{green}
				\dotdparam{(nd-2-7)}{0.25}{0.04}{orange}
			\end{boxedstrings}
			=
			\begin{boxedstrings}
				\idfigparam{(0,2)}{1}{green}
				\idfigparam{(0.5,2)}{1}{green}
			\end{boxedstrings}
			+
			\begin{boxedstrings}
				\def\coltempa{green}
				\def\coltempb{orange}
				\Hfigparamcol{(0,1)}{0.5}{1}{\coltempa}{\coltempa}{\coltempb}{\coltempa}{\coltempa}
			\end{boxedstrings}.
		\end{equation}
		
		Now use \eqref{H=I middle brown} 
		\begin{equation}\label{Y2 part2}
			\begin{split}
				\begin{boxedstrings}
					\idfigparam{(0,1)}{0.25}{green}
					\Ifigcol{(nd-1-2)}{green}{green}{brown}{green}{green}
					\idfigparam{(nd-2-3)}{0.25}{green}
					\idfigparam{(nd-2-2)}{-0.25}{green}
					\idfigparam{(nd-2-4)}{0.25}{green}
				\end{boxedstrings}
				+
				\begin{boxedstrings}
					\idfigparam{(0,1)}{0.25}{green}
					\Ifigcol{(nd-1-2)}{green}{green}{brown}{green}{green}
					\idfigparam{(nd-2-3)}{0.25}{green}
					\idfigparam{(nd-2-2)}{-0.25}{green}
					\idfigparam{(nd-2-4)}{0.25}{green}
					\idfigparam{(0.125,0.75)}{0.5}{orange}
				\end{boxedstrings}
				= &
				{
					\frac{1}{2}\left(
					\begin{boxedstrings}
						\idfigparam{(0,2)}{1}{green}
						\idfigparam{(0.5,2)}{1}{green}
					\end{boxedstrings}
					-
					\begin{boxedstrings}
						\def\coltempa{green}
						\def\coltempb{orange}
						\Hfigparamcol{(0,1)}{0.5}{1}{\coltempa}{\coltempa}{\coltempb}{\coltempa}{\coltempa}
					\end{boxedstrings}
					+
					\begin{boxedstrings}
						\def\coltempa{green}
						\def\coltempb{brown}
						\Hfigparamcol{(0,1)}{0.5}{1}{\coltempa}{\coltempa}{\coltempb}{\coltempa}{\coltempa}
					\end{boxedstrings}
					-
					\begin{boxedstrings}
						\def\coltempa{green}
						\def\coltempb{brown}
						\Hfigparamcol{(0,1)}{0.5}{0.5}{\coltempa}{\coltempa}{\coltempb}{\coltempa}{\coltempa}
						\idfigparam{(nd-1-1)}{-0.25}{\coltempa}
						\idfigparam{(nd-1-2)}{-0.25}{\coltempa}
						\idfigparam{(nd-1-3)}{ 0.25}{\coltempa}
						\idfigparam{(nd-1-4)}{ 0.25}{\coltempa}
						\draw[orange] (nd-1-1) -- (nd-1-2);
					\end{boxedstrings}
					\right)
				}
				\\
				+ &
				{
					\frac{1}{2}\left(
					\begin{boxedstrings}
						\idfigparam{(0,1)}{1}{green}
						\idfigparam{(0.5,1)}{1}{green}
						\draw[orange] (0,0.75) -- (-.125,0.75) --
						(-.125,0.25) -- (0,0.25);
					\end{boxedstrings}
					-
					\begin{boxedstrings}
						\def\coltempa{green}
						\def\coltempb{orange}
						\Hfigparamcol{(0,1)}{0.5}{1}{\coltempa}{\coltempa}{\coltempb}{\coltempa}{\coltempa}
						\draw[orange] (0,0.75) -- (-.125,0.75) --
						(-.125,0.25) -- (0,0.25);
					\end{boxedstrings}
					+
					\begin{boxedstrings}
						\def\coltempa{green}
						\def\coltempb{brown}
						\Hfigparamcol{(0,1)}{0.5}{1}{\coltempa}{\coltempa}{\coltempb}{\coltempa}{\coltempa}
						\draw[orange] (0,0.75) -- (-.125,0.75) --
						(-.125,0.25) -- (0,0.25);
					\end{boxedstrings}
					-
					\begin{boxedstrings}
						\def\coltempa{green}
						\def\coltempb{brown}
						\Hfigparamcol{(0,0.75)}{0.5}{0.5}{\coltempa}{\coltempa}{\coltempb}{\coltempa}{\coltempa}
						\idfigparam{(nd-1-1)}{-0.25}{\coltempa}
						\idfigparam{(nd-1-2)}{-0.25}{\coltempa}
						\idfigparam{(nd-1-3)}{ 0.25}{\coltempa}
						\idfigparam{(nd-1-4)}{ 0.25}{\coltempa}
						\draw[orange] (nd-1-1) -- (nd-1-2);
						\draw[orange] (0,0.75) -- (-.125,0.75) --
						(-.125,0.25) -- (0,0.25);
					\end{boxedstrings}
					\right)
				}
				\\
				= &
				{
					\frac{1}{2}\left(
					\begin{boxedstrings}
						\idfigparam{(0,2)}{1}{green}
						\idfigparam{(0.5,2)}{1}{green}
					\end{boxedstrings}
					-
					\begin{boxedstrings}
						\def\coltempa{green}
						\def\coltempb{orange}
						\Hfigparamcol{(0,1)}{0.5}{1}{\coltempa}{\coltempa}{\coltempb}{\coltempa}{\coltempa}
					\end{boxedstrings}
					+
					\begin{boxedstrings}
						\def\coltempa{green}
						\def\coltempb{brown}
						\Hfigparamcol{(0,1)}{0.5}{1}{\coltempa}{\coltempa}{\coltempb}{\coltempa}{\coltempa}
					\end{boxedstrings}
					-
					\begin{boxedstrings}
						\def\coltempa{green}
						\def\coltempb{brown}
						\Hfigparamcol{(0,1)}{0.5}{0.5}{\coltempa}{\coltempa}{\coltempb}{\coltempa}{\coltempa}
						\idfigparam{(nd-1-1)}{-0.25}{\coltempa}
						\idfigparam{(nd-1-2)}{-0.25}{\coltempa}
						\idfigparam{(nd-1-3)}{ 0.25}{\coltempa}
						\idfigparam{(nd-1-4)}{ 0.25}{\coltempa}
						\draw[orange] (nd-1-1) -- (nd-1-2);
					\end{boxedstrings}
					\right)
				}
				\\
				+ &
				{
					\frac{1}{2} \left(
					\begin{boxedstrings}
						\idfigparam{(0,1)}{1}{green}
						\idfigparam{(0.5,1)}{1}{green}
					\end{boxedstrings}
					-
					\begin{boxedstrings}
						\def\coltempa{green}
						\def\coltempb{orange}
						\Hfigparamcol{(0,1)}{0.5}{1}{\coltempa}{\coltempa}{\coltempb}{\coltempa}{\coltempa}
					\end{boxedstrings}
					-
					\begin{boxedstrings}
						\def\coltempa{green}
						\def\coltempb{brown}
						\Hfigparamcol{(0,1)}{0.5}{1}{\coltempa}{\coltempa}{\coltempb}{\coltempa}{\coltempa}
					\end{boxedstrings}
					+
					\begin{boxedstrings}
						\def\coltempa{green}
						\def\coltempb{brown}
						\Hfigparamcol{(0,0.75)}{0.5}{0.5}{\coltempa}{\coltempa}{\coltempb}{\coltempa}{\coltempa}
						\idfigparam{(nd-1-1)}{-0.25}{\coltempa}
						\idfigparam{(nd-1-2)}{-0.25}{\coltempa}
						\idfigparam{(nd-1-3)}{ 0.25}{\coltempa}
						\idfigparam{(nd-1-4)}{ 0.25}{\coltempa}
						\draw[orange] (nd-1-1) -- (nd-1-2);
					\end{boxedstrings}
					\right)
				}
				\\
				= &
				{
					\begin{boxedstrings}
						\idfigparam{(0,2)}{1}{green}
						\idfigparam{(0.5,2)}{1}{green}
					\end{boxedstrings}
					-
					\begin{boxedstrings}
						\def\coltempa{green}
						\def\coltempb{orange}
						\Hfigparamcol{(0,1)}{0.5}{1}{\coltempa}{\coltempa}{\coltempb}{\coltempa}{\coltempa}
					\end{boxedstrings}
				}.
			\end{split}
		\end{equation}
		
		Combining \eqref{Y2 part1} and \eqref{Y2 part2} we get
		\begin{equation}
			2
			\begin{boxedstrings}
				\idfigparam{(0,2)}{1}{green}
				\idfigparam{(0.5,2)}{1}{green}
			\end{boxedstrings}
			=
			\begin{boxedstrings}
				\idfigparam{(0,1)}{0.25}{green}
				\Ifig{(nd-1-2)}{green}
				\idfigparam{(nd-2-3)}{0.25}{green}
				\idfigparam{(nd-2-2)}{-0.25}{green}
				\idfigparam{(nd-2-4)}{0.25}{green}
				\hbarbparam{(0.125,0.875)}{0.25}{0.04}{green}
			\end{boxedstrings}
			+
			\begin{boxedstrings}
				\idfigparam{(0,1)}{0.25}{green}
				\Ifig{(nd-1-2)}{green}
				\idfigparam{(nd-2-3)}{0.25}{green}
				\idfigparam{(nd-2-2)}{-0.25}{green}
				\idfigparam{(nd-2-4)}{0.25}{green}
				\dotrparam{(nd-3-3)}{0.25}{0.04}{orange}
			\end{boxedstrings}
			+
			\begin{boxedstrings}
				\idfigparam{(0,1)}{0.25}{green}
				\Ifigcol{(nd-1-2)}{green}{green}{brown}{green}{green}
				\idfigparam{(nd-2-3)}{0.25}{green}
				\idfigparam{(nd-2-2)}{-0.25}{green}
				\idfigparam{(nd-2-4)}{0.25}{green}
			\end{boxedstrings}
			+
			\begin{boxedstrings}
				\idfigparam{(0,1)}{0.25}{green}
				\Ifigcol{(nd-1-2)}{green}{green}{brown}{green}{green}
				\idfigparam{(nd-2-3)}{0.25}{green}
				\idfigparam{(nd-2-2)}{-0.25}{green}
				\idfigparam{(nd-2-4)}{0.25}{green}
				\idfigparam{(0.125,0.75)}{0.5}{orange}
			\end{boxedstrings}
			.\end{equation}
		
		\eqref{YZ decomp}
		Using \eqref{H=I top brown} and \eqref{poly forcing green 1} we get 
		\begin{align} \label{eqn ZY decomp 1}
			\begin{boxedstrings}
				\idfigparam{(0,1)}{0.25}{green}
				\Ifigcol{(nd-1-2)}{green}{brown}{brown}{green}{brown}
				\idfigparam{(nd-2-3)}{0.25}{green}
				\idfigparam{(nd-2-2)}{-0.25}{brown}
				\idfigparam{(nd-2-4)}{0.25}{brown}
				\hbarbparam{(0.125,0.875)}{0.25}{0.04}{green}
			\end{boxedstrings}
			+ 	
			\begin{boxedstrings}
				\idfigparam{(0,1)}{0.25}{green}
				\Ifigcol{(nd-1-2)}{green}{brown}{brown}{green}{brown}
				\idfigparam{(nd-2-3)}{0.25}{green}
				\idfigparam{(nd-2-2)}{-0.25}{brown}
				\idfigparam{(nd-2-4)}{0.25}{brown}
				\dotrparam{(nd-3-3)}{0.25}{0.04}{orange}
			\end{boxedstrings}
			&=
			\begin{boxedstrings}
				\Hfigparamcol{(0,1)}{0.5}{1}{green}{brown}{green}{green}{brown}
				\hbarbparam{(0.125,0.75)}{0.25}{0.04}{green}
			\end{boxedstrings}
			+ 	
			\begin{boxedstrings}
				\Hfigparamcol{(0,1)}{0.5}{1}{green}{brown}{green}{green}{brown}
				\dotrparam{(0,0.25)}{0.25}{0.04}{orange}
			\end{boxedstrings}
			+
			\begin{boxedstrings}
				\Hfigparamcol{(0,1)}{0.5}{1}{green}{brown}{brown}{green}{brown}
				\hbarbparam{(0.125,0.75)}{0.25}{0.04}{green}
			\end{boxedstrings}
			+ 	
			\begin{boxedstrings}
				\Hfigparamcol{(0,1)}{0.5}{1}{green}{brown}{brown}{green}{brown}
				\dotrparam{(0,0.25)}{0.25}{0.04}{orange}
			\end{boxedstrings}
			\\
			&=
			\begin{boxedstrings}
				\idfigparam{(0,1)}{1}{green}
				\idfigparam{(0.5,1)}{1}{brown}
				\dotlparam{(nd-2-3)}{0.25}{0.04}{green}
			\end{boxedstrings}
			+
			\begin{boxedstrings}
				\idfigparam{(0,1)}{1}{green}
				\idfigparam{(0.5,1)}{1}{brown}
				\draw[orange] (nd-1-3) -- ++(0.25,0);
				\draw[green] (nd-2-3) -- ++(-0.25,0);
			\end{boxedstrings}
			+
			\begin{boxedstrings}
				\Hfigparamcol{(0,1)}{0.5}{1}{green}{brown}{brown}{green}{brown}
				\hbarbparam{(0.125,0.75)}{0.25}{0.04}{green}
			\end{boxedstrings}
			+ 	
			\begin{boxedstrings}
				\Hfigparamcol{(0,1)}{0.5}{1}{green}{brown}{brown}{green}{brown}
				\dotrparam{(0,0.25)}{0.25}{0.04}{orange}
			\end{boxedstrings}
			\\
			&=
			2
			\begin{boxedstrings}
				\idfigparam{(0,1)}{1}{green}
				\idfigparam{(0.5,1)}{1}{brown}
			\end{boxedstrings}
			+
			\begin{boxedstrings}
				\Hfigparamcol{(0,1)}{0.5}{1}{green}{brown}{brown}{green}{brown}
				\hbarbparam{(0.125,0.75)}{0.25}{0.04}{green}
			\end{boxedstrings}
			+ 	
			\begin{boxedstrings}
				\Hfigparamcol{(0,1)}{0.5}{1}{green}{brown}{brown}{green}{brown}
				\dotrparam{(0,0.25)}{0.25}{0.04}{orange}
			\end{boxedstrings}
			.
		\end{align}
		
		Similarly,
		\begin{align}\label{eqn ZY decomp 2}
			\begin{boxedstrings}
				\idfigparam{(0,1)}{0.25}{green}
				\Ifigcol{(nd-1-2)}{green}{brown}{brown}{green}{brown}
				\idfigparam{(nd-2-3)}{0.25}{green}
				\idfigparam{(nd-2-2)}{-0.25}{brown}
				\idfigparam{(nd-2-4)}{0.25}{brown}
				\hbarbparam{(0.125,0.125)}{0.25}{0.04}{green}
			\end{boxedstrings}
			+
			\begin{boxedstrings}
				\idfigparam{(0,1)}{0.25}{green}
				\Ifigcol{(nd-1-2)}{green}{brown}{brown}{green}{brown}
				\idfigparam{(nd-2-3)}{0.25}{green}
				\idfigparam{(nd-2-2)}{-0.25}{brown}
				\idfigparam{(nd-2-4)}{0.25}{brown}
				\dotrparam{(nd-1-3)}{0.25}{0.04}{orange}
			\end{boxedstrings}
			&=
			2
			\begin{boxedstrings}
				\idfigparam{(0,1)}{1}{green}
				\idfigparam{(0.5,1)}{1}{brown}
			\end{boxedstrings}
			+
			\begin{boxedstrings}
				\Hfigparamcol{(0,1)}{0.5}{1}{green}{brown}{brown}{green}{brown}
				\hbarbparam{(0.125,0.25)}{0.25}{0.04}{green}
			\end{boxedstrings}
			+ 	
			\begin{boxedstrings}
				\Hfigparamcol{(0,1)}{0.5}{1}{green}{brown}{brown}{green}{brown}
				\dotrparam{(0,0.75)}{0.25}{0.04}{orange}
			\end{boxedstrings}
			,
			\\ \label{eqn ZY decomp 3}
			\begin{boxedstrings}
				\idfigparam{(0,1)}{0.25}{green}
				\Ifigcol{(nd-1-2)}{green}{brown}{brown}{green}{brown}
				\idfigparam{(nd-2-3)}{0.25}{green}
				\idfigparam{(nd-2-2)}{-0.25}{brown}
				\idfigparam{(nd-2-4)}{0.25}{brown}
				\idfigparam{(0.125,0.75)}{0.5}{orange}
				\hbarbparam{(0.125,0.875)}{0.25}{0.04}{green}
			\end{boxedstrings}
			+
			\begin{boxedstrings}
				\idfigparam{(0,1)}{0.25}{green}
				\Ifigcol{(nd-1-2)}{green}{brown}{brown}{green}{brown}
				\idfigparam{(nd-2-3)}{0.25}{green}
				\idfigparam{(nd-2-2)}{-0.25}{brown}
				\idfigparam{(nd-2-4)}{0.25}{brown}
				\idfigparam{(0.125,0.75)}{0.5}{orange}
				\dotrparam{(nd-3-3)}{0.25}{0.04}{orange}
			\end{boxedstrings}
			&=
			2
			\begin{boxedstrings}
				\idfigparam{(0,1)}{1}{green}
				\idfigparam{(0.5,1)}{1}{brown}
			\end{boxedstrings}
			-
			\begin{boxedstrings}
				\Hfigparamcol{(0,1)}{0.5}{1}{green}{brown}{brown}{green}{brown}
				\hbarbparam{(0.125,0.75)}{0.25}{0.04}{green}
			\end{boxedstrings}
			-	
			\begin{boxedstrings}
				\Hfigparamcol{(0,1)}{0.5}{1}{green}{brown}{brown}{green}{brown}
				\dotrparam{(0,0.25)}{0.25}{0.04}{orange}
			\end{boxedstrings}
			,
			\\ \label{eqn ZY decomp 4}
			\begin{boxedstrings}
				\idfigparam{(0,1)}{0.25}{green}
				\Ifigcol{(nd-1-2)}{green}{brown}{brown}{green}{brown}
				\idfigparam{(nd-2-3)}{0.25}{green}
				\idfigparam{(nd-2-2)}{-0.25}{brown}
				\idfigparam{(nd-2-4)}{0.25}{brown}
				\idfigparam{(0.125,0.75)}{0.5}{orange}
				\hbarbparam{(0.125,0.125)}{0.25}{0.04}{green}
			\end{boxedstrings}
			+
			\begin{boxedstrings}
				\idfigparam{(0,1)}{0.25}{green}
				\Ifigcol{(nd-1-2)}{green}{brown}{brown}{green}{brown}
				\idfigparam{(nd-2-3)}{0.25}{green}
				\idfigparam{(nd-2-2)}{-0.25}{brown}
				\idfigparam{(nd-2-4)}{0.25}{brown}
				\idfigparam{(0.125,0.75)}{0.5}{orange}
				\dotrparam{(nd-1-3)}{0.25}{0.04}{orange}
			\end{boxedstrings}
			&=
			2
			\begin{boxedstrings}
				\idfigparam{(0,1)}{1}{green}
				\idfigparam{(0.5,1)}{1}{brown}
			\end{boxedstrings}
			-
			\begin{boxedstrings}
				\Hfigparamcol{(0,1)}{0.5}{1}{green}{brown}{brown}{green}{brown}
				\hbarbparam{(0.125,0.25)}{0.25}{0.04}{green}
			\end{boxedstrings}
			-	
			\begin{boxedstrings}
				\Hfigparamcol{(0,1)}{0.5}{1}{green}{brown}{brown}{green}{brown}
				\dotrparam{(0,0.75)}{0.25}{0.04}{orange}
			\end{boxedstrings}
			.
		\end{align}
	
		Adding \eqref{eqn ZY decomp 1}-\eqref{eqn ZY decomp 4},
		we get
		\begin{equation}
			\begin{split}
				8
				\begin{boxedstrings}
					\idfigparam{(0,2)}{1}{green}
					\idfigparam{(0.5,2)}{1}{brown}
				\end{boxedstrings}
				= &
				\begin{boxedstrings}
					\idfigparam{(0,1)}{0.25}{green}
					\Ifigcol{(nd-1-2)}{green}{brown}{brown}{green}{brown}
					\idfigparam{(nd-2-3)}{0.25}{green}
					\idfigparam{(nd-2-2)}{-0.25}{brown}
					\idfigparam{(nd-2-4)}{0.25}{brown}
					\hbarbparam{(0.125,0.875)}{0.25}{0.04}{green}
				\end{boxedstrings}
				+ 
				\begin{boxedstrings}
					\idfigparam{(0,1)}{0.25}{green}
					\Ifigcol{(nd-1-2)}{green}{brown}{brown}{green}{brown}
					\idfigparam{(nd-2-3)}{0.25}{green}
					\idfigparam{(nd-2-2)}{-0.25}{brown}
					\idfigparam{(nd-2-4)}{0.25}{brown}
					\idfigparam{(0.125,0.75)}{0.5}{orange}
					\hbarbparam{(0.125,0.875)}{0.25}{0.04}{green}
				\end{boxedstrings}
				+
				\begin{boxedstrings}
					\idfigparam{(0,1)}{0.25}{green}
					\Ifigcol{(nd-1-2)}{green}{brown}{brown}{green}{brown}
					\idfigparam{(nd-2-3)}{0.25}{green}
					\idfigparam{(nd-2-2)}{-0.25}{brown}
					\idfigparam{(nd-2-4)}{0.25}{brown}
					\hbarbparam{(0.125,0.125)}{0.25}{0.04}{green}
				\end{boxedstrings}
				+
				\begin{boxedstrings}
					\idfigparam{(0,1)}{0.25}{green}
					\Ifigcol{(nd-1-2)}{green}{brown}{brown}{green}{brown}
					\idfigparam{(nd-2-3)}{0.25}{green}
					\idfigparam{(nd-2-2)}{-0.25}{brown}
					\idfigparam{(nd-2-4)}{0.25}{brown}
					\idfigparam{(0.125,0.75)}{0.5}{orange}
					\hbarbparam{(0.125,0.125)}{0.25}{0.04}{green}
				\end{boxedstrings}
				\\
				+ & 
				\begin{boxedstrings}
					\idfigparam{(0,1)}{0.25}{green}
					\Ifigcol{(nd-1-2)}{green}{brown}{brown}{green}{brown}
					\idfigparam{(nd-2-3)}{0.25}{green}
					\idfigparam{(nd-2-2)}{-0.25}{brown}
					\idfigparam{(nd-2-4)}{0.25}{brown}
					\dotrparam{(nd-1-3)}{0.25}{0.04}{orange}
				\end{boxedstrings}
				+ 
				\begin{boxedstrings}
					\idfigparam{(0,1)}{0.25}{green}
					\Ifigcol{(nd-1-2)}{green}{brown}{brown}{green}{brown}
					\idfigparam{(nd-2-3)}{0.25}{green}
					\idfigparam{(nd-2-2)}{-0.25}{brown}
					\idfigparam{(nd-2-4)}{0.25}{brown}
					\idfigparam{(0.125,0.75)}{0.5}{orange}
					\dotrparam{(nd-1-3)}{0.25}{0.04}{orange}
				\end{boxedstrings}
				+ 
				\begin{boxedstrings}
					\idfigparam{(0,1)}{0.25}{green}
					\Ifigcol{(nd-1-2)}{green}{brown}{brown}{green}{brown}
					\idfigparam{(nd-2-3)}{0.25}{green}
					\idfigparam{(nd-2-2)}{-0.25}{brown}
					\idfigparam{(nd-2-4)}{0.25}{brown}
					\dotrparam{(nd-3-3)}{0.25}{0.04}{orange}
				\end{boxedstrings}
				+ 
				\begin{boxedstrings}
					\idfigparam{(0,1)}{0.25}{green}
					\Ifigcol{(nd-1-2)}{green}{brown}{brown}{green}{brown}
					\idfigparam{(nd-2-3)}{0.25}{green}
					\idfigparam{(nd-2-2)}{-0.25}{brown}
					\idfigparam{(nd-2-4)}{0.25}{brown}
					\idfigparam{(0.125,0.75)}{0.5}{orange}
					\dotrparam{(nd-3-3)}{0.25}{0.04}{orange}
				\end{boxedstrings}
			\end{split}
			.
		\end{equation}
	\end{proof}

	The following is an analog of Lemma \ref{lemma tensor products in Htau} for $\Dcal$.
	\begin{theorem}
		We have the following isomorphisms: 
		\begin{equation}\label{gr group bar 1}
			\Xobjbar\otimes \Xobjbar\cong \onecal,  
		\end{equation}
		\begin{equation}\label{gr group bar 2}
			\Xobjbar\otimes \Yobjbar \cong \Yobjbar \cong \Yobjbar\otimes \Xobjbar ,
		\end{equation} 
		\begin{equation}\label{gr group bar 3}
			\Xobjbar\otimes \Zobjbar \cong \XZobjbar \cong \Zobjbar\otimes \Xobjbar,
		\end{equation} 
		\begin{equation}\label{gr group bar 4}
			\Yobjbar\otimes \Yobjbar \cong \Yobjbar[-1]\oplus \Yobjbar[1]\oplus \Zobjbar \oplus \XZobjbar ,
		\end{equation}
		\begin{equation}\label{gr group bar 5}
			\Yobjbar\otimes \Zobjbar\cong \Zobjbar[-1]\oplus \Zobjbar[1] \oplus \XZobjbar[-1] \oplus \XZobjbar[1] \cong \Zobjbar \otimes \Yobjbar,
		\end{equation}
		\begin{equation}\label{gr group bar 6}
			\Zobjbar\otimes \Zobjbar\cong \Zobjbar[-2]\oplus \Zobjbar \oplus \Zobjbar[2] \oplus \XZobjbar .
		\end{equation}
		
		Hence, every object in $\Dcal$ is a direct sum of grading shifts of objects in
		$\Icalbar$.
	\end{theorem}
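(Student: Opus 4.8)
The plan is as follows. Since $\Dcalpre$ — and hence its quotient $\Dcal$ — is by construction generated by $\Xobjbar,\Yobjbar,\Zobjbar$ under direct sums, tensor products and grading shifts, every object of $\Dcal$ is a finite direct sum of grading shifts of tensor words in $\Xobjbar,\Yobjbar,\Zobjbar$; so the closing assertion will follow from \eqref{gr group bar 1}--\eqref{gr group bar 6} by an easy induction on word length, once we also record commutativity of $\otimes$ on $\Icalbar$. I would therefore first dispose of the three ``invertible-twist'' isomorphisms and then read the remaining three off Proposition~\ref{prop idemp decomp}.

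For \eqref{gr group bar 1} I would take the orange cup $\coeval_{\Xobjbar}\colon\onecalbar\to\Xobjbar\otimes\Xobjbar$ and orange cap $\eval_{\Xobjbar}\colon\Xobjbar\otimes\Xobjbar\to\onecalbar$ as the candidate isomorphisms: $\eval_{\Xobjbar}\circ\coeval_{\Xobjbar}=\id_{\onecalbar}$ is precisely the first needle relation in \eqref{needle relations}, and $\coeval_{\Xobjbar}\circ\eval_{\Xobjbar}=\id_{\Xobjbar\otimes\Xobjbar}$ is forced by the orange--orange crossing relation \eqref{vertex definitions 2} (this is the diagrammatic incarnation of \eqref{eqn isomorphism sngXInd}). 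Equation \eqref{gr group bar 3} is then essentially free: $\XZobjbar$ is \emph{defined} to be $\Xobjbar\otimes\Zobjbar$, and $\Xobjbar\otimes\Zobjbar\cong\Zobjbar\otimes\Xobjbar$ because the brown--orange crossing of \eqref{crossings vertices} is invertible (its inverse being the evident rotation; the needed relation is among those collected in Appendix~\ref{appendix-relations}). For \eqref{gr group bar 2}, the orange--landing bigon relation \eqref{XY idempotent decomp} exhibits $\id_{\Xobjbar\otimes\Yobjbar}$ as split after merge through a green strand, so $\Xobjbar\otimes\Yobjbar$ is a summand of $\Yobjbar$; the complementary composite merge after split is then shown to be $\id_{\Yobjbar}$ using the orange--landing sliding relations \eqref{GGG orange}--\eqref{GGB triv orange 2} together with the unit relations \eqref{brown green unit rel}--\eqref{green orange unit rel}, giving $\Xobjbar\otimes\Yobjbar\cong\Yobjbar$, and the left--right mirror argument (cf.\ \eqref{bivalent definition right} vs.\ \eqref{bivalent definition left}) gives $\Yobjbar\otimes\Xobjbar\cong\Yobjbar$.

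The isomorphisms \eqref{gr group bar 4}, \eqref{gr group bar 6} and \eqref{gr group bar 5} I would obtain directly from the explicit orthogonal idempotent decompositions of $\Yobjbar\otimes\Yobjbar$, $\Zobjbar\otimes\Zobjbar$ and $\Yobjbar\otimes\Zobjbar$ provided by Proposition~\ref{prop idemp decomp} (equations \eqref{Y2 decomp}--\eqref{YZ decomp}). Each idempotent occurring there is a bigon; reading such a bigon as (lower half) $\circ$ (upper half) shows that the corresponding summand is the object labelling the ``waist'' of the bigon — a $\Yobjbar$ or a $\Zobjbar$ — shifted by the degree of the lower half, which one reads off from the decorating barbells, dots and polynomial boxes, while a bigon carrying an extra orange strand alongside its waist yields $\Xobjbar\otimes\Zobjbar=\XZobjbar$ in place of $\Zobjbar$. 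Matching the four idempotents of \eqref{Y2 decomp}, the four of \eqref{Z2 decomp}, and the four column-sum idempotents of \eqref{YZ decomp} against their waist objects and shifts produces exactly the lists in \eqref{gr group bar 4}--\eqref{gr group bar 6}; that each bigon composed ``the other way'' is the identity on its waist object is the short $H{=}I$/unit/needle reduction already underlying the phrase ``factor through their direct summands'' in Proposition~\ref{prop idemp decomp}. Commutativity of $\otimes$ on $\Icalbar$ then follows since $\Zobjbar\otimes\Yobjbar$ decomposes, by the mirror of \eqref{YZ decomp}, into the same object as $\Yobjbar\otimes\Zobjbar$, the remaining pairs being covered by \eqref{gr group bar 1}--\eqref{gr group bar 4}, \eqref{gr group bar 6}.

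Finally, for the closing sentence: using \eqref{gr group bar 1}--\eqref{gr group bar 6}, the equality $\XZobjbar=\Xobjbar\otimes\Zobjbar$ and invertibility of $\Xobjbar$, the tensor product of any two objects of $\Icalbar$ is a finite direct sum of grading shifts of objects of $\Icalbar$ (a factor $\XZobjbar$ is absorbed via $\XZobjbar\otimes W=\Xobjbar\otimes(\Zobjbar\otimes W)$ and then simplifying the resulting $\Xobjbar\otimes(-)$'s by \eqref{gr group bar 1}--\eqref{gr group bar 3}); an induction on the number of tensor factors then extends this to every tensor word in $\Xobjbar,\Yobjbar,\Zobjbar$, and since every object of $\Dcal$ is a finite direct sum of grading shifts of such words, the claim follows. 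I expect the main obstacle to be the bookkeeping in the third paragraph: correctly assigning each of the twelve idempotents its $\Icalbar$-summand and grading shift — in particular tracking the orange-strand twist that turns $\Zobjbar$ into $\XZobjbar$ — and checking each ``$p\iota=\id$''; the one genuinely conceptual point is $\coeval_{\Xobjbar}\circ\eval_{\Xobjbar}=\id_{\Xobjbar\otimes\Xobjbar}$, i.e.\ that $\Xobjbar$ is honestly $\otimes$-invertible rather than merely has $\onecalbar$ as a summand of $\Xobjbar^{\otimes2}$, which is exactly what the orange--orange crossing relation \eqref{vertex definitions 2} buys us.
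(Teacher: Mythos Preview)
Your proposal is correct and follows essentially the same approach as the paper: use the orange cap/cup for \eqref{gr group bar 1}, the orange--brown crossing for \eqref{gr group bar 3}, the orange landing for \eqref{gr group bar 2}, and read \eqref{gr group bar 4}--\eqref{gr group bar 6} off Proposition~\ref{prop idemp decomp}. The only notable difference is in \eqref{gr group bar 2}: the paper's computation of the ``merge after split'' composite is simpler than what you sketch --- it just observes that this composite is a green strand with a floating orange circle beside it, and the orange circle evaluates to $1$ by the first relation in \eqref{needle relations}; the sliding relations \eqref{GGG orange}--\eqref{GGB triv orange 2} and unit relations \eqref{brown green unit rel}--\eqref{green orange unit rel} you invoke are not needed here.
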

	\begin{proof}
		\eqref{gr group bar 1}  The diagrams $\Capp{orange}$ and $\Cupp{orange}$ correspond to mutually inverse morphisms between $\Xobjbar\otimes \Xobjbar$ and $\onecal$.
		This is shown in equations \eqref{needle relations} and \eqref{X2 idempotent decomp}.
		
		\eqref{gr group bar 2} 
		The diagrams $\Trivup{green}{orange}{green}$ and $\Trivdown{orange}{green}{green}$ correspond to mutually inverse morphisms between $\Xobjbar\otimes \Yobjbar$ and $\Yobjbar$.
		To check this, note that \eqref{XY idempotent decomp} implies $\Trivup{green}{orange}{green}\circ \Trivdown{orange}{green}{green} = \iddoubletext{orange}{green}$ and
		\begin{equation}
			\begin{boxedstrings}
				\trivuparamcol{(0.25,1)}{0.5}{0.5}{green}{orange}{green}
				\trivdparamcol{(nd-1-2)}{0.5}{0.5}{orange}{green}{green}
			\end{boxedstrings}
			=
			\begin{boxedstrings}
				\idfigparam{(0.5,1)}{1}{green}
				\node (a) at (0,0.5) {};
				\def\coltop{orange}
				\capfigparam{(a)}{0.4}{0.2}{\coltop}
				\cupfigparam{(a)}{0.4}{0.2}{\coltop}
			\end{boxedstrings}
			=
			\begin{boxedstrings}
				\idfigparam{(0.5,1)}{1}{green}
			\end{boxedstrings}
			.
		\end{equation}
		
		\eqref{gr group bar 3}
		The diagrams $\Cross{brown}{orange}$ and $\Cross{orange}{brown}$ correspond to mutually inverse morphisms between $\Xobjbar\otimes \Zobjbar$ and $\Zobjbar\otimes \Xobjbar$.
		This is shown in \eqref{orange brown uncross}.
		
		The isomorphisms \eqref{gr group bar 4}, \eqref{gr group bar 4}, and \eqref{gr group bar 6} follow from Proposition \ref{prop idemp decomp}.
		
	\end{proof}

\subsection{Sliding Orange Strands}\label{subsection orange strand}
One of the tools we will need to prove Theorem \ref{diagram reduction}
in Section \ref{subsection-diagram reduction},
is to get orange strands, as much as possible, out of the way 
of the non-orange elements of our diagrams.
In this section we will prove all the tools needed for this.

Our first goal is to show that orange strands slide over any diagram.
To do this, we must show that the orange strand slides over all other strands, 
as well as over dots, trivalent vertices and polynomials.

The following lemma shows the interactions of an orange strand with other orange strands and dots. Its follows easily from the definition of 
$
\Cross{orange}{orange}
$
in
\eqref{vertex definitions 2} and the orange barbell in \ref{barbell rels}.
\begin{lemma}\label{lemma orange interaction with itself}
	The orange strand slides over an orange strand, dot, or barbell.
	That is, the following relations hold:
	\begin{equation}\label{X2 idempotent decomp}
		\begin{boxedstrings}
			\idfigparam{(0,1)}{1}{orange}
			\idfigparam{(0.5,1)}{1}{orange}
		\end{boxedstrings}
		=
		\begin{boxedstrings}
			\Xfig{(0,0)}{orange}{orange}
			\idfigparam{(nd-1-1)}{-0.25}{orange}
			\idfigparam{(nd-1-2)}{-0.25}{orange}
			\idfigparam{(nd-1-4)}{0.25}{orange}
			\idfigparam{(nd-1-3)}{0.25}{orange}
		\end{boxedstrings}
		=
		\begin{boxedstrings}
			\capfigparam{(0,0)}{0.5}{0.3}{orange}
			\cupfigparam{(0,1)}{0.5}{0.3}{orange}
		\end{boxedstrings}
		,
	\end{equation}
	
	\begin{equation}\label{orange jump dot orange rel}
		\begin{boxedstrings}
			\Xfig{(0,0)}{orange}{orange}
			\idfigparam{(nd-1-2)}{-0.25}{orange}
			\idfigparam{(nd-1-3)}{0.25}{orange}
			\idfigparam{(nd-1-4)}{0.25}{orange}
			\dotfig{(nd-1-1)}{orange}
		\end{boxedstrings}
		=
		\begin{boxedstrings}
			\idfig{(0.5,1)}{orange}
			\dotuparam{(0.25,0)}{0.5}{0.06}{orange}
		\end{boxedstrings}
		=
		\begin{boxedstrings}
			\idfigparam{(0,1)}{0.5}{orange}
			\cupfigparam{(nd-1-2)}{-0.4}{0.2}{orange}
			\capfigparam{(0,0)}{-0.4}{0.2}{orange}
			\dotfigparamcol{(nd-2-2)}{0.06}{orange}
		\end{boxedstrings}
		=
		\begin{boxedstrings}
			\dotdparam{(0.2,1)}{0.5}{0.06}{orange}
			\capfigparam{(0,0)}{0.4}{0.2}{orange}
		\end{boxedstrings}
		=
		\begin{boxedstrings}
			\idfigparam{(0,1)}{0.5}{orange}
			\cupfigparam{(nd-1-2)}{0.4}{0.2}{orange}
			\capfigparam{(0,0)}{0.4}{0.2}{orange}
			\dotfigparamcol{(nd-2-2)}{0.06}{orange}
		\end{boxedstrings}
		=
		\begin{boxedstrings}
			\idfig{(0.5,1)}{orange}
			\dotuparam{(0.75,0)}{0.5}{0.06}{orange}
		\end{boxedstrings}
		,
	\end{equation}
	
	\begin{equation}\label{orange dots merger}
		(\alpha_s-\alpha_t)^2
		\begin{boxedstrings}
			\idfig{(1,1)}{orange}
		\end{boxedstrings}
		=
		\begin{boxedstrings}
		\idfig{(1,1)}{orange}
		\vbarb{(0.75,0.7)}{orange}
		\end{boxedstrings}
		=	
		\begin{boxedstrings}
			\dotd{(1,1)}{orange}
			\dotu{(1,0)}{orange}
		\end{boxedstrings}
		=
		\begin{boxedstrings}
		\idfig{(1,1)}{orange}
		\vbarb{(1.25,0.7)}{orange}
		\end{boxedstrings}
		=
		\begin{boxedstrings}
			\idfig{(1,1)}{orange}
		\end{boxedstrings}
		(\alpha_s-\alpha_t)^2.
		\end{equation}
\end{lemma}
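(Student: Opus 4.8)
The plan is to prove the three displayed relations in the order \eqref{X2 idempotent decomp}, \eqref{orange jump dot orange rel}, \eqref{orange dots merger}, since each uses the previous one. The guiding principle is that an orange strand behaves like the identity of an invertible object: by \eqref{vertex definitions 2} the all‑orange $4$‑valent vertex is just a pair of parallel orange strands, by \eqref{needle relations} the orange circle is $1$, and $\Xobjbar$ is self‑biadjoint by construction, so $\Dcal$ is pivotal; these are the only inputs. For \eqref{X2 idempotent decomp} the first equality is literally \eqref{vertex definitions 2}. For the second, $\Cross{orange}{orange}=\Cupp{orange}\circ\Capp{orange}$, I would start from $\id_{\Xobjbar}\otimes\id_{\Xobjbar}$, redraw the left strand as an orange cap–cup zigzag, and insert an all‑orange $4$‑valent vertex on the two new strands (legal by \eqref{vertex definitions 2}); the resulting picture may then be read planarly so that the zigzag's cap closes off against one leg of the vertex and its cup against another, and an isotopy collapses the rest to $\Capp{orange}$ stacked below $\Cupp{orange}$, with \eqref{needle relations} ensuring no scalar correction. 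Equivalently, one checks that $e:=\id_{\Xobjbar}\otimes\id_{\Xobjbar}-\Cupp{orange}\circ\Capp{orange}$ is an idempotent annihilated on the left by $\Capp{orange}$ and on the right by $\Cupp{orange}$, and that $e=0$.

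For \eqref{orange jump dot orange rel} I would argue term‑by‑term, each step being a single move. The first two terms agree once the all‑orange $4$‑valent vertex is removed by \eqref{vertex definitions 2}, leaving an orange dot sitting on one of two parallel strands. The middle ``straightened'' term is the same morphism with that strand drawn as an orange zigzag carrying the dot, reached by isotopy, and the remaining terms are obtained by sliding the dot from one endpoint of the orange cap (respectively cup) to the other, invoking \eqref{X2 idempotent decomp} (or \eqref{vertex definitions 2}) to absorb any $4$‑valent vertex produced along the way; the last two terms are the left–right mirror images of the first two, so cost no extra work.

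For \eqref{orange dots merger} the outermost equalities are formal: by the orange case of \eqref{barbell rels} a free‑floating orange barbell equals the polynomial box $(\alpha_s-\alpha_t)^2$, which by \eqref{left poly rel} is the scalar $(\alpha_s-\alpha_t)^2\in\Rtau$, and a scalar commutes past $\id_{\Xobjbar}$, giving both end equalities. The middle equality, that $\Dotdown{orange}$ post‑composed with $\Dotup{orange}$ equals the orange barbell times $\id_{\Xobjbar}$, I would prove by writing $\id_{\Xobjbar}$ as an orange zigzag and folding this composite so that its two orange dots and the arc joining them pinch off into a closed orange barbell beside a straightened orange strand; the dot‑slides needed to carry this out are precisely those recorded in \eqref{orange jump dot orange rel}, after which \eqref{barbell rels} finishes.

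The step I expect to be the main obstacle is the second equality in \eqref{X2 idempotent decomp}: identifying the two canonical idempotents of $\Xobjbar\otimes\Xobjbar$, namely $\id$ and the projection onto $\onecalbar$ through the orange cap and cup. This is the only place where the ``invertibility'' of $\Xobjbar$ is used essentially; once it is in hand, \eqref{orange jump dot orange rel} and \eqref{orange dots merger} reduce to routine isotopy together with the barbell relation, modulo a few standard simplifications from Appendix \ref{appendix-relations}.
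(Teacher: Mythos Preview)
Your approach uses the right ingredients and agrees with the paper, whose entire proof is the one-line remark that the lemma follows from \eqref{vertex definitions 2} and \eqref{barbell rels}; your arguments for \eqref{orange jump dot orange rel} and \eqref{orange dots merger} go through. There is, however, a gap in your treatment of the second equality in \eqref{X2 idempotent decomp}. In your zigzag-and-insert construction, two adjacent legs of the inserted $4$-valent vertex $V$ end up joined by the zigzag's cup; substituting $V=\id\otimes\id$ via \eqref{vertex definitions 2} then simply undoes the insertion and returns $\id\otimes\id$, not the cup--cap composite. The idempotent alternative does not close either: knowing that $e=\id-(\text{cup}\circ\text{cap})$ is an idempotent annihilated by cap on one side and cup on the other does not force $e=0$ without independent control of $\End(\Xobjbar\otimes\Xobjbar)$, which is exactly what is in question at this point of the paper.

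The clean fix is to use the cyclicity of the orange $4$-valent vertex directly. Apply the $90^\circ$ rotation
\[
f\;\longmapsto\;(\text{cap}\otimes\id\otimes\id)\circ(\id\otimes f\otimes\id)\circ(\id\otimes\id\otimes\text{cup})
\]
to both sides of \eqref{vertex definitions 2}. On the left-hand side this fixes $V$, since a single $4$-valent orange vertex is isotopy-invariant under such a rotation (bend one top leg down with the cap and one bottom leg up with the cup, then straighten); on the right-hand side it sends two parallel vertical strands to two parallel horizontal ones, i.e.\ to $\text{cup}\circ\text{cap}$. This uses only \eqref{vertex definitions 2} and planar isotopy --- neither \eqref{needle relations} nor anything from Appendix~\ref{appendix-relations} is needed for this step.
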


\begin{coro}
	Let $D$ be a diagram containing only orange strands and dots, with $k$ boundary strands.
	Then $D$ is an $\Rtau$-multiple of $\mathsf{Or}_k$, defined by
	\begin{equation}\label{ORk definition}
		\mathsf{Or}_k := 
		\begin{cases} 
			\begin{boxedstrings}
				\foreach \x in {0.1,0.3,0.7,0.9}
				{
					\draw[orange] (\x,0) -- ++(0,1);
				};
				\node (A) at (0.5,0.5) {$\overset{m}{...}$};
			\end{boxedstrings}
			&\mbox{if } k=2m \\
			&\\
			\begin{boxedstrings}
				\foreach \x in {0.1,0.3,0.7,0.9}
				{
					\draw[orange] (\x,0) -- ++(0,1);
				};
				\node (A) at (0.5,0.5) {$\overset{m}{...}$};
				\dotu{(-0.1,0)}{orange}
			\end{boxedstrings}
			& \mbox{if } k=2m+1 
		\end{cases}
		.
	\end{equation}
\end{coro}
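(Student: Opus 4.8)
The plan is to bring $D$ into a normal form using only the relations collected in Lemma~\ref{lemma orange interaction with itself} together with the isotopy relations of $\Dcalpre$. First I would isotope $D$ into generic (Morse) position, so that it is expressed as a vertical composite of elementary layers, each layer being a horizontal tensor product of identity orange strands with at most one non-identity generator (an orange crossing, cup, cap, dot, or a polynomial box). Then, scanning from top to bottom, I would eliminate every orange--orange crossing using \eqref{X2 idempotent decomp}, which rewrites a crossing as two parallel strands (equivalently as a cap--cup); after finitely many such moves $D$ becomes a \emph{crossingless} orange diagram. In a crossingless orange diagram every connected component is either a closed loop, an arc between two dots, an arc from a boundary point to a dot, or an arc between two boundary points, and — using the snake (zig-zag) identities, which hold because $\Xobjbar$ is self-adjoint in $\Dcalpre$ — each such arc can be straightened so that its turning features are as simple as possible.

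Next I would strip off the scalar pieces. A closed orange loop, being contained in a disk meeting nothing else, can be isotoped to the standard circle $\encircO$, which equals $1$ by \eqref{needle relations}; hence all closed loops can be erased. An arc joining two dots is an orange barbell, so it equals $(\alpha_s-\alpha_t)^2\in\Rtau$ by \eqref{barbell rels}, and being in an isolated disk it factors out as a central scalar; likewise two orange dots meeting head-on along a strand contract to $(\alpha_s-\alpha_t)^2$ by \eqref{orange dots merger}. All floating polynomial boxes are merged into a single element of $\Rtau$ via \eqref{polynomial sum} and \eqref{polynomial mult}. After these steps $D$ equals $f$ times a crossingless orange diagram $D'$ with no closed components, no dot--dot arcs, and no two colliding dots, for some $f\in\Rtau$.

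Finally I would argue $D' = \mathsf{Or}_k$. Using \eqref{orange jump dot orange rel} (and isotopy) each boundary-to-dot arc can be slid to run straight from its boundary point to a nearby dot, and by the same relation together with the cap--cup presentation of a crossing the remaining through-strands can be permuted and unnested freely; since orange strands neither branch nor meet any other colour, the resulting picture is determined up to isotopy by which of the $k$ boundary points are matched to one another and which terminate at a dot. A parity count forces the number of dot-terminated legs to be $k\bmod 2$, i.e.\ zero or one, so $D'$ is precisely the standard diagram $\mathsf{Or}_k$ and $D=f\cdot\mathsf{Or}_k$. I expect this last step to be the real obstacle: one must check carefully that \eqref{orange jump dot orange rel} suffices to bring \emph{every} configuration of dotted arcs and through-strands into the single standard configuration (and, depending on how strictly one reads ``boundary strands'', to use isotopy to move boundary legs between the top and bottom so that $D$ and $\mathsf{Or}_k$ are morphisms between the same objects). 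The crossing- and circle-removal steps are routine; it is the bookkeeping that pins down the unique normal form that needs care, and an induction on the number of orange dots (or on the number of cups and caps) is the natural way to organize it.
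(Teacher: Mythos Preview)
Your overall strategy matches the paper's: remove crossings via \eqref{X2 idempotent decomp}, erase closed orange circles via \eqref{needle relations}, absorb barbells as $(\alpha_s-\alpha_t)^2$, and then use \eqref{X2 idempotent decomp} again to turn any non-crossing matching of boundary points into the standard one. The order of operations differs slightly (the paper reduces the number of dots \emph{first}, then removes crossings and circles), but that is immaterial.

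The genuine gap is your parity claim. After you have stripped off closed loops and barbells, the remaining crossingless diagram consists of boundary--boundary arcs and boundary--dot arcs, and parity only tells you that the number $a$ of boundary--dot arcs satisfies $a\equiv k\pmod 2$; it does \emph{not} force $a\in\{0,1\}$. For instance with $k=3$ you could perfectly well have three separate boundary--dot arcs. So ``the resulting picture is determined up to isotopy by which of the $k$ boundary points are matched to one another and which terminate at a dot'' is true, but there are many such pictures, not just $\mathsf{Or}_k$.

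What the paper does (and what your closing remark about ``induction on the number of orange dots'' is pointing toward) is to merge dots \emph{on different arcs}: given two boundary--dot arcs, use \eqref{orange jump dot orange rel} to slide one dot across any intervening strands until it sits directly opposite the other dot, and then apply \eqref{orange dots merger} to fuse the two facing dots into a factor of $(\alpha_s-\alpha_t)^2$ times a through-segment. This joins the two arcs into a single boundary--boundary arc and decreases the dot count by two. Iterating leaves at most one dot, which \eqref{orange jump dot orange rel} then lets you move to the leftmost boundary strand. Once you add this dot-merging step, your argument is complete and essentially identical to the paper's.
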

\begin{proof}
	By \eqref{orange jump dot orange rel} and \eqref{orange dots merger},
	\begin{equation}
		\begin{boxedstrings}
			\foreach \x in {0.5,0.75}
			{
				\draw[orange] (\x,0) -- ++(0,1);
			};
			\dotu{(0.25,0)}{orange}
			\dotu{(1,0)}{orange}
		\end{boxedstrings}
		=
		\begin{boxedstrings}
			\foreach \x in {0.5,0.75}
			{
				\draw[orange] (\x,0) -- ++(0,1);
			};
			\draw[orange] (1,0) -- ++(0,0.5) -- ++(-0.75,0);
			\dotu{(0.25,0)}{orange}
			\dotfig{(0.25,0.5)}{orange}
		\end{boxedstrings}
		=
		\begin{boxedstrings}
			\foreach \x in {0.5,0.75}
			{
				\draw[orange] (\x,0) -- ++(0,1);
			};
			\draw[orange] (0.25,0) -- ++(0,0.5) -- ++(0.75,0) -- ++(0,-0.5);
			\vbarb{(0,0.7)}{orange}
		\end{boxedstrings}
		=
		(\alpha_s-\alpha_t)
		\begin{boxedstrings}
			\foreach \x in {0.25,0.5,0.75}
			{
				\draw[orange] (\x,0) -- ++(0,1);
			};
			\draw[orange] (0,0) -- ++(0,0.5) -- ++(1,0) -- ++(0,-0.5);
		\end{boxedstrings}
		.
	\end{equation}
	If $D$ has more than one orange dot, we can use this process to merge them into a factor of $(\alpha_s-\alpha_t)^2$. Hence, we may suppose that $D$ has at most one orange dot.
	Moreover, \eqref{orange jump dot orange rel} allows us to move that orange dot to the bottom left orange boundary strand.
	
	By \eqref{X2 idempotent decomp}, we may replace any $\Cross{orange}{orange}$ by
	$\iddoubletext[0.6]{orange}{orange}$. 
	Thus, we may suppose that $D$ has no orange crossings.
	
	We can also use Lemma \ref{lemma orange interaction with itself} 
	along with the orange circle relation in \eqref{needle relations},
	to slide any orange circle and eliminate any orange circle in $D$.
	Because of this, we can suppose further that $D$ does not have any orange circles.
	Therefore, the diagram $D$ is a non-crossing matching of its boundary points, 
	plus an orange boundary dot if $k$ is odd.
	
	Finally, \eqref{X2 idempotent decomp} allows us to transform any perfect matching into the one in \eqref{ORk definition}.
\end{proof}

Let us consider the relations 
\begin{equation}\label{orange slide horizontal green}
	\begin{fitboxedstrings}
		\path (0,0)--(1,1);
		\draw[green] (0,0.5) -- (1,0.5);
		\draw[orange] (0.25,1) -- (0.25,0.5);
		\draw[orange] (0.75,0) -- (0.75,0.5);
	\end{fitboxedstrings}
	=
	\begin{fitboxedstrings}
		\path (0,0)--(1,1);
		\draw[green] (0,0.5) -- (1,0.5);
		\draw[orange] (0.5,1) -- (0.5,0);
	\end{fitboxedstrings}
	=
	\begin{fitboxedstrings}
		\path (0,0)--(1,1);
		\draw[green] (0,0.5) -- (1,0.5);
		\draw[orange] (0.25,0) -- (0.25,0.5);
		\draw[orange] (0.75,1) -- (0.75,0.5);
	\end{fitboxedstrings}
	\hspace{0.1cm},
\end{equation}
which are rotations of \eqref{vertex definitions}.
This shows that we can split orange crossings and slide orange landings on opposite sides of a green strand past each other.
When we have two orange landings on the same side of a green strand we get, 
\begin{equation}\label{horizontal green cups}
	\begin{fitboxedstrings}
		\path (0,0)--(1,1);
		\draw[green] (0,0.5) -- (1,0.5);
		\foreach \x in {1,2}
		{
			\tikzmath{\z=\x*0.333;}
			\draw[orange] (\z,1) -- (\z,0.5);
		};
	\end{fitboxedstrings}
	=
	\begin{fitboxedstrings}
		\path (0,0)--(1,1);
		\draw[green] (0,0.5) -- (1,0.5);
		\cupfigparam{(0.333,1)}{0.333}{0.333}{orange}
	\end{fitboxedstrings}
	\hspace{0.1cm}.
\end{equation}
instead. This is a rotation of \eqref{XY idempotent decomp}.
Applying \eqref{orange slide horizontal green} and \eqref{horizontal green cups} repeatedly,
we have the following lemma.
\begin{lemma}\label{lemma orange landings green edge}
	Let $D$ be a diagram of the form
	\begin{equation}
		D
		=
		\begin{fitboxedstrings}
			\path (0,0)--(1,1);
			\draw[green] (0,0.5) -- (2,0.5);
			\node (A) at (0.5,0.25) {$\overset{m}{...}$};
			\node (B) at (1.5,0.75) {$\overset{k}{...}$};
			\foreach \x in {0.15,0.3,0.7,0.85}
			{
				\draw[orange] (\x,0) -- ++(0,0.5);
			};
			\foreach \x in {1.15,1.3,1.7,1.85}
			{
				\draw[orange] (\x,0.5) -- ++(0,0.5);
			};
		\end{fitboxedstrings}
		\hspace{0.2cm} ,
	\end{equation}
	with $m$ orange landings from one side and $k$ from the other.
	Then
	\begin{equation}
		D=
		\left\{
		\begin{array}{llll}
			\begin{fitboxedstrings}
				\path (0,0)--(1,1);
				\draw[green] (0,0.5) -- (2,0.5);
				\node (A) at (1,0.25) {$\overset{\frac{m}{2}}{...}$};
				\node (B) at (1,0.75) {$\overset{\frac{k}{2}}{...}$};
				\foreach \x in {0.4,1.3}
				{
					\cupfigparam{(\x,1)}{0.3}{0.3}{orange}
					\capfigparam{(\x,0)}{0.3}{0.3}{orange}
				};
			\end{fitboxedstrings} 
			\vspace{0.2cm}
			 & \text{ if } k,m\in 2\ZZ;
			&
			\begin{fitboxedstrings}
				\path (0,0)--(1,1);
				\draw[green] (0,0.5) -- (2,0.5);
				\node (A) at (1,0.25) {$\overset{\frac{m-1}{2}}{...}$};
				\node (B) at (1,0.75) {$\overset{\frac{k}{2}}{...}$};
				\foreach \x in {0.4,1.3}
				{
					\cupfigparam{(\x,1)}{0.3}{0.3}{orange}
					\capfigparam{(\x,0)}{0.3}{0.3}{orange}
				};
				\draw[orange] (0.2,0) -- ++(0,0.5);
			\end{fitboxedstrings}
			 & \text{ if } k\in 2\ZZ, m\notin 2\ZZ ;
			\\
			\begin{fitboxedstrings}
			\path (0,0)--(1,1);
			\draw[green] (0,0.5) -- (2,0.5);
			\node (A) at (1,0.25) {$\overset{\frac{m}{2}}{...}$};
			\node (B) at (1,0.75) {$\overset{\frac{k-1}{2}}{...}$};
			\foreach \x in {0.4,1.3}
			{
				\cupfigparam{(\x,1)}{0.3}{0.3}{orange}
				\capfigparam{(\x,0)}{0.3}{0.3}{orange}
			};
			\draw[orange] (0.2,0.5) -- ++(0,0.5);
			\end{fitboxedstrings}
			 & \text{ if } m\in 2\ZZ, k\notin 2\ZZ ;
			 &
			 \begin{fitboxedstrings}
			 	\path (0,0)--(1,1);
			 	\draw[green] (0,0.5) -- (2,0.5);
			 	\node (A) at (1,0.25) {$\overset{\frac{m-1}{2}}{...}$};
			 	\node (B) at (1,0.75) {$\overset{\frac{k-1}{2}}{...}$};
			 	\foreach \x in {0.4,1.3}
			 	{
			 		\cupfigparam{(\x,1)}{0.3}{0.3}{orange}
			 		\capfigparam{(\x,0)}{0.3}{0.3}{orange}
			 	};
			 	\draw[orange] (0.2,0) -- ++(0,1);
			 \end{fitboxedstrings}
			  & \text{ if } k,m\notin 2\ZZ;
		\end{array}
		\right.
		.
	\end{equation}
	
	Hence, we can transform any green edge with an arbitrary number of 
	orange crossings and landings into an edge that has at most one crossing or landing.
\end{lemma}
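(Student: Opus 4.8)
The plan is to run an explicit diagrammatic algorithm on $D$, using only the two families of moves \eqref{orange slide horizontal green} and \eqref{horizontal green cups} together with planar isotopy, and to track what happens to the orange landings according to the parities of $m$ and $k$. The key observation is that these two moves do two complementary things: \eqref{orange slide horizontal green} lets an orange landing from one side of the green edge commute past an orange landing from the other side, while \eqref{horizontal green cups} lets two landings \emph{on the same side} be replaced by a single orange cup that is disjoint from the green edge.

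First I would use \eqref{orange slide horizontal green} to \emph{sort} the orange landings along the green edge. That relation says precisely that an orange landing coming in from below and one coming in from above can be slid past one another along the green strand, the common intermediate diagram being a single orange strand crossing the green edge. Applying it repeatedly, I can push all $m$ landings on the lower side to the left portion of the green edge and all $k$ landings on the upper side to the right portion, so that no lower landing sits between two upper landings or vice versa; in particular the $m$ lower landings become consecutive, and likewise the $k$ upper ones.

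Next I would apply \eqref{horizontal green cups} — and its reflection across the green edge, which follows by rotating \eqref{XY idempotent decomp} the other way and produces an orange \emph{cap} below rather than a cup above — to pair up consecutive same-side landings two at a time, each pair being absorbed into an orange cap (resp.\ cup) disjoint from the green edge. After doing this for the lower side one is left with $\lfloor m/2\rfloor$ orange caps and, if $m$ is odd, a single remaining lower landing, which isotopy slides to the far left; doing it for the upper side leaves $\lfloor k/2\rfloor$ orange cups and, if $k$ is odd, a single upper landing. Sorting into the four cases for the parities of $k$ and $m$ yields exactly the four diagrams in the statement. For the concluding sentence, in the case where one lower and one upper landing both remain I would read \eqref{orange slide horizontal green} (equivalently \eqref{vertex definitions}) backwards to merge them into a single orange–green crossing, so that in every case the green edge meets at most one orange feature. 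I do not expect a real obstacle here: the only points needing care are checking that the sorting step can always be completed before the pairing step — i.e.\ that any two same-side landings can be made adjacent — and keeping the four parity cases straight; the remainder is routine planar isotopy.
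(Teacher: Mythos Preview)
Your proposal is correct and matches the paper's approach: the paper gives no proof beyond the sentence ``Applying \eqref{orange slide horizontal green} and \eqref{horizontal green cups} repeatedly, we have the following lemma,'' and your algorithm is exactly the spelling-out of that. One minor remark: in the stated diagram $D$ the $m$ lower landings and the $k$ upper landings are already segregated (left versus right), so your sorting step via \eqref{orange slide horizontal green} is only needed for the ``Hence'' clause about arbitrary configurations of crossings and landings, not for the displayed identity itself.
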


\begin{prop}
	The orange strand slides over the green and brown strands.
	That is, the following relations hold:
	\begin{equation}\label{orange green uncross}
		\begin{boxedstrings}
			\idfigparam{(0,1)}{1}{orange}
			\idfigparam{(0.25,1)}{1}{green}
		\end{boxedstrings}
		=
		\begin{boxedstrings}
			\Xfig{(0,0)}{orange}{green}
			\Xfig{(nd-1-3)}{green}{orange}
		\end{boxedstrings}
		,
	\end{equation}
	\begin{equation}\label{orange brown uncross}
		\begin{boxedstrings}
			\idfigparam{(0,1)}{1}{orange}
			\idfigparam{(0.25,1)}{1}{brown}
		\end{boxedstrings}
		=
		\begin{boxedstrings}
			\Xfig{(0,0)}{orange}{brown}
			\Xfig{(nd-1-3)}{brown}{orange}
		\end{boxedstrings}
		.
	\end{equation}
\end{prop}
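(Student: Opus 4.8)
The two relations are ``Reidemeister~II'' statements: composing a crossing with its reverse is the identity of $\Xobjbar\otimes\Yobjbar$, respectively $\Xobjbar\otimes\Zobjbar$. The plan is to prove the green relation \eqref{orange green uncross} first, using only the vertex definitions and the orange landing relations, and then to bootstrap the brown relation \eqref{orange brown uncross} from it together with the definition \eqref{vertex definitions 2} of the brown--orange crossing.

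For \eqref{orange green uncross}: by \eqref{vertex definitions} the green--orange crossing is, by definition, a green strand that splits off an orange landing and then re-absorbs one, and its reverse is the mirror image of this (equivalently, its rotation through the green and orange cups and caps, compare the rotated forms \eqref{orange slide horizontal green}). First I would expand both crossings in the composite on the right-hand side of \eqref{orange green uncross} using \eqref{vertex definitions}. After an isotopy the resulting diagram is a single green strand carrying two orange landings on the same side, joined along the orange strand --- concretely the $I$-shaped diagram on the left of \eqref{XY idempotent decomp}. Should the isotopy produce a closed orange loop, it is removed with the orange needle relation \eqref{needle relations}, i.e.\ Lemma~\ref{lemma orange interaction with itself}. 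Applying \eqref{XY idempotent decomp} --- or equivalently its rotation \eqref{horizontal green cups} together with Lemma~\ref{lemma orange landings green edge} --- collapses this to the pair of parallel strands, which is the left-hand side of \eqref{orange green uncross}.

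For \eqref{orange brown uncross}: the brown--orange crossing is defined in \eqref{vertex definitions 2} as $-\frac{1}{2}$ of a diagram built from two $H$-figures in the colours brown, orange and green, so expanding the double crossing on the right-hand side of \eqref{orange brown uncross} produces a diagram involving only green, orange and brown strands. One can simplify it to the identity using the $H=I$ relations \eqref{H=I bicolor associativity} and \eqref{H=I top brown}, the bigon relations \eqref{bigon brown} and \eqref{bigon brown orange}, the orange-landing slide relations \eqref{GGG orange}, \eqref{GGB orange 1} and \eqref{GGB triv orange 2}, and --- crucially --- the green case \eqref{orange green uncross} just proved, which lets orange strands pass over the internal green strands. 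To keep the bookkeeping manageable I would instead argue via a retract: by \eqref{bigon brown} the identity of $\Zobjbar$ factors through $\Yobjbar\otimes\Yobjbar$ via the brown--green--green trivalent vertices, one composite of them being $2\,\id_{\Zobjbar}$. Hence $\id_{\Xobjbar}\otimes\id_{\Zobjbar}$ is obtained from $\id_{\Xobjbar}\otimes\id_{\Yobjbar}\otimes\id_{\Yobjbar}$ by pre- and post-composition with those vertices, and sliding the orange strand across the brown double crossing reduces, after two applications of \eqref{orange green uncross}, to checking that the orange strand commutes past the brown trivalent vertices --- which is exactly \eqref{GGB orange 1} and \eqref{GGB triv orange 2}, the signs there cancelling in the two-legged composite.

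I expect the main obstacle to be the brown case. In the direct approach, the expansion of the brown--orange double crossing via the $-\frac{1}{2}HH$ definition is a large diagram, and organizing the cancellations among the four $H$'s and the several orange landings without sign errors is delicate; in the retract approach the difficulty is instead the careful verification that the orange strand passes through the brown splitting and merging maps with compatible signs, so that the signs in \eqref{GGB orange 1}--\eqref{GGB triv orange 2} do not obstruct the conclusion. Everything else is routine given the relations of Definition~\ref{relations equiv category} and Appendix~\ref{appendix-relations}.
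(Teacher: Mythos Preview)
Your argument for the green case \eqref{orange green uncross} is correct and is exactly the paper's proof: expand the two crossings via \eqref{vertex definitions}, obtain the $I$-diagram of \eqref{XY idempotent decomp} with an extra orange arc, remove the resulting orange circle by \eqref{needle relations}, and collapse via \eqref{XY idempotent decomp}.

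For the brown case \eqref{orange brown uncross}, the paper follows your ``direct'' route rather than the retract route. It expands both crossings via the $-\tfrac{1}{2}HH$ definition \eqref{vertex definitions 2}, giving $\tfrac{1}{4}$ times a diagram consisting of two stacked green bigons on a brown core with orange strands threaded through. The simplification does \emph{not} use the $H=I$ relations you list; it uses only the orange-landing slide \eqref{GGB triv orange 2} (twice, each contributing a sign) to collect the orange into a single strand disjoint from the bigons, and then \eqref{bigon brown} twice to evaluate the two green bigons as $2\cdot 2$, cancelling the $\tfrac{1}{4}$. So the relations actually needed are \eqref{vertex definitions 2}, \eqref{GGB triv orange 2}, and \eqref{bigon brown}.

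Your retract approach is a legitimate alternative, but as you have written it the main step is hidden. Factoring $\id_{\Zobjbar}$ through $\Yobjbar\otimes\Yobjbar$ does not by itself let you compare the brown double crossing with the identity; what you really need is the \emph{naturality} of the orange--brown crossing with respect to the brown--green--green trivalent vertices, i.e.\ that crossing the orange over the brown leg equals crossing it over the two green legs. This is precisely relation \eqref{orange jump through GGB}, which the paper proves later (and indeed only from \eqref{GGB orange 1} and \eqref{GGB triv orange 2}, so there is no circularity). Once you have that naturality on both the merge and the split, your retract argument goes through and the signs cancel as you hoped. The trade-off is that you must first establish \eqref{orange jump through GGB}; the paper's direct computation avoids this by manipulating the expanded $HH$-picture in place, which is a short chain of four equalities.
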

\begin{proof}
	Using the orange circle in 
	\eqref{needle relations}, 
	\eqref{X2 idempotent decomp}, and
	\eqref{XY idempotent decomp} we can conclude that 
	\begin{equation}\label{orange green uncross proof}
		\begin{boxedstrings}
			\Xfig{(0,0)}{orange}{green}
			\Xfig{(nd-1-3)}{green}{orange}
		\end{boxedstrings}
		=
		\begin{boxedstrings}
			\Ilongparam{(0,1)}{0.5}{1}{orange}{green}{green}{orange}{green}
			\draw[orange] (nd-1-2) -- (nd-1-4);
		\end{boxedstrings}
		=
		\begin{boxedstrings}
			\idfigparam{(0,1)}{1}{orange}
			\idfigparam{(0.25,1)}{1}{green}
			\node (a) at (0.5,0.5) {};
			\def\coltop{orange}
			\capfigparam{(a)}{0.4}{0.2}{\coltop}
			\cupfigparam{(a)}{0.4}{0.2}{\coltop}
		\end{boxedstrings}
		=
		\begin{boxedstrings}
			\idfigparam{(0,1)}{1}{orange}
			\idfigparam{(0.25,1)}{1}{green}
		\end{boxedstrings}
		.
	\end{equation}
	
	Similarly, \eqref{GGB triv orange 2} and \eqref{bigon brown orange}
	allow us to write
	\begin{equation}\label{orange brown uncross proof}
		\begin{boxedstrings}
			\Xfig{(0,0)}{orange}{brown}
			\Xfig{(nd-1-3)}{brown}{orange}
			\idfigparam{(nd-1-1)}{-0.25}{orange}
			\idfigparam{(nd-2-3)}{ 0.25}{orange}
			\idfigparam{(nd-1-2)}{-0.25}{brown}
			\idfigparam{(nd-2-4)}{ 0.25}{brown}
		\end{boxedstrings}
		=
		\frac{1}{4}
		\begin{boxedstrings}
			\draw[orange] (0,1)-- ++(0,0.4);
			\capfigparam{(0,1)}{0.4}{0.2}{green}
			\idfigparam{(nd-1-2)}{0.7}{orange}
			\cupfigparam{(nd-1-1)}{0.4}{0.2}{green}
			\capfigparam{(nd-2-2)}{-0.4}{0.2}{green}
			\draw[orange](nd-4-2)-- ++(0,-0.4);
			\cupfigparam{(nd-2-2)}{-0.4}{0.2}{green}
			\idfigparam{(nd-5-3)}{0.2}{brown}
			\idfigparam{(nd-1-3)}{-0.2}{brown}
			\draw[brown] (nd-3-3)--(nd-4-3);
		\end{boxedstrings}
		=
		-
		\frac{1}{4}
		\begin{boxedstrings}
			\draw[orange] (0,1)-- ++(0,0.4);
			\capfigparam{(0,1)}{0.4}{0.2}{green}
			\draw[orange](nd-1-2)-- ++(0,-0.35) -- ++(-0.3,0) -- ++(0,-0.125);
			\cupfigparam{(nd-1-1)}{0.4}{0.2}{green}
			\capfigparam{(0,0.3)}{0.4}{0.2}{green}
			\draw[orange](nd-3-1)-- ++(0,-0.4);
			\cupfigparam{(0,0.3)}{0.4}{0.2}{green}
			\idfigparam{(nd-4-3)}{0.2}{brown}
			\idfigparam{(nd-1-3)}{-0.2}{brown}
			\draw[brown] (nd-2-3)--(nd-3-3);
		\end{boxedstrings}
		=
		\frac{1}{4}
		\begin{boxedstrings}
			\draw[orange] (0,1)-- ++(0,0.4);
			\capfigparam{(0,1)}{0.4}{0.2}{green}
			\draw[orange](0.1,0.525) -- ++(0,0.25);
			\cupfigparam{(nd-1-1)}{0.4}{0.2}{green}
			\capfigparam{(0,0.3)}{0.4}{0.2}{green}
			\draw[orange](nd-3-1)-- ++(0,-0.4);
			\cupfigparam{(0,0.3)}{0.4}{0.2}{green}
			\idfigparam{(nd-4-3)}{0.2}{brown}
			\idfigparam{(nd-1-3)}{-0.2}{brown}
			\draw[brown] (nd-2-3)--(nd-3-3);
		\end{boxedstrings}
		=
		\frac{1}{4}
		\begin{boxedstrings}
			\capfigparam{(0,1)}{0.4}{0.2}{green}
			\cupfigparam{(nd-1-1)}{0.4}{0.2}{green}
			\capfigparam{(0,0.3)}{0.4}{0.2}{green}
			\cupfigparam{(0,0.3)}{0.4}{0.2}{green}
			\idfigparam{(nd-4-3)}{0.2}{brown}
			\idfigparam{(nd-1-3)}{-0.2}{brown}
			\draw[brown] (nd-2-3)--(nd-3-3);
			\idfigparam{(-0.2,1.4)}{1.5}{orange}
		\end{boxedstrings}
		=
		\begin{boxedstrings}
			\idfigparam{(0,1)}{1.5}{orange}
			\idfigparam{(0.25,1)}{1.5}{brown}
		\end{boxedstrings}
		.
	\end{equation}
\end{proof}

As a consequence of \eqref{orange brown uncross}, we have
\begin{equation}\label{orange slides over orange brown crossing}
		\begin{fitboxedstrings}
			\path (0,0)--(1,1);
			\draw[brown] (0,0.5) -- (1,0.5);
			\idfigparam{(0.25,1)}{1}{orange}
			\idfigparam{(0.75,1)}{1}{orange}
		\end{fitboxedstrings}
		=
		\begin{fitboxedstrings}
			\path (0,0)--(1,1);
			\draw[brown] (0,0.5) -- (1,0.5);
			\cupfigparam{(0.25,1)}{0.5}{0.2}{orange}
			\capfigparam{(0.3,0.5)}{0.4}{0.2}{orange}
			\cupfigparam{(0.3,0.5)}{0.4}{0.2}{orange}
			\capfigparam{(0.25,0)}{0.5}{0.2}{orange}
		\end{fitboxedstrings}
		=
		\begin{fitboxedstrings}
			\path (0,0)--(1,1);
			\draw[brown] (0,0.5) -- (1,0.5);
			\cupfigparam{(0.25,1)}{0.5}{0.2}{orange}
			\capfigparam{(0.25,0)}{0.5}{0.2}{orange}
		\end{fitboxedstrings}
		\hspace{0.1cm}.
\end{equation}
Applying this relation repeatedly, we have the following lemma.
\begin{lemma}\label{lemma orange crossings brown edge}
	Let $D$ be a diagram of the form
	\begin{equation}
		D
		=
		\begin{fitboxedstrings}
			\path (0,0)--(1,1);
			\draw[brown] (0,0.5) -- (1,0.5);
			\node (A) at (0.5,0.25) {$\overset{k}{...}$};
			\foreach \x in {0.15,0.3,0.7,0.85}
			{
				\draw[orange] (\x,0) -- ++(0,1);
			};
		\end{fitboxedstrings}
		\hspace{0.2cm} ,
	\end{equation}
	with $k$ orange crossings.
	Then
	\begin{equation}
		D=
		\left\{
		\begin{array}{ll}
			\begin{fitboxedstrings}
				\path (0,0)--(1,1);
				\draw[brown] (0,0.5) -- (2,0.5);
				\node (A) at (1,0.25) {$\overset{\frac{k}{2}}{...}$};
				\node (B) at (1,0.75) {$\overset{\frac{k}{2}}{...}$};
				\foreach \x in {0.4,1.3}
				{
					\cupfigparam{(\x,1)}{0.3}{0.3}{orange}
					\capfigparam{(\x,0)}{0.3}{0.3}{orange}
				};
			\end{fitboxedstrings} 
			\vspace{0.2cm}
			& \text{ if } k\in 2\ZZ;
			\\
			\begin{fitboxedstrings}
				\path (0,0)--(1,1);
				\draw[brown] (0,0.5) -- (2,0.5);
				\node (A) at (1,0.25) {$\overset{\frac{k-1}{2}}{...}$};
				\node (B) at (1,0.75) {$\overset{\frac{k-1}{2}}{...}$};
				\foreach \x in {0.4,1.3}
				{
					\cupfigparam{(\x,1)}{0.3}{0.3}{orange}
					\capfigparam{(\x,0)}{0.3}{0.3}{orange}
				};
				\draw[orange] (0.2,0) -- ++(0,1);
			\end{fitboxedstrings}
			& \text{ if } k\notin 2\ZZ;
		\end{array}
		\right.
		.
	\end{equation}
	Hence, we can transform any brown edge with an arbitrary number of 
	orange crossings into an edge that has at most one crossing.
\end{lemma}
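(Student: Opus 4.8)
The plan is to argue by induction on the number $k$ of orange crossings, with the single local relation \eqref{orange slides over orange brown crossing} as the inductive step. When $k\le 1$ there is nothing to prove, since $D$ already has at most one orange crossing. So suppose $k\ge 2$. Working in a disk-shaped neighborhood that contains only the two leftmost orange strands and the stretch of the brown edge they cross, and meets the rest of $D$ in exactly the boundary pattern of the left-hand side of \eqref{orange slides over orange brown crossing}, we apply that relation to replace these two crossings by an orange cup attached to the top boundary of the disk and an orange cap attached to its bottom boundary, leaving the brown edge uncrossed inside the disk. The result is the same brown edge, now carrying only $k-2$ orange crossings (the strands formerly in positions $3,\dots,k$), together with one extra orange cup sitting above the brown edge and one extra orange cap below it, both lying strictly to the left of the remaining crossings.

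Now I would invoke the inductive hypothesis on the sub-diagram consisting of the brown edge together with its $k-2$ surviving orange crossings: these reduce to $\lfloor (k-2)/2\rfloor$ juxtaposed orange cups above the edge and $\lfloor (k-2)/2\rfloor$ juxtaposed orange caps below it, plus a single surviving orange crossing when $k$ is odd. Adjoining the cup–cap pair produced in the first step and using planar isotopy — together with Lemma \ref{lemma orange interaction with itself}, which lets orange strands, and hence orange cups and caps, slide freely past one another — I would arrange all the cups (respectively caps) into the side-by-side configuration displayed in the statement, and when $k$ is odd slide the leftover orange strand to the far left. This yields precisely the claimed normal form: $k/2$ orange cups above the brown edge and $k/2$ below when $k$ is even, and $(k-1)/2$ cups above, $(k-1)/2$ below, plus one orange strand still crossing, when $k$ is odd.

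The only step that needs genuine care is verifying at each stage that \eqref{orange slides over orange brown crossing}, being a local relation about exactly two orange strands and one brown edge, really does apply: one must check that the two strands being peeled off can be taken adjacent and that the disk in which the relation is applied sees the rest of $D$ only through the prescribed boundary. Since the cups and caps generated at earlier steps always lie to the left of the crossings not yet treated, and since orange strands can be spread apart when necessary by Lemma \ref{lemma orange interaction with itself}, this is always arranged. I do not expect any real obstacle here — all the mathematical content is already packaged in \eqref{orange slides over orange brown crossing} — so the main nuisance is simply keeping the cup/cap bookkeeping in lockstep with the pictures in the statement.
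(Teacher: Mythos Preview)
Your proposal is correct and matches the paper's approach exactly: the paper simply states ``Applying this relation repeatedly, we have the following lemma'' immediately after \eqref{orange slides over orange brown crossing}, and you have spelled out that induction in full detail. The invocation of Lemma~\ref{lemma orange interaction with itself} is harmless but unnecessary---planar isotopy already places each new cup/cap pair to the left of the remaining crossings, so no further rearrangement is needed.
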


\begin{prop}\label{prop orange jumps over green brown dots}
	The orange strand slides over green and brown dots.
	That is, the following relations hold:
	\begin{equation}\label{green dot orange rel}
		\begin{boxedstrings}
			\idfig{(0.5,1)}{orange}
			\dotuparam{(0.75,0)}{0.5}{0.06}{green}
		\end{boxedstrings}
		=
		\begin{boxedstrings}
			\Xfig{(0,0)}{green}{orange}
			\idfigparam{(nd-1-2)}{-0.25}{orange}
			\idfigparam{(nd-1-3)}{0.25}{orange}
			\idfigparam{(nd-1-4)}{0.25}{green}
			\dotfig{(nd-1-1)}{green}
		\end{boxedstrings}
		,
	\end{equation}
	\begin{equation}\label{brown dot orange rel}
		\begin{boxedstrings}
			\idfig{(0.5,1)}{orange}
			\dotuparam{(0.75,0)}{0.5}{0.06}{brown}
		\end{boxedstrings}
		=
		\begin{boxedstrings}
			\Xfig{(0,0)}{brown}{orange}
			\idfigparam{(nd-1-2)}{-0.25}{orange}
			\idfigparam{(nd-1-3)}{0.25}{orange}
			\idfigparam{(nd-1-4)}{0.25}{brown}
			\dotfig{(nd-1-1)}{brown}
		\end{boxedstrings}.
	\end{equation}
\end{prop}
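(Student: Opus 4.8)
The plan is to verify \eqref{green dot orange rel} and \eqref{brown dot orange rel} by reducing the right-hand sides, which are closed subdiagrams (the orange strand crosses a colored dot), to the identity orange strand. The strategy throughout this section has been to simplify orange-on-colored configurations using the circle and needle relations \eqref{needle relations}, the orange idempotent decompositions \eqref{X2 idempotent decomp} and \eqref{XY idempotent decomp}, and the already-established sliding relations \eqref{orange green uncross} and \eqref{orange brown uncross}; I would follow the same recipe.

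For \eqref{green dot orange rel}: first I would rewrite the green dot $\Dotup{green}$ on the right-hand side using the unit relation, namely as a green trivalent vertex $\Trivup{green}{green}{green}$ capped off on one leg with another green dot (this is the standard $\HcalA$-style identity, valid in green by relation \eqref{one color unit rel}). Then the configuration becomes an orange strand crossing a green trivalent vertex, which I can slide past using \eqref{GGG orange} to move the orange strand onto the remaining leg, after which I undo the trivalent-vertex-plus-dot decomposition. A cleaner route: use \eqref{green orange unit rel}, which already says $\Trivdown{orange}{green}{green}$ precomposed with a green dot on one leg equals the orange dot; dualizing and combining with \eqref{orange green uncross} to split the orange crossing as $\Xfig\circ\Xfig$, one sees the orange strand crossing a green dot must equal a green dot on one side times the bare orange strand on the other. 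Concretely, I would take the defining splitting $\begin{boxedstrings}\idfigparam{(0,1)}{1}{orange}\idfigparam{(0.25,1)}{1}{green}\end{boxedstrings} = \begin{boxedstrings}\Xfig{(0,0)}{orange}{green}\Xfig{(nd-1-3)}{green}{orange}\end{boxedstrings}$ from \eqref{orange green uncross}, cap the green strand at the bottom with a dot, and use \eqref{vertex definitions} (the orange crossing in terms of orange landings) together with \eqref{bivalent definition left} to collapse the resulting orange landing against the green dot, leaving exactly a single orange crossing over a green dot.

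For \eqref{brown dot orange rel}: the brown dot is $\Fbold$-related to a pair of red and blue dots, and the orange strand is $\Fbold$-related to the polynomial box $\alpha_s-\alpha_t$; but since we want an intrinsic diagrammatic proof I would instead argue directly. Using \eqref{polynomial dependence barbells} or the barbell relations \eqref{barbell rels}, the brown dot composed with a brown dot is the brown barbell $=\alpha_s\alpha_t$, and I can play the brown analog of the green argument: rewrite $\Dotup{brown}$ via the brown unit relation in \eqref{one color unit rel} as a brown trivalent vertex with a capped leg, then slide the orange strand across the brown trivalent vertex using \eqref{GGB triv orange 2} and \eqref{GGB orange 1} (the orange-landing-slide relations through a brown/green trivalent vertex), and finally re-collapse. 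Alternatively I would apply \eqref{orange brown uncross} to split the orange crossing and then use \eqref{orange slides over orange brown crossing} together with the circle relations to cancel the extra orange cup-cap. Either way the computation is short.

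The main obstacle, as in the rest of this section, is purely bookkeeping: choosing which of the several equivalent ``orange unit/landing/crossing'' relations to invoke so that the diagrams collapse cleanly without generating spurious circles or sign errors. There is no conceptual difficulty — everything follows from relations already derived in Definition \ref{relations equiv category}, Lemma \ref{lemma orange interaction with itself}, and the two uncrossing relations \eqref{orange green uncross}, \eqref{orange brown uncross} — so I would present both \eqref{green dot orange rel} and \eqref{brown dot orange rel} by the same two-line diagrammatic manipulation, deferring the fully expanded chain of equalities (which is routine) and leaving the verification that no circles appear to the reader.
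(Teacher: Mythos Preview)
Your green case is basically on the right track and close to the paper's argument: the paper splits the green--orange crossing via \eqref{vertex definitions} into two orange landings, then uses the bivalent relations \eqref{bivalent definition right}, \eqref{bivalent definition left} and \eqref{XY idempotent decomp} to slide the landing past the green dot. Your sketch using \eqref{orange green uncross} plus the bivalent and landing relations would also go through.

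The brown case, however, has a genuine gap. Neither of your proposed routes works. Route one (rewrite $\Dotup{brown}$ as a brown trivalent vertex with a capped leg, then slide orange through using \eqref{GGB triv orange 2} and \eqref{GGB orange 1}) fails because those two relations govern orange \emph{landings on green legs} of a $\Trivup{brown}{green}{green}$ vertex; they say nothing about an orange \emph{crossing} over a brown edge of a $\Trivup{brown}{brown}{brown}$ vertex. The relation you would actually need is \eqref{orange jump through BBB}, but that is proved in Proposition \ref{prop orange jumps through trivalents}, \emph{after} the present proposition, and its proof ultimately relies on the orange-slide machinery being built here. Route two (start from \eqref{orange brown uncross} and cap the brown strand with a dot) is circular: capping $\id_{\Xobjbar}\otimes\id_{\Zobjbar} = \Cross{orange}{brown}\circ\Cross{brown}{orange}$ with a brown dot leaves you needing to simplify a single brown--orange crossing with a brown dot on one leg, which is exactly the identity you are trying to prove.

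The paper's actual argument for \eqref{brown dot orange rel} uses a different idea: it expands the brown--orange crossing via its \emph{definition} \eqref{vertex definitions 2}, which expresses $\Cross{brown}{orange}$ as $-\tfrac{1}{2}$ times an $H$-shape built from green edges and $\Trivup{brown}{green}{green}$ vertices. Placing a brown dot on one leg then allows \eqref{brown green unit rel} to replace the brown dot by a sum of broken-green and green--orange--green terms; one of these vanishes and the other collapses via \eqref{GGB unit rel} to a bare brown dot next to an orange strand. The point is that brown has no direct orange-landing relations, so you must first pass through the green structure hidden in the definition of the crossing.
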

\begin{proof}
	For \eqref{green dot orange rel}, let us use \eqref{bivalent definition right}, 
	\eqref{bivalent definition left}, and \eqref{XY idempotent decomp} to write
	\begin{equation}
		\begin{boxedstrings}
			\Xfig{(0,0)}{green}{orange}
			\idfigparam{(nd-1-2)}{-0.25}{orange}
			\idfigparam{(nd-1-3)}{0.25}{orange}
			\idfigparam{(nd-1-4)}{0.25}{green}
			\dotfig{(nd-1-1)}{green}
		\end{boxedstrings}
		=
		\begin{boxedstrings}
			\dotuparam{(0.25,0)}{0.8}{0.06}{green}
			\draw[orange] (0,0) -- ++(0,0.4) -- ++ (0.25,0);
			\draw[orange] (0.5,1) -- ++(0,-0.4) -- ++ (-0.25,0);
		\end{boxedstrings}
		=
		\begin{boxedstrings}
			\idfigparam{(0.25,1)}{0.4}{orange}
			\idfigparam{(nd-1-2)}{0.6}{green}
			\draw[orange] (0,0) -- ++(0,0.4) -- ++ (0.25,0);
		\end{boxedstrings}
		=
		\begin{boxedstrings}
			\dotuparam{(0.25,0)}{0.8}{0.06}{green}
			\draw[orange] (0,0) -- ++(0,0.4) -- ++ (0.25,0);
			\draw[orange] (0,1) -- ++(0,-0.4) -- ++ (0.25,0);
		\end{boxedstrings}
		=
		\begin{boxedstrings}
			\idfig{(0.5,1)}{orange}
			\dotuparam{(0.75,0)}{0.5}{0.06}{green}
		\end{boxedstrings}
		.
	\end{equation}
	For \eqref{brown dot orange rel}, let us use \eqref{vertex definitions}, \eqref{brown green unit rel}, and \eqref{GGB unit rel} to write
	\begin{align}
		\begin{boxedstrings}
			\Xfig{(0,0)}{brown}{orange}
			\idfigparam{(nd-1-2)}{-0.375}{orange}
			\idfigparam{(nd-1-3)}{0.375}{orange}
			\idfigparam{(nd-1-4)}{0.375}{brown}
			\dotfig{(nd-1-1)}{brown}
		\end{boxedstrings}
		&=
		- 
		\frac{1}{2}
		\begin{boxedstrings}
			\Hfigparamcol{(0,1)}{0.5}{0.35}{brown}{orange}{green}{green}{green}
			\dotfigparamcol{(nd-1-1)}{0.06}{brown}
			\Hfigparamcol{(nd-1-3)}{0.5}{0.35}{green}{green}{green}{orange}{brown}
			\idfigparam{(nd-1-2)}{-0.275}{orange}
			\idfigparam{(nd-2-3)}{0.275}{orange}
			\idfigparam{(nd-2-4)}{0.275}{brown}
		\end{boxedstrings}	
		=
		- 
		\frac{1}{2}
		\begin{boxedstrings}
			\Hfigparamcol{(0,0.75)}{0.5}{0.5}{green}{green}{green}{orange}{brown}
			\idfigparam{(nd-1-1)}{-0.25}{green}
			\dotfigparamcol{(nd-2-2)}{0.06}{green}
			\idfigparam{(nd-1-2)}{-0.25}{green}
			\dotlparam{(nd-3-3)}{0.25}{0.06}{green}
			\idfigparam{(nd-1-3)}{0.25}{orange}
			\idfigparam{(nd-1-4)}{0.25}{brown}
			\idfigparam{(nd-3-2)}{-0.25}{orange}
		\end{boxedstrings}
		+
		\frac{1}{2}
		\begin{boxedstrings}
			\Hfigparamcol{(0,0.75)}{0.5}{0.5}{green}{green}{green}{orange}{brown}
			\draw[orange] (nd-1-1) -- ++(0.375,0) -- ++(0,0.25);
			\idfigparam{(nd-1-1)}{-0.25}{green}
			\dotfigparamcol{(nd-2-2)}{0.06}{green}
			\idfigparam{(nd-1-2)}{-0.25}{green}
			\dotlparam{(nd-3-2)}{0.25}{0.06}{green}
			\idfigparam{(nd-1-3)}{0.25}{orange}
			\idfigparam{(nd-1-4)}{0.25}{brown}
			\idfigparam{(nd-3-2)}{-0.25}{orange}
		\end{boxedstrings}
		\\
		&=
		\begin{boxedstrings}
			\Hfigparamcol{(0,0.75)}{0.5}{0.5}{green}{green}{green}{orange}{brown}
			\draw[orange] (nd-1-1) -- ++(0.375,0) -- ++(0,0.25);
			\idfigparam{(nd-1-1)}{-0.25}{green}
			\dotfigparamcol{(nd-2-2)}{0.06}{green}
			\idfigparam{(nd-1-2)}{-0.25}{green}
			\dotlparam{(nd-3-2)}{0.25}{0.06}{green}
			\idfigparam{(nd-1-3)}{0.25}{orange}
			\idfigparam{(nd-1-4)}{0.25}{brown}
			\idfigparam{(nd-3-2)}{-0.25}{orange}
		\end{boxedstrings}
		=
		\begin{boxedstrings}
			\trivdparamcol{(0,0.7)}{0.5}{0.7}{green}{green}{brown}
			\dotfig{(nd-1-1)}{green}
			\dotfig{(nd-1-2)}{green}
			\idfigparam{(-0.25,0)}{-1.25}{orange}
		\end{boxedstrings}
		=
		\begin{boxedstrings}
			\dotuparam{(0,0)}{0.6}{0.06}{brown}
			\idfigparam{(-0.25,0)}{-1.25}{orange}
		\end{boxedstrings}.
	\end{align}
\end{proof}

An easy consequence of Proposition \ref{prop orange jumps over green brown dots}
is that the orange strand slides over barbells. 
Writing polynomial boxes as polynomials in green and brown barbells, by Proposition \ref{prop polynomials are barbells},
 we have the following corollary.
\begin{coro}[Orange polynomial forcing]\label{polynomial forcing under orange}
	The orange strand slides over polynomial boxes.
	That is, 
	\begin{equation}
		\begin{boxedstrings}
			\idfigparam{(1,1)}{1}{orange}
			\polybox{(0.5,0.5)}{$f$}
		\end{boxedstrings}
		=
		\begin{boxedstrings}
		\idfigparam{(0,1)}{1}{orange}
		\polybox{(0.5,0.5)}{$f$}
		\end{boxedstrings} 
		\text{ for all } 
		f\in\Rtau.
	\end{equation}
\end{coro}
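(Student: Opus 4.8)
The plan is to deduce Corollary \ref{polynomial forcing under orange} from Proposition \ref{prop orange jumps over green brown dots} together with Proposition \ref{prop polynomials are barbells}. First I would note that by Proposition \ref{prop polynomials are barbells}, any polynomial box labeled by $f\in\Rtau$ can be rewritten as a (noncommutative, but in fact commuting since barbells are central) polynomial in the green and brown barbells $\vbarb{(0,0.7)}{green}$ and $\vbarb{(0,0.7)}{brown}$; these are just floating diagrams in a region of the plane. Hence it suffices to show that an orange strand slides past a single green barbell and past a single brown barbell, and then iterate monomial by monomial.

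Next I would establish the barbell-sliding statements. A green (resp. brown) barbell is the composite of a green (resp. brown) cup followed by a green (resp. brown) cap, i.e. a dot-down stacked on a dot-up in the appropriate color. By Proposition \ref{prop orange jumps over green brown dots}, the orange strand slides over a green dot via \eqref{green dot orange rel} and over a brown dot via \eqref{brown dot orange rel}. Applying \eqref{green dot orange rel} once to push the orange strand past the lower (dot-up) half of a green barbell, and then once more past the upper (dot-down) half — using the $180^\circ$ rotation of \eqref{green dot orange rel}, which is a valid relation in $\Dcal$ since all our objects are self-biadjoint and the relations are isotopy-invariant — produces the identity
\begin{equation}
	\begin{boxedstrings}
		\idfigparam{(1,1)}{1}{orange}
		\vbarb{(0.5,0.7)}{green}
	\end{boxedstrings}
	=
	\begin{boxedstrings}
		\idfigparam{(0,1)}{1}{orange}
		\vbarb{(0.5,0.7)}{green}
	\end{boxedstrings}
	,
\end{equation}
and the analogous computation with \eqref{brown dot orange rel} gives the same for the brown barbell. (Strictly, after sliding past a dot the orange strand acquires an orange/green or orange/brown crossing that must be absorbed; this is handled by \eqref{orange green uncross} and \eqref{orange brown uncross}, or equivalently by the fact that an orange strand with two such crossings straightens out, cf. \eqref{orange green uncross proof} and \eqref{orange brown uncross proof}. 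One should also invoke Lemma \ref{lemma orange interaction with itself} to ensure the orange strand passes cleanly through the floating barbell region.)

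Finally I would assemble the proof: write $f$ as a sum of monomials in the green and brown barbells, use relation \eqref{polynomial sum} to reduce to a single monomial, use relation \eqref{polynomial mult} to present that monomial as a horizontal juxtaposition of individual barbells, and then slide the orange strand past each barbell in turn using the two identities just established. Since every intermediate barbell is central (a floating diagram), the order of the slides is immaterial and we recover the polynomial box on the other side of the orange strand. The main obstacle — really the only subtlety — is making sure the crossings created when the orange strand passes a colored \emph{dot} in \eqref{green dot orange rel}/\eqref{brown dot orange rel} are correctly accounted for and straightened, so that the net effect on the orange strand is trivial; once that bookkeeping is in place the rest is formal. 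This corollary does not depend on any of the polynomial forcing relations of Section \ref{subsection poly forcing 1}, so it is safe to invoke it earlier in that section as claimed.
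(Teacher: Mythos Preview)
Your proposal is correct and follows essentially the same approach as the paper. The paper's proof is a one-sentence sketch---``an easy consequence of Proposition \ref{prop orange jumps over green brown dots} is that the orange strand slides over barbells; writing polynomial boxes as polynomials in green and brown barbells, by Proposition \ref{prop polynomials are barbells}, we have the corollary''---and your write-up simply fills in the bookkeeping (handling the crossings via \eqref{orange green uncross}, \eqref{orange brown uncross}, invoking \eqref{polynomial sum}, \eqref{polynomial mult} to reduce to single barbells) that the paper leaves implicit.
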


\begin{prop}\label{prop orange jumps through trivalents}
	The orange strand slides over trivalent vertices.
	That is, the following relations hold:
	
	\begin{center}
		\begin{minipage}[t]{0.4\textwidth}
			\begin{equation}\label{orange slide over orange green trivalent 1}
				\begin{boxedstrings}
					\trivuparamcol{(0.25,0.75)}{0.5}{0.5}{green}{green}{orange}
					\idfigparam{(nd-1-1)}{-0.25}{green}
					\idfigparam{(nd-1-2)}{0.25}{green}
					\idfigparam{(nd-1-3)}{0.25}{orange}
					\draw[orange] (-0.25,0) -- ++(0,0.25) -- (nd-1-1) -- (0.75,0.75) -- (0.75,1);
				\end{boxedstrings}
				=
				\begin{boxedstrings}
					\trivuparamcol{(0.25,0.75)}{0.5}{0.5}{green}{green}{orange}
					\idfigparam{(nd-1-1)}{-0.25}{green}
					\idfigparam{(nd-1-2)}{0.25}{green}
					\idfigparam{(nd-1-3)}{0.25}{orange}
					\draw[orange] (-0.25,0) -- ++(0,0.25) -- (nd-1-2) -- (nd-1-3) -- 
					++(0.25,0.25) -- (0.75,1);
				\end{boxedstrings}
				,
			\end{equation}
			
			\begin{equation}\label{orange slide over orange green trivalent 2}
				\begin{boxedstrings}
					\trivuparamcol{(0.25,0.75)}{0.5}{0.5}{green}{orange}{green}
					\idfigparam{(nd-1-1)}{-0.25}{green}
					\idfigparam{(nd-1-2)}{0.25}{orange}
					\idfigparam{(nd-1-3)}{0.25}{green}
					\draw[orange] (-0.25,0) -- ++(0,0.25) -- (nd-1-1) -- (0.75,0.75) -- (0.75,1);
				\end{boxedstrings}
				=
				\begin{boxedstrings}
					\trivuparamcol{(0.25,0.75)}{0.5}{0.5}{green}{orange}{green}
					\idfigparam{(nd-1-1)}{-0.25}{green}
					\idfigparam{(nd-1-2)}{0.25}{orange}
					\idfigparam{(nd-1-3)}{0.25}{green}
					\draw[orange] (-0.25,0) -- ++(0,0.25) -- (nd-1-2) -- (nd-1-3) -- 
					++(0.25,0.25) -- (0.75,1);
				\end{boxedstrings}
				,
			\end{equation}
			
			\begin{equation}\label{orange slide over orange green trivalent 3}
				\begin{boxedstrings}
					\trivuparamcol{(0.25,0.75)}{0.5}{0.5}{orange}{green}{green}
					\idfigparam{(nd-1-1)}{-0.25}{orange}
					\idfigparam{(nd-1-2)}{0.25}{green}
					\idfigparam{(nd-1-3)}{0.25}{green}
					\draw[orange] (-0.25,0) -- ++(0,0.25) -- (nd-1-1) -- (0.75,0.75) -- (0.75,1);
				\end{boxedstrings}
				=
				\begin{boxedstrings}
					\trivuparamcol{(0.25,0.75)}{0.5}{0.5}{orange}{green}{green}
					\idfigparam{(nd-1-1)}{-0.25}{orange}
					\idfigparam{(nd-1-2)}{0.25}{green}
					\idfigparam{(nd-1-3)}{0.25}{green}
					\draw[orange] (-0.25,0) -- ++(0,0.25) -- (nd-1-2) -- (nd-1-3) -- 
					++(0.25,0.25) -- (0.75,1);
				\end{boxedstrings}
				,
			\end{equation}
			
			\begin{equation}\label{orange jump through GGG}
				\begin{boxedstrings}
					\trivuparamcol{(0.25,0.75)}{0.5}{0.5}{green}{green}{green}
					\idfigparam{(nd-1-1)}{-0.25}{green}
					\idfigparam{(nd-1-2)}{0.25}{green}
					\idfigparam{(nd-1-3)}{0.25}{green}
					\draw[orange] (-0.25,0) -- ++(0,0.25) -- (nd-1-1) -- (0.75,0.75) -- (0.75,1);
				\end{boxedstrings}=
				\begin{boxedstrings}
					\trivuparamcol{(0.25,0.75)}{0.5}{0.5}{green}{green}{green}
					\idfigparam{(nd-1-1)}{-0.25}{green}
					\idfigparam{(nd-1-2)}{0.25}{green}
					\idfigparam{(nd-1-3)}{0.25}{green}
					\draw[orange] (-0.25,0) -- ++(0,0.25) -- (nd-1-3) -- (0.75,0.5) -- (0.75,1);
				\end{boxedstrings},
			\end{equation}
		\end{minipage}
		\begin{minipage}[t]{0.4\textwidth}
			\begin{equation}\label{orange jump through GGB}
				\begin{boxedstrings}
					\trivuparamcol{(0.25,0.75)}{0.5}{0.5}{brown}{green}{green}
					\idfigparam{(nd-1-1)}{-0.25}{brown}
					\idfigparam{(nd-1-2)}{0.25}{green}
					\idfigparam{(nd-1-3)}{0.25}{green}
					\draw[orange] (-0.25,0) -- ++(0,0.25) -- (nd-1-1) -- (0.75,0.75) -- (0.75,1);
				\end{boxedstrings}
				=
				\begin{boxedstrings}
					\trivuparamcol{(0.25,0.75)}{0.5}{0.5}{brown}{green}{green}
					\idfigparam{(nd-1-1)}{-0.25}{brown}
					\idfigparam{(nd-1-2)}{0.25}{green}
					\idfigparam{(nd-1-3)}{0.25}{green}
					\draw[orange] (-0.25,0) -- ++(0,0.25) -- (nd-1-3) -- (0.75,0.5) -- (0.75,1);
				\end{boxedstrings},
			\end{equation}
			
			\begin{equation}\label{orange jump through GBB}
				\begin{boxedstrings}
					\trivuparamcol{(0.25,0.75)}{0.5}{0.5}{green}{brown}{brown}
					\idfigparam{(nd-1-1)}{-0.25}{green}
					\idfigparam{(nd-1-2)}{0.25}{brown}
					\idfigparam{(nd-1-3)}{0.25}{brown}
					\draw[orange] (-0.25,0) -- ++(0,0.25) -- (nd-1-1) -- (0.75,0.75) -- (0.75,1);
				\end{boxedstrings}
				=
				\begin{boxedstrings}
					\trivuparamcol{(0.25,0.75)}{0.5}{0.5}{green}{brown}{brown}
					\idfigparam{(nd-1-1)}{-0.25}{green}
					\idfigparam{(nd-1-2)}{0.25}{brown}
					\idfigparam{(nd-1-3)}{0.25}{brown}
					\draw[orange] (-0.25,0) -- ++(0,0.25) -- (nd-1-3) -- (0.75,0.5) -- (0.75,1);
				\end{boxedstrings},
			\end{equation}
			
			\begin{equation}\label{orange jump through BBB}
				\begin{boxedstrings}
					\trivuparamcol{(0.25,0.75)}{0.5}{0.5}{brown}{brown}{brown}
					\idfigparam{(nd-1-1)}{-0.25}{brown}
					\idfigparam{(nd-1-2)}{0.25}{brown}
					\idfigparam{(nd-1-3)}{0.25}{brown}
					\draw[orange] (-0.25,0) -- ++(0,0.25) -- (nd-1-1) -- (0.75,0.75) -- (0.75,1);
				\end{boxedstrings}=
				\begin{boxedstrings}
					\trivuparamcol{(0.25,0.75)}{0.5}{0.5}{brown}{brown}{brown}
					\idfigparam{(nd-1-1)}{-0.25}{brown}
					\idfigparam{(nd-1-2)}{0.25}{brown}
					\idfigparam{(nd-1-3)}{0.25}{brown}
					\draw[orange] (-0.25,0) -- ++(0,0.25) -- (nd-1-3) -- (0.75,0.5) -- (0.75,1);
				\end{boxedstrings}
				.
			\end{equation}
		\end{minipage}
	\end{center}
\end{prop}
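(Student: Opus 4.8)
The plan is to derive all eight relations from the defining orange-landing-slide relations \eqref{GGG orange}, \eqref{GGB orange 1}, \eqref{GGB triv orange 2}, together with the facts already established in this section that the orange strand slides over colored strands (\eqref{orange green uncross}, \eqref{orange brown uncross}), over green and brown dots (Proposition \ref{prop orange jumps over green brown dots}) and over polynomial boxes (Corollary \ref{polynomial forcing under orange}), plus the vertex and bivalent definitions \eqref{vertex definitions}, \eqref{vertex definitions 2} and \eqref{bivalent definition right}. A uniform simplification used throughout is that an incoming orange strand may be freely re-routed to sit adjacent to whichever leg of a vertex is convenient, by \eqref{orange green uncross} and \eqref{orange brown uncross}; consequently, in each of the eight cases it suffices to verify the local move at the vertex itself, after which the routing is immaterial.

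First I would dispatch the all-green vertex \eqref{orange jump through GGG}: it is exactly the defining relation \eqref{GGG orange}, once the incoming orange strand is slid across the two green legs into the position in which \eqref{GGG orange} is stated. The all-green--and--brown vertex \eqref{orange jump through GGB} comes next: combine \eqref{GGB triv orange 2}, which carries the orange strand from a green leg over the top brown leg at the cost of a sign, with \eqref{GGB orange 1}, which exchanges the green leg on which the orange strand enters at the cost of a second sign; the two signs cancel, producing the sign-free statement \eqref{orange jump through GGB}.

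For the three mixed orange--green trivalent relations \eqref{orange slide over orange green trivalent 1}--\eqref{orange slide over orange green trivalent 3}, I would use the definition \eqref{vertex definitions} of the green--orange crossing (an orange landing stacked on its pivotal rotation) together with the idempotent relation \eqref{XY idempotent decomp} to rewrite a green--orange trivalent vertex locally, and Lemma \ref{lemma orange interaction with itself} to handle the interaction of the sliding orange strand with the orange leg of the vertex. Concretely: slide the orange strand onto an adjacent green leg, apply the appropriate rotation of \eqref{GGG orange} (obtained by bending legs with cups and caps), and recombine. These three relations are three rotational incarnations of one computation, so I would prove one carefully and obtain the other two by applying the pivotal rotation $(-)^\vee$ together with the isotopy relations.

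The remaining two vertices are reduced to the cases above. For the green--brown--brown vertex \eqref{orange jump through GBB}, expand it via its definition \eqref{vertex definitions} as a triangle assembled from one all-green trivalent vertex, two green--green--brown trivalent vertices and internal green edges, then push the orange strand through that triangle piece by piece using \eqref{orange jump through GGG}, \eqref{orange jump through GGB}, \eqref{orange green uncross} and \eqref{orange brown uncross}. The all-brown vertex \eqref{orange jump through BBB} is the genuine obstacle, since the brown trivalent vertex is a bona fide generator that is not reduced to green/brown pieces by any relation. Here I would expand the two orange--brown crossings involved via their definition \eqref{vertex definitions 2} into a diagram of orange landings and green--brown trivalent vertices, and then propagate the orange strand through that diagram using \eqref{orange jump through GGB}, the orange-landing-slide relations \eqref{GGG orange}, \eqref{GGB orange 1}, \eqref{GGB triv orange 2}, \eqref{bigon brown orange}, and the unit relations \eqref{brown green unit rel}--\eqref{green orange unit rel} — exactly the strategy by which \eqref{orange brown uncross} was proved — cleaning up accumulated orange crossings and landings with Lemmas \ref{lemma orange landings green edge} and \ref{lemma orange crossings brown edge}. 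Keeping this expansion-and-reduction for \eqref{orange jump through BBB} under control, in particular tracking the signs and the orange landings produced along the way so that everything cancels, is the step I expect to cost the most effort.
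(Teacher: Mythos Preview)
Your plan for \eqref{orange slide over orange green trivalent 1}--\eqref{orange slide over orange green trivalent 3}, \eqref{orange jump through GGG}, \eqref{orange jump through GGB}, and \eqref{orange jump through GBB} matches the paper's proof essentially line for line: the paper also treats the three orange--green trivalents as one computation up to rotation, derives \eqref{orange jump through GGB} from the sign cancellation between \eqref{GGB orange 1} and \eqref{GGB triv orange 2}, says \eqref{orange jump through GGG} is similar, and handles \eqref{orange jump through GBB} by expanding the triangle definition \eqref{vertex definitions} and pushing the orange strand through piece by piece.

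Where you diverge is \eqref{orange jump through BBB}. You propose expanding the two orange--brown crossings via \eqref{vertex definitions 2} and then propagating the orange strand through the resulting stack of H-shapes, green edges and landings. The paper instead leaves the crossings alone and attacks the brown trivalent vertex itself: it first proves an auxiliary identity
\[
\begin{boxedstrings}
	\Triangucol{(0,0)}{brown}{green}{green}{brown}{brown}{brown}
\end{boxedstrings}
=
2
\begin{boxedstrings}
	\trivuparamcol{(0,0)}{0.5}{0.5}{brown}{brown}{brown}
	\idfigparam{(nd-1-1)}{-0.25}{brown}
	\idfigparam{(nd-1-2)}{0.25}{brown}
	\idfigparam{(nd-1-3)}{0.25}{brown}
\end{boxedstrings}
\]
(deduced from the H=I relation \eqref{H=I brown bottom feet}), which rewrites the all-brown trivalent as a triangle built from one all-green and two green--green--brown trivalents. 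Once this is in hand, \eqref{orange jump through BBB} reduces immediately to the already-established \eqref{orange jump through GGG} and \eqref{orange jump through GBB}, exactly as you did for \eqref{orange jump through GBB}. Your route should also work in principle, but the expansion of both crossings introduces a $\tfrac14$ and four green--brown trivalents flanking the brown vertex, and getting the landings and signs to cancel is a genuine bookkeeping exercise; the paper's trick of trading the brown vertex for a triangle bypasses all of that at the cost of one H=I computation.
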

\begin{proof}
	The proof of \eqref{orange slide over orange green trivalent 1}-\eqref{orange slide over orange green trivalent 3} are all similar. We will only show \eqref{orange slide over orange green trivalent 1} and leave the rest as an exercise for the reader.
	By \eqref{GGB orange 1} and \eqref{orange slide horizontal green},
	\begin{equation}\label{orange slide over orange green trivalent 1 proof}
		\begin{boxedstrings}
			\trivuparamcol{(0.25,0.75)}{0.5}{0.5}{green}{green}{orange}
			\idfigparam{(nd-1-1)}{-0.25}{green}
			\idfigparam{(nd-1-2)}{0.25}{green}
			\idfigparam{(nd-1-3)}{0.25}{orange}
			\draw[orange] (-0.25,0) -- ++(0,0.25) -- (nd-1-1) -- (0.75,0.75) -- (0.75,1);
		\end{boxedstrings}
		=
		\begin{boxedstrings}
			\trivuparamcol{(0.25,0.75)}{0.5}{0.5}{green}{green}{orange}
			\idfigparam{(nd-1-1)}{-0.25}{green}
			\idfigparam{(nd-1-2)}{0.25}{green}
			\idfigparam{(nd-1-3)}{0.25}{orange}
			\draw[orange] (-0.25,0) -- ++(0,0.25) -- (nd-1-2);
			\draw[orange] (nd-1-1) -- (0.75,0.75) -- (0.75,1);
		\end{boxedstrings}
		=
		\begin{boxedstrings}
			\trivuparamcol{(0.25,0.75)}{0.5}{0.5}{green}{green}{orange}
			\idfigparam{(nd-1-1)}{-0.25}{green}
			\idfigparam{(nd-1-2)}{0.25}{green}
			\idfigparam{(nd-1-3)}{0.25}{orange}
			\draw[orange] (-0.25,0) -- ++(0,0.25) -- (nd-1-2) -- (nd-1-3) -- 
			++(0.25,0.25) -- (0.75,1);
		\end{boxedstrings}
		.
	\end{equation}

	For \eqref{orange jump through GGB}, let us use \eqref{GGB orange 1} and \eqref{GGB triv orange 2} to write
	\begin{equation}
		\begin{boxedstrings}
			\trivuparamcol{(0.25,0.75)}{0.5}{0.5}{brown}{green}{green}
			\idfigparam{(nd-1-1)}{-0.25}{brown}
			\idfigparam{(nd-1-2)}{0.25}{green}
			\idfigparam{(nd-1-3)}{0.25}{green}
			\draw[orange] (-0.25,0) -- ++(0,0.25) -- (nd-1-3) -- (0.75,0.5) -- (0.75,1);
		\end{boxedstrings}
		=
		-
		\begin{boxedstrings}
			\trivuparamcol{(0.25,0.75)}{0.5}{0.5}{brown}{green}{green}
			\idfigparam{(nd-1-1)}{-0.25}{brown}
			\idfigparam{(nd-1-2)}{0.25}{green}
			\idfigparam{(nd-1-3)}{0.25}{green}
			\draw[orange] (-0.25,0) -- ++(0,0.25) -- (nd-1-2);
			\draw[orange] (nd-1-3) -- (0.75,0.5) -- (0.75,1);
		\end{boxedstrings}
		=
		\begin{boxedstrings}
			\trivuparamcol{(0.25,0.75)}{0.5}{0.5}{brown}{green}{green}
			\idfigparam{(nd-1-1)}{-0.25}{brown}
			\idfigparam{(nd-1-2)}{0.25}{green}
			\idfigparam{(nd-1-3)}{0.25}{green}
			\draw[orange] (0.125,0.375) -- ++(0,0.25) -- (nd-1-1) -- (0.75,0.75) -- (0.75,1);
			\draw[orange] (-0.25,0) -- ++(0,0.25) -- (nd-1-2);
		\end{boxedstrings}
		=
		\begin{boxedstrings}
			\trivuparamcol{(0.25,0.75)}{0.5}{0.5}{brown}{green}{green}
			\idfigparam{(nd-1-1)}{-0.25}{brown}
			\idfigparam{(nd-1-2)}{0.25}{green}
			\idfigparam{(nd-1-3)}{0.25}{green}
			\draw[orange] (-0.25,0) -- ++(0,0.25) -- (nd-1-1) -- (0.75,0.75) -- (0.75,1);
		\end{boxedstrings}.
	\end{equation}
	The proof of \eqref{orange jump through GGG} is similar.
	
	For \eqref{orange jump through GBB}, use \eqref{vertex definitions}, \eqref{orange jump through GGG}, and \eqref{orange jump through GGB} to write
	\begin{equation}
		\begin{boxedstrings}
			\trivuparamcol{(0.25,0.75)}{0.5}{0.5}{green}{brown}{brown}
			\idfigparam{(nd-1-1)}{-0.25}{green}
			\idfigparam{(nd-1-2)}{0.25}{brown}
			\idfigparam{(nd-1-3)}{0.25}{brown}
			\draw[orange] (-0.25,0) -- ++(0,0.25) -- (nd-1-1) -- (0.75,0.75) -- (0.75,1);
		\end{boxedstrings}
		=
		\begin{boxedstrings}
			\Triangucol{(0.5,1)}{green}{green}{green}{green}{brown}{brown}
			\draw[orange] (0,0) -- ++(0,0.5) -- (0.5,0.875) -- ++(0.5,0) -- ++(0,0.125);
		\end{boxedstrings}
		=
		\begin{boxedstrings}
			\Triangucol{(0.5,1)}{green}{green}{green}{green}{brown}{brown}
			\draw[orange] (0,0) -- ++(0,0.5) -- ++(1,0) -- ++(0,0.5);
		\end{boxedstrings}
		=
		\begin{boxedstrings}
			\Triangucol{(0.5,1)}{green}{green}{green}{green}{brown}{brown}
			\draw[orange] (0,0) -- ++(0,0.125) -- ++(1,0) -- ++(0,0.875);
		\end{boxedstrings}
		=
		\begin{boxedstrings}
			\trivuparamcol{(0.25,0.75)}{0.5}{0.5}{green}{brown}{brown}
			\idfigparam{(nd-1-1)}{-0.25}{green}
			\idfigparam{(nd-1-2)}{0.25}{brown}
			\idfigparam{(nd-1-3)}{0.25}{brown}
			\draw[orange] (-0.25,0) -- ++(0,0.25) -- (nd-1-3) -- (0.75,0.5) -- (0.75,1);
		\end{boxedstrings}
		.
	\end{equation}
	
	To show \eqref{orange jump through BBB} we must show the relation
	\begin{equation}\label{BBB triangle}
		\begin{boxedstrings}
			\Triangucol{(0,0)}
			{brown}{green}{green}{brown}{brown}{brown}
		\end{boxedstrings}
		=
		2
		\begin{boxedstrings}
			\trivuparamcol{(0,0)}{0.5}{0.5}{brown}{brown}{brown}
			\idfigparam{(nd-1-1)}{-0.25}{brown}
			\idfigparam{(nd-1-2)}{0.25}{brown}
			\idfigparam{(nd-1-3)}{0.25}{brown}
		\end{boxedstrings}
	\end{equation}
	and then use relations \eqref{orange jump through GGG} and \eqref{orange jump through GBB}.
	We leave the verification of \eqref{BBB triangle} as an exercise to the reader
	(use \eqref{H=I brown bottom feet}).
\end{proof}

As bivalent vertices and crossings are defined in terms of trivalent vertices and dots,
The following corollary follows directly from Proposition \ref{prop orange jumps over green brown dots} and Proposition \ref{prop orange jumps through trivalents}.
\begin{coro}
	The orange strand slides over trivalent vertices and crossings.
\end{coro}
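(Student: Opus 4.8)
The plan is to reduce this corollary to Proposition \ref{prop orange jumps through trivalents} and Proposition \ref{prop orange jumps over green brown dots} by observing that every crossing in $\Dcal$ is a composite of trivalent vertices, orange landings, and dots. First I would record the relevant unfoldings: relation \eqref{vertex definitions} identifies the green--orange crossing with a vertical composite of a green--orange--green trivalent vertex and its rotation; relation \eqref{vertex definitions 2} identifies the orange--orange crossing with the identity of $\Xobjbar\otimes\Xobjbar$ (equivalently, by \eqref{X2 idempotent decomp}, with an orange cup stacked on an orange cap); and relation \eqref{vertex definitions 2} writes the brown--orange crossing as $-\tfrac12$ times a composite of two $H$-shaped diagrams, each of which is itself a composite of a brown--green--green trivalent vertex and an orange landing. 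The bivalent vertices and the GB crossings reduce to trivalent vertices and dots in the same way via \eqref{bivalent definition right}, \eqref{vertex definitions}, and the $H=I$ relations, so it is enough to know that an orange strand slides over each elementary piece out of which crossings are assembled.

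The argument itself then has two short steps. The first step is that the orange strand slides over every trivalent vertex, which is exactly Proposition \ref{prop orange jumps through trivalents}: its equations \eqref{orange slide over orange green trivalent 1}--\eqref{orange jump through BBB} cover the four one-color-pattern trivalent vertices GGG, GGB, GBB, BBB together with the orange landing in each of its three rotations. The second step is, given a crossing, to expand it into a composite of trivalent vertices, orange landings, one-color dots, and orange cups/caps as above, and then to transport the orange strand across that composite one piece at a time: past trivalent vertices and orange landings by Proposition \ref{prop orange jumps through trivalents}, past green and brown dots by Proposition \ref{prop orange jumps over green brown dots}, past orange dots and orange cups/caps by Lemma \ref{lemma orange interaction with itself}, and past any lone green, brown, or orange strand segment by \eqref{orange green uncross}, \eqref{orange brown uncross}, and Lemma \ref{lemma orange interaction with itself} (a cup or cap being an isotoped identity strand). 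The composite of these local moves carries the orange strand from one side of the crossing to the other, which is the asserted relation.

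I expect the main work to be organizational rather than conceptual: one must check that after each elementary slide the orange endpoints sit in a position where the next move is licensed, so that the local slides chain together without an orange strand becoming trapped between consecutive pieces of the composite. Since Proposition \ref{prop orange jumps through trivalents} is phrased so that the orange strand may enter along either leg of a trivalent vertex and exit on the opposite side, successive applications do chain cleanly; the most intricate case will be the brown--orange crossing, whose defining composite in \eqref{vertex definitions 2} already contains four trivalent vertices and two orange landings, but it introduces nothing genuinely new. Thus the corollary follows directly from Propositions \ref{prop orange jumps over green brown dots} and \ref{prop orange jumps through trivalents}, as indicated in the text preceding the statement.
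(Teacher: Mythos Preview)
Your proposal is correct and follows precisely the approach the paper takes: the paper's proof is the single sentence preceding the corollary, noting that bivalent vertices and crossings are defined in terms of trivalent vertices and dots, so the result follows directly from Propositions \ref{prop orange jumps over green brown dots} and \ref{prop orange jumps through trivalents}. You have simply spelled out the unfoldings from \eqref{vertex definitions} and \eqref{vertex definitions 2} and the chaining of local slides in more detail than the paper does, but the underlying argument is identical.
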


\begin{notation}
	From now on, we will use purple strands
	$
	\begin{miniunboxedstrings}
		\idfigparam{(0,0)}{1}{purple}
	\end{miniunboxedstrings}
	$
	as a shorthand for places where we can place a brown or green strand indistinctively, 
	and black strands
	$
	\begin{miniunboxedstrings}
		\idfigparam{(0,0)}{1}{black}
	\end{miniunboxedstrings}
	$
	for places where we can place an orange, brown, or green strand indistinctively.
\end{notation}

Adding all the results in this section we have the following theorem.
\begin{theorem}\label{thm orange sliding}
	The orange strand slides over any diagram. 
	That is, for a diagram $D$ with $k$ boundary strands
	\begin{equation}
		\begin{boxedstrings}
			\draw[dashed] (0.75,0.625) rectangle ++(1,0.5);
			\node (A) at (1.25,0.875) {$D$};
			\node (B) at (1.25,0.25) {$\overset{k}{...}$};
			\draw[orange] (0.625,0)-- ++(0,0.5) -- ++(1.25,0) -- ++(0,0.75) -- ++(-1.25,0) -- ++(0,0.25);
			\foreach \x in {0.9,1.05,1.45,1.6}
			{
				\draw (\x,0) -- ++(0,0.625);
			};
		\end{boxedstrings}
		=
		\begin{boxedstrings}
			\draw[dashed] (0.75,0.625) rectangle ++(1,0.5);
			\node (A) at (1.25,0.875) {$D$};
			\node (B) at (1.25,0.25) {$\overset{k}{...}$};
			\idfigparam{(0.625,1.5)}{1.5}{orange}
			\foreach \x in {0.9,1.05,1.45,1.6}
			{
					\draw (\x,0) -- ++(0,0.625);
			};
		\end{boxedstrings}
		.
	\end{equation}
	In particular, when $k=0$,
	\begin{equation}
		\begin{boxedstrings}
			\draw[dashed] (0,0.25) rectangle ++(0.5,0.5);
			\node (A) at (0.25,0.5) {$D$};
			\idfigparam{(-0.25,1)}{1}{orange}
		\end{boxedstrings}
		=
		\begin{boxedstrings}
			\draw[dashed] (0,0.25) rectangle ++(0.5,0.5);
			\node (A) at (0.25,0.5) {$D$};
			\idfigparam{(0.75,1)}{1}{orange}
		\end{boxedstrings}
		.
	\end{equation}
\end{theorem}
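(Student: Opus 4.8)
The plan is to prove the theorem by induction on the number of generating morphisms in $D$, reducing everything to the local sliding moves established earlier in this section. First I would use the standard fact that, modulo the isotopy relations, every morphism diagram of $\Dcal$ can be written as a vertical composite of ``elementary'' layers, each of the form $\Id_{w_1}\otimes g\otimes \Id_{w_2}$ for a single generating morphism $g$: a colored dot, a (one-, two-, or mixed-colored) trivalent vertex, an orange landing, an orange crossing, a bivalent vertex, a green/brown/orange cup or cap, or a polynomial box. It then suffices to prove that a vertical orange strand lying immediately to the left of one such elementary layer can be isotoped, using the relations of $\Dcal$, into a vertical orange strand lying immediately to its right.

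For an elementary layer $\Id_{w_1}\otimes g\otimes \Id_{w_2}$ the orange strand only has to pass the identity strands of $w_1$, the generator $g$, and the identity strands of $w_2$, in that order. Passing a single identity strand of any color is \eqref{orange green uncross}, \eqref{orange brown uncross} (for green and brown) or \eqref{X2 idempotent decomp} (for orange). Passing $g$ is handled case by case by the results already proved: Lemma \ref{lemma orange interaction with itself} and Proposition \ref{prop orange jumps over green brown dots} when $g$ is a dot; Corollary \ref{polynomial forcing under orange} when $g$ is a polynomial box; Proposition \ref{prop orange jumps through trivalents} and the corollary following it when $g$ is a trivalent vertex, an orange landing, or a crossing; and, when $g$ is a bivalent vertex, first rewriting $g$ via \eqref{bivalent definition right}--\eqref{bivalent definition left} in terms of trivalent vertices and dots and then invoking the cases just listed. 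The remaining case is that of a cup or cap: sliding the orange strand past a green or brown cup/cap reduces by planar isotopy to sliding it past each of the two colored legs of the cup/cap (two applications of \eqref{orange green uncross} or \eqref{orange brown uncross}), and sliding it past an orange cup/cap is covered by Lemma \ref{lemma orange interaction with itself} together with the orange circle relation in \eqref{needle relations}.

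Granting the elementary case, the induction is immediate. Given $D$ sliced into layers $L_1,\dots,L_n$ from bottom to top, slide the orange strand rightward through $L_1$; it now lies to the right of the output word of $L_1$, which is the input word of $L_2$, so it may be drawn as a single vertical orange strand to the right of all of $L_2,\dots,L_n$ as well, and hence to the right of $D$. This already establishes the $k=0$ statement. For the general wrap-around picture, I would apply the $k=0$ statement to $D$ itself in order to slide the upper horizontal portion of the orange ``staple'' down through $D$; once it has passed below $D$, the two horizontal orange segments and the arc joining them on the right straighten to a single vertical strand on the left by planar isotopy (no relations needed), which is the right-hand side of the displayed equation.

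I expect the only real difficulty to be organizational rather than mathematical: making the slicing-into-elementary-layers argument precise and checking that the reduction does not depend on the chosen slicing, treating cups, caps and the orange cup/cap of the wrap-around picture carefully, and verifying at each step that the intermediate diagrams are honest diagrams of $\Dcal$ so that the isotopy relations may be applied freely. No new relation manipulations should be required, since every sliding move of the orange strand over a generator has already been carried out in the preceding lemmas, propositions, and their corollaries.
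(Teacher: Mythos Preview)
Your proposal is correct and matches the paper's approach, which simply records that the theorem follows by combining all the local sliding results established earlier in the section. One minor correction: after sliding the upper horizontal segment down through $D$, the resulting double crossings with the $k$ boundary strands cancel by the uncrossing relations \eqref{orange green uncross}, \eqref{orange brown uncross}, \eqref{X2 idempotent decomp}, not by planar isotopy alone; alternatively, note that your layer-by-layer induction already proves the naturality of the orange crossing with respect to arbitrary $D$, which specializes directly to the wrap-around identity when $D$ has empty codomain, making the separate argument unnecessary.
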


\begin{defi}\label{defi encircling}
	Let $D$ be a diagram.
	Define the \textit{orange encircling of $D$},
	$\encircO(D)$, to be the diagram
	obtained from $D$ in the following way:
	\begin{enumerate}
		\item Start with an annulus $A$ consisting of an orange circle around its center.
		\item For each boundary strand of $D$,
		 add (sequentially) to $A$ a strand of the same color and
		 an orange crossing connecting the outter and inner boundary
		 of $A$.
		\item Paste $D$ in the center of $A$ (noting that its inner boundary is the same as the boundary of $D$).
	\end{enumerate}
	That is, add an orange circle encircling $D$.
	Diagrammatically, 
	\begin{equation}\label{encircle diagram}
			\encircO(D)=
			\begin{unboxedstrings}
				\node[circle,draw,dashed,
				minimum size = 2.5cm] (p) at (0,0) {};
				\node[circle,draw,dashed,
				minimum size = 1.5cm] (q) at (0,0) {$D$};
				\foreach \x in {0,1,2,...,11}
				{
					\tikzmath{\z=360/10*\x-90;}
					\draw[black] (p.\z) -- (q.\z);
				};
			\node[circle,draw,minimum size = 20mm, orange] (r) at (0,0) {};
			\end{unboxedstrings}
		,
	\end{equation}
	where the number and color of the boundary strands corresponds exactly with the boundary of $D$.
	In particular, if $D$ has empty boundary then
	\begin{equation}\label{encircle diagram 2}
		\encircO(D)=
		\begin{unboxedstrings}
			\node[circle,draw,dashed,
			minimum size = 2.5cm] (p) at (0,0) {};
			\node[circle,draw,dashed,
			minimum size = 1.5cm] (q) at (0,0) {$D$};
			\node[circle,draw,minimum size = 20mm, orange] (r) at (0,0) {};
		\end{unboxedstrings}
		.
	\end{equation}
	We say that a diagram $D$ is an 
	\textit{orange 
		 encircled diagram}
	if there exists a diagram $D'$ contained in $D$ such that $\encircO(D')$ 
		is exactly $D$.
\end{defi}

Using all the slide relations shown in Proposition \ref{prop orange jumps through trivalents} along with the slide relations 
\eqref{orange jump dot orange rel}-\eqref{orange brown uncross},
we can conclude that the orange strands slide over any diagram.
Hence, we have the following corollary.
\begin{coro}[Circle Sliding]\label{coro circle sliding}
	Let $D$ be a diagram with no orange boundary strands.
	Then $\encircO(D)=D$. 
	Hence, erasing closed orange circles from a diagram $D$ does not change the underlying morphism in $\Dcal$.
\end{coro}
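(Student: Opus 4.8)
The proof of Corollary~\ref{coro circle sliding} proceeds by reduction to the case of an orange circle with no strands passing through it, and then an inductive argument on the complexity of the enclosed diagram. The plan is as follows. First I would observe that if $D$ has no orange boundary strands, then $\encircO(D)$ is literally an orange circle enclosing $D$ together with a (possibly empty) collection of non-orange strands crossing the orange circle via the orange crossings of Definition~\ref{defi encircling}. By Theorem~\ref{thm orange sliding} (the main result of this section), the orange strand slides over any diagram; applying this repeatedly to the enclosing orange circle, each pair of orange crossings through which a non-orange strand passes can be eliminated, sliding the orange circle inward past every generator of $D$ until no non-orange strand is caught inside it.

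The key intermediate step is therefore: after sliding, $\encircO(D)$ equals a diagram in which a closed orange circle is drawn in some region of the plane, disjoint from all non-orange strands. At this point one invokes the needle relation $\encircO(\emptydiagram) = 1$, i.e.\ the first relation in \eqref{needle relations} combined with Lemma~\ref{lemma orange interaction with itself} (specifically \eqref{X2 idempotent decomp}, which says an orange bigon is a pair of vertical strands, forcing a free orange circle to be a scalar) to conclude that a bare closed orange circle evaluates to $1$. Concretely, I would make this precise by fixing an innermost region containing the orange circle and no other strands, cutting the circle open into an orange cup--cap composite, and using \eqref{needle relations} to replace it by the empty diagram. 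Erasing the circle then leaves exactly $D$, so $\encircO(D) = D$ in $\Dcal$.

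To make the ``slide until nothing is caught'' argument rigorous, I would induct on the number of generating morphisms of $D$ that lie in the interior disk, ordered by a horizontal sweep. For each such generator---a trivalent vertex, bivalent vertex, dot, crossing, or polynomial box---the orange strand of the enclosing circle slides past it by the appropriate relation from Proposition~\ref{prop orange jumps through trivalents}, Proposition~\ref{prop orange jumps over green brown dots}, Corollary~\ref{polynomial forcing under orange}, or \eqref{orange green uncross}--\eqref{orange brown uncross}. After passing all generators, Lemma~\ref{lemma orange landings green edge} and Lemma~\ref{lemma orange crossings brown edge} (and the analogous green-strand case) reduce the count of orange crossings along each edge, so that the orange circle can be collapsed to a bare loop. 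Since $D$ has no orange boundary, no orange strand escapes the disk, and the inductive collapse terminates with a bare orange circle, which is erased as above.

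The main obstacle I anticipate is bookkeeping in the sliding induction: the orange strand sliding past a trivalent vertex can change which of the two ``legs'' it emerges from (compare \eqref{orange jump through GGG} with \eqref{GGG orange}), so one must be careful that the topological position of the orange circle really does simplify monotonically rather than getting pushed back and forth. This is handled by noting that Theorem~\ref{thm orange sliding} has already packaged all these cases into a single statement---the orange strand slides over \emph{any} diagram $D'$---so one applies it once to the whole interior diagram rather than generator-by-generator, and the position question dissolves. The only genuinely new content here, beyond Theorem~\ref{thm orange sliding}, is the evaluation of the resulting bare orange loop, which is immediate from \eqref{needle relations}.
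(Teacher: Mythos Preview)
Your proposal is correct and follows essentially the same approach as the paper: apply Theorem~\ref{thm orange sliding} once to slide the encircling orange circle to a region disjoint from $D$, then erase the resulting bare orange loop using the first relation in \eqref{needle relations}. Your middle paragraphs describing a generator-by-generator induction are unnecessary overhead, as you yourself recognize in the final paragraph; the paper's proof is precisely your streamlined version---one application of Theorem~\ref{thm orange sliding} followed by the orange circle relation.
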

\begin{proof}
	Let $D$ be a diagram without any orange boundary strands.
	By Theorem \ref{thm orange sliding}, we can slide the orange circle on the outside 
	of $\encircO(D)$ to a region where it does not intersect $D$,
	and then use the orange circle relation to erase it.
	Diagrammatically,
	\begin{equation}
		\encircO(D)=
		\begin{unboxedstrings}
			\node[circle,draw,dashed,
			minimum size = 2.5cm] (p) at (0,0) {};
			\node[circle,draw,dashed,
			minimum size = 1cm] (q) at (0,0) {$D$};
			\foreach \x in {0,1,2,...,5}
			{
				\tikzmath{\z=360/6*\x-90;}
				\draw[black] (p.\z) -- (q.\z);
			};
			\node[circle,draw,minimum size = 20mm, orange] (r) at (0,0) {};
		\end{unboxedstrings}
		=
		\begin{unboxedstrings}
			\node[circle,draw,dashed,
			minimum size = 2.5cm] (p) at (0,0) {};
			\node[circle,draw,dashed,
			minimum size = 1cm] (q) at (0,0) {$D$};
			\foreach \x in {0,1,2,...,5}
			{
				\tikzmath{\z=360/6*\x-90;}
				\draw[black] (p.\z) -- (q.\z);
			};
			\node[circle,draw,minimum size = 5mm, orange] (r) at (-0.85,0) {};
		\end{unboxedstrings}
	= D.
	\end{equation}
\end{proof}

	\begin{defi}\label{crossbar diagram}
		A \textit{crossbar diagram} is a diagram made exclusively of two trivalent vertices joined by a single strand and where the other 4 strands are in the boundary. That is, a diagram of the form
		\begin{equation}
			\begin{boxedstrings}
				\Hlongparam{(0,0)}{0.5}{1}{black}{black}{black}{black}{black}
			\end{boxedstrings}.
		\end{equation}
		A crossbar made only with GB trivalent vertices will be called a \textit{GB crossbar}. That is, a diagram of the form
		\begin{equation}
			\begin{boxedstrings}
				\Hlongparam{(0,0)}{0.5}{1}{purple}{purple}{purple}{purple}{purple}
			\end{boxedstrings}
		.
		\end{equation}
		
		A \textit{GB comb} is a diagram made by starting with a GB crossbar and any adding number of non-crossing orange strands,
		each intersecting the crossbar once on the middle edge (at a crossing or landing).
		Diagrammatically, a \textit{GB comb} either of the form
		\begin{equation}
			\begin{boxedstrings}
				\Hfigparamcol{(0,1)}{1}{1}{purple}{purple}{brown}{purple}{purple}
				\foreach \x in {0.3,0.7}
				{
					\draw[orange] (\x,0) -- ++(0,1);
				};
				\node (A) at (0.5,0.25) {$\overset{k}{...}$};
			\end{boxedstrings}
			,
		\end{equation}
		for some $k\in \NN$, or obtained from
		\begin{equation}
			\begin{boxedstrings}
				\Hlongparam{(0,1)}{2.2}{1}{purple}{purple}{green}{purple}{purple}
				\foreach \x in {2.5,7.5}
				{
					\tikzmath{\z=\x*0.1+1.3;}
					\draw[orange] (\z,1) -- (\z,0.5);
				};
				\node (A) at (1.8,0.75) {$\overset{k}{...}$};
				\foreach \x in {2.5,7.5}
				{
					\tikzmath{\z=\x*0.1-0.1;}
					\draw[orange] (\z,0) -- (\z,0.5);
				};
				\node (B) at (0.4,0.25) {$\overset{m}{...}$};
				\foreach \x in {2.5,7.5}
				{
					\tikzmath{\z=\x*0.1+0.6;}
					\draw[orange] (\z,0) -- (\z,1);
				};
				\node (C) at (1.1,0.75) {$\overset{\ell}{...}$};
			\end{boxedstrings}
			,
		\end{equation}
		for some $k,\ell,m\in\NN$, by permuting its orange crossings and landings.
		A \textit{single GB comb} is a GB comb only has one orange crossing or landing.
		
	\end{defi}

	An important part of the proof of Lemma \ref{cycle reduction} requires
	us to apply and $H=I$ relation on GB cycles (until they eventually become needles).
	To be able to apply an $H=I$ relation, we need a diagram containing a GB crossbar.
	To make a GB comb into a diagram containing a GB crossbar,
	we need to remove all the orange crossings and landings out of its central edge.
	The following lemma shows that we can do for most GB combs.
	\begin{lemma}\label{GB comb sliding}
		The following relations hold:
		\begin{equation}\label{purple crossbar slide crossing}
			\begin{boxedstrings}
				\Hlongparam{(0,1)}{0.5}{1}{purple}{purple}{purple}{purple}{purple}
				\idfigparam{(0.25,1)}{1}{orange}
			\end{boxedstrings}
			=
			\begin{boxedstrings}
				\Hlongparam{(0,1)}{0.5}{1}{purple}{purple}{purple}{purple}{purple}
				\draw[orange] (0.25,1) -- (0.25, 0.75) --
				(-0.25,0.75) -- (-0.25,0.25) -- 
				(0.25,0.25) -- (0.25,0);
			\end{boxedstrings}
			.
		\end{equation}
		
		\begin{equation}\label{purple crossbar slide landing}
			\begin{boxedstrings}
				\Hlongparam{(0,1)}{0.5}{1}{green}{purple}{green}{purple}{purple}
				\idfigparam{(0.25,1)}{0.5}{orange}
			\end{boxedstrings}
			=
			\pm
			\begin{boxedstrings}
				\Hlongparam{(0,1)}{0.5}{1}{green}{purple}{green}{purple}{purple}
				\draw[orange] (0.25,1) -- (0.25, 0.75) --
				(0,0.75);
			\end{boxedstrings},
			\text{ and }
			\begin{boxedstrings}
				\Hlongparam{(0,1)}{0.5}{1}{green}{purple}{green}{purple}{purple}
				\idfigparam{(0.25,0)}{-0.5}{orange}
			\end{boxedstrings}
			=
			\pm
			\begin{boxedstrings}
				\Hlongparam{(0,1)}{0.5}{1}{green}{purple}{green}{purple}{purple}
				\draw[orange] (0.25,0) -- (0.25, 0.25) --
				(-0.25,0.25) -- (-0.25,0.75) -- (0,0.75);
			\end{boxedstrings},
		\end{equation}
		where the sign is positive if the bottom left strand is green and negative it its brown.
		Hence, any single GB comb that is not of the form
		\begin{equation}
			\begin{boxedstrings}
				\Hlongparam{(0,1)}{0.5}{1}{brown}{brown}{green}{brown}{brown}
				\idfigparam{(0.25,1)}{0.5}{orange}
			\end{boxedstrings} 
		\end{equation}
		(or its rotation), is equal to a diagram 
		(with exactly two GB trivalent vertices) 
		containing a GB crossbar.		
	\end{lemma}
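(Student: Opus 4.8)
The plan is to prove the two slide relations \eqref{purple crossbar slide crossing} and \eqref{purple crossbar slide landing} by pushing the orange strand against one of the two GB trivalent vertices of the crossbar and applying the trivalent-sliding relations from Proposition~\ref{prop orange jumps through trivalents}, and then to read off the statement about single GB combs as an immediate corollary.

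For \eqref{purple crossbar slide crossing} I would first isotope the orange strand so that its intersection point with the central edge is pushed all the way against the left-hand GB trivalent vertex. Rotating that vertex by $90^\circ$ so that the central edge plays the role of the ``top leg'' and the two left-hand legs of the crossbar play the role of the two ``bottom legs'', the orange strand now sits exactly in the position of the left-hand side of one of \eqref{orange jump through GGG}, \eqref{orange jump through GGB}, \eqref{orange jump through GBB}, \eqref{orange jump through BBB}. Since every GB trivalent vertex has all three legs coloured green or brown, and these four relations (up to rotation, using the isotopy relations) exhaust the four possible colourings, the relevant one rewrites this as the orange strand crossing the two left-hand legs, i.e.\ passing around the left side of the crossbar; a final isotopy gives the right-hand side of \eqref{purple crossbar slide crossing}. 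No sign appears because all four crossing-slide relations are sign-free, and the resulting diagram is legal for either colour of leg thanks to the two orange crossing generators in \eqref{crossings vertices}.

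For \eqref{purple crossbar slide landing} the argument is the same in spirit, with ``crossing'' replaced by ``landing'' and the crossing-slide relations replaced by the orange-landing-slide relations. Here the central edge is necessarily green (orange only lands on green), and by hypothesis the top-left leg of the crossbar is green as well, so the left-hand vertex has two green legs (the central edge and the top-left leg) and a third leg of colour purple. If that third leg is green the vertex is GGG and \eqref{GGG orange} --- together with \eqref{green orange unit rel} and the bivalent definitions \eqref{bivalent definition right}, \eqref{bivalent definition left} --- slides the landing from the central edge onto the top-left leg with no sign; if it is brown the vertex is of type ``green from green and brown'' and \eqref{GGB orange 1} (or \eqref{GGB triv orange 2}) performs the same slide but at the cost of a minus sign. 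Since the third leg of the left vertex is precisely the bottom-left strand of the crossbar, this is the sign rule stated. The second equality in \eqref{purple crossbar slide landing}, where the orange strand lands from below, follows from the first by a vertical flip (equivalently, by the rotation relations), the strand being forced to route around the left because it cannot land on the lower-left leg when that leg is brown.

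Finally, a single GB comb is a GB crossbar carrying one orange strand on its central edge, either crossing it or landing on it. In the crossing case, \eqref{purple crossbar slide crossing} (or its left--right mirror) moves the orange strand off the central edge, leaving a diagram that contains the unchanged GB crossbar together with orange crossings on outer legs; an orange crossing is four-valent, so the crossbar's two vertices remain the only GB trivalent vertices. In the landing case the central edge is green, and the left-hand vertex has two green legs unless its top-left and bottom-left legs are both brown, and symmetrically for the right-hand vertex, so unless \emph{all four} outer legs are brown we may apply \eqref{purple crossbar slide landing} (or its mirror) on a suitable side and conclude as before, the displaced orange landing now being a trivalent but non-GB vertex on an outer leg. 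The one remaining possibility --- all four outer legs brown, an orange landing on the green central edge --- is exactly the excluded diagram (and its rotation), where no slide relation applies because the orange landing cannot be moved onto a brown leg. The main difficulty I expect is purely bookkeeping: checking that the four colour cases really are exhaustive up to rotation, applying each of \eqref{orange jump through GGG}--\eqref{orange jump through BBB} in the correct rotation, and verifying that the minus sign produced by \eqref{GGB orange 1} ends up attached to the bottom-left leg so that it matches the stated convention; the passage from ``slide past the left vertex'' to the explicit routed pictures on the right-hand sides is then a routine isotopy.
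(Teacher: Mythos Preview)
The paper states this lemma without proof, so I am evaluating your argument on its own merits.  Your treatment of \eqref{purple crossbar slide crossing} and of the \emph{first} identity in \eqref{purple crossbar slide landing} is correct: the former is an instance of Proposition~\ref{prop orange jumps through trivalents} (or Theorem~\ref{thm orange sliding}) applied at the left vertex, and the latter follows from \eqref{GGG orange} or \eqref{GGB orange 1} exactly as you describe, with the sign coming from \eqref{GGB orange 1} when the bottom-left leg is brown.  Your deduction of the ``Hence'' clause from the two displayed identities (together with their mirror images and rotations) is also fine.

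The gap is your justification of the \emph{second} identity in \eqref{purple crossbar slide landing}.  A vertical flip of the first identity produces a statement about a crossbar whose \emph{bottom}-left leg is green (the old top-left leg), not one whose top-left leg is green; so it does not yield the second identity for the same crossbar, and the phrase ``the strand being forced to route around the left because it cannot land on the lower-left leg'' is not a proof but a restatement of what must be shown.  The clean argument in the brown case is to apply \eqref{GGB triv orange 2} (which you mention, but only for the first identity) directly at the left vertex: rotated so that the brown leg is the bottom-left leg, it says precisely that an orange landing on the top-left green leg from the outside region, followed by a crossing of the brown leg, equals $-1$ times an orange landing on the middle green edge from the inside-bottom region.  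In the all-green case one can either derive the analogous sign-free relation by sliding the landing from the top-left leg to the bottom-left leg within the outside region via \eqref{GGG orange}, then collapsing the resulting orange--green bigon on the bottom-left leg using \eqref{XY idempotent decomp} together with the orange circle relation, and finally sliding back to the middle edge within the inside region; or one can combine the first identity with \eqref{purple crossbar slide crossing} and two such bigon collapses.  Either way the missing step is routine once identified, but it is not a symmetry of the first identity.
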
 
	
	\begin{rema}
		Let $D$ be a $GB$ comb.
		By Lemma \ref{lemma orange landings green edge} and Lemma \ref{lemma orange crossings brown edge},
		every GB comb can be transformed into the union of a single GB comb or GB crossbar $E$ and some number of orange cups and caps.
		By Lemma \ref{GB comb sliding}, if $E$ is not of the form 
		$
			\begin{miniunboxedstrings}
				\Hlongparam{(0,0.8)}{0.5}{0.8}{brown}{brown}{green}{brown}{brown}
				\idfigparam{(0.25,0.8)}{0.4}{orange}
			\end{miniunboxedstrings}
			, 
		$
		equals a diagram containing a GB crossbar.
	\end{rema}

	The only tool we have left to define is the restriction to one or more colors. This operation erases all polynomials and edges that are not of that color and is defined as follows.
	
	\begin{defi}\label{GB Restriction}
		Let $\Res_{GB}$ be the operation diagrams of $\Dcal$ sending all the generators on green and brown colors to themselves and sending
		\begin{equation}
			\begin{gathered}
				\begin{boxedstrings}
					\polybox{(0,0.5)}{$f$}
					\path (0,0) ++ (0,1);
				\end{boxedstrings}
				\mapsto
				\begin{boxedstrings}
					\path (0,0) ++ (0.5,1);
				\end{boxedstrings},
				\begin{boxedstrings}
					\dotuparam{(0,0)}{0.6}{0.06}{orange}
					\path (nd-1-1) ++ (0,1);
				\end{boxedstrings}
				\mapsto
				\begin{boxedstrings}
					\path (0,0) ++ (0.5,1);
				\end{boxedstrings},
				\begin{boxedstrings}
					\idfig{(0,0)}{orange}
				\end{boxedstrings}
				\mapsto
				\begin{boxedstrings}
					\path (0,0) ++ (0.5,1);
				\end{boxedstrings}
				,
				\\
				\begin{boxedstrings}
					\trivuparamcol{(0,0)}{0.5}{0.5}{green}{orange}{green}
					\idfigparam{(nd-1-1)}{-0.25}{green}
					\idfigparam{(nd-1-2)}{0.25}{orange}
					\idfigparam{(nd-1-3)}{0.25}{green}
				\end{boxedstrings}
				\mapsto
				\begin{boxedstrings}
					\idfig{(0,0)}{green}
				\end{boxedstrings}
				,
				\begin{boxedstrings}
					\Xfig{(0,0)}{brown}{orange}
					\idfigparam{(nd-1-1)}{-0.25}{brown}
					\idfigparam{(nd-1-2)}{-0.25}{orange}
					\idfigparam{(nd-1-4)}{0.25}{brown}
					\idfigparam{(nd-1-3)}{0.25}{orange}
				\end{boxedstrings}
				\mapsto
				\begin{boxedstrings}
					\idfig{(0,0)}{brown}
				\end{boxedstrings}
				,
				\begin{boxedstrings}
					\Xfig{(0,0)}{green}{orange}
					\idfigparam{(nd-1-1)}{-0.25}{green}
					\idfigparam{(nd-1-2)}{-0.25}{orange}
					\idfigparam{(nd-1-4)}{0.25}{green}
					\idfigparam{(nd-1-3)}{0.25}{orange}
				\end{boxedstrings}
				\mapsto
				\begin{boxedstrings}
					\idfig{(0,0)}{green}
				\end{boxedstrings}
				,
				\begin{boxedstrings}
					\idfigparam{(0,1)}{0.5}{orange}
					\idfigparam{(nd-1-2)}{0.5}{green}
				\end{boxedstrings}
				\mapsto
				\begin{boxedstrings}
					\dotuparam{(0,0)}{0.6}{0.06}{green}
					\path (nd-1-1) ++ (0,1);
				\end{boxedstrings}
				.
			\end{gathered}
		\end{equation}
		This operation takes a diagram and erases all orange strands, orange dots and polynomials.
		We will call this operation the \textit{green-brown restriction}. 
		We will refer to the connected components of $\ResGB(D)$ as its \textit{GB components}.
		
		Let $\Res_{G}$ be the diagrams of $\Dcal$ sending all the green generators to themselves and sending
		\begin{equation}
			\begin{gathered}
				\begin{boxedstrings}
					\dotuparam{(0,0)}{0.6}{0.06}{orange}
					\path (nd-1-1) ++ (0,1);
				\end{boxedstrings}
				\mapsto
				\begin{boxedstrings}
					\path (0,0) ++ (0.5,1);
				\end{boxedstrings}
				,
				\begin{boxedstrings}
					\idfig{(0,0)}{orange}
				\end{boxedstrings}
				\mapsto
				\begin{boxedstrings}
					\path (0,0) ++ (0.5,1);
				\end{boxedstrings}
				,
				\begin{boxedstrings}
					\dotuparam{(0,0)}{0.6}{0.06}{brown}
					\path (nd-1-1) ++ (0,1);
				\end{boxedstrings}
				\mapsto
				\begin{boxedstrings}
					\path (0,0) ++ (0.5,1);
				\end{boxedstrings}
				,
				\\
				\begin{boxedstrings}
					\idfig{(0,0)}{brown}
				\end{boxedstrings}
				\mapsto
				\begin{boxedstrings}
					\path (0,0) ++ (0.5,1);
				\end{boxedstrings}
				,
				\begin{boxedstrings}
					\trivu{(0,0)}{brown}
					\idfigparam{(nd-1-1)}{-0.25}{brown}
					\idfigparam{(nd-1-2)}{0.25}{brown}
					\idfigparam{(nd-1-3)}{0.25}{brown}
				\end{boxedstrings}
				\mapsto
				\begin{boxedstrings}
					\path (0,0) ++ (0.5,1);
				\end{boxedstrings}
				,
				\begin{boxedstrings}
					\Xfig{(0,0)}{brown}{orange}
					\idfigparam{(nd-1-1)}{-0.25}{brown}
					\idfigparam{(nd-1-2)}{-0.25}{orange}
					\idfigparam{(nd-1-4)}{0.25}{brown}
					\idfigparam{(nd-1-3)}{0.25}{orange}
				\end{boxedstrings}
				\mapsto
				\begin{boxedstrings}
					\path (0,0) ++ (0.5,1);
				\end{boxedstrings}
				,
				\\
				\begin{boxedstrings}
					\idfigparam{(0,1)}{0.5}{orange}
					\idfigparam{(nd-1-2)}{0.5}{green}
				\end{boxedstrings}
				\mapsto
				\begin{boxedstrings}
					\dotuparam{(0,0)}{0.6}{0.06}{green}
					\path (nd-1-1) ++ (0,1);
				\end{boxedstrings}
				,
				\begin{boxedstrings}
					\trivuparamcol{(0,0)}{0.5}{0.5}{brown}{brown}{green}
					\idfigparam{(nd-1-1)}{-0.25}{brown}
					\idfigparam{(nd-1-2)}{0.25}{brown}
					\idfigparam{(nd-1-3)}{0.25}{green}
				\end{boxedstrings}.
				\mapsto
				\begin{boxedstrings}
					\dotuparam{(0,0)}{0.6}{0.06}{green}
					\path (nd-1-1) -- ++(0,1);
				\end{boxedstrings}
				,
				\begin{boxedstrings}
					\idfigparam{(0,1)}{0.5}{brown}
					\idfigparam{(nd-1-2)}{0.5}{green}
				\end{boxedstrings}
				\mapsto
				\begin{boxedstrings}
					\dotuparam{(0,0)}{0.6}{0.06}{green}
					\path (nd-1-1) -- ++(0,1);
				\end{boxedstrings}
				,
				\\		
				\begin{boxedstrings}
					\trivuparamcol{(0,0)}{0.5}{0.5}{green}{brown}{green}
					\idfigparam{(nd-1-1)}{-0.25}{green}
					\idfigparam{(nd-1-2)}{0.25}{brown}
					\idfigparam{(nd-1-3)}{0.25}{green}
				\end{boxedstrings}
				\mapsto
				\begin{boxedstrings}
					\idfig{(0,0)}{green}
				\end{boxedstrings}
				,
				\begin{boxedstrings}
					\trivuparamcol{(0,0)}{0.5}{0.5}{green}{orange}{green}
					\idfigparam{(nd-1-1)}{-0.25}{green}
					\idfigparam{(nd-1-2)}{0.25}{orange}
					\idfigparam{(nd-1-3)}{0.25}{green}
				\end{boxedstrings}
				\mapsto
				\begin{boxedstrings}
					\idfig{(0,0)}{green}
				\end{boxedstrings}
				,
				\begin{boxedstrings}
					\Xfig{(0,0)}{green}{orange}
					\idfigparam{(nd-1-1)}{-0.25}{green}
					\idfigparam{(nd-1-2)}{-0.25}{orange}
					\idfigparam{(nd-1-4)}{0.25}{green}
					\idfigparam{(nd-1-3)}{0.25}{orange}
				\end{boxedstrings}
				\mapsto
				\begin{boxedstrings}
					\idfig{(0,0)}{green}
				\end{boxedstrings}
				,
				\\
				\begin{boxedstrings}
					\polybox{(0,0.5)}{$f$}
					\path (0,0) ++ (0,1);
				\end{boxedstrings}
				\mapsto
				\begin{boxedstrings}
					\path (0,0) ++ (0.5,1);
				\end{boxedstrings}
				,.
			\end{gathered}
		\end{equation}
		This operation takes a diagram and erases all orange or brown strands, all orange or brown dots, and all polynomials.
		We will call this operation the \textit{green restriction}.
		We will refer to the connected components of $\Res_{G}(D)$ as its \textit{green components}.
		
		For a given diagram $D$, let $\nuGB{D}$ (resp. $\nuG{D}$) be the embedding of the graph of $\ResGB(D)$ (resp. $\Res_G(D)$) into $D$.
		
		Let $S$ be a subset of the graph of $\ResGB(D)$ (resp. $\Res_G(D)$)
		and $t$ a point in $D$ corresponding to an orange crossing or landing.
		We say \textit{$t$ is over $S$} if $t\in \nuGB{D}(S)$ (resp. $t\in \nuG{D}(S)$).
	\end{defi}
	\begin{rema}
		The operations $\Res_{G}$ and $\ResGB$ does not define a functor. 
		For instance,
		\begin{equation}
			\Res_{G}
			\left(
			\begin{boxedstrings}
				\vbarb{(0,0.7)}{green}
				\path (0,0) ++ (0,1);
			\end{boxedstrings}
			\right)
			=
			\begin{boxedstrings}
				\vbarb{(0,0.7)}{green}
				\path (0,0) ++ (0,1);
			\end{boxedstrings}
		\end{equation}
		but
		\begin{equation}
			\Res_{G}
			\left(
			\begin{boxedstrings}
				\polybox{(0,0.5)}{$\alpha_s$}
				\path (0,0) ++ (0,1);
			\end{boxedstrings}
			\right)
			=
			\begin{boxedstrings}
				\path (0,0) ++ (0.5,1);
			\end{boxedstrings}
		.
		\end{equation}
		These operations are well defined for a specific diagram, but do not preserve the relations of $\Dcal$.
	\end{rema}
	
	Having defined the green components of a diagram, we can state the following proposition.
	\begin{prop}\label{prop only one landing in each green component}
		Any diagram $D$ equals a diagram $E$ where each green component of $E$ has at most one orange landing over it.
	\end{prop}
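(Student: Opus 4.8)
The plan is to induct on the total number of orange landings appearing in $D$. If every green component already carries at most one orange landing there is nothing to prove, so suppose some green component $C$ has $n\ge 2$ orange landings over it. It then suffices to exhibit a diagram equal to $D$ in $\Dcal$ whose total number of orange landings is strictly smaller: the construction below only touches orange strands and the orange decorations sitting on $C$, and every orange strand it moves can be routed over the rest of the diagram by Theorem \ref{thm orange sliding}, so it will not create any new landing on a green component other than $C$.

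First I would set up a single working edge. Fix an edge $e$ of $C$. Using the orange--green sliding relations of Section \ref{subsection orange strand} — Proposition \ref{prop orange jumps through trivalents} (in particular \eqref{orange jump through GGG} and \eqref{orange slide over orange green trivalent 1}--\eqref{orange slide over orange green trivalent 3}) to pass orange crossings and landings over green trivalent vertices, \eqref{green dot orange rel} to pass them over green dots, and the horizontal slides \eqref{orange slide horizontal green} along green edges — I can push every orange crossing lying on $e$ off of $e$, or, if convenient, replace a green--orange crossing on $e$ by two green--orange landings via \eqref{vertex definitions}; in either case $e$ ends up carrying only orange landings. Next, using relation \eqref{GGG orange} (an orange landing slides freely from one leg of a green trivalent vertex to any other), I move each of the orange landings of $C$ along $C$ toward $e$. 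A routine induction on the combinatorial distance from a landing to $e$ inside the graph of $C$ — measured relative to a fixed spanning tree of $C$, so that landings on non-tree edges are first pushed onto the tree — shows this terminates, yielding a diagram equal to $D$ in which all orange landings of $C$ are concentrated on the single edge $e$ and no orange crossings lie on $e$.

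Now the local picture around $e$ is exactly of the form treated in Lemma \ref{lemma orange landings green edge}: a green edge with some orange landings attaching from above and some from below. That lemma rewrites it as the same green edge carrying \emph{at most one} orange landing, together with a family of orange cups and caps that no longer touch $e$. Each such cup or cap is an arc of an orange strand that previously ran to a landing on $e$; by Theorem \ref{thm orange sliding} these arcs slide over all of $D$, so they can be reabsorbed into the remaining orange part of the diagram without introducing any new landing, and any closed orange circle produced this way is erased by the orange circle relation \eqref{needle relations} (equivalently Corollary \ref{coro circle sliding}). After these moves $C$ has at most one orange landing over it while every other green component has exactly the landings it had before; since $n\ge 2$, the total number of orange landings has strictly decreased, and the induction closes.

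The main obstacle I expect is bookkeeping rather than a genuine difficulty: one must verify carefully that collecting the landings of $C$ onto $e$ and then applying Lemma \ref{lemma orange landings green edge} never raises the orange-landing count on a different green component — the point being that every orange strand touched by the procedure is afterwards free to slide over the whole diagram by Theorem \ref{thm orange sliding}, so it need not land anywhere — and one must make the termination of the ``collect onto $e$'' step precise when $C$ contains cycles, by working relative to a spanning tree. Clearing orange crossings from $e$ before invoking the lemma (either by sliding them off or by expanding them into landings via \eqref{vertex definitions}) is the other point that needs a little care, though it is routine.
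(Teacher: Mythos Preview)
Your approach is essentially the paper's: gather all orange landings of a green component onto a single edge, then pair them off via \eqref{orange slide horizontal green} and \eqref{horizontal green cups} (which is exactly Lemma \ref{lemma orange landings green edge}). The paper's proof is two sentences because it isolates one observation that you bury under extra scaffolding: the landing-slide relations \eqref{orange jump through GGG} and \eqref{GGG orange}--\eqref{GGB triv orange 2} only change the number of orange \emph{crossings}, never the number of landings. Once that is said, collecting all landings of $C$ onto one edge is immediate and needs no induction on the total landing count, no preliminary clearing of crossings from $e$, and no appeal to Theorem \ref{thm orange sliding} for rerouting---the landing count on other components simply cannot change.

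There is one genuine omission in your collection step: you cite only \eqref{GGG orange} to move a landing through a vertex of $C$, but a green component of $\Res_G(D)$ can run through a GGB trivalent vertex in $D$ (since $\Res_G$ erases the brown leg, leaving the two green legs connected). Passing a landing across such a vertex requires \eqref{GGB orange 1}--\eqref{GGB triv orange 2}, which is precisely the range the paper invokes.
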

	\begin{proof}
		Note that the orange landing slide relations 
		\eqref{orange jump through GGG} and \eqref{GGG orange}-\eqref{GGB triv orange 2} only change the number of
		orange crossings. 
		Because of this, we can slide all the orange landings in the same connected components to a single green edge without creating any new orange landings.
		We can finally use \eqref{orange slide horizontal green} and \eqref{horizontal green cups} to change any pair of orange landings into a crossing or a cup/cap.
	\end{proof}
	
	We will use the following lemma to deal with diagrams that have two or more orange boundary strands.
	\begin{lemma}\label{lemma orange boundary strands}
		Let $D$ be a diagram with two or more orange boundary strands and at most one green or brown boundary strand.
		Then there is a diagram $E$ such that $D=E$ and diagrams $O,N$ contained in $E$
		satisfying:
		\begin{itemize}
			\item $E=O\circ N$.
			\item $\ResGB(D)=\ResGB(E)=\ResGB(N)$.
			\item $N$ has at most one orange boundary strand.
			\item $O$ is a rotation of $\mathsf{Or}_{2k}$ for some $k\in\NN$.
		\end{itemize}
		That is, we can make $D$ into a diagram $E$ that factors into a diagram of the form $\mathsf{Or}_{2k}$ and a diagram with at most one orange boundary strand.
	\end{lemma}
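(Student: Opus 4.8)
The plan is to peel the orange boundary strands off $D$ one region at a time, using the sliding results of this section, and to collect the peeled strands into a purely orange matching diagram $O$, leaving a diagram $N$ that contains the whole green--brown part together with at most one orange thread. First I would normalize the orange substructure of $D$: by Lemma~\ref{lemma orange interaction with itself}, every orange--orange crossing may be replaced by two parallel orange strands via \eqref{X2 idempotent decomp}, every closed orange circle may be erased via \eqref{needle relations}, and any pair of orange dots lying on a common orange arc may be merged into the scalar $(\alpha_s-\alpha_t)^2\in\Rtau$ by \eqref{orange dots merger}; after this $D$ has no orange self-crossings, no orange circles, and at most one orange dot. Next I would apply Proposition~\ref{prop only one landing in each green component} together with Lemma~\ref{lemma orange landings green edge} and Lemma~\ref{lemma orange crossings brown edge} to arrange that each green component carries at most one orange landing over it and each brown edge at most one orange crossing over it. In this form every orange strand of $D$ is a simple arc whose two endpoints each lie on the boundary of $D$, at the unique orange dot, or at an orange landing / orange--green bivalent vertex on the green--brown part.

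Now I would route the orange arcs using Theorem~\ref{thm orange sliding}, which lets orange strands slide over any diagram. Since $D$ has at most one green or brown boundary strand, $\ResGB(D)$ has at most one boundary strand, so at most one green--brown component meets the boundary of $D$ and the rest are closed inside $D$. Pushing all orange arcs toward one boundary edge (say the top), an orange arc joining two boundary points of $D$ can be slid so that near that edge it becomes a small cup; an arc joining a boundary point to an interior orange endpoint descends into the green--brown region. I would then combine these descending arcs two at a time: two descents that reach green (resp.\ brown) edges can be brought into parallel position by the trivalent slides of Proposition~\ref{prop orange jumps through trivalents} and then fused using the orange--landing slide relations \eqref{orange slide horizontal green}--\eqref{horizontal green cups} (any extra orange dot produced being absorbed into an $\Rtau$-scalar and pushed below). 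Iterating leaves at most one orange strand entering the green--brown region. Cutting $D$ along a horizontal line just below the orange pattern produced above it, we obtain $E=O\circ N$, where $N$ contains the entire green--brown part and at most one orange boundary strand, and $O$ is a diagram built only from orange strands with no dots and no crossings, hence a non-crossing perfect matching of its boundary points; by the normal-form corollary following Lemma~\ref{lemma orange interaction with itself} this $O$ is a rotation of $\mathsf{Or}_{2k}$ for the appropriate $k$. Finally, every move above is either an orange-only relation or an instance of Theorem~\ref{thm orange sliding}, none of which changes $\ResGB$, and $\ResGB(O)$ is empty; hence $\ResGB(D)=\ResGB(E)=\ResGB(N)$, completing the four bullet points.

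The hard part will be the merging step in the second paragraph: showing that the orange arcs descending into the green--brown region can always be paired off down to a single strand. This is exactly where the hypothesis that $D$ has at most one green or brown boundary strand is used — it forces the green--brown part to be almost closed, so that the components into which orange strands land can be reorganized (via the $H=I$ relations of Definition~\ref{relations equiv category} and the trivalent slides of Proposition~\ref{prop orange jumps through trivalents}) in a way that brings neighbouring orange descents together; without this constraint one cannot in general reduce below one descending strand per boundary component, and the statement would fail. I would expect the bookkeeping here (tracking which green--brown component each orange landing sits over, in the sense of $\nuGB{D}$) to be the bulk of the actual write-up.
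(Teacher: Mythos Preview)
Your proposal works far harder than necessary and, in the ``hard part'' you correctly flag, runs into a genuine obstruction that the paper's proof simply sidesteps.

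The paper's argument never touches the interior of $D$ at all. It uses only the relation \eqref{X2 idempotent decomp}, which says that two parallel orange strands equal an orange cup stacked on an orange cap. Since $D$ has at most one green or brown boundary strand, any two orange boundary strands can be taken adjacent along the boundary (all the orange boundary points lie in a single boundary arc avoiding the lone GB strand). For two adjacent orange boundary strands one simply inserts a cap--cup pair right at the boundary: the cap is absorbed into $N$, the cup becomes part of $O$. Iterating pairs off the orange boundary strands two at a time until at most one remains. The whole green--brown part of $D$, together with all interior orange structure (landings, crossings, dots), is left completely untouched inside $N$, so $\ResGB(D)=\ResGB(N)$ is automatic, and $O$ is visibly a rotation of $\mathsf{Or}_{2k}$.

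Your approach instead tries to fuse the \emph{interior} endpoints of the descending orange arcs, using \eqref{horizontal green cups} to merge two orange landings into a cup. But \eqref{horizontal green cups} and the orange--landing slides of \eqref{GGG orange}--\eqref{GGB triv orange 2} only move a landing within a single green component; there is no relation that transports a landing from one green component to another. So if two descending arcs land on different green components, your merging step cannot be carried out, and neither the $H=I$ relations nor Proposition~\ref{prop orange jumps through trivalents} help with this. The hypothesis on the number of GB boundary strands is not what saves you here; it is used in the paper only to guarantee adjacency of orange strands \emph{on the boundary}, which is exactly where the pairing should happen.
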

	\begin{proof}
		Let us only consider the case where $\ResGB(D)$ has only one boundary strand, as the case where $\ResGB(D)$ has empty boundary is similar.
		We will use relation \eqref{X2 idempotent decomp} in $D$ to build a diagram $E$, containing a diagram $N$ that has at most one orange boundary strand.
		Note that we can slide any pair of nearby orange boundary strands next to each other	and merge them into 
		$
		\begin{miniunboxedstrings}
			\cupfigparam{(0,1)}{0.5}{0.4}{orange}
			\capfigparam{(0,0)}{0.5}{0.4}{orange}
		\end{miniunboxedstrings}
		$.
		If $D$ has exactly two orange boundary strands, we write
		\begin{equation}\label{orange cups circle diagram}
			D=
			\begin{boxedstrings}
				\draw[dashed] (0,0) rectangle ++(0.75,0.75);
				\node at (0.375,0.375) {$D$};
				\draw[purple] (0.375,0) -- ++(0,-0.5);
				\draw[orange] (0.25,0.75) -- ++(0,0.5);
				\draw[orange] (0.5,0.75) -- ++(0,0.5);
			\end{boxedstrings}
			=
			\begin{boxedstrings}
				\draw[dashed] (0,0) rectangle ++(0.75,0.75);
				\node at (0.375,0.375) {$D$};
				\draw[purple] (0.375,0) -- ++(0,-0.5);
				\capfigparam{(0.25,0.75)}{0.25}{0.20}{orange}
				\cupfigparam{(0.25,1.25)}{0.25}{0.20}{orange}
			\end{boxedstrings}
			.
		\end{equation}
		If we set $E$ to be the diagram on the right in \eqref{orange cups circle diagram}, then we can set
		\begin{equation}
			N :=
			\begin{boxedstrings}
				\draw[dashed] (0,0) rectangle ++(0.75,0.75);
				\node at (0.375,0.375) {$D$};
				\draw[purple] (0.375,0) -- ++(0,-0.5);
				\capfigparam{(0.25,0.75)}{0.25}{0.20}{orange}
				\path (0.25,1.25) -- (0.25,0);
			\end{boxedstrings}
			.
		\end{equation}
		Now it becomes clear that $E$ and $N$ satisfy all the conditions required.
		
		If $D$ has more than two orange strands, we can apply \eqref{orange cups circle diagram} until we are left with at most one orange strands coming out of $D$.
		Let $E$ be the diagram obtained at the end of this process,
		and define $N$ to be the diagram contained in $E$
		that only excludes the edges connecting orange boundary points of $E$
		and the (at most two) boundary edges of $D$ that were not changed.
		Then $N$ has at most one orange boundary strand and $\ResGB(D)=\ResGB(E)=\ResGB(N)$.
		Hence, $E$ and $N$ satisfy all the conditions needed.
	\end{proof}

\subsection{Reduction of Diagrams}\label{subsection-diagram reduction}
In Section \ref{subsection-presentation}, we defined the functor $\Fbold:\Fcal\longrightarrow \Hcaltau$ and showed it
is an equivalence as long as Theorem \ref{diagram reduction} holds.
The goal of this section is to carefully prove this theorem with the use of diagrammatic techniques.

\begin{theorem}[Diagram Reduction]\label{diagram reduction}\phantom{.}
The spaces $\Hom\!\left(\onecalbar,\onecalbar\right)$, $\Hom\!\left(\Xobjbar,\onecalbar\right)$,
$\Hom\!\left(\Yobjbar,\onecalbar\right)$, $\Hom\!\left(\Zobjbar,\onecalbar\right)$,
and $\Hom\!\left(\XZobjbar,\onecalbar\right)$ have the following $\Rtau$-spanning sets:
\begin{enumerate}[(a)]
    \item $\Hom\!\left(\onecalbar,\onecalbar\right)$ is spanned by the empty diagram $\emptydiagram$.

    \item $\Hom\!\left(\Xobjbar,\onecalbar\right)$ is spanned by    
    $\Dotup{orange} $.
    
    \item $\Hom\!\left(\Yobjbar,\onecalbar\right)$ is spanned by 
    $\Dotup{green}$ and
    $
    \begin{miniunboxedstrings}
    	\idfigparam{(1,0)}{-0.3}{green}
    	\dotuparam{(nd-1-2)}{0.3}{0.06}{orange}
    	\path (nd-1-1) -- ++(0,-0.2);
    \end{miniunboxedstrings}
    $
    .

    \item $\Hom\!\left(\Zobjbar,\onecalbar\right)$ is spanned by 
    $ \Dotup{brown} $.

    \item $\Hom\!\left(\XZobjbar,\onecalbar\right)$ is spanned by 
    $\Dotup{orange}\hspace{-0.2cm} \Dotup{brown}
    $
    .
\end{enumerate}

\end{theorem}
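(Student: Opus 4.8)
The plan is to prove Theorem \ref{diagram reduction} by a diagrammatic induction on the complexity of a morphism in $\Hom(\widehat{A},\onecalbar)$ for $\widehat{A}\in\Icalbar$. Given an arbitrary diagram $D$ representing such a morphism, the goal is to rewrite it, using only the relations in Definition \ref{relations equiv category} and the derived relations from Sections \ref{subsection poly forcing 1}--\ref{subsection orange strand} (and Appendix \ref{appendix-relations}), as an $\Rtau$-linear combination of the claimed spanning elements. Since every $\Hom$ space out of $\widehat{A}$ ends at $\onecalbar$ (which has no boundary strands), the underlying graph of $D$ has at most the boundary strands dictated by $\widehat A$ (zero, one orange, one green, one brown, or one orange plus one brown), so the diagram is ``mostly closed''.

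\textbf{Key steps, in order.} First I would use Theorem \ref{thm orange sliding} and Corollary \ref{coro circle sliding} to push all orange material as far as possible: any closed orange circle is erased, and by Lemma \ref{lemma orange boundary strands} any diagram with two or more orange boundary strands factors through some $\mathsf{Or}_{2k}$, which (after the barbell relations \eqref{barbell rels} and Proposition \ref{prop polynomials are barbells}) contributes only an $\Rtau$-scalar; this reduces to the case of at most one orange boundary strand. Second, I would handle the green--brown part $\ResGB(D)$: using Proposition \ref{prop only one landing in each green component} I arrange at most one orange landing per green component, and then run the core ``cycle reduction'' — every GB cycle, being a closed planar graph of green/brown trivalent vertices, contains a crossbar, so after sliding the finitely many orange crossings/landings off the central edge via Lemma \ref{GB comb sliding} I can apply an $H=I$ relation (\eqref{H=I one color}--\eqref{H=I middle brown}) to strictly decrease the number of internal regions, terminating in a forest of needles. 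Third, needles are evaluated by the circle and needle relations \eqref{needle relations} together with the unit relations \eqref{one color unit rel}--\eqref{green orange unit rel}, turning closed components into polynomial boxes in $\Rtau$; repeated polynomial forcing (Proposition \ref{polynomial forcing all} and Corollary \ref{polynomial forcing under orange}) moves all polynomials to the far left as $\Rtau$-coefficients. Finally, what survives in each case is a single ``tail'' attached to the boundary of $\widehat A$: for $\onecalbar$ nothing survives (the empty diagram); for $\Xobjbar$ the orange strand must cap off to an orange dot; for $\Yobjbar$ the green strand ends in either a green dot or a green dot fused with an orange dot — exactly the two listed generators, with the identity $\eqref{green orange unit rel}$ and the vertex definitions killing all other configurations; for $\Zobjbar$ the brown strand ends in a brown dot; and for $\XZobjbar=\Xobjbar\otimes\Zobjbar$ one gets the orange dot times the brown dot. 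One checks along the way, using $\Fbold$ and Proposition \ref{prop HomBs,1 in AxA}, that these images are $\Rtau$-linearly independent, so the spanning sets are in fact bases, but for the statement of the theorem only spanning is needed.

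\textbf{Main obstacle.} The hard part will be organizing the induction so that it genuinely terminates: each local move (sliding an orange strand, applying an $H=I$ relation, collapsing a needle) must be packaged into a well-founded complexity measure — something like the lexicographic triple (number of internal regions, number of GB trivalent vertices, number of orange crossings/landings) — and one must verify that no move increases an earlier coordinate without strictly decreasing a later one. The genuinely delicate case is the exceptional single GB comb $\begin{miniunboxedstrings}\Hlongparam{(0,0.8)}{0.5}{0.8}{brown}{brown}{green}{brown}{brown}\idfigparam{(0.25,0.8)}{0.4}{orange}\end{miniunboxedstrings}$ excluded by Lemma \ref{GB comb sliding}, which cannot be turned into a GB crossbar; here one must instead invoke the bigon relation \eqref{bigon brown orange} (which kills the offending triangle outright) or \eqref{orange jump through GGB} to route around it, and checking that these suffice in every embedded occurrence is where most of the careful case analysis lives. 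A secondary subtlety is bookkeeping the signs produced by \eqref{GGB orange 1}, \eqref{GGB triv orange 2}, and the $\pm$ in Lemma \ref{GB comb sliding}, but these are routine once the skeleton of the argument is fixed.
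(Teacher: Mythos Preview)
Your outline is essentially the paper's approach — orange sliding, cycle reduction via $H=I$, needle evaluation, polynomial forcing — and you cite the right lemmas throughout. There are two places where the paper's argument differs from what you propose, and one of them is precisely the point you flag as the main obstacle.

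\textbf{The exceptional comb.} You propose attacking it directly with \eqref{bigon brown orange} or \eqref{orange jump through GGB}. This does not work as stated: \eqref{bigon brown orange} kills a specific triangle, not the exceptional $H$-shape, and \eqref{orange jump through GGB} slides an orange crossing past a GGB vertex but does not remove an orange landing from a green edge flanked by four brown legs. The paper's key observation is that this case can always be \emph{avoided} rather than resolved, because when reducing a cycle you get to choose which edge plays the role of the crossbar's middle strand. If the cycle contains any brown edge, pick that one: brown edges carry only orange crossings, and those slide off by \eqref{purple crossbar slide crossing}. If every edge of the cycle is green, then all of them lie in a single green component, so by Proposition \ref{prop only one landing in each green component} there is at most one orange landing on the entire cycle; choose any edge that does not carry it. Either way Lemma \ref{GB comb sliding} applies, and the exceptional comb never appears. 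So the ``careful case analysis'' you anticipate dissolves once you realize the edge is yours to choose.

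\textbf{The induction scheme.} Your triple (internal regions, GB trivalent vertices, orange crossings/landings) does not decrease under a single $H=I$ step: applying $H=I$ to an $n$-cycle with spokes yields an $(n{-}1)$-cycle with spokes plus an attached trivalent vertex, which has the same number of regions and the same number of GB trivalent vertices. The paper organizes the induction differently. It first proves a separate lemma for GB \emph{trees} (this is exactly your ``what survives is a single tail'' step, isolated and proved by induction on trivalent vertices), then runs an inner induction on cycle size to reduce a single \emph{innermost} cycle with spokes — one whose interior is already a based forest — to a sum of GB forests, and finally runs the outer induction on the pair (number of GB trivalent vertices, number of GB components). The notion of innermost cycle is what lets the cycle-reduction step feed back into the tree lemma and close the argument; without it, a general cycle can have nested cycles in its interior and the recursion does not obviously terminate.
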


\begin{defi}
We say a diagram $D$ is a \textit{tree} if the complement of the underlying graph is connected. That is, and doesn't contain any cycles (it may still have polynomial boxes).

We say that $D$ is a \textit{tree without boundary} if $D$ is a tree and doesn't have any boundary strands (e.g. a barbell).
We say $D$ is a \textit{based tree} if $D$ is a tree and contains a single boundary strand.

We say that $D$ is a \textit{forest} if all of its connected components are trees.
We say that $D$ is a \textit{based forest} if all of its connected components are either based trees or trees without boundary.

We say that $D$ is a \textit{GB tree (resp. based tree, forest, based forest)} if $\Res_{GB}(D)$ is a tree (resp. based tree, forest, based forest).
\end{defi}

Note that a based forest does not partition the strip $[0,1]\times [0,1]$ into more than one region. Hence, every based forest is an $\Rtau$-multiple of a based forest with no polynomials.

\begin{lemma}\label{tree reduction}
	Let $D$ be a GB tree.
	\begin{enumerate}[(a)]
		\item If $D$ has no boundary strands, $D$ is in the $\Rtau$-span of
		$
		\left\{\emptydiagram \right\}
		$.
		
		\item If the boundary of $D$ is one orange strand, $D$ is in the $\Rtau$-span of 
		$
		\left\{\Dotup{orange}\right\}
		$.
		
		\item If the boundary of $D$ is one green strand, $D$ is in the $\Rtau$-span of 
		$\left\{\Dotup{green}, 
		\begin{miniunboxedstrings}
			\idfigparam{(1,0)}{-0.3}{green}
			\dotuparam{(nd-1-2)}{0.3}{0.06}{orange}
			\path (nd-1-1) -- ++(0,-0.2);
		\end{miniunboxedstrings}
		\right\}
		$.
		
		\item If the boundary of $D$ is one green strand and one orange strand, $D$ is in the $\Rtau$-span of 
		$\left\{
		\begin{miniunboxedstrings}
			\idfigparam{(1,0)}{-0.3}{green}
			\dotuparam{(nd-1-2)}{0.3}{0.06}{green}
			\draw[orange] (nd-1-2) -- ++(-0.25,0) -- ++(0,-0.3);
			\path (nd-1-1) -- ++(0,-0.2);
		\end{miniunboxedstrings}
		,
		\Dotup{orange}\hspace{-0.2cm} \Dotup{green}
		\right\}
		$
		.
		
		\item If the boundary of $D$ is one brown strand, $D$ is in the $\Rtau$-span of 
		$\left\{\Dotup{brown}\right\}$.

		\item If the boundary of $D$ is one brown strand and one orange strand, $D$ is in the $\Rtau$-span of  
		$\left\{	\Dotup{orange}\hspace{-0.2cm} \Dotup{brown}
		\right\}
		$.
	\end{enumerate}
\end{lemma}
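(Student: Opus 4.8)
The plan is to prove the six cases of Lemma \ref{tree reduction} by a single simultaneous induction on the number of generating morphisms appearing in the GB tree $D$, reducing in each step to a tree with strictly fewer generators. The key structural fact is that since $\ResGB(D)$ is a forest (in fact a tree once we also account for the orange structure, using the slide relations of Section \ref{subsection orange strand}), $D$ does not partition the strip into more than one region; hence by the remark preceding the lemma we may assume $D$ carries no polynomial boxes, and by \eqref{barbell rels} and Proposition \ref{prop polynomials are barbells} any barbell that does appear can be absorbed into the $\Rtau$-coefficient. So the genuine content is about trees built from trivalent vertices, bivalent vertices, dots, and orange crossings/landings.

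First I would handle the orange strands. Using Theorem \ref{thm orange sliding} (the orange strand slides over any diagram), together with Corollary \ref{coro circle sliding} and Lemma \ref{lemma orange boundary strands}, I would push every orange strand toward the boundary: an orange strand that forms a closed loop is deleted via \eqref{needle relations}; an orange strand with two endpoints on the same boundary can be capped off using \eqref{X2 idempotent decomp} and \eqref{needle relations}, possibly producing an $\Rtau$-scalar via \eqref{orange dots merger}; an orange endpoint that lands on a green strand is turned into an orange dot by \eqref{bivalent definition right}, \eqref{bivalent definition left}, \eqref{green orange unit rel} or by Proposition \ref{prop only one landing in each green component}. The upshot is that after these moves $D$ equals an $\Rtau$-combination of GB trees whose only orange content is at most a single orange boundary dot (cases (b), (d), (f)) or an orange dot attached near the green leaf (cases (c), (e)). This reduces everything to the purely green-and-brown situation, i.e. to GB trees in $\HcalA$-style calculus with the extra brown color, where I can induct.

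For the green-brown core I would argue by leaf removal. A tree with a boundary strand has a leaf (a univalent vertex of the underlying graph) that is either a dot or one endpoint of a trivalent vertex. If the boundary strand itself is the leaf, $D$ is already one of the listed spanning elements (or a barbell times such, handled above). Otherwise pick an internal leaf: if it is a dot sitting atop a trivalent vertex, apply the unit relations \eqref{one color unit rel}, \eqref{brown green unit rel}, \eqref{bivalent definition left}, \eqref{bivalent definition right} (and their consequences in Appendix \ref{appendix-relations}) to either delete the dot-and-vertex pair, or replace the trivalent vertex by a bivalent vertex or an identity strand, strictly decreasing the generator count; then apply the inductive hypothesis. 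If two trivalent vertices are joined by an edge forming a bigon, use the bigon relations \eqref{bigon brown}, \eqref{bigon brown orange} to collapse it. If no bigon is present but the tree has at least two internal vertices, there is an edge joining two trivalent vertices whose removal (via an $H=I$ relation \eqref{H=I one color}–\eqref{H=I middle brown}, legitimate because in a tree such an edge is a "bridge" and the $H$ is a GB crossbar after the orange-sliding of Lemma \ref{GB comb sliding}) rewrites $D$ as a sum of trees each of which is again a tree but with the configuration simplified; tracking a suitable complexity measure (number of trivalent vertices, then number of bivalent vertices) shows this terminates. Throughout, the brown-to-green vertex definitions \eqref{vertex definitions} let me trade a brown trivalent vertex for green ones when convenient, and the color-bookkeeping is what pins down exactly which of the six lists appears.

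The main obstacle I expect is the interaction between the orange-removal step and the GB leaf-removal step: removing an orange landing via Proposition \ref{prop only one landing in each green component} can momentarily increase the number of orange crossings, and collapsing a GB bigon or applying an $H=I$ can reintroduce orange strands onto a central edge, so the two inductions are not cleanly independent. The fix is to set up one lexicographic complexity $(\#\text{trivalent vertices},\ \#\text{bivalent vertices},\ \#\text{orange crossings and landings})$ and to verify that every rewrite above strictly decreases it — in particular that the orange-slide relations of Section \ref{subsection orange strand} never increase the first two coordinates and that Lemma \ref{GB comb sliding} lets one always clear the central edge of a GB comb except in the single exceptional shape, which I would treat by hand. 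Checking this monotonicity case by case against the full relation list in Appendix \ref{appendix-relations} is the delicate bookkeeping that carries the proof.
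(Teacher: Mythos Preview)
Your core idea of leaf removal with unit relations is correct and is what the paper does, but you have dressed it up with machinery that is irrelevant for trees and in one place have written something impossible. A GB tree is acyclic by definition, so there can be no bigon and no occasion to invoke the $H{=}I$ relations; those tools belong to Lemma~\ref{cycle reduction}, not here. Once you drop that detour, the lexicographic complexity and the worry about orange--GB interaction evaporate as well.

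The paper's argument is much leaner. After the same preliminary normalisations you list (no polynomial boxes, at most one orange dot, no orange circles, at most one orange landing per green component), it inducts on the single quantity ``number of GB trivalent vertices in $\ResGB(D)$''. The key graph-theoretic observation you are missing is that any tree with at least one trivalent vertex has a trivalent vertex two of whose three branches are leaves (dots). Zooming in on a closed neighbourhood $T$ of that vertex, one slides away any orange crossings and is left with only finitely many local shapes---the paper lists seven of them---each of which is rewritten directly by a unit relation (\eqref{one color unit rel}, \eqref{brown green unit rel}, \eqref{orange green green =0 unit rel}, and their consequences) as an $\Rtau$-combination of diagrams with \emph{no} GB trivalent vertex in place of $T$. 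Substituting back yields a sum of GB trees each with strictly fewer GB trivalent vertices, and the induction closes. No $H{=}I$, no bigon collapse, no multi-component complexity measure is needed.
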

\begin{proof}
	
	Let $D$ be a green-brown tree whose boundary is either empty,
	an orange strand,
	a green strand, a brown strand,
	an orange and a green strand,
	or an orange and a brown strand.
	By Proposition \ref{prop only one landing in each green component}, we may assume that there is at most one orange landing over each green component of $D$.
	Note that $\ResGB(D)$ has only one region (because it has no cycles),
	so we may suppose that $D$ has no polynomial boxes (as they can be pulled out).
	Also, by \eqref{orange dots merger}, we may suppose $D$ has at most one orange dot. 
	By Corollary \ref{coro circle sliding}, we may assume $D$ has no orange circles.

	This proof will go by induction on the number of trivalent vertices
	of $\ResGB(D)$, after replacing any bivalent vertices by a corresponding trivalent vertex with a dot. 
	When $\ResGB(D)$ has no trivalent vertices, then $\ResGB(D)$ is either empty, 
	$
	\Dotup{green}
	$,
	or
	$
	\Dotup{brown}
	$.
	If $\ResGB(D)$ is empty then $D$ can't have any orange landings.
	Since we assumed that $D$ has at most one orange dot and not orange circles,
	it must be a polynomial or an $\Rtau$-multiple of
	$\Dotup{orange}$.
	Similarly, if $\ResGB(D)$ is a green or a brown dot then $D$
	must be an $\Rtau$-multiple of
	{
		$\Dotup{green}$	,
		$
		\begin{miniunboxedstrings}
			\idfigparam{(1,0)}{-0.3}{green}
			\dotuparam{(nd-1-2)}{0.3}{0.06}{orange}
			\path (nd-1-1) -- ++(0,-0.2);
		\end{miniunboxedstrings}
		$
		,
		$
		\begin{miniunboxedstrings}
			\idfigparam{(1,0)}{-0.3}{green}
			\dotuparam{(nd-1-2)}{0.3}{0.06}{green}
			\draw[orange] (nd-1-2) -- ++(-0.25,0) -- ++(0,-0.3);
			\path (nd-1-1) -- ++(0,-0.2);
		\end{miniunboxedstrings}
		$
		,
		$
		\Dotup{orange}\hspace{-0.2cm} \Dotup{green}
		$
		,
		$\Dotup{brown}$	,
		or 
		$
		\Dotup{orange}\hspace{-0.2cm} \Dotup{brown}
		$.
	}
	This constitutes our base case.

	Now suppose that $\ResGB(D)$ has at least one trivalent vertex.
	Then, the tree $\ResGB(D)$ must have a trivalent vertex with two dots (leaves).
	Let $T$ be a closed neighborhood (in $D$) that only includes this trivalent vertex.
	By Proposition \ref{prop orange jumps through trivalents}, we may slide out any orange crossings on $T$.
	By our previous assumptions, $T$ has at most one orange landing and at most one orange dot.
	Thus, we may assume that $T$ is one of the following:
	\excludeDiagrams{1}
	{
		\begin{equation}
			\begin{boxedstrings}
				\trivdparamcol{(0,0)}{0.5}{0.7}{green}{green}{green}
				\dotfig{(nd-1-1)}{green}
				\dotfig{(nd-1-2)}{green}
				\path (nd-1-3) ++ (0,1);
			\end{boxedstrings}
			=
			\begin{boxedstrings}
				\dotuparam{(0,0)}{0.4}{0.06}{green}
				\path (nd-1-1) ++ (0,1);
			\end{boxedstrings},
		\end{equation}

		\begin{equation}
			\begin{boxedstrings}
				\trivdparamcol{(0,0.7)}{0.5}{0.7}{green}{green}{green}
				\dotfig{(nd-1-1)}{green}
				\dotfig{(nd-1-2)}{green}
				\path (nd-1-3) ++ (0,1);
				\draw[orange] (0,0) -- (0,0.35) -- (0.125,0.525);
			\end{boxedstrings}
			=
			\begin{boxedstrings}
				\dotuparam{(0.25,0)}{0.6}{0.06}{green}
				\draw[orange] (0,0) -- (0,0.25) -- (0.25,0.25);
				\path (nd-1-1) ++ (0,1);
			\end{boxedstrings}
			,
		\end{equation}

		\begin{equation}
			\begin{boxedstrings}
				\trivdparamcol{(0,0)}{0.5}{0.7}{brown}{brown}{brown}
				\dotfig{(nd-1-1)}{brown}
				\dotfig{(nd-1-2)}{brown}
				\path (nd-1-3) ++ (0,1);
			\end{boxedstrings}
			=
			\begin{boxedstrings}
				\dotuparam{(0,0)}{0.4}{0.06}{brown}
				\path (nd-1-1) ++ (0,1);
			\end{boxedstrings},
		\end{equation}
		
		\begin{equation}
			\begin{boxedstrings}
				\trivdparamcol{(0,0)}{0.5}{0.7}{green}{green}{brown}
				\dotfig{(nd-1-1)}{green}
				\dotfig{(nd-1-2)}{green}
				\path (nd-1-3) ++ (0,1);
			\end{boxedstrings}
			= 
			\begin{boxedstrings}
				\trivdparamcol{(0,0)}{0.5}{0.7}{green}{brown}{brown}
				\dotfig{(nd-1-1)}{green}
				\dotfig{(nd-1-2)}{brown}
				\path (nd-1-3) ++ (0,1);
			\end{boxedstrings}
			=2 \ 
			\begin{boxedstrings}
				\dotuparam{(0,0)}{0.4}{0.06}{brown}
				\path (nd-1-1) ++ (0,1);
			\end{boxedstrings},
		\end{equation}

		\begin{equation}
			\begin{boxedstrings}
				\trivdparamcol{(0,0.7)}{0.5}{0.7}{green}{green}{brown}
				\dotfig{(nd-1-1)}{green}
				\dotfig{(nd-1-2)}{green}
				\path (nd-1-3) ++ (0,1);
				\draw[orange] (0,0) -- (0,0.35) -- (0.125,0.525);
			\end{boxedstrings}
			=
			\begin{boxedstrings}
				\trivdparamcol{(0,0.7)}{0.5}{0.7}{green}{brown}{brown}
				\dotfig{(nd-1-1)}{green}
				\dotfig{(nd-1-2)}{brown}
				\path (nd-1-3) ++ (0,1);
				\draw[orange] (0,0) -- (0,0.35) -- (0.125,0.525);
			\end{boxedstrings}
			=
			0,      
		\end{equation}
		
		\begin{equation}
			2 \ 
			\begin{boxedstrings}
				\trivdparamcol{(0,0)}{0.5}{0.7}{brown}{brown}{green}
				\dotfig{(nd-1-1)}{brown}
				\dotfig{(nd-1-2)}{brown}
				\path (nd-1-3) ++ (0,1);
			\end{boxedstrings}
			= 
			2 \ 
			\begin{boxedstrings}
				\trivdparamcol{(0,0)}{0.5}{0.7}{green}{brown}{green}
				\dotfig{(nd-1-1)}{green}
				\dotfig{(nd-1-2)}{brown}
				\path (nd-1-3) ++ (0,1);
			\end{boxedstrings}
			= 
			\begin{boxedstrings}
				\vbarb{(0.75,0.7)}{green}
				\dotuparam{(1,0)}{0.4}{0.06}{green}
				\path (nd-2-1) ++ (0,1);
			\end{boxedstrings}
			-
			\begin{boxedstrings}
				\idfigparam{(1,0)}{-0.25}{green}
				\dotuparam{(nd-1-2)}{0.2}{0.06}{orange}
				\path (nd-1-1) ++ (0,1);
			\end{boxedstrings}, 
		\end{equation}
		
		\begin{equation}
			2 \ 
			\begin{boxedstrings}
				\trivdparamcol{(0,0.7)}{0.5}{0.7}{green}{brown}{green}
				\dotfig{(nd-1-1)}{green}
				\dotfig{(nd-1-2)}{brown}
				\path (nd-1-3) ++ (0,1);
				\draw[orange] (0,0) -- (0,0.35) -- (0.125,0.525);
			\end{boxedstrings}
			=
			-\ 
			\begin{boxedstrings}
				\dotuparam{(0.25,0)}{0.6}{0.06}{green}
				\draw[orange] (0,0) -- (0,0.25) -- (0.25,0.25);
				\path (nd-1-1) ++ (0,1);
				\vbarb{(-0.25,0.7)}{green}
			\end{boxedstrings}
			+
			\begin{boxedstrings}
				\dotuparam{(0,0)}{0.4}{0.06}{orange}
				\dotuparam{(0.25,0)}{0.4}{0.06}{brown}
				\path (nd-1-1) ++ (0,1);
			\end{boxedstrings}.
		\end{equation}
	}  
	
	In each case, we write that trivalent vertex $T$ as a sum of diagrams with no GB trivalent vertices.
	After replacing $T$ by the corresponding expression with no GB trivalent vertices,
	we are left with a diagram $D'$ with one less GB trivalent vertex.
	By induction the result follows.
\end{proof}

\begin{coro}\label{forest based reduction}
	Let $D$ be a GB based forest. Then $D= f_1 D_1 + ... + f_k D_k$ for some $f_i\in \Rtau$ and diagrams $D_i$
	and such that $\ResGB(D_i)$ is a union of boundary dots.
	In particular, if $\ResGB(D)$ has empty boundary, then $D= f \mathsf{Or}_k$ for some $f\in\Rtau$ and $k$ is the number of orange boundary strands of $D$.
\end{coro}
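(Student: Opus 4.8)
The plan is to deduce this from Lemma \ref{tree reduction} by treating each GB component of $D$ separately, after using the orange-sliding machinery of Section \ref{subsection orange strand} to disentangle the orange strands from the green-brown skeleton. First I would preprocess the orange part of $D$. By Corollary \ref{coro circle sliding} we may erase all closed orange circles. By \eqref{orange dots merger} any two orange dots lying on a common orange strand merge into a scalar factor $(\alpha_s-\alpha_t)^2\in\Rtau$, and since a based forest partitions the strip into a single region (as observed just before Lemma \ref{tree reduction}) such scalars may be pulled out of the diagram; so we may assume every orange strand carries at most one orange dot. By Proposition \ref{prop only one landing in each green component} we may assume each green component of $D$ has at most one orange landing over it.

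Next I would organize the orange strands. Using Theorem \ref{thm orange sliding} together with Lemma \ref{lemma orange landings green edge} and Lemma \ref{lemma orange crossings brown edge}, every orange strand that merely crosses the GB skeleton can be routed around the outside, and any cluster of orange crossings on a single GB edge reduces to at most one crossing or, after sliding, to a single boundary orange dot in the notation of Lemma \ref{tree reduction}. If $D$ has two or more orange boundary strands, Lemma \ref{lemma orange boundary strands} splits $D$ as $O\circ N$ with $O$ a rotation of $\mathsf{Or}_{2k}$ and $N$ having at most one orange boundary strand and the same $\ResGB$; iterating this factors off a power of $\mathsf{Or}$. After this bookkeeping $D$ becomes (up to the extracted $\Rtau$-scalar and the $\mathsf{Or}$ factor) a horizontal juxtaposition of its GB components, each of which is now a GB tree whose total boundary is one of the six configurations handled by Lemma \ref{tree reduction}: empty, a single orange strand, a single green or brown strand, or a single green (resp.\ brown) strand together with one orange strand.

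Applying Lemma \ref{tree reduction} to each component writes it as an $\Rtau$-linear combination of the explicit small diagrams listed there, every one of which has $\ResGB$ equal to a disjoint union of boundary dots; since that property is preserved under horizontal juxtaposition and by the remaining $\mathsf{Or}$ and scalar factors, expanding everything out yields $D=\sum_i f_i D_i$ with each $\ResGB(D_i)$ a union of boundary dots, as claimed. For the "in particular" statement, if $\ResGB(D)$ has empty boundary then every GB component is a tree without boundary; after routing its orange strands to the outside it is a GB tree with empty boundary, hence by Lemma \ref{tree reduction}(a) an $\Rtau$-multiple of the empty diagram. All that survives is the $\mathsf{Or}_k$ factor produced from the $k$ orange boundary strands of $D$, giving $D=f\,\mathsf{Or}_k$ for some $f\in\Rtau$.

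The main obstacle is the second step: making the routing of the orange strands precise, so that each orange strand interacts with at most one GB component and only in one of the ways Lemma \ref{tree reduction} permits, while never creating a GB cycle and never destroying the based-forest structure. This is precisely the point where the full strength of Theorem \ref{thm orange sliding}, in combination with Lemmas \ref{lemma orange landings green edge}, \ref{lemma orange crossings brown edge}, and \ref{lemma orange boundary strands} and Proposition \ref{prop only one landing in each green component}, is needed; everything after it is just linearity and the observation that $\ResGB$ of each basic building block is a union of boundary dots.
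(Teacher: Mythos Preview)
Your approach is correct and follows essentially the same route as the paper: reduce to GB based trees, control the orange boundary via Lemma~\ref{lemma orange boundary strands}, and invoke Lemma~\ref{tree reduction}. The paper's argument is more economical in its ordering: it first observes that a GB based forest splits into GB based trees (so one may assume $\ResGB(D)$ is connected with at most one GB boundary strand), and only then applies Lemma~\ref{lemma orange boundary strands} to factor $D=O\circ N$ with $N$ having at most one orange boundary strand. At that point $N$ falls directly under one of the six cases of Lemma~\ref{tree reduction}, and the $\mathsf{Or}$ factor $O$ does not affect $\ResGB$. This ordering completely sidesteps the ``main obstacle'' you flag, because each tree is handled in isolation and there is no need to route orange strands among multiple GB components. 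Your preprocessing steps (erasing circles, merging dots, bounding landings) are already absorbed into the proof of Lemma~\ref{tree reduction} itself, so repeating them here is harmless but unnecessary. One small caution: Lemma~\ref{lemma orange boundary strands} as stated requires at most one GB boundary strand, so it should be invoked after the reduction to a single tree, not before; your current phrasing applies it to the whole forest first.
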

\begin{proof}
	It suffices to prove this when $D$ is a GB based tree, since any based forest can be split into finitely many based trees.
	
	By Lemma \ref{lemma orange boundary strands}, we may suppose that 
	$D$ is the composition of a diagram $N$ with at most one orange boundary strand,
	and a diagram $O$ that is a rotation of $\mathsf{Or}_{2k}$ for some $k\in\NN$.
	By Lemma \ref{tree reduction}, 
	$N$ is in the $\Rtau$-span of the sets
	$
	\left\{\emptydiagram \right\}
	$,
	$
	\left\{\Dotup{orange}\right\}
	$,
	$\left\{\Dotup{green}, 
	\begin{miniunboxedstrings}
		\idfigparam{(1,0)}{-0.3}{green}
		\dotuparam{(nd-1-2)}{0.3}{0.06}{orange}
		\path (nd-1-1) -- ++(0,-0.2);
	\end{miniunboxedstrings}
	\right\}
	$
	,
	$\left\{
	\begin{miniunboxedstrings}
		\idfigparam{(1,0)}{-0.3}{green}
		\dotuparam{(nd-1-2)}{0.3}{0.06}{green}
		\draw[orange] (nd-1-2) -- ++(-0.25,0) -- ++(0,-0.3);
		\path (nd-1-1) -- ++(0,-0.2);
	\end{miniunboxedstrings}
	,
	\Dotup{orange}\hspace{-0.2cm} \Dotup{green}
	\right\}
	$
	,
	$\left\{\Dotup{brown}\right\}$	,
	or 
	$\left\{	\Dotup{orange}\hspace{-0.2cm} \Dotup{brown}
	\right\}
	$.
	Let $N= f_1 D_1 + f_2 D_2$ where $D_1'$ and $D_2'$ are both in one of the generating sets before and $f_1,f_2\in\Rtau$, so that $D= f_1 D_1\circ O + f_2 D_2\circ O$.
	Since $\ResGB(D_1\circ O)=\ResGB(D_1)$ and $\ResGB(D_2\circ O)=\ResGB(D_2)$
	are either empty or a boundary dot,	the result follows.
	
	Suppose now that $D$ is a based GB forest with $k$ orange boundary strands, such that $\ResGB(D)$ has empty boundary.
	Applying our result to $D$, we can write $D= f_1 D_1+...+f_k D_k$ for some $f_i\in \Rtau$ and diagrams $D_i$ such that $\ResGB(D_i)$ is the empty diagram for all $i$.
	Since each $D_i$ is entirely made of orange strands and dots, each $D_i$ must be an $\Rtau$-multiple of $\mathsf{Or}_k$.
	Hence, $D= f \mathsf{Or}_k$ for some $f\in\Rtau$.
\end{proof}

\begin{defi}
An \textit{$n$-cycle with spokes} is a pair of diagrams $(D,D')$ where $D'$ is contained in the interior of $D$ and the annulus
obtained by excising $D'$ from $D$ forms an $n$-cycle
entirely made by:
\begin{enumerate}
    \item Bivalent vertices.
    \item Trivalent vertices with a strand to the boundary of $D$,
    which we will call an \textit{outside spoke}.
    \item Trivalent vertices with a strand to the boundary of $D'$, which we will cal an \textit{inside spoke}.
\end{enumerate}
This implies that the boundary of $D'$ corresponds to the inside spokes. We will refer to $D'$ as the \textit{inside} of $(D,D')$.
Diagramatically,
\excludeDiagrams{1}{
\begin{equation}
		D=	
	\begin{unboxedstrings}
		\node[circle,draw,dashed,
		minimum size = 3cm] (p) at (0,0) {};
		\node[circle,
		minimum size = 2cm] (q) at (0,0) {};
		\node[circle,draw, dashed,
		minimum size = 1cm] (t) at (0,0) {};
		\node at (0,0) {$D'$};
		\foreach \x in {0,1,2,...,5}
		{
			\tikzmath{\y=\x+1; \z=2*\x+1;}
			\draw[black] (p.360/6*\x) -- (q.360/6*\x);
			\draw[black] (q.360/6*\x) -- (q.360/6*\y);
			\draw[black] ($0.5*(q.360/6*\x)+0.5*(q.360/6*\y)$) -- (t.360/12*\z);
		};
	\end{unboxedstrings}
	.
\end{equation}
}

We note that when $k=0$, $D$ is of the form
\excludeDiagrams{1}{
	\begin{equation}
		D=	
		\begin{unboxedstrings}
			\node[circle,draw,dashed,
			minimum size = 2cm] (p) at (0,0) {};
			\node[circle,draw,
			minimum size = 1.5cm] (q) at (0,0) {};
			\node[circle,draw, dashed,
			minimum size = 1cm] (t) at (0,0) {};
			\node at (0,0) {$D'$};
		\end{unboxedstrings}
		.
	\end{equation}
}

We say that our cycle with spokes is \textit{empty} if $D'$ is empty.
In particular, $(D,D')$ has no inside spokes.

We say that our cycle with spokes is \textit{innermost} if $D'$ is a based forest.
Abusing notation, we say that $D$ is a cycle with spokes
(without specifying the interior)
if there exists a diagram $D'$ contained in $D$ such that $(D,D')$ is a cycle with spokes.
We say that $D$ is a \textit{GB cycle with spokes 
(resp. empty/innermost cycle with spokes)}
if $(\Res_{GB}(D)$ is a cycle with spokes
(resp. empty/innermost cycle with spokes).
\end{defi}

The argument to prove Theorem \ref{diagram reduction} will go by induction,
first on the total number of GB bivalent and trivalent vertices,
and then on the number of GB components of our diagram 
(given it has the same total number of GB bivalent and trivalent vertices).
For our base case, we need to prove the theorem for diagrams with at most one GB component and at most one GB bivalent or trivalent vertex.
This corresponds to the following lemma.

\begin{lemma}\label{lemma base case}
	Let $D$ be a diagram satisfying the following conditions:
	\begin{enumerate}[(a)]
		\item $D$ is not a GB forest.
		\item $D$ has at most one GB
		and at most one orange boundary strand.
		\item $D$ has at most one trivalent vertex.
		\item $D$ has at most one GB component.
		\item $D$ has no orange circles.
		\item there is at most one orange landing over each green component of $D$.
		\item $D$ has no polynomial boxes in its (only) outer region region.
		\item $D$ has no orange boundary dots and has at most one orange dot
		in its (only) enclosed region.
	\end{enumerate}
	If $D$ has no trivalent vertices, then $D$ is of the form
	\begin{align}\label{eqn cases circles}
		\begin{boxedstrings}
			\draw[green] (-0.25,0.25) rectangle ++(0.5,0.5);
			\path (0,0) -- ++(0,1);
			\polyboxscale{(0,0.5)}{$f$}{0.9}
		\end{boxedstrings}
		,
		\hspace{0.1cm}
		\begin{boxedstrings}
			\draw[green] (-0.25,0.35) rectangle ++(0.5,0.5);
			\idfigparam {(0,0.35)}{0.35}{orange}
			\path (nd-1-2) -- ++(0,1);
			\polyboxscale{(0,0.6)}{$f$}{0.9}
		\end{boxedstrings}
		,
		\hspace{0.1cm}
		\begin{boxedstrings}
			\node at (0,1) {};
			\draw[green] (0.25,0.25) rectangle ++(-0.75,0.5);			
			\path (0,0) -- ++(0,1);
			\polyboxscale{(0,0.5)}{$f$}{0.9}
			\dotu{(-0.3125,0.25)}{orange}
		\end{boxedstrings}
		,
		\hspace{0,1cm}
		\begin{boxedstrings}
			\node at (0,1) {};
			\draw[green] (0.25,0.35) rectangle ++(-0.75,0.5);			
			\path (0,0) -- ++(0,1);
			\polyboxscale{(0,0.6)}{$f$}{0.9}
			\dotu{(-0.3125,0.35)}{orange}
			\idfigparam {(nd-1-1)}{0.35}{orange}	
		\end{boxedstrings}
		,
		\hspace{0.1cm}
		\begin{boxedstrings}
			\node at (0,1) {};
			\draw[brown] (-0.25,0.25) rectangle ++(0.5,0.5);
			\path (0,0) -- ++(0,1);
			\polyboxscale{(0,0.5)}{$f$}{0.9}
		\end{boxedstrings}
		,
		\text{ or }
		\hspace{0.1cm}
		\begin{boxedstrings}
			\node at (0,1) {};
			\draw[brown] (0.25,0.35) rectangle ++(-0.75,0.5);			
			\path (0,0) -- ++(0,1);
			\polyboxscale{(0,0.6)}{$f$}{0.9}
			\dotu{(-0.3125,0.35)}{orange}
			\idfigparam {(-0.3125,0.35)}{0.35}{orange}	
		\end{boxedstrings}
		,
		\text{ for some }
		f\in\Rtau.
	\end{align}
	If $D$ has exactly one trivalent vertex, then $D$ is (up to rotation) of the form
	\begin{align}\label{eqn cases needles} 
		\begin{split}
			\begin{boxedstrings}
				\draw[green] (-0.25,0.35) rectangle ++(0.5,0.5);
				\idfigparam {(0,0.35)}{0.35}{green}
				\path (nd-1-2) -- ++(0,1);
				\polyboxscale{(0,0.6)}{$f$}{0.9}
			\end{boxedstrings}
			,  \hspace{0.1cm}
			\begin{boxedstrings}
				\node at (0,1) {};
				\draw[green] (0.25,0.35) rectangle ++(-0.75,0.5);
				\idfigparam {(0,0.35)}{0.35}{green}				
				\path (nd-1-2) -- ++(0,1);
				\polyboxscale{(0,0.6)}{$f$}{0.9}
				\dotu{(-0.3125,0.35)}{orange}
			\end{boxedstrings}
			, \hspace{0.1cm}
			\begin{boxedstrings}
				\draw[green] (-0.25,0.35) rectangle ++(0.5,0.5);
				\idfigparam {(0.05,0)}{-0.35}{green}
				\idfigparam {(-0.05,0)}{-0.35}{orange}
				\path (nd-1-1) -- ++(0,1);
				\polyboxscale{(0,0.6)}{$f$}{0.9}
			\end{boxedstrings}
			&, \hspace{0.1cm}
			\begin{boxedstrings}
				\node at (0,1) {};
				\draw[green] (0.25,0.35) rectangle ++(-0.75,0.5);
				\idfigparam {(0,0.35)}{0.35}{green}				
				\path (nd-1-2) -- ++(0,1);
				\polyboxscale{(0,0.6)}{$f$}{0.9}
				\dotu{(-0.3125,0.35)}{orange}
				\idfigparam {(nd-2-1)}{0.35}{orange}	
			\end{boxedstrings}
			, \hspace{0.1cm}
			\begin{boxedstrings}
				\node at (0,1) {};
				\draw[green] (-0.25,0.35) rectangle ++(0.5,0.5);
				\idfigparam {(0,0.35)}{0.35}{brown}
				\path (nd-1-2) -- ++(0,1);
				\polyboxscale{(0,0.6)}{$f$}{0.9}
			\end{boxedstrings}
			,  \hspace{0.1cm}
			\begin{boxedstrings}
				\node at (0,1) {};
				\draw[green] (0.25,0.35) rectangle ++(-0.75,0.5);
				\idfigparam {(0,0.35)}{0.35}{brown}				
				\path (nd-1-2) -- ++(0,1);
				\polyboxscale{(0,0.6)}{$f$}{0.9}
				\dotu{(-0.3125,0.35)}{orange}
			\end{boxedstrings}
			,
			\\
			\begin{boxedstrings}
				\node at (0,1) {};
				\draw[brown] (-0.25,0.35) rectangle ++(0.5,0.5);
				\idfigparam {(0,0.35)}{0.35}{green}
				\path (nd-1-2) -- ++(0,1);
				\polyboxscale{(0,0.6)}{$f$}{0.9}
			\end{boxedstrings}
			, \hspace{0.1cm}
			\begin{boxedstrings}
				\node at (0,1) {};
				\draw[brown] (0.25,0.35) rectangle ++(-0.75,0.5);
				\idfigparam {(0,0.35)}{0.35}{green}				
				\path (nd-1-2) -- ++(0,1);
				\polyboxscale{(0,0.6)}{$f$}{0.9}
				\dotu{(-0.3125,0.35)}{orange}
				\idfigparam {(-0.3125,0.35)}{0.35}{orange}	
			\end{boxedstrings}
			, \hspace{0.1cm}
			\begin{boxedstrings}
				\node at (0,1) {};
				\draw[brown] (-0.25,0.35) rectangle ++(0.5,0.5);
				\idfigparam {(0,0.35)}{0.35}{green}
				\path (nd-1-2) -- ++(0,1);
				\polyboxscale{(0,0.6)}{$f$}{0.9}
				\draw[orange] (0,0.25) -- ++(-0.1,0) -- (-0.1,0);
			\end{boxedstrings}
			&,\hspace{0.1cm}
			\begin{boxedstrings}
				\node at (0,1) {};
				\draw[brown] (-0.25,0.35) rectangle ++(0.5,0.5);
				\idfigparam {(0,0.35)}{0.35}{brown}
				\path (nd-1-2) -- ++(0,1);
				\polyboxscale{(0,0.6)}{$f$}{0.9}
			\end{boxedstrings}
			,
			\text{ or } \hspace{0.1cm}
			\begin{boxedstrings}
				\node at (0,1) {};
				\draw[brown] (0.25,0.35) rectangle ++(-0.75,0.5);
				\idfigparam {(0,0.35)}{0.35}{brown}				
				\path (nd-1-2) -- ++(0,1);
				\polyboxscale{(0,0.6)}{$f$}{0.9}
				\dotu{(-0.3125,0.35)}{orange}
				\idfigparam {(-0.3125,0.35)}{0.35}{orange}	
			\end{boxedstrings}
			\text{ for some }
			f\in\Rtau.
		\end{split}
	\end{align}
		
	All of these diagrams are in the $\Rtau$-span of
	{
		\begin{equation}\label{eqn spanning diagrams}
			\left\{\emptydiagram \right\}
			,		 
			\left\{\Dotup{orange}\right\}		 
			,
			\left\{\Dotup{green}, 
			\begin{miniunboxedstrings}
				\idfigparam{(1,0)}{-0.3}{green}
				\dotuparam{(nd-1-2)}{0.3}{0.06}{orange}
				\path (nd-1-1) -- ++(0,-0.2);
			\end{miniunboxedstrings}
			\right\}		 
			,
			\left\{
			\begin{miniunboxedstrings}
				\idfigparam{(1,0)}{-0.3}{green}
				\dotuparam{(nd-1-2)}{0.3}{0.06}{green}
				\draw[orange] (nd-1-2) -- ++(-0.25,0) -- ++(0,-0.3);
				\path (nd-1-1) -- ++(0,-0.2);
			\end{miniunboxedstrings}
			,
			\Dotup{orange}\hspace{-0.2cm} \Dotup{green}
			\right\}		 
			,
			\left\{\Dotup{brown}\right\} 
			,
			\text{ or }
			\left\{	\Dotup{orange}\hspace{-0.2cm} \Dotup{brown}
			\right\}
			.
		\end{equation}
	}
\end{lemma}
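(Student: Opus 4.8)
The plan is to prove Lemma~\ref{lemma base case} by a direct case analysis on the combinatorial type of $\ResGB(D)$, using the orange-sliding results of Section~\ref{subsection orange strand} to bring the orange decorations of $D$ into a minimal form. The first step is to record the graph-theoretic input. By hypotheses (a), (c), (d) the graph $\ResGB(D)$ is connected, is not a forest, and has at most one trivalent vertex; since every other vertex of $\ResGB(D)$ has degree at most $2$, its cyclomatic number is at most $1$, so it consists of exactly one cycle. Hence $\ResGB(D)$ is either a single GB circle (when there are no trivalent vertices) or a GB \emph{lollipop}: a single circle together with one pendant path to a leaf, the unique trivalent vertex sitting at the junction (when there is exactly one trivalent vertex). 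In the lollipop case one must first dispose of the degenerate possibility that the pendant path ends at a GB dot rather than at a boundary strand: applying the unit relations \eqref{one color unit rel}, \eqref{bivalent definition left} and the vertex definitions \eqref{vertex definitions} collapses the trivalent vertex in that situation, reducing $D$ to the zero-trivalent-vertex case. Otherwise hypothesis (b) forces the pendant path to carry the unique GB boundary strand.

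Next I would normalize the orange part. Hypotheses (e)--(h) already supply: no orange circles, at most one orange landing over each green component (so, by connectedness, at most one landing on the loop), no orange boundary dots, at most one orange dot in the unique enclosed region, and no polynomial boxes in the outer region. It remains to clear the orange crossings off the loop and off the pendant path. By Theorem~\ref{thm orange sliding} and Corollary~\ref{coro circle sliding} the optional orange boundary strand may be routed entirely outside the loop; Lemma~\ref{lemma orange landings green edge} and Lemma~\ref{lemma orange crossings brown edge} convert any orange crossings, and any pair of orange landings, on the edges of the loop into orange cups/caps, which are then erased by the orange circle relation in \eqref{needle relations} together with \eqref{X2 idempotent decomp}. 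After this the only orange data left on $D$ is: at most one orange boundary strand, at most one orange landing on the loop (possible only when the loop carries a green edge), and at most one orange dot in the enclosed region, and \eqref{orange dots merger} and \eqref{green orange unit rel} govern how these may coexist. Any polynomial trapped in the enclosed region is pushed to the outer region by the barbell relations \eqref{barbell rels}, Proposition~\ref{polynomial forcing all}, and Corollary~\ref{polynomial forcing under orange}, producing the scalar $f\in\Rtau$. Enumerating the surviving possibilities — the circle being green or brown, the boundary strand and orange dot each present or not, and, in the lollipop case, the colour of the leg together with the rotation that records where it exits — yields exactly the diagrams displayed in \eqref{eqn cases circles} and \eqref{eqn cases needles} (up to rotation).

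The last step is the membership assertion: each diagram in \eqref{eqn cases circles} and \eqref{eqn cases needles} lies in the $\Rtau$-span of the corresponding set in \eqref{eqn spanning diagrams}. This is checked shape by shape. The monochrome green and brown circles, and the green and brown lollipops with a leg of the same colour, are $0$ by the circle relations of \eqref{needle relations} (after an $H=I$ move of \eqref{H=I one color} to create a bigon where needed), hence lie trivially in any span. The mixed green/brown lollipop shapes are handled by the bigon relations \eqref{bigon brown} and \eqref{bigon brown orange}, which express them as $\Rtau$-multiples of $\Dotup{brown}$ or $\Dotup{green}$. The remaining shapes, which carry an orange strand and/or an orange dot, are reduced using \eqref{orange brown uncross}, \eqref{orange green uncross}, \eqref{green dot orange rel}, \eqref{brown dot orange rel}, and \eqref{orange dots merger} to an $\Rtau$-multiple of one of the generators in \eqref{eqn spanning diagrams}, the scalar absorbing one factor of $(\alpha_s-\alpha_t)$ for each orange dot via \eqref{barbell rels}.

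The hard part will be the exhaustiveness of the enumeration rather than any single computation: one must make sure the orange-sliding lemmas genuinely bring \emph{every} legal configuration of orange crossings, landings, dots, and the optional orange boundary strand down to the finite list above, with nothing unanticipated surviving; and in the lollipop case one must track the ``up to rotation'' data (which of the three colours at the trivalent vertex is the leg, and whether the leg and the optional orange landing sit on a green or a brown edge of the loop) without double-counting or omitting a shape. A secondary subtlety, already flagged above, is ensuring that when the pendant path of a lollipop ends in a dot, or when an orange landing lands on the trivalent vertex itself, the vertex-definition and unit relations collapse the picture to one of the circle shapes rather than to something outside both lists.
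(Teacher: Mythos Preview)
Your enumeration (paragraphs 1--2) is essentially the paper's argument, though the phrase ``pushed to the outer region \ldots\ producing the scalar $f$'' is misleading: for the enumeration you do not push anything out; the polynomial content inside the enclosed region simply consolidates into a single box $f$, which \emph{stays inside} the circle or needle, consistent with hypothesis (g) and with the displayed forms in \eqref{eqn cases circles} and \eqref{eqn cases needles}.

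Your membership argument (paragraph 3), however, has a real gap. The relations you invoke---\eqref{needle relations}, \eqref{bigon brown}, \eqref{bigon brown orange}---only evaluate \emph{empty} circles, needles, and bigons. The diagrams in \eqref{eqn cases circles} and \eqref{eqn cases needles} carry an arbitrary $f\in\Rtau$ in the enclosed region, and such diagrams are generally nonzero: for instance, a green circle enclosing $f$ equals $2\Sym_\tau(\partial_s(f))(\alpha_s+\alpha_t)$, not zero. To evaluate them one must force the polynomial across the boundary strand, and each such forcing (e.g.\ \eqref{poly forcing green 1} or the relations of Proposition~\ref{polynomial forcing all}) produces \emph{additional} terms---broken-strand and orange-bivalent pieces---not a single scalar. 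The paper handles this by deriving, in the Appendix, general polynomial forcing formulas (Proposition~\ref{prop general polynomial forcing}) and then applying them to obtain explicit closed expressions for every circle and needle with interior $f$ (Propositions~\ref{prop general circle rels} and~\ref{prop general needle rels}); membership in the span of \eqref{eqn spanning diagrams} is then read off directly from those formulas. Your argument needs the analogue of this step: either derive those closed formulas, or at least argue inductively that each forcing step keeps all resulting terms in the desired span.
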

\begin{proof}
	Since $D$ is not a GB forest, $\ResGB(D)$ must be a circle or a needle.
	The diagrams listed, correspond to the all possible circles and needles, possibly adding an orange boundary strand or an orange dot.
	Proposition \ref{prop general circle rels} shows that 
	the diagrams in \eqref{eqn cases circles} are in the span 
	of the generators in \eqref{eqn spanning diagrams}.
	Proposition \ref{prop general needle rels} shows that 
	the diagrams in \eqref{eqn cases needles} are in the span 
	of the generators in \eqref{eqn spanning diagrams}.
\end{proof}

We now want to show that any empty GB cycle with spokes $D$ is equal to an $\Rtau$-combination of GB forests.
Before we do the general case, let our initial diagram be
the green $4$-cycle and apply the green $H=I$ relation repeatedly:
\begin{equation}
	\begin{boxedstrings}
		\trivuparam{(0,1)}{0.5}{0.5}{green}
		\trivdparam{(nd-1-2)}{0.5}{0.5}{green}
		\draw[green] (nd-1-2)--(-0.5,0.25)--(-0.5,0);
		\draw[green] (nd-1-3)--(0.5,0.25)--(0.5,0);
	\end{boxedstrings}
=
	\begin{boxedstrings}
	\trivuparam{(0,1.25)}{0.5}{0.5}{green}
	\trivuparam{(-0.25,0.5)}{0.5}{0.5}{green}
	\trivdparam{(nd-1-2)}{0.5}{0.5}{green}
	\draw[green] (nd-1-2) -- (nd-2-1);
	\draw[green] (nd-1-2) -- (0,0.5) -- (nd-1-3);
	\draw[green] (nd-3-3) -- (0.25,0);
	\end{boxedstrings}
=
	\begin{boxedstrings}
	\trivuparam{(0,1)}{0.5}{0.5}{green}
	\trivuparam{(nd-1-2)}{0.5}{0.5}{green}
	\draw[green] (nd-1-3) -- (0.25,0);
	\draw[green] (nd-1-1) -- (0,1.25);
	\draw[green] (nd-1-1) -- (-0.25,1);
	\draw[color=green] (-0.4,1) circle [radius=0.15];
	\end{boxedstrings}.
\end{equation}
Observe how we end up getting a tree with a needle attached to it.
In general, when we apply the $H=I$ relation to a green $n$-cycle with spokes we get 
	\excludeDiagrams{1}{
		\begin{equation}\label{cyle with spokes H=I}
			\begin{unboxedstrings}
				\node[circle,draw,dashed,
				minimum size = 2cm] (p) at (0,0) {};
				\node[green, circle,
				minimum size = 1cm] (q) at (0,0) {};
				\foreach \x in {1,2,...,7}
				{
					\tikzmath{\y=360/9*(\x+1)+90; \z=360/9*\x+90;}
					\draw[green] (p.\z) -- (q.\z);
					\draw[green] (q.\z) -- (q.\y);
				};
			\draw[green] (p.360*8/9+90) -- (q.360*8/9+90);
			\draw[green,dashed] (q.360*1/9+90) -- (q.360*8/9+90);
			\node (A) at (0,0.7) {$\overset{n}{...}$};
			\end{unboxedstrings}
		=
			\begin{unboxedstrings}
			\node[circle,draw,dashed,
			minimum size = 2cm] (p) at (0,0) {};
			\node[green, circle,
			minimum size = 1cm] (q) at (0,0) {};
			\foreach \x in {1,2,6,7}
			{
				\tikzmath{\y=360/9*(\x+1)+90; \z=360/9*\x+90;}
				\draw[green] (p.\z) -- (q.\z);
				\draw[green] (q.\z) -- (q.\y);
			};
			\draw[green] (p.360*8/9+90) -- (q.360*8/9+90);
			\draw[green,dashed] (q.360*1/9+90) -- (q.360*8/9+90);
			\node (A) at (0,0.7) {$\overset{n-1}{...}$};
			\node (B) at (q.270) {};
			\node (C) at ($(B)-(0,0.2)$) {};
			\draw[green] (p.360*3/9+90) -- (q.360*3/9+90);
			\draw[green] (q.360*3/9+90) -- (q.270) -- (q.360*6/9+90);
			\draw[green] (p.360*4/9+90) -- (C) -- (p.360*5/9+90);
			\draw[green] (q.270) -- (C);
		\end{unboxedstrings}
		.
		\end{equation}
	}
The right hand side becomes a cycle with spokes of size $n-1$ attached to a trivalent vertex. If we apply the $H=I$ relation repeatedly, we will eventually get a tree and a needle (as in the case for $n=4$).
That is the main idea for the proof of the following lemma.

\begin{lemma}\label{cycle reduction}
Let $D$ be an empty GB cycle with spokes. 
Then $D= D_1+...+ D_k$ for some diagrams $D_i$ such that $\ResGB(D_i)$ is a forest with fewer trivalent vertices than $\ResGB(D)$, or the same number fewer GB components and fewer GB components. 
\end{lemma}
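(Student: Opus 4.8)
The plan is to induct on the size $n$ of the cycle part of $\ResGB(D)$, reducing via the $H=I$ relations so that each application trades an $n$-cycle with spokes for an $(n-1)$-cycle with spokes attached to a trivalent vertex plus lower-order error terms. First I would set up the bookkeeping: since $(D,D')$ is an empty GB cycle with spokes, $D'$ is empty, so $\ResGB(D)$ is literally an $n$-cycle whose vertices are either bivalent vertices or trivalent vertices with one outside spoke. As in Lemma \ref{lemma base case}, I would first normalize $D$ using the tools from Sections \ref{subsection poly forcing 1} and \ref{subsection orange strand}: replace each GB bivalent vertex by a GB trivalent vertex with a dot, slide all orange strands off the cycle (Theorem \ref{thm orange sliding}, Proposition \ref{prop orange jumps through trivalents}), erase orange circles (Corollary \ref{coro circle sliding}), use Proposition \ref{prop only one landing in each green component} to keep at most one orange landing per green component, and pull polynomial boxes out of the outer region. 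After this normalization, the cycle is monochromatic on each of its GB components, and each GB component of the cycle is either entirely green or entirely brown (brown cycles being handled by the brown $H=I$ relation \eqref{H=I one color}, mixed green-brown cycles by \eqref{H=I bicolor associativity}, \eqref{H=I top brown}, etc.).

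Next I would carry out the reduction step. Pick a GB component of the cycle containing at least one trivalent vertex (if the cycle has no trivalent vertices at all it is a collection of bivalent vertices, i.e. after normalization an empty green/brown circle, which is $0$ by \eqref{needle relations} or reduces via the base-case analysis in Lemma \ref{lemma base case}). Apply the appropriate $H=I$ relation to a crossbar formed by a trivalent vertex on the cycle and an adjacent one, exactly as illustrated in \eqref{cyle with spokes H=I}: the leading term on the right-hand side is a cycle with spokes of size $n-1$ with one extra trivalent vertex dangling off a new trivalent vertex, and the remaining terms are diagrams where the cycle has been broken (producing a GB forest component plus a needle, as in the displayed $n=4$ computation) or where orange strands have been introduced on the crossbar. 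For the green case one uses \eqref{H=I one color}; for the mixed cases the $H=I$ relations \eqref{H=I top brown}-\eqref{H=I middle brown} produce additional terms with strictly fewer trivalent vertices or with orange decorations that can be slid away. In every term other than the leading one, $\ResGB$ has strictly fewer trivalent vertices than $\ResGB(D)$; the leading term has the same number of trivalent vertices but its cycle has size $n-1$, so we may recurse. When $n$ reaches $1$ or $2$, the "cycle" is a needle or a bigon, which is either killed by \eqref{needle relations} or reduced by an $H=I$ relation to a tree, giving a GB forest. Each non-leading term is thus a diagram whose $\ResGB$ is a forest; since it was obtained by breaking a cycle, it has either strictly fewer trivalent vertices or (when only a bivalent vertex was consumed, as when two orange landings merge) fewer GB components, which is exactly the conclusion we want.

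I expect the main obstacle to be the careful case analysis of which $H=I$ relation applies at each trivalent vertex, and verifying that the error terms genuinely decrease the relevant complexity. The subtlety is that the mixed-color $H=I$ relations \eqref{H=I top brown}, \eqref{H=I L brown leg}, \eqref{H=I middle brown} produce terms with orange strands crossing or landing on the crossbar (terms like $\begin{boxedstrings}\Ilongparam{(0,0)}{0.5}{1}{green}{green}{brown}{green}{green}\draw[orange] (nd-1-8) -- (nd-1-9);\end{boxedstrings}$), and one must invoke Lemma \ref{GB comb sliding}, Lemma \ref{lemma orange landings green edge}, and Lemma \ref{lemma orange crossings brown edge} to slide these orange decorations off so that the underlying $\ResGB$ of each term is again a bona fide cycle with spokes or a forest. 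The single exception flagged in Lemma \ref{GB comb sliding} — a GB comb of the form $\begin{miniunboxedstrings}\Hlongparam{(0,0.8)}{0.5}{0.8}{brown}{brown}{green}{brown}{brown}\idfigparam{(0.25,0.8)}{0.4}{orange}\end{miniunboxedstrings}$ that cannot be turned into a crossbar — needs separate handling: there one slides the brown $H=I$ \eqref{H=I one color} on the outer brown edges first, or uses \eqref{BBB triangle}, so that the orange landing ends up over a green component where \eqref{orange slide horizontal green} applies. Once this orange-cleanup is in place, the induction closes routinely, since the leading term strictly decreases the cycle size while every other term strictly decreases the trivalent-vertex count or the GB-component count of $\ResGB$.
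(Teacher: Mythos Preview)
Your overall strategy---induct on the cycle size $n$ and shrink the cycle via $H=I$ relations---is exactly the paper's approach. Two points need correction, one cosmetic and one substantive.

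First, the claim that after normalization ``the cycle is monochromatic on each of its GB components'' is confused: an empty GB cycle with spokes is a single GB component, and its edges can freely mix green and brown. You immediately contradict yourself by then discussing ``mixed green-brown cycles,'' so this is just a misstatement. Relatedly, your description of the $H=I$ output as ``leading term $+$ terms with strictly fewer trivalent vertices'' is inaccurate: in \eqref{H=I one color}, \eqref{H=I bicolor associativity}, \eqref{H=I top brown}, \eqref{H=I L brown leg} \emph{every} term on the right is an $(n-1)$-cycle with the \emph{same} number of GB trivalent vertices as $D$. This does not break the induction (you recurse on $n$, not on the vertex count), but your bookkeeping should reflect it.

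Second, and more seriously, you have the orange-cleanup problem backwards. You worry about sliding orange decorations off the \emph{output} of an $H=I$ relation. But orange is invisible to $\ResGB$, so each output term is automatically an empty $(n-1)$-cycle with spokes in $\ResGB$ regardless of its orange content; recursion applies directly. The real obstruction is on the \emph{input}: to apply an $H=I$ relation at all you must exhibit a GB crossbar, i.e.\ two adjacent cycle vertices whose joining edge is free of orange. Your proposed handling of the exceptional comb (``slide the brown $H=I$ on the outer brown edges first, or use \eqref{BBB triangle}'') is vague and does not clearly produce such a crossbar. The paper's argument is a clean dichotomy: if some cycle edge is brown, that edge carries only orange \emph{crossings} (never landings), and Lemma~\ref{GB comb sliding} slides them off; if the cycle is entirely green, it is a single green component carrying at most one orange landing (Proposition~\ref{prop only one landing in each green component}), so since $n\geq 2$ some cycle edge has no landing and again Lemma~\ref{GB comb sliding} applies. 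The exceptional comb of Lemma~\ref{GB comb sliding} never arises, because it requires all four outer legs brown with a green middle, whereas in the all-green case two of the four outer legs are adjacent cycle edges and hence green.
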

\begin{proof}
Let $D$ be an empty GB cycle with spokes.
We can assume that $D$ has no orange circles, that $D$ has at most one orange dot inside it and that every green connected component of $D$ has at most one orange landing.

We can assume that $D$ has no orange strands splitting the interior of its cycle into 2 different regions.
That is because we can slide any orange strand outside the cycle. Here are the three possible cases.

\excludeDiagrams{1}
{
\begin{equation}
\begin{unboxedstrings}
    \node[circle,draw,dashed,
    minimum size = 2.5cm] (p) at (0,0) {};
    \node[circle,
    minimum size = 1.5cm] (q) at (0,0) {};
        \foreach \x in {0,1,2,...,9}
    {
    \tikzmath{\y=\x+1;}
    \draw[purple] (p.360/10*\x) -- (q.360/10*\x);
    \draw[purple] (q.360/10*\x) -- (q.360/10*\y);
    };
    \draw[orange] (p.90)--(0,1);
    \draw[orange] (p.270)--(0,-1);
    \draw[orange] (0,1)--(0,-1);
\end{unboxedstrings}
=
\begin{unboxedstrings}
    \node[circle,draw,dashed,
    minimum size = 2.5cm] (p) at (0,0) {};
    \node[circle,
    minimum size = 1.5cm] (q) at (0,0) {};
        \foreach \x in {0,1,2,...,9}
    {
    \tikzmath{\y=\x+1;}
    \draw[purple] (p.360/10*\x) -- (q.360/10*\x);
    \draw[purple] (q.360/10*\x) -- (q.360/10*\y);
    };
    \draw[orange] (p.90)--(0,1);
    \draw[orange] (p.270)--(0,-1);
    \draw[orange] (0,1) arc (90:270:1);
\end{unboxedstrings}
\end{equation}
\begin{equation}
\begin{unboxedstrings}
    \node[circle,draw,dashed,
    minimum size = 2.5cm] (p) at (0,0) {};
    \node[circle,
    minimum size = 1.5cm] (q) at (0,0) {};
        \foreach \x in {0,1,2,...,9}
    {
    \tikzmath{\y=\x+1;}
    \draw[purple] (p.360/10*\x) -- (q.360/10*\x);
    \draw[purple] (q.360/10*\x) -- (q.360/10*\y);
    };
    \draw[orange] (p.90)--($0.5*(q.252)+0.5*(q.288)$);
\end{unboxedstrings}
=
\begin{unboxedstrings}
    \node[circle,draw,dashed,
    minimum size = 2.5cm] (p) at (0,0) {};
    \node[circle,
    minimum size = 1.5cm] (q) at (0,0) {};
        \foreach \x in {0,1,2,...,9}
    {
    \tikzmath{\y=\x+1;}
    \draw[purple] (p.360/10*\x) -- (q.360/10*\x);
    \draw[purple] (q.360/10*\x) -- (q.360/10*\y);
    };
    \draw[orange] (p.90)-- (0,1) arc (90:270:1) -- ($0.5*(q.252)+0.5*(q.288)$);
\end{unboxedstrings}
\end{equation}
\begin{equation}
\begin{unboxedstrings}
    \node[circle,draw,dashed,
    minimum size = 2.5cm] (p) at (0,0) {};
    \node[circle,
    minimum size = 1.5cm] (q) at (0,0) {};
        \foreach \x in {0,1,2,...,9}
    {
    \tikzmath{\y=\x+1;}
    \draw[purple] (p.360/10*\x) -- (q.360/10*\x);
    \draw[purple] (q.360/10*\x) -- (q.360/10*\y);
    };
    \draw[orange] ($0.5*(q.72)+0.5*(q.108)$)-- 
    ($0.5*(q.252)+0.5*(q.288)$);
\end{unboxedstrings}
=
\begin{unboxedstrings}
    \node[circle,draw,dashed,
    minimum size = 2.5cm] (p) at (0,0) {};
    \node[circle,
    minimum size = 1.5cm] (q) at (0,0) {};
        \foreach \x in {0,1,2,...,9}
    {
    \tikzmath{\y=\x+1;}
    \draw[purple] (p.360/10*\x) -- (q.360/10*\x);
    \draw[purple] (q.360/10*\x) -- (q.360/10*\y);
    };
    \draw[orange] ($0.5*(q.72)+0.5*(q.108)$)-- (0,1) arc (90:270:1)
     -- ($0.5*(q.252)+0.5*(q.288)$);
\end{unboxedstrings}
\end{equation}
}

The proof will go by induction on the size of the cycle of $\ResGB(D)$.
We showed our base case in Lemma \ref{lemma base case}.

Now let $\ResGB(D)$ is a cycle of size $2$ or greater, writing any 
GB bivalent vertices in $D$ by their definition as a trivalent vertex with a dot on the outside of the cycle.
We will now make use of all the $H=I$ relations in Appendix \ref{appendix-H=I-relations}.

In most cases, when we apply an $H=I$ relation to the outside cycle of $D$ we get that $D=D_1+... +D_r$ where each $D_i$ satisfies that $\ResGB(D_i)$ is an empty $n-1$-cycle with spokes attached to a trivalent vertex 
(unlike equation \eqref{cyle with spokes H=I} we get a sum of diagrams of this form).
The one exception is \eqref{H=I middle brown},
\begin{equation} 
	2
	\begin{boxedstrings}
		\Hlongparam{(0,0)}{0.5}{1}{green}{green}{brown}{green}{green}
	\end{boxedstrings}
	=
	\begin{boxedstrings}
		\capfigparam{(0,0)}{0.5}{0.3}{green}
		\cupfigparam{(0,1)}{0.5}{0.3}{green}
	\end{boxedstrings}
	-
	\begin{boxedstrings}
		\capfigparam{(0,0)}{0.5}{0.3}{green}
		\cupfigparam{(0,1)}{0.5}{0.3}{green}
		\draw[orange] (nd-1-3) -- (nd-2-3);
	\end{boxedstrings}
	+
	\begin{boxedstrings}
		\Ilongparam{(0,0)}{0.5}{1}{green}{green}{brown}{green}{green}
	\end{boxedstrings}
	-
	\begin{boxedstrings}
		\Ilongparam{(0,0)}{0.5}{1}{green}{green}{brown}{green}{green}
		\draw[orange] (nd-1-8) -- (nd-1-9);
	\end{boxedstrings}
\end{equation}
whose first two summands break the cycle, making them trees right away with $2$ less trivalent vertices on green and brown.
Thus, if we show that our diagram $D$ equals a GB cycle with spokes $E$ containing a GB crossbar (to be able to apply an $H=I$ relation), we are done.

If the central cycle has a brown edge, we can use 
Lemma \ref{GB comb sliding} to get $E$.
Otherwise, the central cycle in $D$ is made entirely of green edges, all in the same green component.
Since we assumed there is at most one orange landing over this green component, we can apply Lemma \ref{GB comb sliding} to an edge that doesn't have an orange landing.
This concludes the proof.
\end{proof}

\begin{coro}\label{innermost cycle reduction}
	Let $D$ be an innermost RG cycle with spokes.
	Then $D=E_1+...+E_k$ for diagrams $E_i$ such that each $\ResGB(E_i)$ is a forest with fewer trivalent vertices than $\ResGB(D)$.	
\end{coro}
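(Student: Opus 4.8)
\textbf{Proof plan for Corollary \ref{innermost cycle reduction}.}

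The plan is to reduce the statement to Lemma \ref{cycle reduction} by handling the inside diagram $D'$ first. Since $(\Res_{GB}(D), \Res_{GB}(D'))$ is an innermost cycle with spokes, $D'$ is a GB based forest (the inside spokes provide exactly the boundary strands of $D'$). First I would apply Corollary \ref{forest based reduction} to $D'$, writing $D'= f_1 D'_1 + \dots + f_m D'_m$ with $f_i\in\Rtau$ and each $\Res_{GB}(D'_i)$ a disjoint union of boundary dots. Substituting this expansion back into the interior of $D$, by $\Rtau$-linearity we get $D = f_1 D^{(1)} + \dots + f_m D^{(m)}$, where $D^{(i)}$ is the diagram obtained from $D$ by replacing the inside $D'$ with $D'_i$. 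For each $i$, the inside of $D^{(i)}$ is now (up to the placement of orange strands and polynomials, which we may push out by Corollary \ref{coro circle sliding}, Theorem \ref{thm orange sliding}, Corollary \ref{polynomial forcing under orange}, and the fact that a forest does not partition the strip) a disjoint union of dots attached to the inside spokes; absorbing each such dot into the trivalent vertex it meets — turning that inside-spoke trivalent vertex into a bivalent vertex, and turning that bivalent vertex into a plain strand with a dot via \eqref{bivalent definition right}, \eqref{bivalent definition left}, \eqref{one color unit rel} — converts $D^{(i)}$ into an \emph{empty} GB cycle with spokes (possibly with strictly fewer GB trivalent vertices, if some inside spokes disappeared in the process).

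Next I would apply Lemma \ref{cycle reduction} to each of these empty GB cycles with spokes $D^{(i)}$. That lemma gives $D^{(i)} = \sum_j D^{(i)}_{j}$ where each $\Res_{GB}(D^{(i)}_j)$ is a forest, and moreover either has strictly fewer trivalent vertices than $\Res_{GB}(D^{(i)})$, or the same number but fewer GB components. Since $\Res_{GB}(D^{(i)})$ already has at most as many trivalent vertices as $\Res_{GB}(D)$ (equality when no inside spokes collapsed), and since each $D^{(i)}_j$ is a forest — hence in particular has no cycle passing through the outer annulus — I then want to conclude that each $\Res_{GB}(D^{(i)}_j)$ is a forest with \emph{strictly} fewer trivalent vertices than $\Res_{GB}(D)$. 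This is where one must be slightly careful: Lemma \ref{cycle reduction} only promises ``fewer trivalent vertices OR fewer GB components''. The resolution is that $\Res_{GB}(D)$, being a cycle with spokes, is \emph{connected} (one GB component); so ``fewer GB components'' is vacuous and the decrease must be in the trivalent-vertex count. Combining, each $\Res_{GB}(D^{(i)}_j)$ is a forest with strictly fewer trivalent vertices than $\Res_{GB}(D^{(i)})$, which in turn has at most as many as $\Res_{GB}(D)$, giving the claimed strict inequality. Reindexing the double sum $\sum_i f_i \sum_j D^{(i)}_j$ as $E_1 + \dots + E_k$ (absorbing the $f_i\in\Rtau$ into the coefficients, or noting that we are working in an $\Rtau$-linear category so scalar multiples of admissible diagrams are again admissible) finishes the argument.

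The main obstacle I anticipate is the bookkeeping in the first paragraph: one has to make sure that after expanding $D'$ via Corollary \ref{forest based reduction} and absorbing the resulting boundary dots into the inside-spoke vertices, the leftover diagram really is an \emph{empty} cycle with spokes in the precise sense required by Lemma \ref{cycle reduction} — in particular that the orange decorations coming from the $D'_i$ can genuinely be evacuated from the interior (using the orange-sliding machinery of Section \ref{subsection orange strand}) without creating new GB trivalent vertices on the cycle, and that the trivalent-vertex count does not accidentally \emph{increase}. A secondary point to verify is that when $k=0$ in the cycle-with-spokes data (no outside spokes, no boundary), the same reduction still applies verbatim, since Lemma \ref{cycle reduction} already covers that degenerate case.
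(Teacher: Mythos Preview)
Your overall strategy matches the paper's proof: reduce the inside $D'$ to boundary dots via Corollary~\ref{forest based reduction}, absorb those dots into the cycle using unit relations, then invoke Lemma~\ref{cycle reduction}. Your argument at the end that ``fewer GB components'' is vacuous because a cycle with spokes is connected is a clean observation and works as you say.

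However, your dot-absorption step has a genuine gap. You claim that each inside-spoke dot merges into its trivalent vertex via relations \eqref{bivalent definition right}, \eqref{bivalent definition left}, \eqref{one color unit rel}, yielding an empty cycle with spokes. This fails in one configuration: when the inside spoke is \emph{brown} and the two cycle edges at that vertex are both \emph{green} (a GGB trivalent with the brown leg pointing inward). The applicable relation there is \eqref{brown green unit rel},
\[
2\,[\text{green strand with brown side-dot}] \;=\; [\text{broken green}] \;-\; [\text{green--orange--green}],
\]
which does \emph{not} produce a bivalent or identity on the cycle: the first summand severs the cycle entirely, and the second replaces the GB trivalent by an orange segment. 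The paper's proof handles exactly this by splitting the unit relations into three types: the ``clean'' ones you cite (plus \eqref{GBB unit rel 1}, \eqref{GBB unit rel 2}), the cycle-breaking relation \eqref{brown green unit rel}, and the vanishing relation \eqref{GBB unit rel 3 orange}. In the latter two cases the diagram is already a forest (or zero) with strictly fewer GB trivalent vertices, so no appeal to Lemma~\ref{cycle reduction} is needed; only in the first case do you get an empty cycle with spokes. Your anticipated obstacle---checking that absorbing dots does not accidentally increase the trivalent count---is the right instinct, but the actual failure mode is that in one case the dot cannot be absorbed into the cycle at all.
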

\begin{proof}
	Let $D$ be a diagram such that $\ResGB(D)$ is an innermost cycle with spokes and let $D'$ be the inside of $D$.
	Since $\ResGB(D')$ is a based forest, by Corollary \ref{forest based reduction}, we can write $D'= D_1'+...+D_m'$, where each $\ResGB(D_j')$ is a union of boundary dots.
	Thus, $D=D_1+...+D_m$, where each $\ResGB(D_j)$ is a cycle with spokes whose inside is a union of boundary dots.
	As for previous proofs, suppose that $D_1,...,D_m$ have at most one
	orange landing over each of their green components and have no orange circles.
	
	We now want to apply unit relations to the dots in the inside of each $D_j$.
	There are three types of unit relations:
	\begin{itemize}
		\item \eqref{bivalent definition right},\eqref{one color unit rel},\eqref{bivalent definition left},
		\eqref{GBB unit rel 1}, and \eqref{GBB unit rel 2},
		that merge a dot into a green or brown strand, or into a green-brown bivalent vertex.
		After using one of these relations on $D_j$, we end up with a GB cycle with spokes whose interior has one fewer boundary dot.
		If $D_j$ only has boundary dots of these types, we can use our relations to make $D_j$ into an empty GB cycle with spokes.
		
		\item \eqref{brown green unit rel}, that splits the trivalent vertex into an $\Rtau$-combination 
		of 
		$
		\begin{miniunboxedstrings}
			\dotd{(1,1)}{green}
			\dotu{(1,0)}{green}
			\path (nd-2-3) -- ++(0,-0.2);
		\end{miniunboxedstrings}
		$
		and
		$
		\begin{miniunboxedstrings}
			\idfigparam{(0,1)}{0.3}{green}
			\idfigparam{(nd-1-2)}{0.4}{orange}
			\idfigparam{(nd-2-2)}{0.3}{green}
			\path (nd-1-1) -- ++(0,-0.2);
		\end{miniunboxedstrings}
		$
		.
		This relation transforms $D_j$ into a GB tree, as it breaks the cycle with spokes (with at least one trivalent vertex less than $D$). Note that using this more than once will result in a forest with more than one GB tree.
		
		\item  \eqref{GBB unit rel 3 orange}, which makes the diagram equal to zero.
	\end{itemize}

	In the last two cases, $D_j$ becomes a forests right away.
	In the first case, instead, we can use Lemma \ref{cycle reduction} to 
	write $D_j=E_1+...+E_k$ for some diagrams $E_1,...,E_k$, where each $\ResGB(E_i)$ is a forest and has fewer trivalent vertices or fewer GB components than $\ResGB(D_j)$. 
	This completes the proof.
\end{proof}

\begin{lemma}\label{existence of cycle}
Let $D$ be a diagram such that $D$ is not a GB forest.
Then there is a diagram $N$ contained in $D$ such that $N$ is an innermost GB cycle with spokes.
\end{lemma}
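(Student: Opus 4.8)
\textbf{Proof plan for Lemma \ref{existence of cycle}.}

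The plan is to argue by a standard "innermost" extraction. First I would observe that since $D$ is not a GB forest, the graph $\ResGB(D)$ contains at least one cycle. A cycle in $\ResGB(D)$ bounds a disk in the planar strip $[0,1]\times[0,1]$; among all cycles of $\ResGB(D)$, I would choose one, call it $\gamma$, whose bounded region $U_\gamma$ is minimal with respect to inclusion (such a minimal one exists because there are finitely many cycles and the bounded regions are nested or disjoint in a planar graph). Minimality means: no cycle of $\ResGB(D)$ is contained in the open region $U_\gamma$. Let $D'$ be the sub-diagram of $D$ obtained by restricting to everything of $D$ that lies (via the embeddings $\nuGB{D}$ and $\nuG{D}$, together with the orange strands and polynomial boxes of $D$) strictly inside $U_\gamma$. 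Then $\ResGB(D')$ has no cycles, i.e. $D'$ is a GB forest, and in fact (since the boundary of $D'$ lies on $\gamma$) it is a based forest: each GB component of $D'$ meets the cycle $\gamma$ in a single strand, which becomes its unique boundary strand.

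Next I would check that the annular region between $\gamma$ and $D'$ has the required form. By definition of $\ResGB$, the only GB-vertices lying on the cycle $\gamma$ are trivalent vertices and bivalent vertices; after writing each GB bivalent vertex in terms of its definition (a trivalent vertex with a dot, using \eqref{bivalent definition right}–\eqref{bivalent definition left}), every GB-vertex on $\gamma$ is trivalent with a third strand pointing either outward (an outside spoke, reaching the boundary of the chosen neighborhood in $D$) or inward (an inside spoke, reaching $D'$). The orange strands crossing or landing on $\gamma$ contribute only orange crossings/landings on the cycle, which are permitted by the definition of a GB cycle with spokes. Hence $(\,\text{closed neighborhood of }\gamma\cup D',\ D'\,)$ is a GB cycle with spokes, and since $\ResGB(D')$ is a based forest it is an \emph{innermost} GB cycle with spokes. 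Taking $N$ to be this closed neighborhood gives the desired sub-diagram of $D$.

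The main technical point — and the only one requiring care — is the passage from "$\ResGB(D)$ contains a cycle" to "a minimal such cycle, together with the material enclosed, is an innermost cycle with spokes." One must be slightly careful about what "contained in $D$" means for $N$: we need $N$ to be literally a sub-diagram of $D$ cut out along embedded arcs, which is fine because $\gamma$ is an embedded closed curve in the strip and we may take $N$ to be a closed regular neighborhood of the union of $\gamma$, $D'$, and the short initial segments of the spokes. A second subtlety is that a priori $\ResGB(D)$ might have nested cycles whose bounded regions coincide as sets but differ as subgraphs; planarity rules this out once we pick $\gamma$ to bound a region minimal under inclusion, since any other cycle inside $U_\gamma$ would have a strictly smaller bounded region, contradicting minimality. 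I do not expect any genuine obstacle beyond making these planar-topology observations precise.
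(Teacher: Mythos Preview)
Your approach is essentially the same as the paper's: pick a minimal cycle and argue that what sits inside must be a based forest. However, you assert without adequate justification the one step that actually requires work. You write that $D'$ ``is a based forest: each GB component of $D'$ meets the cycle $\gamma$ in a single strand,'' giving as your reason only that ``the boundary of $D'$ lies on $\gamma$.'' That reason does not rule out a GB tree $T$ inside $U_\gamma$ with two (or more) boundary strands attached to $\gamma$; nothing about where the boundary lies forces it to be a single point. Your later ``second subtlety'' paragraph is about distinct cycles with coinciding bounded regions and does not address this either.

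This is exactly the point the paper proves carefully: assuming some GB tree $T\subset C'$ touches the outer cycle in at least two spokes, one takes the path in $T$ between two such attachment points together with one of the two arcs of the outer cycle; this produces a strictly smaller GB cycle with spokes, contradicting minimality. Your minimality hypothesis (region minimal under inclusion) is precisely what is needed to run this argument, so the fix is easy, but as written the key implication is asserted rather than argued. Everything else in your plan matches the paper's proof.
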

\begin{proof}
	Let $\mathbf{C}$ be the set of all GB cycles with spokes contained in $D$.
	The set $\mathbf{C}$ is partially ordered by containment.
	Since $D$ is not a GB forest, the set $\mathbf{C}$ is non-empty.
	Let $(C,C')$ be a minimal element of $\mathbf{C}$.
	By minimality, $C'$ must be a GB forest.
	We claim that $C'$ is a GB based forest and $(C,C')$ is an innermost GB cycle with spokes.
	
	Let us now suppose that $\ResGB(C')$ is a GB forest, but not a based forest.
	This means that $\ResGB(C')$ contains a green-brown tree with at least 2 boundary strands connecting to $C$. 
	Let $T$ be such tree. Let $P$ be a diagram inside $C$ containing $T$ and also containing all the edges in the outer cycle connecting the two boundary strands of $T$, but excluding at least one trivalent vertex in the outer cycle of $C$.
	Diagrammatically
	\excludeDiagrams{1}
	{
		\begin{equation}
			C=
			\begin{unboxedstrings}
				\node[circle,draw,dashed,
				minimum size = 3.5cm] (p) at (0,0) {};
				\node[circle,
				minimum size = 3cm] (q) at (0,0) {};
				\node[circle,draw, dashed,
				minimum size = 2cm] (t) at (0,0) {};
				\node[circle,draw, dashed,
				minimum size = 0.75cm] (r) at (0,-0.25) {$T$};
				\node at (0.45,0.45) {$C'$};
				\foreach \x in {0,1,2,...,5}
				{
					\tikzmath{\y=\x+1; \z=2*\x+1;}
					\draw[black] (p.360/6*\x) -- (q.360/6*\x);
					\draw[purple] (q.360/6*\x) -- (q.360/6*\y);
				};
				\foreach \x in {0,1,2,4}
				{
					\tikzmath{\y=\x+1; \z=2*\x+1;}
					\draw[black] ($0.5*(q.360/6*\x)+0.5*(q.360/6*\y)$) -- (t.360/12*\z);
				};
				\foreach \x in {3,5}
				{
					\tikzmath{\y=\x+1; \z=2*\x+1;\w=180*(-\x+5)/2;};
					\draw[purple] ($0.5*(q.360/6*\x)+0.5*(q.360/6*\y)$) -- (t.360/12*\z)
					-- (r.\w);
				};
			\end{unboxedstrings},
			\text{ and }
			P=
			\begin{unboxedstrings}
				\draw[dashed] (-1.3,0.25) rectangle (1.3,-1.65);
				\path[clip] (-1.28,0.25) rectangle (1.28,-1.65);
				\node[circle,
				minimum size = 4cm] (p) at (0,0) {};
				\node[circle,
				minimum size = 3cm] (q) at (0,0) {};
				\node[circle,draw, dashed,
				minimum size = 2cm] (t) at (0,0) {};
				\node[circle,draw, dashed,
				minimum size = 0.75cm] (r) at (0,-0.25) {$T$};
				\node at (0.45,0.45) {$C'$};
				\foreach \x in {0,1,2,...,5}
				{
					\tikzmath{\y=\x+1; \z=2*\x+1;}
					\draw[black] (p.360/6*\x) -- (q.360/6*\x);
					\draw[purple] (q.360/6*\x) -- (q.360/6*\y);
				};
				\foreach \x in {0,1,2,4}
				{
					\tikzmath{\y=\x+1; \z=2*\x+1;}
					\draw[black] ($0.5*(q.360/6*\x)+0.5*(q.360/6*\y)$) -- (t.360/12*\z);
				};
				\foreach \x in {3,5}
				{
					\tikzmath{\y=\x+1; \z=2*\x+1;\w=180*(-\x+5)/2;};
					\draw[purple] ($0.5*(q.360/6*\x)+0.5*(q.360/6*\y)$) -- (t.360/12*\z)
					-- (r.\w);
				};
			\end{unboxedstrings}
			\hspace{0.1cm}
			.
		\end{equation}
	}
	But $P$ contains a GB cycle with spokes and this contradicts the minimality of $C$.
\end{proof}

Now we will prove the theorem.
\begin{proof}[Proof of Theorem \ref{diagram reduction}]
Let $D$ be a diagram such that $\ResGB(D)$ only has at most one boundary strand, and such that $D$ has at most one orange boundary strand.
The proof will go by induction on the total number of bivalent and trivalent vertices of $\ResGB(D)$ first,
and then on the number of GB components of $D$.
As before, suppose that $D$ has no orange circles, no barbells,
no more than two orange dots in each region,
and no more than one orange landing over each green component.
If $\ResGB(D)$ is a GB forest,
we can apply Lemma \ref{forest based reduction},
so suppose that $D$ is not a forest.
Since $D$ is not a forest, our base case corresponds to Lemma \ref{lemma base case}.

Let us suppose that $\ResGB(D)$ has more than one trivalent vertex.
Since $D$ is not a GB forest,
by Lemma \ref{existence of cycle},
$D$ has an innermost cycle with spokes.
Let $C$ be an innermost cycle with spokes contained in $D$.
Use Corollary \ref{innermost cycle reduction} to write $C=E_1+...+E_k$ for diagrams $E_i$,
such that $\ResGB(E_i)$ is a forest with fewer trivalent vertices than $\ResGB(C)$, or with the same number of trivalent vertices but fewer
GB components.
Hence $D=D_1+...+D_k$, where $D_i$ is the diagram obtained by replacing $C$ for $E_i$.
Each $D_i$ either has fewer trivalent vertices than $D$, or has the same number of trivalent vertices as $D$ but has fewer
GB components.
Hence we can apply our induction hypothesis to all the $D_i$'s.
This concludes our proof.
\end{proof}

\begin{appendices}

\setcounter{visible}{1}

\section{Relations in $\Hcaltau$}\label{appendix-relations}

\subsection{Additional Needle and Unit relations}\label{appendix-unit-relations}

\begin{prop}\label{prop needle relations}
	The needle relations
	\begin{equation}
		\label{needle relations 2}
		\begin{boxedstrings}
			\node at (0,1) {};
			\node (a) at (-0.2,0.5) {};
			\def\coltop{brown}
			\def\colbot{green}
			\capfigparam{(a)}{0.4}{0.2}{\coltop}
			\cupfigparam{(a)}{0.4}{0.2}{\coltop}
			\idfigparam {(0,0)}{-0.3}{\colbot}
		\end{boxedstrings}
		= 0
		,
		\hspace{0.2cm}
		\begin{boxedstrings}
			\node at (0,1) {};
			\node (a) at (-0.2,0.5) {};
			\def\coltop{green}
			\def\colbot{green}
			\def\colside{orange}
			\capfigparam{(a)}{0.4}{0.2}{\coltop}
			\cupfigparam{(a)}{0.4}{0.2}{\coltop}
			\idfigparam {(0.05,0)}{-0.3}{\colbot}
			\idfigparam {(-0.05,0)}{-0.3}{\colside}
		\end{boxedstrings}
		=0
		,
		\hspace{0.1cm}
		\text{ and }
		\hspace{0.1cm}
		\begin{boxedstrings}
			\node at (0,1) {};
			\node (a) at (-0.2,0.5) {};
			\def\coltop{green}
			\def\colbot{brown}
			\def\colside{orange}
			\capfigparam{(a)}{0.4}{0.2}{\coltop}
			\cupfigparam{(a)}{0.4}{0.2}{\coltop}
			\idfigparam {(0.05,0)}{-0.3}{\colbot}
			\idfigparam {(-0.05,0)}{-0.3}{\colside}
		\end{boxedstrings}
		=0
	\end{equation}
	hold.
\end{prop}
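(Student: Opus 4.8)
The three identities in \eqref{needle relations 2} are further ``needle relations'', and the plan is to derive each of them from the defining relations of $\Dcal$ by reducing it to a needle already shown to vanish in \eqref{needle relations} (the green circle with a green or a brown tail, and the brown circle with a brown tail). The three cases require slightly different reductions: a vertex rewrite for the brown circle with a green tail, a monodromy (sign) argument for the green circle with a brown and an orange tail, and a factoring argument for the green circle with a green and an orange tail.

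\textbf{Brown circle with a green tail.} First I would rewrite the green--brown--brown trivalent vertex sitting at the foot of the needle using the vertex definition \eqref{vertex definitions}, turning it into the triangle built from a green--green--green vertex and two green--green--brown vertices. Closing the two brown legs with a brown cap turns the two green--green--brown vertices into the two corners of a green--brown bigon inside the diagram; applying the bigon relation \eqref{bigon brown} (equivalently, a green $H{=}I$ move \eqref{H=I one color}) collapses it and leaves a green circle carrying a single green tail, which is $0$ by \eqref{needle relations}. Alternatively, using the bicolour associativity relation \eqref{H=I bicolor associativity} one rewrites the capped green--brown--brown vertex as a brown circle-with-brown-tail precomposed with a green--brown bivalent vertex, and concludes directly from \eqref{needle relations}.

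\textbf{Green circle with a brown tail and an orange tail.} Since an orange strand can only attach to the rest of the picture along a green strand, the orange tail leaves the green circle through an orange landing, while the brown tail leaves through a green--green--brown vertex. I would run a monodromy argument: slide the orange landing once around the green circle. Along the purely green arcs this is free by \eqref{GGG orange}, and when it passes the green--green--brown vertex it acquires a sign $-1$ by \eqref{GGB orange 1} (with its variant \eqref{GGB triv orange 2} for the routing that goes over the brown leg). After a full loop the diagram is isotopic to the one we started with, so $D=-D$; since $\chr(\kk)\neq 2$ (Section~\ref{sec-equiv-2}), $D=0$. For the green circle with a green tail and an orange tail the monodromy gives nothing, because an orange strand slides through a green--green--green vertex without a sign (\eqref{orange jump through GGG}); instead I would slide the orange landing off the circle and onto the green tail (again via \eqref{orange jump through GGG} to move it past the green--green--green vertex at the foot of the circle), so that the diagram becomes literally the composite of the orange landing (a map $\Xobjbar\otimes\Yobjbar\to\Yobjbar$, or $\Yobjbar\otimes\Xobjbar\to\Yobjbar$) with the green circle-with-green-tail $\Yobjbar\to\onecalbar$; the latter is $0$ by \eqref{needle relations}, hence so is the composite.

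\textbf{Main obstacle.} The delicate point is the sign bookkeeping in the second case: one must check that every rotation and reflection of the local picture that actually occurs as the orange landing travels around the circle is covered by \eqref{GGB orange 1}--\eqref{GGB triv orange 2} (and by the orange-slide moves \eqref{orange slide horizontal green}--\eqref{horizontal green cups} used to pass between a landing and half of a crossing), so that the net monodromy sign is exactly $-1$ and not $+1$. The isotopy steps (``a full loop returns an isotopic diagram'', ``the orange attachment can be moved onto the tail'') are routine once the signs are controlled.
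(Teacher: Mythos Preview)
Your proposal is correct and matches the paper's proof in all three cases: the paper uses the bicolor $H{=}I$ associativity \eqref{H=I bicolor associativity} to reduce the brown circle with green tail to the brown needle \eqref{needle relations}; it slides the orange landing onto the green tail via \eqref{GGG orange} to factor the green--green--orange case through the vanishing green needle; and it runs exactly your monodromy/sign argument (via \eqref{GGB orange 1} and the orange sliding Theorem~\ref{thm orange sliding}) for the green--brown--orange case to get $D=-D$. One small correction: the relation you want for moving an orange \emph{landing} between legs of a green trivalent is \eqref{GGG orange}, not \eqref{orange jump through GGG}, which is the version for an orange \emph{crossing}.
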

\begin{proof}
	Checking these is straightforward.
	By \eqref{H=I bicolor associativity} and \eqref{needle relations},
	\begin{align}
		\begin{boxedstrings}
			\node at (0,1) {};
			\node (a) at (-0.2,0.5) {};
			\def\coltop{brown}
			\def\colbot{green}
			\capfigparam{(a)}{0.4}{0.2}{\coltop}
			\cupfigparam{(a)}{0.4}{0.2}{\coltop}
			\idfigparam {(nd-2-3)}{0.3}{\colbot}
		\end{boxedstrings}
		&=
		\begin{boxedstrings}
			\node at (0,1) {};
			\node (a) at (-0.2,0.5) {};
			\def\coltop{brown}
			\def\colbot{green}
			\capfigparam{(a)}{0.4}{0.2}{\coltop}
			\cupfigparam{(a)}{0.4}{0.2}{\coltop}
			\idfigparam {(nd-2-3)}{0.3}{\colbot}
			\dotuparam{(0,0.7)}{0.15}{0.06}{\coltop}
		\end{boxedstrings}
		=
		\begin{boxedstrings}
			\node at (0,1) {};
			\node (a) at (-0.2,0.7) {};
			\def\coltop{brown}
			\def\colbot{green}
			\capfigparam{(a)}{0.4}{0.2}{\coltop}
			\cupfigparam{(a)}{0.4}{0.2}{\coltop}
			\idfigparam {(nd-2-3)}{0.3}{\coltop}
			\idfigparam {(nd-3-2)}{0.2}{\colbot}
		\end{boxedstrings}
		=
		0.
	\end{align}
	
	By \eqref{GGG orange} and \eqref{needle relations},
	\begin{equation}
		\begin{boxedstrings}
			\node at (0,1) {};
			\node (a) at (-0.2,0.5) {};
			\def\coltop{green}
			\def\colbot{green}
			\def\colside{orange}
			\capfigparam{(a)}{0.4}{0.2}{\coltop}
			\cupfigparam{(a)}{0.4}{0.2}{\coltop}
			\idfigparam {(0.05,0)}{-0.3}{\colbot}
			\idfigparam {(-0.05,0)}{-0.3}{\colside}
		\end{boxedstrings}
		=
		\begin{boxedstrings}
			\node at (0,1) {};
			\node (a) at (-0.2,0.5) {};
			\def\coltop{green}
			\def\colbot{green}
			\def\colside{orange}
			\capfigparam{(a)}{0.4}{0.2}{\coltop}
			\cupfigparam{(a)}{0.4}{0.2}{\coltop}
			\idfigparam {(0,0)}{-0.3}{\colbot}
			\draw[orange] (nd-3-3) -- ++(-0.1,0) -- ++(0,-0.15);
		\end{boxedstrings}
		=
		0.
	\end{equation}
	
	By \eqref{GGB orange 1} and Theorem \ref{thm orange sliding},
	\begin{equation}
		\begin{boxedstrings}
			\node at (0,1) {};
			\node (a) at (-0.2,0.5) {};
			\def\coltop{green}
			\def\colbot{brown}
			\def\colside{orange}
			\capfigparam{(a)}{0.4}{0.2}{\coltop}
			\cupfigparam{(a)}{0.4}{0.2}{\coltop}
			\idfigparam {(0.05,0)}{-0.3}{\colbot}
			\idfigparam {(-0.05,0)}{-0.3}{\colside}
		\end{boxedstrings}
		=
		-
		\begin{boxedstrings}
			\node at (0,1) {};
			\node (a) at (-0.2,0.5) {};
			\def\coltop{green}
			\def\colbot{brown}
			\capfigparam{(a)}{0.4}{0.2}{\coltop}
			\cupfigparam{(a)}{0.4}{0.2}{\coltop}
			\idfigparam {(0,0)}{-0.3}{\colbot}
			\draw[orange] (nd-1-2) -- ++(0,-0.35) -- (nd-3-3) -- ++(-0.1,0) -- ++(0,-0.15);
		\end{boxedstrings}
		=
		-
		\begin{boxedstrings}
			\node at (0,1) {};
			\node (a) at (-0.2,0.5) {};
			\def\coltop{green}
			\def\colbot{brown}
			\capfigparam{(a)}{0.4}{0.2}{\coltop}
			\cupfigparam{(a)}{0.4}{0.2}{\coltop}
			\idfigparam {(0,0)}{-0.3}{\colbot}
			\draw[orange] (nd-1-2) -- ++(0,0.3) -- ++(-0.5,0) -- ++(0,-0.55) --
			++(0.2,0) -- ++(0,-0.25);
		\end{boxedstrings}
		=
		-
		\begin{boxedstrings}
			\node at (0,1) {};
			\node (a) at (-0.2,0.5) {};
			\def\coltop{green}
			\def\colbot{brown}
			\def\colside{orange}
			\capfigparam{(a)}{0.4}{0.2}{\coltop}
			\cupfigparam{(a)}{0.4}{0.2}{\coltop}
			\idfigparam {(0.05,0)}{-0.3}{\colbot}
			\idfigparam {(-0.05,0)}{-0.3}{\colside}
		\end{boxedstrings}
		.
	\end{equation}
	Hence,
	\begin{equation}
		2
		\begin{boxedstrings}
			\node at (0,1) {};
			\node (a) at (-0.2,0.5) {};
			\def\coltop{green}
			\def\colbot{brown}
			\def\colside{orange}
			\capfigparam{(a)}{0.4}{0.2}{\coltop}
			\cupfigparam{(a)}{0.4}{0.2}{\coltop}
			\idfigparam {(0.05,0)}{-0.3}{\colbot}
			\idfigparam {(-0.05,0)}{-0.3}{\colside}
		\end{boxedstrings}
		=0.
	\end{equation}
\end{proof}

\begin{prop}\label{prop unit relations}
	The unit relations
	\begin{equation}\label{GBB unit rel 1}
		\begin{boxedstrings}
			\idfigparam{(0,1)}{0.5}{brown}
			\idfigparam{(nd-1-2)}{0.5}{brown}
			\dotl{(nd-1-2)}{green}
		\end{boxedstrings} 
		=
		2
		\begin{boxedstrings}
			\idfigparam{(0,1)}{1}{brown}
		\end{boxedstrings} 
		,
	\end{equation}
	\begin{equation}\label{GBB unit rel 3 orange}
		\begin{boxedstrings}
			\idfigparam{(0,1)}{1}{brown}
			\draw[orange] (-0.25,0) -- ++(0,0.5);
			\draw[green] (nd-1-3)-- ++(-0.25,0);
		\end{boxedstrings} 
		= 0
		,
	\end{equation}
	\begin{equation}\label{GBB unit rel 2}
		\begin{boxedstrings}
			\idfigparam{(0,1)}{0.5}{brown}
			\idfigparam{(nd-1-2)}{0.5}{green}
			\dotl{(nd-1-2)}{brown}
		\end{boxedstrings} 
		=
		\begin{boxedstrings}
			\idfigparam{(0,1)}{0.5}{brown}
			\idfigparam{(nd-1-2)}{0.5}{green}
		\end{boxedstrings} 
		,
	\end{equation}
	and
	\begin{equation}\label{GGB unit rel}
		\begin{boxedstrings}
			\idfigparam{(1,0)}{-0.25}{brown}
			\dotuparam{(nd-1-2)}{0.2}{0.06}{green}
			\path (nd-1-1) -- ++(0,1);
		\end{boxedstrings}
		=
		2
		\begin{boxedstrings}
			\dotuparam{(0,0)}{0.4}{0.06}{brown}
			\path (0,0) -- ++(0,1);
		\end{boxedstrings}	
	\end{equation}	
	hold.
\end{prop}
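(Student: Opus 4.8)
The plan is to derive each of the four identities \eqref{GBB unit rel 1}--\eqref{GGB unit rel} by a short local computation inside $\Dcal$, starting only from the relations listed in Definition~\ref{relations equiv category}, together with the needle relations of Proposition~\ref{prop needle relations} (which precede this statement in the appendix) and the hypothesis $\chr(\kk)\neq 2$. None of these proofs should require Theorem~\ref{thm orange sliding}; keeping this separation is important, since \eqref{GGB unit rel} is itself invoked in the proof of Proposition~\ref{prop orange jumps over green brown dots}, so a circular appeal to the orange-sliding machinery must be avoided.

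I would begin with \eqref{GGB unit rel}. Take the brown--green unit relation \eqref{brown green unit rel} and compose both sides with a green dot on top: the one-color green unit relation \eqref{one color unit rel} collapses the green double-dot term on the right, while the green--orange unit relation \eqref{green orange unit rel} rewrites the green--orange--green term as an orange dot, and the orange barbell bookkeeping in \eqref{barbell rels} reduces the remaining orange pieces; identifying the brown dots then yields \eqref{GGB unit rel}. Relation \eqref{GBB unit rel 1} is the companion statement for a green whisker-dot against a brown strand: here I would expand the relevant GBB trivalent vertex by its definition \eqref{vertex definitions}, turn the configuration into a brown bigon, reduce it with the brown bigon relation \eqref{bigon brown}, and clean up stray dots with \eqref{one color unit rel} and \eqref{bivalent definition left}. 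Relation \eqref{GBB unit rel 2} (absorption of a brown whisker-dot on the green leg of a brown--green bivalent vertex) I would handle by unfolding the bivalent vertex via \eqref{bivalent definition right}/\eqref{bivalent definition left} and the GBB trivalent vertex via \eqref{vertex definitions}, pushing the brown whisker-dot onto a green--green--brown trivalent vertex where \eqref{brown green unit rel} and \eqref{one color unit rel} remove it, and then refolding.

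The remaining identity \eqref{GBB unit rel 3 orange}, which asserts that a certain orange--green--brown configuration vanishes, I expect to be the main obstacle: unlike the other three it is not a direct rewriting but a vanishing-by-antisymmetry argument in the style of the needle relations \eqref{needle relations 2}. The idea is to slide the orange whisker across the green leg of the diagram using \eqref{GGB orange 1} (which introduces a sign), compare with the original diagram, and conclude that twice the diagram equals minus itself, hence it is zero because $\chr(\kk)\neq 2$; the vanishing bigon \eqref{bigon brown orange} is used along the way to dispose of the leftover GBB triangle. The delicate point is ensuring that every orange manipulation invoked is one of \eqref{orange green green =0 unit rel}, \eqref{GGB orange 1}, or \eqref{GGB triv orange 2} from Definition~\ref{relations equiv category}, rather than a consequence established later in Section~\ref{subsection orange strand}. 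Everything else reduces to routine bookkeeping with dots and barbells.
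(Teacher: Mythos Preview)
Your overall plan is close to the paper's, but two of the four sketches have genuine problems.

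For \eqref{GBB unit rel 3 orange} the paper's argument is a one-liner: after unfolding the bivalent vertex, the diagram is literally the rotated triangle appearing in \eqref{bigon brown orange}, hence zero. Your proposed antisymmetry maneuver is more elaborate than necessary, and the sentence ``twice the diagram equals minus itself'' is not what such an argument would actually give (you would get $D=-D$, hence $2D=0$).

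More seriously, your strategy for \eqref{GGB unit rel} does not lead anywhere. Composing \eqref{brown green unit rel} with a green dot on top produces, on the left, a green strand capped by a green dot with a brown \emph{whisker}-dot in the middle, and on the right a green barbell times a green dot minus a green--orange--green term; there is no brown boundary dot on either side, so there is nothing to ``identify'' with $2\,\Dotup{brown}$. The paper instead starts from $2\,\Dotup{brown}$, uses \eqref{bigon brown} to replace the brown identity by half a green bigon with brown ends, and then exploits the sign in \eqref{GGB orange 1} (the orange landing swap on a $BGG$ trivalent vertex) to collapse the resulting expression to twice the $BGG$ trivalent with two green dots, which is the left side of \eqref{GGB unit rel} by the bivalent definition. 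This is the step you are missing; the key relation is \eqref{bigon brown}, not \eqref{brown green unit rel}.

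Your sketches for \eqref{GBB unit rel 1} and \eqref{GBB unit rel 2} match the paper's proofs in spirit (note that ``brown bigon relation'' should read ``green bigon with brown legs'', which is exactly what \eqref{bigon brown} is). Your care about avoiding circularity with Section~\ref{subsection orange strand} is well-placed and the paper's arguments respect it.
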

\begin{proof}
	\eqref{GBB unit rel 1}
	By \eqref{bigon brown} and \eqref{one color unit rel} we have
	\begin{equation}
		\begin{boxedstrings}
			\idfigparam{(0,1)}{0.5}{brown}
			\idfigparam{(nd-1-2)}{0.5}{brown}
			\dotl{(nd-1-2)}{green}
		\end{boxedstrings} 
		=
		\begin{boxedstrings}
			\node at (0,1) {};
			\node (a) at (-0.2,0.5) {};
			\def\coltop{green}
			\def\colbot{brown}
			\capfigparam{(a)}{0.4}{0.2}{\coltop}
			\cupfigparam{(a)}{0.4}{0.2}{\coltop}
			\dotl{(a)}{green}
			\idfigparam {(0,0.3)}{0.3}{\colbot}
			\idfigparam {(0,0.7)}{-0.3}{\colbot}
		\end{boxedstrings}
		=
		\begin{boxedstrings}
			\node at (0,1) {};
			\node (a) at (-0.2,0.5) {};
			\def\coltop{green}
			\def\colbot{brown}
			\capfigparam{(a)}{0.4}{0.2}{\coltop}
			\cupfigparam{(a)}{0.4}{0.2}{\coltop}
			\idfigparam {(0,0.3)}{0.3}{\colbot}
			\idfigparam {(0,0.7)}{-0.3}{\colbot}
		\end{boxedstrings}
		=
		2
		\begin{boxedstrings}
			\idfigparam{(0,1)}{1}{brown}
		\end{boxedstrings} 
		.
	\end{equation}
	
	\eqref{GBB unit rel 3 orange}
	By \eqref{bigon brown orange},
	\begin{equation}
		\begin{boxedstrings}
			\idfigparam{(0,1)}{1}{brown}
			\draw[orange] (-0.25,0) -- ++(0,0.5);
			\draw[green] (nd-1-3)-- ++(-0.25,0);
		\end{boxedstrings} 
		=
		\begin{boxedstrings}
			\Triangucol{(0,0)}{brown}{green}{green}{green}{orange}{brown}
		\end{boxedstrings}
		= 0
		.
	\end{equation}
	
	\eqref{GBB unit rel 2}
	By \eqref{brown green unit rel} and \eqref{GGB orange 1}
	\begin{equation}
		2
		\begin{boxedstrings}
			\idfigparam{(0,1)}{0.5}{brown}
			\idfigparam{(nd-1-2)}{0.5}{green}
			\dotl{(nd-1-2)}{brown}
		\end{boxedstrings} 
		=
		2
		\begin{boxedstrings}
			\node at (0,1) {};
			\node (a) at (-0.2,0.5) {};
			\def\coltop{green}
			\def\colbot{brown}
			\capfigparam{(a)}{0.4}{0.2}{\coltop}
			\cupfigparam{(a)}{0.4}{0.2}{\coltop}
			\dotl{(a)}{brown}
			\idfigparam {(0,0.3)}{0.3}{\coltop}
			\idfigparam {(0,0.7)}{-0.3}{\colbot}
		\end{boxedstrings}
		=
		\begin{boxedstrings}
			\idfigparam{(0,1)}{0.5}{brown}
			\idfigparam{(nd-1-2)}{0.5}{green}
			\dotl{(nd-1-2)}{green}
		\end{boxedstrings}
		-
		\begin{boxedstrings}
			\idfigparam{(0,1)}{0.5}{brown}
			\idfigparam{(nd-1-2)}{0.5}{green}
			\draw[orange] (nd-2-3) -- ++(-0.25,0) -- ++(0,0.25);
			\draw[green] (nd-1-2) -- ++(-0.25,0);
		\end{boxedstrings}	
		=
		2
		\begin{boxedstrings}
			\idfigparam{(0,1)}{0.5}{brown}
			\idfigparam{(nd-1-2)}{0.5}{green}
			\dotl{(nd-1-2)}{green}
		\end{boxedstrings}	
		=
		2
		\begin{boxedstrings}
			\idfigparam{(0,1)}{0.5}{brown}
			\idfigparam{(nd-1-2)}{0.5}{green}
		\end{boxedstrings} 
		.
	\end{equation}

	\eqref{GGB unit rel}
	By \eqref{bigon brown} and \eqref{GGB orange 1} we have
	\begin{equation}
		2
		\begin{boxedstrings}
			\dotuparam{(0,0)}{0.6}{0.06}{brown}
			\path (nd-1-3) -- (0,1.25);
		\end{boxedstrings}
		=
		\begin{boxedstrings}
			\node at (0,1) {};
			\node (a) at (-0.2,0.5) {};
			\def\coltop{green}
			\def\colbot{brown}
			\def\colside{orange}
			\capfigparam{(a)}{0.4}{0.2}{\coltop}
			\cupfigparam{(a)}{0.4}{0.2}{\coltop}
			\idfigparam {(0,0.3)}{0.3}{\colbot}
			\idfigparam {(0,0.7)}{-0.3}{\colbot}
			\dotfigparamcol{(nd-4-2)}{0.06}{\colbot}
			\path (nd-1-3) -- (0,1.25);
		\end{boxedstrings}
		=
		\begin{boxedstrings}
			\trivdparamcol{(0,0.7)}{0.5}{0.7}{green}{green}{brown}
			\dotfig{(nd-1-1)}{green}
			\dotfig{(nd-1-2)}{green}
			\path (nd-1-3) -- ++(0,1.25);
		\end{boxedstrings}
		-
		\begin{boxedstrings}
			\draw[orange] (0.125,0.525) -- ++(0.25,0);
			\trivdparamcol{(0,0.7)}{0.5}{0.7}{green}{green}{brown}
			\dotfig{(nd-1-1)}{green}
			\dotfig{(nd-1-2)}{green}
			\path (nd-1-3) -- ++(0,1.25);
		\end{boxedstrings}
		=
		2
		\begin{boxedstrings}
			\trivdparamcol{(0,0.7)}{0.5}{0.7}{green}{green}{brown}
			\dotfig{(nd-1-1)}{green}
			\dotfig{(nd-1-2)}{green}
			\path (nd-1-3) -- ++(0,1.25);
		\end{boxedstrings}.
		= 2
		\begin{boxedstrings}
			\idfigparam{(1,0)}{-0.35}{brown}
			\dotuparam{(nd-1-2)}{0.25}{0.06}{green}
			\path (nd-1-1) -- ++(0,1.25);
		\end{boxedstrings}
	\end{equation}
\end{proof}

\subsection{Additional $H=I$ relations}\label{appendix-H=I-relations}

The $H=I$ relations in Definition \ref{relations equiv category} are:
\excludeDiagrams{1}
{

	\begin{equation}\label{H=I one color2}
		\begin{boxedstrings}
			\Hlongparam{(0,0)}{0.5}{1}{green}{green}{green}{green}{green}
		\end{boxedstrings}
		=
		\begin{boxedstrings}
			\Ilongparam{(0,0)}{0.5}{1}{green}{green}{green}{green}{green}
		\end{boxedstrings},
		\hspace{0.5cm}
		\begin{boxedstrings}
			\Hlongparam{(0,0)}{0.5}{1}{brown}{brown}{brown}{brown}{brown}
		\end{boxedstrings}
		=
		\begin{boxedstrings}
			\Ilongparam{(0,0)}{0.5}{1}{brown}{brown}{brown}{brown}{brown}
		\end{boxedstrings},
	\end{equation}

	\begin{equation}\label{H=I bicolor associativity2}
		\begin{boxedstrings}
			\Hlongparam{(0,0)}{0.5}{1}{green}{brown}{brown}{brown}{green}
		\end{boxedstrings}
		=
		\begin{boxedstrings}
			\Ilongparam{(0,0)}{0.5}{1}{green}{brown}{brown}{brown}{green}
		\end{boxedstrings},
		\hspace{0.5cm}
		\begin{boxedstrings}
			\Hlongparam{(0,0)}{0.5}{1}{green}{brown}{brown}{brown}{brown}
		\end{boxedstrings}
		=
		\begin{boxedstrings}
			\Ilongparam{(0,0)}{0.5}{1}{green}{brown}{brown}{brown}{brown}
		\end{boxedstrings},
	\end{equation}

	\begin{equation}\label{H=I top brown2}
		\begin{boxedstrings}
			\Hlongparam{(0,0)}{0.5}{1}{brown}{brown}{brown}{green}{green}
		\end{boxedstrings}
		=
		\begin{boxedstrings}
			\Ilongparam{(0,0)}{0.5}{1}{brown}{brown}{green}{green}{green}
		\end{boxedstrings} 
		+
		\begin{boxedstrings}
			\Ilongparam{(0,0)}{0.5}{1}{brown}{brown}{brown}{green}{green}
		\end{boxedstrings}    
	\end{equation}
	
	\begin{equation}\label{H=I L brown leg2}
		2
		\begin{strings}
			\Hlongparam{(0,0)}{0.5}{1}{green}{green}{brown}{green}{brown}
		\end{strings}
		=
		2
		\begin{strings}
			\Ilongparam{(0,0)}{0.5}{1}{green}{green}{green}{green}{brown}
		\end{strings}
		+
		\begin{strings}
			\Ilongparam{(0,0)}{0.5}{1}{green}{green}{brown}{green}{brown}
		\end{strings}
		-
		\begin{strings}
			\Ilongparam{(0,0)}{0.5}{1}{green}{green}{brown}{green}{brown}
			\draw[orange] (nd-1-8) -- (nd-1-9);
		\end{strings}
	\end{equation}

	\begin{equation} \label{H=I middle brown2} 
		2
		\begin{boxedstrings}
			\Hlongparam{(0,0)}{0.5}{1}{green}{green}{brown}{green}{green}
		\end{boxedstrings}
		=
		\begin{boxedstrings}
			\capfigparam{(0,0)}{0.5}{0.3}{green}
			\cupfigparam{(0,1)}{0.5}{0.3}{green}
		\end{boxedstrings}
		-
		\begin{boxedstrings}
			\capfigparam{(0,0)}{0.5}{0.3}{green}
			\cupfigparam{(0,1)}{0.5}{0.3}{green}
			\draw[orange] (nd-1-3) -- (nd-2-3);
		\end{boxedstrings}
		+
		\begin{boxedstrings}
			\Ilongparam{(0,0)}{0.5}{1}{green}{green}{brown}{green}{green}
		\end{boxedstrings}
		-
		\begin{boxedstrings}
			\Ilongparam{(0,0)}{0.5}{1}{green}{green}{brown}{green}{green}
			\draw[orange] (nd-1-8) -- (nd-1-9);
		\end{boxedstrings}
	\end{equation}

}

We wish to show the rest of the $H=I$ relations, since we need all of them to prove Theorem \ref{diagram reduction}.

\begin{lemma}
	The following follow from \eqref{H=I top brown} and all other non-$H=I$ relations.
	\excludeDiagrams{0}
	{
		\begin{equation}\label{H=I side brown}
			2
			\begin{strings}
				\Hlongparam{(0,0)}{0.5}{1}{green}{brown}{green}{green}{brown}
			\end{strings}
			=
			\begin{strings}
				\Ilongparam{(0,0)}{0.5}{1}{green}{brown}{brown}{green}{brown}
			\end{strings}
			+
			\begin{strings}
				\Ilongparam{(0,0)}{0.5}{1}{green}{brown}{brown}{green}{brown}
				\draw[orange] (nd-1-8) -- (nd-1-9);
			\end{strings}
		\end{equation}
		
		\begin{equation}\label{H=I T brown}
			2
			\begin{strings}
				\Hlongparam{(0,0)}{0.5}{1}
				{green}{brown}{brown}{green}{brown}
			\end{strings}
			=
			\begin{strings}
				\Ilongparam{(0,0)}{0.5}{1}
				{green}{brown}{brown}{green}{brown}
			\end{strings}
			-
			\begin{strings}
				\Ilongparam{(0,0)}{0.5}{1}
				{green}{brown}{brown}{green}{brown}
				\draw[orange] (nd-1-8) -- (nd-1-9);
			\end{strings}
		\end{equation}
	}
	
	The relation
	{
		\begin{equation}\label{H=I brown arms and legs}
			2
			\begin{strings}
				\Hlongparam{(0,0)}{0.5}{1}{brown}{brown}{green}{brown}{brown}
			\end{strings}
			=
			\begin{strings}
				\Ilongparam{(0,0)}{0.5}{1}{brown}{brown}{brown}{brown}{brown}
				\hbarbparam{(blBarb)}{0.25}{0.04}{green}
			\end{strings}
			+
			\begin{strings}
				\Ilongparam{(0,0)}{0.5}{1}{brown}{brown}{brown}{brown}{brown}
				\hbarbparam{(tlBarb)}{0.25}{0.04}{green}
			\end{strings}
		\end{equation}
	}	
	also requires \eqref{H=I one color}.
\end{lemma}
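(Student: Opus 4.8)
The plan is to derive each of the three relations by reducing it to an $H=I$ relation that is already available, and then absorbing the difference using only non-$H=I$ relations: isotopy, the unit relations, the needle and circle relations \eqref{needle relations} and \eqref{needle relations 2}, the bigon relations \eqref{bigon brown} and \eqref{bigon brown orange}, the barbell relations \eqref{barbell rels} together with \eqref{polynomial dependence barbells}, the polynomial forcing relations of Proposition \ref{polynomial forcing all}, and the orange-sliding machinery of Section \ref{subsection orange strand} (in particular Theorem \ref{thm orange sliding} and Corollary \ref{coro circle sliding}). The two-term right-hand sides — the same $I$-diagram with, and then without, an orange strand threaded past one of the legs — should appear automatically, since the vertex definition \eqref{vertex definitions 2} and the orange--green unit relation \eqref{orange green unit rel} are exactly what introduces orange strands when one rewrites a mixed trivalent vertex in terms of the primitive generators.

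First I would prove \eqref{H=I side brown}: its $H$-diagram is a green crossbar carrying a GGG vertex on one side and a GBB vertex on the other, and the GBB vertex is not primitive, so by \eqref{vertex definitions} it expands into the triangle consisting of a GGG vertex stacked over a GGB vertex. After substituting this and isotoping, the brown half of the picture becomes a "top-brown $H$" configuration of GGG/GGB vertices to which \eqref{H=I top brown} applies directly, while the green half that is left over is a green \emph{needle} rather than an honest green $H$, so it dies (or collapses to a dot) purely by the needle relations \eqref{needle relations}/\eqref{needle relations 2} and the unit relations \eqref{one color unit rel}--\eqref{green orange unit rel}; this is precisely why \eqref{H=I one color} is not needed here. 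Relation \eqref{H=I T brown} I would then obtain either by the same expansion-and-reduction procedure or, more cheaply, by composing the now-established \eqref{H=I side brown} with suitable trivalent vertices and a dot and simplifying via \eqref{bivalent definition right} and the unit relations. For \eqref{H=I brown arms and legs}, whose $H$-diagram is a green crossbar with four brown legs, I would expand \emph{both} GBB vertices via \eqref{vertex definitions}; this time the middle of the picture is a genuine green $H$, which must be resolved by \eqref{H=I one color} (hence the extra hypothesis), and the outer GGB structure is then simplified using \eqref{H=I top brown}, the GB unit relations \eqref{GBB unit rel 1}--\eqref{GGB unit rel}, and the barbell relations, the green barbells on the legs of the $I$-diagrams being produced by \eqref{GBB unit rel 1}/\eqref{barbell rels}.

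The main obstacle will be the bookkeeping: each substitution of a vertex definition multiplies the number of terms, and one must track the numerical coefficients (the factors of $2$), the signs, and — most delicately — which leg of each resulting $I$-diagram carries an orange strand, since that bookkeeping \emph{is} the content of the two-term right-hand sides. Theorem \ref{thm orange sliding} and Corollary \ref{coro circle sliding} allow me to slide orange strands through everything and to erase closed orange circles, so no orange information is genuinely lost, but checking that the surviving orange strands land on the correct legs (and with the right signs, via \eqref{GGB orange 1}) is where the care is. A secondary point is that one must choose the right rotation of the vertex definition to substitute so that a known $H=I$ relation becomes literally, not just morally, applicable; this uses the built-in pivotality of $\Dcalpre$ coming from its self-adjoint generators, and is best handled by drawing the intermediate diagrams once rather than arguing in prose.
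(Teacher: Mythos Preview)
Your plan diverges from the paper's proof and, for \eqref{H=I side brown} and \eqref{H=I T brown}, has a concrete gap.

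The paper never expands a vertex definition. Instead it uses an \emph{orange--threading trick}: take \eqref{H=I top brown} and draw an orange arc connecting the two green bottom legs on both sides. On the right, the first $I$-diagram has a GGG bottom vertex, so by \eqref{GGG orange} and \eqref{horizontal green cups} the orange arc contracts without sign; the second $I$-diagram has a GGB bottom vertex, so by \eqref{GGB orange 1} it contracts with a sign flip. This produces a companion relation $H_{\mathrm{orange}} = I_1 - I_2$ to the original $H = I_1 + I_2$. Taking sum and difference isolates $2I_1 = H + H_{\mathrm{orange}}$ and $2I_2 = H - H_{\mathrm{orange}}$, and a $90^\circ$ rotation (which exchanges $H$- and $I$-shapes) turns these into \eqref{H=I side brown} and \eqref{H=I T brown}. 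No vertex expansion, no needles, no \eqref{H=I one color}.

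Your expansion route runs into a type mismatch. Expanding the GBB vertex in $H(\text{green},\text{brown},\text{green},\text{green},\text{brown})$ via \eqref{vertex definitions} replaces it by a triangle whose three vertices are one GGG and two GGB; together with the original left GGG, the expanded diagram contains only GGG and GGB trivalents. But \eqref{H=I top brown} is an $H$ with two \emph{GBB} vertices (all three top/crossbar strands are brown), and rotation does not change vertex type. The crossbars available in your expanded picture are either GGG--GGG (which would need the green \eqref{H=I one color}) or GGB--GGB (which would need \eqref{H=I middle brown}) or GGG--GGB (which would need \eqref{H=I brown leg}, itself derived from \eqref{H=I L brown leg}); none is a rotation of the crossbar in \eqref{H=I top brown}. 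So the sentence ``the brown half \ldots becomes a top-brown $H$ \ldots to which \eqref{H=I top brown} applies directly'' does not hold, and the claim that the green half is merely a needle is also off --- the two GGG vertices are joined by a single edge with all other legs distinct, which is an honest $H$, not a needle.

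For \eqref{H=I brown arms and legs} the paper again proceeds differently: it starts from the right-hand side, applies the brown \eqref{H=I one color} to rewrite the two all-brown $I$-diagrams as $H$-diagrams, uses \eqref{poly forcing brown 1} to merge the pair of green barbells into a single green bigon on the crossbar, and then applies the just-proved \eqref{H=I T brown} on each side of that bigon, with \eqref{bigon brown orange} killing the unwanted orange term. This makes transparent why exactly \eqref{H=I one color} enters here but not in the first two relations, whereas your plan of expanding both GBB vertices would produce six trivalents and obscure that dependency.
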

\begin{proof}
	By \eqref{H=I top brown} we get
	\begin{equation}\label{eq top brown orange line}
		\begin{boxedstrings}
			\Hlongparam{(0,1)}{0.5}{1}{brown}{brown}{brown}{green}{green}
			\draw[orange] (0,0.25)--(0.5,0.25);
		\end{boxedstrings}
		=
		\begin{boxedstrings}
			\Ilongparam{(0,1)}{0.5}{1}{brown}{brown}{green}{green}{green}
			\draw[orange] (0,0.125)--(0.5,0.125);
		\end{boxedstrings} 
		+
		\begin{boxedstrings}
			\Ilongparam{(0,1)}{0.5}{1}{brown}{brown}{brown}{green}{green}
			\draw[orange] (0,0.125)--(0.5,0.125);
		\end{boxedstrings}  
		=  
		\begin{boxedstrings}
			\Ilongparam{(0,1)}{0.5}{1}{brown}{brown}{green}{green}{green}
		\end{boxedstrings} 
		-
		\begin{boxedstrings}
			\Ilongparam{(0,1)}{0.5}{1}{brown}{brown}{brown}{green}{green}
		\end{boxedstrings}	.
	\end{equation}
	Adding and subtracting 
	\eqref{H=I top brown} and \eqref{eq top brown orange line}
	we get 
		\begin{equation}
		\begin{boxedstrings}
			\Hlongparam{(0,1)}{0.5}{1}{brown}{brown}{brown}{green}{green}
		\end{boxedstrings}
		+
		\begin{boxedstrings}
			\Hlongparam{(0,1)}{0.5}{1}{brown}{brown}{brown}{green}{green}
			\draw[orange] (0,0.25)--(0.5,0.25);
		\end{boxedstrings}
		=
		2
		\begin{boxedstrings}
			\Ilongparam{(0,1)}{0.5}{1}{brown}{brown}{green}{green}{green}
		\end{boxedstrings} 	
	\end{equation}
	and
	\begin{equation}
		\begin{boxedstrings}
			\Hlongparam{(0,1)}{0.5}{1}{brown}{brown}{brown}{green}{green}
		\end{boxedstrings}
		-
		\begin{boxedstrings}
			\Hlongparam{(0,1)}{0.5}{1}{brown}{brown}{brown}{green}{green}
			\draw[orange] (0,0.25)--(0.5,0.25);
		\end{boxedstrings}
		=
		2
		\begin{boxedstrings}
			\Ilongparam{(0,1)}{0.5}{1}{brown}{brown}{brown}{green}{green}
		\end{boxedstrings}.	
	\end{equation}
	These are rotated versions of \eqref{H=I side brown} and
	 \eqref{H=I T brown} respectively.
	 
	 For \eqref{H=I brown arms and legs},
	 \begin{align}
	 		\begin{strings}
	 			\Ilongparam{(0,0)}{0.5}{1}{brown}{brown}{brown}{brown}{brown}
	 			\hbarbparam{(blBarb)}{0.25}{0.04}{green}
	 		\end{strings}
	 		+
	 		\begin{strings}
	 			\Ilongparam{(0,0)}{0.5}{1}{brown}{brown}{brown}{brown}{brown}
	 			\hbarbparam{(tlBarb)}{0.25}{0.04}{green}
	 		\end{strings}
	 		&=
	 		\begin{strings}
	 			\Hlongparam{(0,1)}{0.5}{1}{brown}{brown}{brown}{brown}{brown}
	 			\hbarbparam{(blBarb)}{0.25}{0.04}{green}
	 		\end{strings}
	 		+
	 		\begin{strings}
	 			\Hlongparam{(0,1)}{0.5}{1}{brown}{brown}{brown}{brown}{brown}
	 			\hbarbparam{(tlBarb)}{0.25}{0.04}{green}
	 		\end{strings}
	 	\\
	 	&= 
	 	2
	 	\begin{strings}
	 		\Hmidfigparamcol{(0,1)}{1}{1}{brown}{brown}{brown}{white}{brown}{brown}
	 		\dotuparam{(nd-1-7)}{0.25}{0.05}{green}
	 		\dotuparam{(nd-1-8)}{0.25}{0.05}{green}
	 		\draw[green] (nd-1-7)-- ++(0,-0.25) -- ++(0.5,0) -- (nd-1-8);
	 	\end{strings}.
	 \end{align}
	Applying \eqref{H=I T brown} on the left side of the diagram 
	and using \eqref{bigon brown orange} to get rid of the middle term we get,
	\begin{equation}
		2
		\begin{strings}
			\Hmidfigparamcol{(0,1)}{1}{1}
			{brown}{brown}{brown}{white}{brown}{brown}
			\dotuparam{(nd-1-7)}{0.25}{0.05}{green}
			\dotuparam{(nd-1-8)}{0.25}{0.05}{green}
			\draw[green] (nd-1-7)-- ++(0,-0.25) -- ++(0.5,0) -- (nd-1-8);
		\end{strings}
		=
		\begin{strings}
			\renewcommand{\tempcolora}{brown}
			\renewcommand{\tempcolorb}{brown}
			\renewcommand{\tempcolorc}{green}
			\renewcommand{\tempcolord}{green}
			\renewcommand{\tempcolore}{brown}
			\renewcommand{\tempcolorf}{brown}
			\renewcommand{\tempcolorg}{brown}
			\Hmidfigparamsetcol{(0,1)}{1}{1}
			\dotrparam{(0,0.75)}{0.5}{0.05}{green}
			\dotuparam{(nd-1-8)}{0.25}{0.05}{green}
		\end{strings}
		+
		\begin{strings}
			\renewcommand{\tempcolora}{brown}
			\renewcommand{\tempcolorb}{brown}
			\renewcommand{\tempcolorc}{green}
			\renewcommand{\tempcolord}{green}
			\renewcommand{\tempcolore}{brown}
			\renewcommand{\tempcolorf}{brown}
			\renewcommand{\tempcolorg}{brown}
			\Hmidfigparamsetcol{(0,1)}{1}{1}
			\dotrparam{(0,0.75)}{0.5}{0.05}{green}
			\draw[orange] (nd-1-7) -- ++(0,0.25);
			\dotuparam{(nd-1-8)}{0.25}{0.05}{green}
		\end{strings}
		=
		2
		\begin{strings}
			\renewcommand{\tempcolora}{brown}
			\renewcommand{\tempcolorb}{brown}
			\renewcommand{\tempcolorc}{green}
			\renewcommand{\tempcolord}{green}
			\renewcommand{\tempcolore}{brown}
			\renewcommand{\tempcolorf}{brown}
			\renewcommand{\tempcolorg}{brown}
			\Hmidfigparamsetcol{(0,1)}{1}{1}
			\dotuparam{(nd-1-8)}{0.25}{0.05}{green}
		\end{strings}.
	\end{equation}
	
	Aplying the same procedure on the right side,
	\begin{equation}
		2
		\begin{strings}
			\renewcommand{\tempcolora}{brown}
			\renewcommand{\tempcolorb}{brown}
			\renewcommand{\tempcolorc}{green}
			\renewcommand{\tempcolord}{green}
			\renewcommand{\tempcolore}{brown}
			\renewcommand{\tempcolorf}{brown}
			\renewcommand{\tempcolorg}{brown}
			\Hmidfigparamsetcol{(0,1)}{1}{1}
			\dotuparam{(nd-1-8)}{0.25}{0.05}{green}
		\end{strings}
		=
		\begin{strings}
			\Hlongparam{(0,1)}{0.5}{1}{brown}{brown}{green}{brown}{brown}
			\dotlparam{(0.5,0.75)}{0.15}{0.04}{green}
		\end{strings}
		=
		2
		\begin{strings}
			\Hlongparam{(0,1)}{0.5}{1}{brown}{brown}{green}{brown}{brown}
		\end{strings}.
	\end{equation}
\end{proof}

\begin{lemma}
	The following follows from \eqref{H=I L brown leg} and all other non-$H=I$ relations.
	\begin{equation}\label{H=I brown leg}
		2
		\begin{boxedstrings}
			\Hlongparam{(0,0)}{0.5}{1}{green}{green}{green}{green}{brown}
		\end{boxedstrings}
		=
		\begin{boxedstrings}
			\Ilongparam{(0,0)}{0.5}{1}{green}{green}{brown}{green}{brown}
		\end{boxedstrings}
		+
		\begin{boxedstrings}
			\Ilongparam{(0,0)}{0.5}{1}{green}{green}{brown}{green}{brown}
			\draw[orange] (nd-1-8) -- (nd-1-9);
		\end{boxedstrings}
	\end{equation}
\end{lemma}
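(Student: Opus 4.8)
The plan is to deduce \eqref{H=I brown leg} from \eqref{H=I L brown leg} by the same two-step maneuver that produced \eqref{H=I side brown} and \eqref{H=I T brown} from \eqref{H=I top brown}: first manufacture an ``orange companion'' of \eqref{H=I L brown leg}, and then take a suitable linear combination of the two. To organise this, abbreviate the left-hand crossbar of \eqref{H=I L brown leg} (brown central edge) by $H_b$ and the left-hand crossbar of \eqref{H=I brown leg} (green central edge) by $H_g$, and write $I_g$, $I_b$, $I_b^{\mathrm o}$ for the three right-hand terms of \eqref{H=I L brown leg}, so that \eqref{H=I L brown leg} reads $2H_b = 2I_g + I_b - I_b^{\mathrm o}$ and the target \eqref{H=I brown leg} reads $2H_g = I_b + I_b^{\mathrm o}$. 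Adding these, \eqref{H=I brown leg} is seen to be equivalent modulo \eqref{H=I L brown leg} to the orange-free identity $H_b + H_g = I_g + I_b$; so it is enough to establish that identity (or, equivalently, its orange companion) from \eqref{H=I L brown leg} together with non-$H{=}I$ relations.

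First I would thread a single orange strand through the brown central edge of every term of \eqref{H=I L brown leg} and then discharge it. By Theorem \ref{thm orange sliding} the orange strand slides freely over all the pieces present; using \eqref{orange jump through GBB}, \eqref{orange jump through GGB} and \eqref{GGG orange} it can be carried off the central edges of $H_b$, $I_g$ and $I_b$ and out to the boundary, while on the term $I_b^{\mathrm o}$ --- which already carries a vertical orange edge --- the second orange strand creates a pair of parallel orange strands that collapse by \eqref{X2 idempotent decomp} and the orange circle relation in \eqref{needle relations}. The minus signs are produced exactly as in \eqref{eq top brown orange line}: by \eqref{GGB orange 1}, each time the orange strand is carried between the two green legs of a GGB vertex it contributes a factor $-1$. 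The outcome is a companion relation in which the left-hand brown arm of $H_b$ has been converted to a green arm decorated by an orange dot --- i.e.\ whose left-hand side is, up to sign, $2H_g$ --- and whose right-hand side is, up to sign, $2I_g \pm I_b \pm I_b^{\mathrm o}$; adding and subtracting it with \eqref{H=I L brown leg} then cancels the $I_g$'s in pairs and isolates $2H_g = I_b + I_b^{\mathrm o}$.

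Throughout, the passage between brown and green edges that this requires is supplied by the vertex definitions \eqref{vertex definitions}, \eqref{vertex definitions 2} and by the unit relations \eqref{one color unit rel}, \eqref{brown green unit rel}, \eqref{bivalent definition left}, \eqref{GGB unit rel}, \eqref{GBB unit rel 1}, \eqref{GBB unit rel 2}; these are also what collapse the bigons and spurious bivalent vertices created when the orange strand is slid past trivalent vertices. Any green or orange barbells that appear are moved out of the way using the polynomial-forcing identities \eqref{poly forcing green 1} and \eqref{poly forcing brown 1}, so that the orange-free remnants on the two sides genuinely coincide and cancel. An alternative route avoiding the orange companion is to unfold the GBB vertex occurring in $H_b$, $I_b$ and $I_b^{\mathrm o}$ via \eqref{vertex definitions} and apply \eqref{H=I L brown leg} to the green crossbar that is thereby exposed; I expect this to lead to essentially the same bookkeeping.

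The step I expect to be the main obstacle is the sign tracking in the second paragraph: following the orange strand past the several GGB and GBB vertices in each term and, above all, handling $I_b^{\mathrm o}$ correctly --- deciding whether the two orange strands there bound a closed orange circle (contributing $1$, by \eqref{needle relations}) or reconnect to the boundary (contributing the surviving orange edge of \eqref{H=I brown leg}). A wrong choice flips a sign and destroys the final cancellation; everything else is routine isotopy together with the slide manipulations already established in Section \ref{subsection orange strand}.
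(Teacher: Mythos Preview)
Your high-level strategy---manufacture an orange companion of \eqref{H=I L brown leg} and combine linearly---is exactly what the paper does, and your reformulation $H_b+H_g=I_g+I_b$ is a correct equivalent of the target. The execution, however, has a genuine gap: the assertion that the companion relation has left-hand side ``up to sign, $2H_g$'' is unjustified, and in fact no orange decoration converts $H_b$ into $H_g$. The only difference between $H_b$ and $H_g$ is the colour of the \emph{central} bar (brown versus green), and none of the orange sliding or landing relations ever changes the colour of a GB edge. (Incidentally there is no ``left-hand brown arm'' in $H_b$: its four external strands are green, green, green, brown; the brown edges are the central bar and the bottom-right leg.) So the mechanism you invoke for producing $H_g$ on the left does not exist, and the cancellation you are hoping for cannot happen as described.

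The paper sidesteps this by never trying to simplify the $H$-side. One adds a horizontal orange edge across the \emph{top} of every term of \eqref{H=I L brown leg}, landing on the two top strands (both are green in every term, so this is the same composition throughout). On the $I$-side the landings slide onto the top trivalent vertex: for $I_g$ (top vertex all green) they pass through by \eqref{GGG orange} and collapse to an orange cup via \eqref{XY idempotent decomp}, leaving $I_g$ unchanged; for $I_b$ and $I_b^{\mathrm o}$ (top vertex is GGB) they contribute a factor $-1$ by \eqref{GGB orange 1}. The companion thus reads $2H_b^{\mathrm{or}} = 2I_g - I_b + I_b^{\mathrm o}$. Adding this to \eqref{H=I L brown leg} gives $2H_b + 2H_b^{\mathrm{or}} = 4I_g$, which is a $90^\circ$ rotation of \eqref{H=I brown leg}: under rotation the $H$-shape becomes an $I$-shape and vice versa, so the pair $(I_b,I_b^{\mathrm o})$ of the target corresponds to $(H_b,H_b^{\mathrm{or}})$, and $H_g$ corresponds to $I_g$. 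It is this rotation, not an orange-induced colour change, that closes the argument.
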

\begin{proof}
	By \eqref{H=I L brown leg} we have
	\begin{align}
		2
		\begin{strings}
			\Hlongparam{(0,0)}{0.5}{1}{green}{green}{brown}{green}{brown}
			\draw[orange] (tlsDot) -- (trsDot);
		\end{strings}
		&=
		2
		\begin{strings}
			\Ilongparam{(0,0)}{0.5}{1}{green}{green}{green}{green}{brown}
			\draw[orange] (tlsDot) -- (trsDot);
		\end{strings}
		+
		\begin{strings}
			\Ilongparam{(0,0)}{0.5}{1}{green}{green}{brown}{green}{brown}
			\draw[orange] (tlsDot) -- (trsDot);
		\end{strings}
		-
		\begin{strings}
			\Ilongparam{(0,0)}{0.5}{1}{green}{green}{brown}{green}{brown}
			\draw[orange] (tlsDot) -- (trsDot);
			\draw[orange] (nd-1-8) -- (nd-1-9);
		\end{strings}
		\\ \label{eqn orange Ls}
		&=
		2
		\begin{strings}
			\Ilongparam{(0,0)}{0.5}{1}{green}{green}{green}{green}{brown}
		\end{strings}
		-
		\begin{strings}
			\Ilongparam{(0,0)}{0.5}{1}{green}{green}{brown}{green}{brown}
		\end{strings}
		+
		\begin{strings}
			\Ilongparam{(0,0)}{0.5}{1}{green}{green}{brown}{green}{brown}
			\draw[orange] (nd-1-8) -- (nd-1-9);
		\end{strings}
	\end{align}
	Adding \eqref{H=I L brown leg} and \eqref{eqn orange Ls} we get
	\begin{equation}
		2
		\begin{strings}
			\Hlongparam{(0,0)}{0.5}{1}{green}{green}{brown}{green}{brown}
		\end{strings}
		+
		2
		\begin{strings}
			\Hlongparam{(0,0)}{0.5}{1}{green}{green}{brown}{green}{brown}
			\draw[orange] (tlsDot) -- (trsDot);
		\end{strings}
		=
		4
		\begin{strings}
			\Ilongparam{(0,0)}{0.5}{1}{green}{green}{green}{green}{brown}
		\end{strings},
	\end{equation}
	which is a rotation of \eqref{H=I L brown leg}.
\end{proof}

\begin{lemma}

We have the following relations:

\begin{equation}\label{H=I brown bottom feet}
	4
	\begin{strings}
		\Hlongparam{(0,0)}{0.5}{1}{green}{green}{green}{brown}{brown}
	\end{strings}
	=
	\begin{strings}
		\Ilongparam{(0,0)}{0.5}{1}{green}{green}{green}{brown}{brown}
		\hbarbparam{(blBarb)}{0.25}{0.04}{green}
	\end{strings}
	+
	\begin{strings}
		\Ilongparam{(0,0)}{0.5}{1}{green}{green}{green}{brown}{brown}
		\hbarbparam{(tlBarb)}{0.25}{0.04}{green}
	\end{strings}
	+
	\begin{strings}
		\Ilongparam{(0,0)}{0.5}{1}{green}{green}{green}{brown}{brown}
		\draw[orange] (trDot) -- (0.375,-0.5) -- (nd-1-7);
		\dotuparam{(trDot)}{0.125}{0.04}{orange}
	\end{strings}
	+
	\begin{strings}
		\Ilongparam{(0,0)}{0.5}{1}{green}{green}{green}{brown}{brown}
		\draw[orange] (brDot) -- (0.375,-0.5) -- (nd-1-7);
		\dotdparam{(brDot)}{0.125}{0.04}{orange}
	\end{strings}
\end{equation}

\begin{equation}\label{H=I 3 brown edges}
    4
    \begin{strings}
        \Hlongparam{(0,0)}{0.5}{1}{green}{brown}{green}{brown}{brown}
    \end{strings}
    =
    \begin{strings}
        \Ilongparam{(0,0)}{0.5}{1}{green}{brown}{brown}{brown}{brown}
        \hbarbparam{(blBarb)}{0.25}{0.04}{green}
    \end{strings}
    +
    \begin{strings}
        \Ilongparam{(0,0)}{0.5}{1}{green}{brown}{brown}{brown}{brown}
        \hbarbparam{(tlBarb)}{0.25}{0.04}{green}
    \end{strings}
    +
    \begin{strings}
        \Ilongparam{(0,0)}{0.5}{1}{green}{brown}{brown}{brown}{brown}
        \dotuparam{(tlDot)}{0.125}{0.04}{orange}
    \end{strings}
    +
    \begin{strings}
        \Ilongparam{(0,0)}{0.5}{1}{green}{brown}{brown}{brown}{brown}
        \dotdparam{(brDot)}{0.125}{0.04}{orange}
        \draw[orange] (brDot) -- (0.375,-0.5) -- (nd-1-7) -- (0.125,-0.5) -- (nd-1-8);
    \end{strings}
\end{equation}

\begin{equation}\label{H=I opposite brown edges (idemp decomp)}
\begin{split}
    8
    \begin{strings}
        \Hlongparam{(0,0)}{0.5}{1}{green}{brown}{green}{brown}{green}
    \end{strings}
    =&
    \begin{strings}
        \Ilongparam{(0,0)}{0.5}{1}{green}{brown}{brown}{brown}{green}
        \hbarbparam{(blBarb)}{0.25}{0.04}{green}
    \end{strings}
    +
    \begin{strings}
        \Ilongparam{(0,0)}{0.5}{1}{green}{brown}{brown}{brown}{green}
        \draw[orange] (tlDot) -- (0.125,-0.5) -- (nd-1-7) -- (0.375,-0.5) -- (brDot);
        \hbarbparam{(blBarb)}{0.25}{0.04}{green}
    \end{strings}
    +
    \begin{strings}
        \Ilongparam{(0,0)}{0.5}{1}{green}{brown}{brown}{brown}{green}
        \hbarbparam{(tlBarb)}{0.25}{0.04}{green}
    \end{strings}
    +
    \begin{strings}
        \Ilongparam{(0,0)}{0.5}{1}{green}{brown}{brown}{brown}{green}
        \draw[orange] (tlDot) -- (0.125,-0.5) -- (nd-1-7) -- (0.375,-0.5) -- (brDot);
        \hbarbparam{(tlBarb)}{0.25}{0.04}{green}
    \end{strings}
    \\
    \ 
    +&
    \begin{strings}
        \Ilongparam{(0,0)}{0.5}{1}{green}{brown}{brown}{brown}{green}
        \dotdparam{(brDot)}{0.125}{0.04}{orange}
    \end{strings}
    +
    \begin{strings}
        \Ilongparam{(0,0)}{0.5}{1}{green}{brown}{brown}{brown}{green}
        \draw[orange] (tlDot) -- (0.125,-0.5) -- (nd-1-7) -- (0.375,-0.5) -- (brDot);
        \dotdparam{(brDot)}{0.125}{0.04}{orange}
    \end{strings}
    +
    \begin{strings}
        \Ilongparam{(0,0)}{0.5}{1}{green}{brown}{brown}{brown}{green}
        \dotuparam{(tlDot)}{0.125}{0.04}{orange}
    \end{strings}
    +
    \begin{strings}
        \Ilongparam{(0,0)}{0.5}{1}{green}{brown}{brown}{brown}{green}
        \draw[orange] (tlDot) -- (0.125,-0.5) -- (nd-1-7) -- (0.375,-0.5) -- (brDot);
        \dotuparam{(tlDot)}{0.125}{0.04}{orange}
    \end{strings}
\end{split}
\end{equation}

\end{lemma}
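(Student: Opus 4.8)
The plan is to derive all three relations from the $H=I$ relations already in hand — the one-color relation \eqref{H=I one color}, the bicolor associativity relations \eqref{H=I bicolor associativity}, the partially-brown relations \eqref{H=I top brown}, \eqref{H=I L brown leg}, \eqref{H=I middle brown}, and the relations \eqref{H=I side brown}, \eqref{H=I T brown}, \eqref{H=I brown arms and legs}, \eqref{H=I brown leg} proved just above — by expanding every brown trivalent vertex appearing in the crossbar into green trivalent vertices using the vertex definitions \eqref{vertex definitions} together with the bivalent and unit relations \eqref{bivalent definition right}, \eqref{bivalent definition left}, \eqref{brown green unit rel}. Once the left-hand side has been rewritten as a (mostly) green diagram containing a green crossbar, one applies the appropriate known $H=I$ relation and then collapses the resulting bigons and needles using \eqref{bigon brown}, \eqref{bigon brown orange}, the needle relations \eqref{needle relations}, \eqref{needle relations 2}, and Proposition \ref{prop unit relations}. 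The numerical factors $4$ and $8$ on the left-hand sides come from \eqref{polynomial dependence barbells}: a residual configuration $(\text{green barbell})^2 - (\text{orange barbell})$ equals $4$ times a brown barbell, and applying this (possibly together with a doubling coming from \eqref{bigon brown} or \eqref{GGB unit rel}) supplies the $8$.

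Concretely, for \eqref{H=I brown bottom feet} the bottom trivalent vertex of the crossbar has a green incoming edge and two brown legs, i.e.\ it is exactly the vertex of \eqref{vertex definitions}; expanding it and then also expanding the structure so produced turns the entire left-hand side into an all-green diagram with a green crossbar, to which \eqref{H=I one color} applies, after which the bigons collapse by \eqref{bigon brown} and the orange landings created along the way are handled by \eqref{GGG orange}--\eqref{GGB triv orange 2}. The relation \eqref{H=I 3 brown edges} is treated the same way, now expanding two brown trivalent vertices (one of which is a green--green--brown vertex, rewritten via \eqref{bivalent definition right} and the unit relations), and \eqref{H=I opposite brown edges (idemp decomp)} is one level deeper still, expanding the two brown legs that sit on opposite sides of the central green edge. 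In every case the orange-decorated summands on the right-hand side are produced exactly as in the proofs of \eqref{H=I side brown}, \eqref{H=I T brown}, \eqref{H=I brown leg}: one runs an orange strand through the central green edge using Theorem \ref{thm orange sliding} and the landing slides \eqref{GGG orange}--\eqref{orange slide horizontal green}, derives a second copy of the identity carrying that orange strand, and adds or subtracts the two copies — the combination with \eqref{bigon brown orange} (a green bigon pierced by an orange strand vanishes) killing the unwanted terms. For \eqref{H=I opposite brown edges (idemp decomp)} there are \emph{two} independent such insertions (one on each side of the relation), so the four sign patterns — orange on the left, on the right, on both, or on neither — are precisely what account for its eight summands. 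One may additionally note, as in Proposition \ref{prop idemp decomp}, that the four pairs of summands are orthogonal idempotents factoring through the corresponding direct summands, but that is a routine further check and not needed for the stated equality.

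The main obstacle is bookkeeping rather than any conceptual difficulty: there is no new idea beyond the vertex expansions plus the orange-insertion trick, but the proliferation of terms — up to eight in \eqref{H=I opposite brown edges (idemp decomp)} — together with the need to keep the signs straight through the successive brown-to-green expansions and the orange-strand insertions makes the computation delicate. In particular one must check at each stage that a bigon produced during the reduction really is a $GB$ bigon to which an available $H=I$ relation applies before collapsing it, and that the factors of $\tfrac12$ introduced by \eqref{brown green unit rel}, \eqref{bigon brown}, \eqref{H=I middle brown} combine correctly with the $\tfrac14$ from \eqref{polynomial dependence barbells} to give the stated coefficients on the left-hand sides.
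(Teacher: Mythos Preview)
Your proposal is too schematic to be a proof, and the concrete steps you describe do not match the actual diagram structure. In the $H$-diagram on the left of \eqref{H=I brown bottom feet} the horizontal bar is green and the two trivalent vertices are the \emph{left} and \emph{right} ones, each of type green--green--brown (two green edges, one brown). There is no ``bottom trivalent vertex of the crossbar with a green incoming edge and two brown legs''; that description fits the bottom vertex of the $I$-diagrams on the \emph{right-hand} side, not anything on the left. Since the vertex definition \eqref{vertex definitions} expresses a green--brown--brown vertex as a green triangle, it does not apply to the vertices you actually need to manipulate, and your plan of ``expanding to an all-green crossbar and applying the green $H=I$'' has no clear first move. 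Your account of the numerical factors via \eqref{polynomial dependence barbells} is likewise not connected to any identifiable step.

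The paper's argument is entirely different and you should look at it. For \eqref{H=I brown bottom feet} one starts from the idempotent decomposition \eqref{Y2 decomp} of $\id_{\Yobjbar\otimes\Yobjbar}$ and its rotation, composes each with the $H$-diagram in question, and uses \eqref{H=I middle brown} to kill the two summands whose middle edge is brown. Adding the two resulting equalities already gives $4$ times the $H$-diagram on the left (the $4$ comes from the two factors of $\tfrac12$ in \eqref{Y2 decomp}, used once in each rotation, not from any barbell identity). The remaining work is to show that the two leftover ``triangle'' terms rearrange into the desired $I$-diagram summands; this is done by a further computation using \eqref{H=I brown leg}, the polynomial forcing relation, and orange-sliding to produce and cancel paired terms. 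The other two relations are stated to follow by the same method and left to the reader. The organising idea you are missing is the use of the idempotent decomposition as the starting point.
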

\begin{proof}
	We will show \eqref{H=I brown bottom feet} and leave the rest to the reader.
	From \ref{Y2 decomp} and its rotation we get
	\begin{align}\label{eqn lemma step1}
		2
		\begin{strings}
			\Hlongparam{(0,0)}{0.5}{1.25}{green}{green}{green}{brown}{brown}
		\end{strings}
		&=
		\begin{strings}
			\Ilongparam{(0,1)}{0.5}{1}{green}{green}{green}{green}{green}
			\hbarbparam{(tlBarb)}{0.25}{0.04}{green}
			\Hfigparamcol{(nd-1-3)}{0.5}{0.5}
			{green}{green}{green}{brown}{brown}
		\end{strings}
		+
		\begin{strings}
			\Ilongparam{(0,1)}{0.5}{1}{green}{green}{green}{green}{green}
			\dotrparam{(blsDot)}{0.25}{0.04}{orange}
			\Hfigparamcol{(nd-1-3)}{0.5}{0.5}
			{green}{green}{green}{brown}{brown}
		\end{strings}
		+
		\begin{strings}
			\Ilongparam{(0,1)}{0.5}{1}{green}{green}{brown}{green}{green}
			\Hfigparamcol{(nd-1-3)}{0.5}{0.5}
			{green}{green}{green}{brown}{brown}
		\end{strings}
		+
		\begin{strings}
			\Ilongparam{(0,1)}{0.5}{1}{green}{green}{brown}{green}{green}
			\draw[orange] (nd-1-8)--(nd-1-9);
			\Hfigparamcol{(nd-1-3)}{0.5}{0.5}
			{green}{green}{green}{brown}{brown}
		\end{strings},
		\\
		\label{eqn lemma step2}
		2
		\begin{strings}
			\Hlongparam{(0,0)}{0.5}{1.25}{green}{green}{green}{brown}{brown}
		\end{strings}
		&=
		\begin{strings}
			\Ilongparam{(0,1)}{0.5}{1}{green}{green}{green}{green}{green}
			\hbarbparam{(blBarb)}{0.25}{0.04}{green}
			\Hfigparamcol{(nd-1-3)}{0.5}{0.5}
			{green}{green}{green}{brown}{brown}
		\end{strings}
		+
		\begin{strings}
			\Ilongparam{(0,1)}{0.5}{1}{green}{green}{green}{green}{green}
			\dotrparam{(tlsDot)}{0.25}{0.04}{orange}
			\Hfigparamcol{(nd-1-3)}{0.5}{0.5}
			{green}{green}{green}{brown}{brown}
		\end{strings}
		+
		\begin{strings}
			\Ilongparam{(0,1)}{0.5}{1}{green}{green}{brown}{green}{green}
			\Hfigparamcol{(nd-1-3)}{0.5}{0.5}
			{green}{green}{green}{brown}{brown}
		\end{strings}
		+
		\begin{strings}
			\Ilongparam{(0,1)}{0.5}{1}{green}{green}{brown}{green}{green}
			\draw[orange] (nd-1-8)--(nd-1-9);
			\Hfigparamcol{(nd-1-3)}{0.5}{0.5}
			{green}{green}{green}{brown}{brown}
		\end{strings}.
	\end{align}
	
	Using \eqref{H=I middle brown}, we can show that
	\begin{equation}
		\begin{strings}
			\Ilongparam{(0,1)}{0.5}{1}{green}{green}{brown}{green}{green}
			\Hfigparamcol{(nd-1-3)}{0.5}{0.5}
			{green}{green}{green}{brown}{brown}
		\end{strings}
		= 0
		\hspace{0.2cm}
		\text{and}
		\hspace{0.2cm}
		\begin{strings}
			\Ilongparam{(0,1)}{0.5}{1}{green}{green}{brown}{green}{green}
			\draw[orange] (nd-1-8)--(nd-1-9);
			\Hfigparamcol{(nd-1-3)}{0.5}{0.5}
			{green}{green}{green}{brown}{brown}
		\end{strings}
		=0.
	\end{equation}
	
	Adding \eqref{eqn lemma step1} and \eqref{eqn lemma step2} we get
	\begin{equation}
		4
		\begin{strings}
			\Hlongparam{(0,0)}{0.5}{1.25}{green}{green}{green}{brown}{brown}
		\end{strings}
		=
		\begin{strings}
			\Ilongparam{(0,1)}{0.5}{1.25}{green}{green}{green}{brown}{brown}
			\hbarbparam{(tlBarb)}{0.25}{0.04}{green}
		\end{strings}
		+
		\begin{strings}
			\Ilongparam{(0,1)}{0.5}{1.25}{green}{green}{green}{brown}{brown}
			\dotrparam{(tlsDot)}{0.25}{0.04}{orange}
		\end{strings}
		+
		\begin{strings}
			\Ilongparam{(0,1)}{0.5}{1}{green}{green}{green}{green}{green}
			\hbarbparam{(blBarb)}{0.25}{0.04}{green}
			\Hfigparamcol{(nd-1-3)}{0.5}{0.5}
			{green}{green}{green}{brown}{brown}
		\end{strings}
		+
		\begin{strings}
			\Ilongparam{(0,1)}{0.5}{1}{green}{green}{green}{green}{green}
			\dotrparam{(blsDot)}{0.25}{0.04}{orange}
			\Hfigparamcol{(nd-1-3)}{0.5}{0.5}
			{green}{green}{green}{brown}{brown}
		\end{strings}
		.
	\end{equation}
	It remains to show that
	\begin{equation}\label{eqn target sum of triangles}
		\begin{strings}
			\Ilongparam{(0,1)}{0.5}{1}{green}{green}{green}{green}{green}
			\hbarbparam{(blBarb)}{0.25}{0.04}{green}
			\Hfigparamcol{(nd-1-3)}{0.5}{0.5}
			{green}{green}{green}{brown}{brown}
		\end{strings}
		+
		\begin{strings}
			\Ilongparam{(0,1)}{0.5}{1}
			{green}{green}{green}{green}{green}
			\dotrparam{(blsDot)}{0.25}{0.04}{orange}
			\Hfigparamcol{(nd-1-3)}{0.5}{0.5}
			{green}{green}{green}{brown}{brown}
		\end{strings}
		=
		\begin{strings}
			\Ilongparam{(0,1)}{0.5}{1.25}
			{green}{green}{green}{brown}{brown}
			\hbarbparam{(blBarb)}{0.25}{0.04}{green}
		\end{strings}
		+
		\begin{strings}
			\Ilongparam{(0,1)}{0.5}{1.25}
			{green}{green}{green}{brown}{brown}
			\dotrparam{(blsDot)}{0.25}{0.04}{orange}
		\end{strings}.
	\end{equation}

	Using \eqref{H=I brown leg},
	\begin{align}
		2
		\begin{strings}
			\Ilongparam{(0,1)}{0.5}{1}{green}{green}{green}{green}{green}
			\hbarbparam{(blBarb)}{0.25}{0.04}{green}
			\Hfigparamcol{(nd-1-3)}{0.5}{0.5}
			{green}{green}{green}{brown}{brown}
		\end{strings}
		=&
		\begin{strings}
			\node (a) at (-0.2,0.3) {};
			\def\coltop{green}
			\def\colbot{brown}
			\capfigparam{(a)}{0.4}{0.2}{\coltop}
			\cupfigparam{(a)}{0.4}{0.2}{\coltop}
			\idfigparam {(0,0.1)}{0.1}{\colbot}
			\idfigparam {(0,0.6)}{0.1}{\colbot}
			\hbarbparam{(-0.075,0.3)}{0.15}{0.03}{green}
			\Ifigparamcol{(0,1.05)}{0.5}{0.45}
			{green}{green}{green}{brown}{brown}
			\idfigparam{(nd-6-1)}{-0.2}{green}
			\idfigparam{(nd-6-2)}{-0.2}{green}
			\idfigparam{(nd-6-4)}{0.6}{brown}
		\end{strings}
		+
		\begin{strings}
			\node (a) at (-0.2,0.3) {};
			\def\coltop{green}
			\def\colbot{brown}
			\capfigparam{(a)}{0.4}{0.2}{\coltop}
			\cupfigparam{(a)}{0.4}{0.2}{\coltop}
			\idfigparam {(0,0.1)}{0.1}{\colbot}
			\idfigparam {(0,0.6)}{0.1}{\colbot}
			\hbarbparam{(-0.075,0.3)}{0.15}{0.03}{green}
			\Ifigparamcol{(0,1.05)}{0.5}{0.45}
			{green}{green}{green}{brown}{brown}
			\draw[orange] (a) -- ++(-0.15,0) 
			-- ++(0,0.525) -- (nd-6-7);
			\idfigparam{(nd-6-1)}{-0.2}{green}
			\idfigparam{(nd-6-2)}{-0.2}{green}
			\idfigparam{(nd-6-4)}{0.6}{brown}
		\end{strings}
		\\
		\begin{split}
		=&	- 
		\begin{strings}
			\node (a) at (-0.2,0.3) {};
			\def\coltop{green}
			\def\colbot{brown}
			\capfigparam{(a)}{0.4}{0.2}{\coltop}
			\cupfigparam{(a)}{0.4}{0.2}{\coltop}
			\idfigparam {(0,0.1)}{0.1}{\colbot}
			\idfigparam {(0,0.6)}{0.1}{\colbot}
			\Ifigparamcol{(0,1.05)}{0.5}{0.45}
			{green}{green}{green}{brown}{brown}
			\idfigparam{(nd-5-1)}{-0.2}{green}
			\idfigparam{(nd-5-2)}{-0.2}{green}
			\idfigparam{(nd-5-4)}{0.6}{brown}
			\dotlparam{(a)}{0.15}{0.03}{orange}
		\end{strings}
		+
		\begin{strings}
			\trivuparamcol{(0,0.6)}{0.4}{0.2}
			{brown}{green}{green}
			\idfigparam{(nd-1-3)}{0.2}{green}
			\trivdparamcol{(nd-2-2)}{-0.4}{0.2}
			{green}{green}{brown}	
			\dotfigparamcol{(nd-1-2)}{0.03}{green}
			\dotfigparamcol{(nd-3-2)}{0.03}{green}		
			\Ifigparamcol{(0,1.05)}{0.5}{0.45}
			{green}{green}{green}{brown}{brown}
			\idfigparam{(nd-4-1)}{-0.2}{green}
			\idfigparam{(nd-4-2)}{-0.2}{green}
			\idfigparam{(nd-4-4)}{0.6}{brown}
		\end{strings}
		+
		\begin{strings}
			\trivuparamcol{(0,0.6)}{0.4}{0.2}
			{brown}{green}{green}
			\idfigparam{(nd-1-3)}{0.2}{green}
			\trivdparamcol{(nd-2-2)}{-0.4}{0.2}
			{green}{green}{brown}	
			\dotfigparamcol{(nd-1-2)}{0.03}{green}
			\dotfigparamcol{(nd-3-2)}{0.03}{green}
			\draw[orange] (-0.1,0.155)-- ++(0,0.29);		
			\Ifigparamcol{(0,1.05)}{0.5}{0.45}
			{green}{green}{green}{brown}{brown}
			\idfigparam{(nd-4-1)}{-0.2}{green}
			\idfigparam{(nd-4-2)}{-0.2}{green}
			\idfigparam{(nd-4-4)}{0.6}{brown}
		\end{strings}
		\\
		& +
		\begin{strings}
			\node (a) at (-0.1,0.3) {};
			\def\coltop{green}
			\def\colbot{brown}
			\capfigparam{(a)}{0.2}{0.1}{\coltop}
			\cupfigparam{(a)}{0.2}{0.1}{\coltop}
			\idfigparam {(0,0.2)}{0.2}{\colbot}
			\idfigparam {(0,0.6)}{0.2}{\colbot}
			\Ifigparamcol{(0,1.05)}{0.5}{0.45}
			{green}{green}{green}{brown}{brown}
			\draw[orange] (nd-5-7)-- ++(-0.125,0) --(nd-5-9);
			\dotdparam{(nd-5-9)}{0.15}{0.03}{orange}
			\idfigparam{(nd-5-1)}{-0.2}{green}
			\idfigparam{(nd-5-2)}{-0.2}{green}
			\idfigparam{(nd-5-4)}{0.6}{brown}
		\end{strings}
		-
		\begin{strings}
			\trivuparamcol{(0,0.6)}{0.4}{0.2}
			{brown}{green}{green}
			\idfigparam{(nd-1-3)}{0.2}{green}
			\trivdparamcol{(nd-2-2)}{-0.4}{0.2}
			{green}{green}{brown}	
			\dotfigparamcol{(nd-1-2)}{0.03}{green}
			\dotfigparamcol{(nd-3-2)}{0.03}{green}		
			\Ifigparamcol{(0,1.05)}{0.5}{0.45}
			{green}{green}{green}{brown}{brown}
			\draw[orange] (nd-5-7)-- ++(-0.125,0)
			--(nd-4-9) -- ++(0,-0.15);
			\idfigparam{(nd-4-1)}{-0.2}{green}
			\idfigparam{(nd-4-2)}{-0.2}{green}
			\idfigparam{(nd-4-4)}{0.6}{brown}
		\end{strings}
		-
		\begin{strings}
			\trivuparamcol{(0,0.6)}{0.4}{0.2}
			{brown}{green}{green}
			\idfigparam{(nd-1-3)}{0.2}{green}
			\trivdparamcol{(nd-2-2)}{-0.4}{0.2}
			{green}{green}{brown}	
			\dotfigparamcol{(nd-1-2)}{0.03}{green}
			\dotfigparamcol{(nd-3-2)}{0.03}{green}
			\draw[orange] (-0.1,0.155)-- ++(0,0.29);		
			\Ifigparamcol{(0,1.05)}{0.5}{0.45}
			{green}{green}{green}{brown}{brown}
			\draw[orange] (nd-5-7)-- ++(-0.125,0)
			--(nd-4-9) -- ++(0,-0.15);
			\idfigparam{(nd-4-1)}{-0.2}{green}
			\idfigparam{(nd-4-2)}{-0.2}{green}
			\idfigparam{(nd-4-4)}{0.6}{brown}
		\end{strings}
		\end{split}
		\\
		=&
		2 
		\begin{strings}
			\Ilongparam{(0,0)}{0.5}{1.25}{green}{green}{green}{brown}{brown}
			\draw[orange]  (nd-1-7)-- ++(0.125,0)
			--(brDot) ;
			\dotdparam{(brDot)}{0.125}{0.04}{orange}
		\end{strings}
		+
		2
		\begin{strings}
			\trivuparamcol{(0,0.6)}{0.4}{0.2}
			{brown}{green}{green}
			\idfigparam{(nd-1-3)}{0.2}{green}
			\trivdparamcol{(nd-2-2)}{-0.4}{0.2}
			{green}{green}{brown}	
			\dotfigparamcol{(nd-1-2)}{0.03}{green}
			\dotfigparamcol{(nd-3-2)}{0.03}{green}		
			\Ifigparamcol{(0,1.05)}{0.5}{0.45}
			{green}{green}{green}{brown}{brown}
			\idfigparam{(nd-4-1)}{-0.2}{green}
			\idfigparam{(nd-4-2)}{-0.2}{green}
			\idfigparam{(nd-4-4)}{0.6}{brown}
		\end{strings}
		-
		2
		\begin{strings}
			\trivuparamcol{(0,0.6)}{0.4}{0.2}
			{brown}{green}{green}
			\idfigparam{(nd-1-3)}{0.2}{green}
			\trivdparamcol{(nd-2-2)}{-0.4}{0.2}
			{green}{green}{brown}	
			\dotfigparamcol{(nd-1-2)}{0.03}{green}
			\dotfigparamcol{(nd-3-2)}{0.03}{green}		
			\Ifigparamcol{(0,1.05)}{0.5}{0.45}
			{green}{green}{green}{brown}{brown}
			\draw[orange] (nd-5-7)-- ++(-0.125,0)
			--(nd-4-9) -- ++(0,-0.15);
			\idfigparam{(nd-4-1)}{-0.2}{green}
			\idfigparam{(nd-4-2)}{-0.2}{green}
			\idfigparam{(nd-4-4)}{0.6}{brown}
		\end{strings}.
	\end{align}
	
	Hence,
	\begin{equation}\label{eqn triangle 1}
		\begin{strings}
			\Ilongparam{(0,1)}{0.5}{1}{green}{green}{green}{green}{green}
			\hbarbparam{(blBarb)}{0.25}{0.04}{green}
			\Hfigparamcol{(nd-1-3)}{0.5}{0.5}
			{green}{green}{green}{brown}{brown}
		\end{strings}
		=
		\begin{strings}
			\Ilongparam{(0,0)}{0.5}{1.25}{green}{green}{green}{brown}{brown}
			\draw[orange]  (nd-1-7)-- ++(0.125,0)
			--(brDot) ;
			\dotdparam{(brDot)}{0.125}{0.04}{orange}
		\end{strings}
		+
		\begin{strings}
			\trivuparamcol{(0,0.6)}{0.4}{0.2}
			{brown}{green}{green}
			\idfigparam{(nd-1-3)}{0.2}{green}
			\trivdparamcol{(nd-2-2)}{-0.4}{0.2}
			{green}{green}{brown}	
			\dotfigparamcol{(nd-1-2)}{0.03}{green}
			\dotfigparamcol{(nd-3-2)}{0.03}{green}		
			\Ifigparamcol{(0,1.05)}{0.5}{0.45}
			{green}{green}{green}{brown}{brown}
			\idfigparam{(nd-4-1)}{-0.2}{green}
			\idfigparam{(nd-4-2)}{-0.2}{green}
			\idfigparam{(nd-4-4)}{0.6}{brown}
		\end{strings}
		-
		\begin{strings}
			\trivuparamcol{(0,0.6)}{0.4}{0.2}
			{brown}{green}{green}
			\idfigparam{(nd-1-3)}{0.2}{green}
			\trivdparamcol{(nd-2-2)}{-0.4}{0.2}
			{green}{green}{brown}	
			\dotfigparamcol{(nd-1-2)}{0.03}{green}
			\dotfigparamcol{(nd-3-2)}{0.03}{green}		
			\Ifigparamcol{(0,1.05)}{0.5}{0.45}
			{green}{green}{green}{brown}{brown}
			\draw[orange] (nd-5-7)-- ++(-0.125,0)
			--(nd-4-9) -- ++(0,-0.15);
			\idfigparam{(nd-4-1)}{-0.2}{green}
			\idfigparam{(nd-4-2)}{-0.2}{green}
			\idfigparam{(nd-4-4)}{0.6}{brown}
		\end{strings}.
	\end{equation}
	
	A similar calculation yields
	\begin{equation}\label{eqn triangle 2}
		\begin{strings}
			\Ilongparam{(0,1)}{0.5}{1}
			{green}{green}{green}{green}{green}
			\dotrparam{(blsDot)}{0.25}{0.04}{orange}
			\Hfigparamcol{(nd-1-3)}{0.5}{0.5}
			{green}{green}{green}{brown}{brown}
		\end{strings}
		=
		\begin{strings}
			\Ilongparam{(0,0)}{0.5}{1.25}{green}{green}{green}{brown}{brown}
			\hbarbparam{(blBarb)}{0.25}{0.04}{green}
		\end{strings}
		-
		\begin{strings}
			\trivuparamcol{(0,0.6)}{0.4}{0.2}
			{brown}{green}{green}
			\idfigparam{(nd-1-3)}{0.2}{green}
			\trivdparamcol{(nd-2-2)}{-0.4}{0.2}
			{green}{green}{brown}	
			\dotfigparamcol{(nd-1-2)}{0.03}{green}
			\dotfigparamcol{(nd-3-2)}{0.03}{green}		
			\Ifigparamcol{(0,1.05)}{0.5}{0.45}
			{green}{green}{green}{brown}{brown}
			\idfigparam{(nd-4-1)}{-0.2}{green}
			\idfigparam{(nd-4-2)}{-0.2}{green}
			\idfigparam{(nd-4-4)}{0.6}{brown}
		\end{strings}
		+
		\begin{strings}
			\trivuparamcol{(0,0.6)}{0.4}{0.2}
			{brown}{green}{green}
			\idfigparam{(nd-1-3)}{0.2}{green}
			\trivdparamcol{(nd-2-2)}{-0.4}{0.2}
			{green}{green}{brown}	
			\dotfigparamcol{(nd-1-2)}{0.03}{green}
			\dotfigparamcol{(nd-3-2)}{0.03}{green}		
			\Ifigparamcol{(0,1.05)}{0.5}{0.45}
			{green}{green}{green}{brown}{brown}
			\draw[orange] (nd-5-7)-- ++(-0.125,0)
			--(nd-4-9) -- ++(0,-0.15);
			\idfigparam{(nd-4-1)}{-0.2}{green}
			\idfigparam{(nd-4-2)}{-0.2}{green}
			\idfigparam{(nd-4-4)}{0.6}{brown}
		\end{strings}.
	\end{equation}
	
	Adding \eqref{eqn triangle 1} and \eqref{eqn triangle 2} we get \eqref{eqn target sum of triangles}.

\end{proof}

\subsection{General polynomial forcing and needle relations}\label{subsection poly forcing 2}

	\begin{defi}
	Let $f\in R$. Define 
	\begin{align}
		\Sym_\tau(f):=& 
		\frac{f + \tau(f)}{2}
		\\
		\Alt_\tau(f):=&
		\frac{f - \tau(f)}{2}
	\end{align}
	to be the \textit{$\tau$-invariant} and \textit{$\tau$-anti invariant parts of $f$}.
\end{defi}

Notice that $\Sym_\tau(f)\in \Rtau$, $\Alt_\tau(f)\in\Rantitau$, and
\begin{equation}
	f=\Sym_\tau(f)+\Alt_\tau(f).
\end{equation}

The following polynomial forcing formulas will be shown by induction by `forcing'
one green barbell or orange dot at a time.
\begin{prop}[General polynomial forcing]\label{prop general polynomial forcing}
	Let $f\in \Rtau$.
	The following relations hold:
	\begin{align}
		\begin{split}
			\begin{boxedstrings}
				\idfigparam{(0,1)}{1}{green}
			\end{boxedstrings}
			f
			&=
			\Sym_\tau(s(f))
			\begin{boxedstrings}
				\idfigparam{(0,1)}{1}{green}
			\end{boxedstrings}
			+
			\frac{\Alt_\tau(s(f))}{\alpha_s-\alpha_t}
			\begin{boxedstrings}
				\idfigparam{(0,1)}{1}{green}
				\dotl{(nd-1-3)}{orange}
			\end{boxedstrings}
			\\
			&+
			\frac{\Sym(\partial_s(f))}{2}
			\left(
			\begin{boxedstrings}
				\dotd{(1,1)}{green}
				\dotu{(1,0)}{green}
			\end{boxedstrings}
			+
			\begin{boxedstrings}
				\idfigparam{(0,1)}{0.3}{green}
				\idfigparam{(nd-1-2)}{0.4}{orange}
				\idfigparam{(nd-2-2)}{0.3}{green}
			\end{boxedstrings}
			\right)
			+\frac{\Alt(\partial_s(f))}{2(\alpha_s-\alpha_t)}
			\left(
			\begin{boxedstrings}
				\idfigparam{(1,1)}{0.25}{green}
				\dotdparam{(nd-1-2)}{0.2}{0.06}{orange}
				\dotuparam{(1,0)}{0.25}{0.06}{green}
			\end{boxedstrings}
			+
			\begin{boxedstrings}
				\idfigparam{(1,0)}{-0.25}{green}
				\dotuparam{(nd-1-2)}{0.2}{0.06}{orange}
				\dotdparam{(1,1)}{0.25}{0.06}{green}
			\end{boxedstrings}			
			\right)
			,
		\end{split}
		\\
		\begin{split}
			\begin{boxedstrings}
				\idfigparam{(0,1)}{1}{green}
				\dotr{(nd-1-3)}{orange}
			\end{boxedstrings}
			f
			&=
			\Sym_\tau(s(f(\alpha_s-\alpha_t)))
			\begin{boxedstrings}
				\idfigparam{(0,1)}{1}{green}
			\end{boxedstrings}
			+
			\frac{\Alt_\tau(s(f(\alpha_s-\alpha_t)))}{\alpha_s-\alpha_t}
			\begin{boxedstrings}
				\idfigparam{(0,1)}{1}{green}
				\dotl{(nd-1-3)}{orange}
			\end{boxedstrings}
			\\
			&+
			\frac{\Sym(\partial_s(f(\alpha_s-\alpha_t)))}{2}
			\left(
			\begin{boxedstrings}
				\dotd{(1,1)}{green}
				\dotu{(1,0)}{green}
			\end{boxedstrings}
			+
			\begin{boxedstrings}
				\idfigparam{(0,1)}{0.3}{green}
				\idfigparam{(nd-1-2)}{0.4}{orange}
				\idfigparam{(nd-2-2)}{0.3}{green}
			\end{boxedstrings}
			\right)
			+\frac{\Alt(\partial_s(f(\alpha_s-\alpha_t)))}{2(\alpha_s-\alpha_t)}
			\left(
			\begin{boxedstrings}
				\idfigparam{(1,1)}{0.25}{green}
				\dotdparam{(nd-1-2)}{0.2}{0.06}{orange}
				\dotuparam{(1,0)}{0.25}{0.06}{green}
			\end{boxedstrings}
			+
			\begin{boxedstrings}
				\idfigparam{(1,0)}{-0.25}{green}
				\dotuparam{(nd-1-2)}{0.2}{0.06}{orange}
				\dotdparam{(1,1)}{0.25}{0.06}{green}
			\end{boxedstrings}			
			\right)
			,
		\end{split}
	\end{align}
	\begin{align}
		\begin{boxedstrings}
			\idfigparam{(0,1)}{1}{brown}
		\end{boxedstrings}
		f
		=&
		st(f)
		\begin{boxedstrings}
			\idfigparam{(0,1)}{1}{brown}
		\end{boxedstrings}
		+
		\Sym_\tau(t(\partial_s(f)))
		\begin{boxedstrings}
			\idfigparam{(0,1)}{0.3}{brown}
			\idfigparam{(nd-1-2)}{0.4}{green}
			\idfigparam{(nd-2-2)}{0.3}{brown}
		\end{boxedstrings}
		+
		\frac{\Alt_\tau(t(\partial_s(f)))}{\alpha_s-\alpha_t}
		\begin{boxedstrings}
			\idfigparam{(0,1)}{0.3}{brown}
			\idfigparam{(nd-1-2)}{0.4}{green}
			\idfigparam{(nd-2-2)}{0.3}{brown}
			\dotl{(nd-2-3)}{orange}
		\end{boxedstrings}
		+
		\partial_{st}(f)
		\begin{boxedstrings}
			\dotd{(0,1)}{brown}
			\dotu{(0,0)}{brown}
		\end{boxedstrings}
		,
		\\
		\begin{split}
			\begin{boxedstrings}
				\idfigparam{(0,1)}{1}{brown}
				\idfigparam{(-0.25,0.5)}{0.5}{orange}
				\dotr{(nd-1-3)}{orange}
				\draw[orange] (nd-2-1) -- (nd-1-3);
			\end{boxedstrings}
			f
			=&
			-
			st(f)
			\begin{boxedstrings}
				\idfigparam{(0,1)}{1}{brown}
				\dotu{(-0.25,0)}{orange}
			\end{boxedstrings}
			+
			\Sym_\tau(t(\partial_s(f(\alpha_s-\alpha_t))))
			\begin{boxedstrings}
				\idfigparam{(0,1)}{0.3}{brown}
				\idfigparam{(nd-1-2)}{0.4}{green}
				\idfigparam{(nd-2-2)}{0.3}{brown}
				\draw[orange] (-0.25,0) -- ++(0,0.5) -- (nd-2-3);
			\end{boxedstrings}
			\\
			+&
			\frac{\Alt_\tau(t(\partial_s(f(\alpha_s-\alpha_t))))}{\alpha_s-\alpha_t}
			\begin{boxedstrings}
				\idfigparam{(0,1)}{0.3}{brown}
				\idfigparam{(nd-1-2)}{0.4}{green}
				\idfigparam{(nd-2-2)}{0.3}{brown}
				\dotu{(-0.25,0)}{orange}
			\end{boxedstrings}
			+
			\frac{\partial_{st}(f(\alpha_s-\alpha_t))}{\alpha_s-\alpha_t}
			\begin{boxedstrings}
				\dotd{(0,1)}{brown}
				\dotu{(0,0)}{brown}
				\dotu{(-0.25,0)}{orange}
			\end{boxedstrings}
			,
		\end{split}
	\end{align}
\end{prop}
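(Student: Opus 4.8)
The plan is to derive all four relations from the single-barbell relations already established, by an induction on $\deg f$. The starting observation is Proposition \ref{prop polynomials are barbells}: $\Rtau$ is generated as a $\kk$-algebra by the green barbell (which equals $\alpha_s+\alpha_t$), the brown barbell ($\alpha_s\alpha_t$), and the orange barbell ($(\alpha_s-\alpha_t)^2$), so every $f\in\Rtau$ is a polynomial in barbells. Moreover the second and fourth relations are consequences of the first and third: using \eqref{poly forcing green 1} one rewrites a green strand carrying an orange half-dot as an $\Rtau$-combination of a bare green strand, the green--orange--green composite, and a broken green strand, and forcing $f$ across this combination needs only the first relation (after sliding the interior orange strand out of the way via Theorem \ref{thm orange sliding} and Corollary \ref{polynomial forcing under orange}); reading off the result gives the second relation, the substitution $f\rightsquigarrow f(\alpha_s-\alpha_t)$ coming from the orange half-dot, and the brown analogue deducing the fourth relation from the third is identical. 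So it suffices to prove the first and third relations.

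For the first relation, induct on $\deg f$. The case $\deg f=0$ is immediate ($f$ passes through untouched; on the right-hand side $s(f)=f$, $\partial_s(f)=0$, $\Sym_\tau(f)=f$, $\Alt_\tau(f)=0$). For the inductive step write $f=bg$ with $b$ a barbell of degree $2$ or $4$ and $\deg g<\deg f$. If $b$ is the orange barbell, it is a diagram built from orange strands and dots, so it slides across the green strand for free (Theorem \ref{thm orange sliding}); thus $\id_{\Yobjbar}\cdot f=b\cdot(\id_{\Yobjbar}\cdot g)$ and the inductive hypothesis applies to $g$. If $b$ is the green barbell, commute $g$ to be adjacent to the green strand, apply the inductive hypothesis to force $g$ across, and then force the leftover green barbell across each resulting diagram one more time using \eqref{poly forcing green 1} (plus, for the broken and orange-landing summands, free sliding and one more instance of the first relation for the degree-$2$ element $b$, handled by the degree-$2$ base of the induction). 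The degree-$4$ generators $(\alpha_s+\alpha_t)^2$, the brown barbell, and the orange barbell are themselves handled by \eqref{poly forcing green 1} applied twice, by \eqref{poly forcing green 2}, and by free sliding respectively, so the induction is anchored. The third relation is proved the same way, now using \eqref{poly forcing brown 1}, \eqref{poly forcing brown 2}, and \eqref{poly forcing brown 3} in place of \eqref{poly forcing green 1}, and using that $\Rtau$ is also generated by the green and brown barbells.

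The only genuine content is the coefficient bookkeeping at the recombination step: one must check that iterating the single-barbell moves collapses to the stated closed form with the operators $\Sym_\tau\circ s$, $\frac{1}{\alpha_s-\alpha_t}\Alt_\tau\circ s$, $\frac{1}{2}\Sym\circ\partial_s$, $\frac{1}{2(\alpha_s-\alpha_t)}\Alt\circ\partial_s$, and their brown analogues. Two points need care. First, well-definedness: that $\Alt_\tau(s(f))$, $\Alt_\tau(s(f(\alpha_s-\alpha_t)))$, $\Alt_\tau(\partial_s(f))$, and the like are divisible by $\alpha_s-\alpha_t$; this follows from $\Rantitau=(\alpha_s-\alpha_t)\Rtau$ together with the conjugation identities $\tau s\tau=t$, $\tau t\tau=s$, $\tau\partial_s\tau=\partial_t$, which make each of these elements $\tau$-anti-invariant (e.g.\ $\tau(\partial_s(f))=\partial_t(\tau(f))=\partial_t(f)$, so $\Alt_\tau(\partial_s(f))=\frac{1}{2}(\partial_s(f)-\partial_t(f))$, which vanishes on $\alpha_s=\alpha_t$). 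Second, the recursion must telescope; the relevant algebraic identities are the twisted Leibniz rule $\partial_s(ab)=\partial_s(a)b+s(a)\partial_s(b)$ (with $\partial_s(\alpha_s-\alpha_t)=2$), the value $s((\alpha_s-\alpha_t)^2)=(\alpha_s+\alpha_t)^2$, and the polynomial dependence $4\cdot(\text{brown barbell})=(\text{green barbell})^2-(\text{orange barbell})$ of \eqref{polynomial dependence barbells} --- the last being exactly what reconciles "force the orange barbell for free" with "apply the formula", since under $s$ the formula replaces $(\alpha_s-\alpha_t)^2$ by $(\alpha_s+\alpha_t)^2$ and contributes nonzero $\partial_s$-terms. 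If (and this should be checked) the needle and circle relations invoked in Lemma \ref{lemma base case} do not themselves rely on the present proposition, one can instead bypass the bookkeeping altogether: apply the equivalence $\Fbold$ of Theorem \ref{thm F is an equivalence}, under which the green strand becomes $\mathrm{diag}(\id_{B_s},\id_{B_t})$, the green dot becomes the row matrix of a red dot and a blue dot, and polynomial forcing in $\HcalAxA$ is the classical relation \eqref{polynomial forcing} for $\alpha_s$ together with its $\alpha_t$-analogue; each of the four relations then becomes a finite linear-algebra identity among $2$-component matrices of morphisms, with the $\Sym_\tau/\Alt_\tau$ split being precisely the decomposition of the red and blue dots into $\tau$-eigencomponents, and faithfulness of $\Fbold$ concludes.
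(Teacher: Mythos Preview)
Your main approach is correct and is essentially the paper's: the paper's entire proof is the single sentence ``The following polynomial forcing formulas will be shown by induction by `forcing' one green barbell or orange dot at a time,'' and your write-up is a more detailed execution of exactly this idea, using the single-barbell relations \eqref{poly forcing green 1}--\eqref{poly forcing brown 3} and Proposition~\ref{prop polynomials are barbells}. A minor difference: the paper phrases the induction in terms of green barbells and orange \emph{dots} (since \eqref{poly forcing green 1} forces an orange dot, not an orange barbell, and $\Rtau=\kk[\text{green barbell},\text{orange barbell}]$ with the orange barbell being the square of the orange dot), whereas you use the three barbells; both work.

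Your suspicion about the alternative argument via $\Fbold$ is well-founded: it is circular. Theorem~\ref{thm F is an equivalence} relies on Theorem~\ref{diagram reduction}, whose base case (Lemma~\ref{lemma base case}) invokes Propositions~\ref{prop general circle rels} and~\ref{prop general needle rels}, and those are explicitly derived in the paper by applying the present proposition. So you cannot use faithfulness of $\Fbold$ here; the diagrammatic induction is the only route available at this point in the logical flow.
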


Applying the general polynomial forcing relations to a needle and circle diagrams and then applying the corresponding needle relation to any empty needle, we get the following propositions. 
The proof is a straightforward calculation and is left as an exercise to the reader.
\begin{prop}[General needle relations]\label{prop general needle rels}
	Let $f\in\Rtau$. 
	The following relations hold:
	\begin{align}
		\begin{boxedstrings}
			\draw[green] (-0.25,0.35) rectangle ++(0.5,0.5);
			\idfigparam {(0,0.35)}{0.35}{green}
			\path (nd-1-2) -- ++(0,1);
			\polyboxscale{(0,0.6)}{$f$}{0.9}
		\end{boxedstrings}
		&=
		\Sym_\tau(\partial_s(f))
		\begin{boxedstrings}
			\dotu{(1,0)}{green}
			\path (nd-1-1) -- ++(0,1);
		\end{boxedstrings}
		+
		\frac{\Alt_\tau(\partial_s(f))}{\alpha_s-\alpha_t}
		\begin{boxedstrings}
			\idfigparam{(1,0)}{-0.25}{green}
			\dotuparam{(nd-1-2)}{0.2}{0.06}{orange}
			\path (nd-1-1) -- ++(0,1);
		\end{boxedstrings}
		,
		\\
		\begin{boxedstrings}
			\node at (0,1) {};
			\draw[green] (0.25,0.35) rectangle ++(-0.75,0.5);
			\idfigparam {(0,0.35)}{0.35}{green}				
			\path (nd-1-2) -- ++(0,1);
			\polyboxscale{(0,0.6)}{$f$}{0.9}
			\dotu{(-0.3125,0.35)}{orange}
		\end{boxedstrings}
		&=
		\Sym_\tau(\partial_s(f(\alpha_s-\alpha_t)))
		\begin{boxedstrings}
			\dotu{(1,0)}{green}
			\path (nd-1-1) -- ++(0,1);
		\end{boxedstrings}
		+
		\frac{\Alt_\tau(\partial_s(f(\alpha_s-\alpha_t)))}{\alpha_s-\alpha_t}
		\begin{boxedstrings}
			\idfigparam{(1,0)}{-0.25}{green}
			\dotuparam{(nd-1-2)}{0.2}{0.06}{orange}
			\path (nd-1-1) -- ++(0,1);
		\end{boxedstrings}
		,
		\\
		\begin{boxedstrings}
			\draw[green] (-0.25,0.35) rectangle ++(0.5,0.5);
			\idfigparam {(0.05,0)}{-0.35}{green}
			\idfigparam {(-0.05,0)}{-0.35}{orange}
			\path (nd-1-1) -- ++(0,1);
			\polyboxscale{(0,0.6)}{$f$}{0.9}
		\end{boxedstrings}
		&=
		\Sym_\tau(\partial_s(f))
		\begin{boxedstrings}
			\idfigparam{(1,0)}{-0.25}{green}
			\dotuparam{(nd-1-2)}{0.2}{0.06}{green}
			\draw[orange] (nd-1-2) -- ++(-0.25,0) -- ++(0,-0.25);
			\path (nd-1-1) -- ++(0,1);
		\end{boxedstrings}	
		+
		\frac{\Alt_\tau(\partial_s(f))}{\alpha_s-\alpha_t}
		\begin{boxedstrings}
			\dotu{(1,0)}{green}
			\dotu{(0.75,0)}{orange}
			\path (nd-1-1) -- ++(0,1);
		\end{boxedstrings}
		,
		\\
		\begin{boxedstrings}
			\node at (0,1) {};
			\draw[green] (0.25,0.35) rectangle ++(-0.75,0.5);
			\idfigparam {(0,0.35)}{0.35}{green}				
			\path (nd-1-2) -- ++(0,1);
			\polyboxscale{(0,0.6)}{$f$}{0.9}
			\dotu{(-0.3125,0.35)}{orange}
			\idfigparam {(nd-2-1)}{0.35}{orange}	
		\end{boxedstrings}
		&=
		\Sym_\tau(\partial_s(f(\alpha_s-\alpha_t)))
		\begin{boxedstrings}
			\idfigparam{(1,0)}{-0.25}{green}
			\dotuparam{(nd-1-2)}{0.2}{0.06}{green}
			\draw[orange] (nd-1-2) -- ++(-0.25,0) -- ++(0,-0.25);
			\path (nd-1-1) -- ++(0,1);
		\end{boxedstrings}	
		+
		\frac{\Alt_\tau(\partial_s(f(\alpha_s-\alpha_t)))}{\alpha_s-\alpha_t}
		\begin{boxedstrings}
			\dotu{(1,0)}{green}
			\dotu{(0.75,0)}{orange}
			\path (nd-1-1) -- ++(0,1);
		\end{boxedstrings}
		,
		\\
		\begin{boxedstrings}
			\node at (0,1) {};
			\draw[green] (-0.25,0.35) rectangle ++(0.5,0.5);
			\idfigparam {(0,0.35)}{0.35}{brown}
			\path (nd-1-2) -- ++(0,1);
			\polyboxscale{(0,0.6)}{$f$}{0.9}
		\end{boxedstrings}
		&= 0, \hspace{0.2cm}
		\begin{boxedstrings}
			\node at (0,1) {};
			\draw[green] (0.25,0.35) rectangle ++(-0.75,0.5);
			\idfigparam {(0,0.35)}{0.35}{brown}				
			\path (nd-1-2) -- ++(0,1);
			\polyboxscale{(0,0.6)}{$f$}{0.9}
			\dotu{(-0.3125,0.35)}{orange}
		\end{boxedstrings}
		=0,
		\\
		\begin{boxedstrings}
			\node at (0,1) {};
			\draw[brown] (-0.25,0.35) rectangle ++(0.5,0.5);
			\idfigparam {(0,0.35)}{0.35}{green}
			\path (nd-1-2) -- ++(0,1);
			\polyboxscale{(0,0.6)}{$f$}{0.9}
		\end{boxedstrings}
		&=
		\frac{\partial_{st}(f)}{2}
		\left(
		(\alpha_s+\alpha_t)
		\begin{boxedstrings}
			\dotuparam{(1,0)}{0.25}{0.06}{green}
			\path (nd-1-1) -- ++(0,1);
		\end{boxedstrings}
		+
		\begin{boxedstrings}
			\idfigparam{(1,0)}{-0.25}{green}
			\dotuparam{(nd-1-2)}{0.2}{0.06}{orange}
			\path (nd-1-1) -- ++(0,1);
		\end{boxedstrings}			
		\right)
		,
		\\
		\begin{boxedstrings}
			\node at (0,1) {};
			\draw[brown] (0.25,0.35) rectangle ++(-0.75,0.5);
			\idfigparam {(0,0.35)}{0.35}{green}				
			\path (nd-1-2) -- ++(0,1);
			\polyboxscale{(0,0.6)}{$f$}{0.9}
			\dotu{(-0.3125,0.35)}{orange}
			\idfigparam {(-0.3125,0.35)}{0.35}{orange}	
		\end{boxedstrings}
		&=
		\frac{\partial_{st}(f(\alpha_s-\alpha_t))(\alpha_s-\alpha_t)}{2}
		\begin{boxedstrings}
			\idfigparam{(1,0)}{-0.25}{green}
			\dotuparam{(nd-1-2)}{0.2}{0.06}{green}
			\draw[orange] (nd-1-2) -- ++(-0.25,0) -- ++(0,-0.25);
			\path (nd-1-1) -- ++(0,1);
		\end{boxedstrings}	
		+
		\frac{(\alpha_s+\alpha_t)\partial_{st}(f(\alpha_s-\alpha_t))}{2(\alpha_s-\alpha_t)}
		\begin{boxedstrings}
			\dotu{(0.75,0)}{orange}
			\dotu{(1,0)}{green}
			\path (nd-1-1) -- ++(0,1);
		\end{boxedstrings}	
		,
		\\
		\begin{boxedstrings}
			\node at (0,1) {};
			\draw[brown] (-0.25,0.35) rectangle ++(0.5,0.5);
			\idfigparam {(0,0.35)}{0.35}{brown}
			\path (nd-1-2) -- ++(0,1);
			\polyboxscale{(0,0.6)}{$f$}{0.9}
		\end{boxedstrings}
		&=
		\partial_{st}(f)			
		\begin{boxedstrings}
			\dotu{(1,0)}{brown}
			\path (nd-1-1) -- ++(0,1);
		\end{boxedstrings}
		, 
		\hspace{0.2cm}
		\begin{boxedstrings}
			\node at (0,1) {};
			\draw[brown] (0.25,0.35) rectangle ++(-0.75,0.5);
			\idfigparam {(0,0.35)}{0.35}{brown}				
			\path (nd-1-2) -- ++(0,1);
			\polyboxscale{(0,0.6)}{$f$}{0.9}
			\dotu{(-0.3125,0.35)}{orange}
			\idfigparam {(-0.3125,0.35)}{0.35}{orange}	
		\end{boxedstrings}
		=
		\frac{\partial_{st}(f(\alpha_s-\alpha_t))}{\alpha_s-\alpha_t}			
		\begin{boxedstrings}
			\dotu{(1,0)}{brown}
			\dotu{(0.75,0)}{orange}
			\path (nd-1-1) -- ++(0,1);
		\end{boxedstrings}
		.
	\end{align}
\end{prop}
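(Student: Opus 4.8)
The plan is to deduce every identity in Proposition \ref{prop general needle rels} from the general polynomial-forcing formulas of Proposition \ref{prop general polynomial forcing}, together with the basic needle relations of \eqref{needle relations} and Proposition \ref{prop needle relations}. Each left-hand side is a needle: a green or brown circle enclosing a polynomial box $f \in \Rtau$, with a single boundary strand (green or brown, and in some cases carrying an extra orange strand or orange dot). Since the region inside the circle is a disk, the first step is to isotope $f$ so that it sits immediately to one side of an arc of the circle whose color matches the color across which $f$ will be forced.

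Second, I would apply the corresponding identity from Proposition \ref{prop general polynomial forcing} --- the green version if that arc is green, the brown version if it is brown --- to push $f$ across the arc and out of the interior of the loop. By \eqref{left poly rel} the escaped polynomial becomes an $\Rtau$-scalar coefficient, and the forcing formula expresses the needle as an $\Rtau$-linear combination of: (i) the needle with an empty circle, possibly now carrying an orange dot or a short orange detour on the loop; (ii) ``broken'' terms in which the arc has been cut by a dot--dot pair; and (iii) the analogous broken and decorated terms coming from the orange-dot part of the forcing formula. The coefficients that appear are precisely $\Sym_\tau$ and $\Alt_\tau$ applied to $s(f)$, $st(f)$, $\partial_s(f)$, $\partial_{st}(f)$ and their products with powers of $\alpha_s - \alpha_t$, which accounts for the polynomial factors in the statement.

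Third, I would evaluate the empty-circle terms using the needle relations directly: \eqref{needle relations} and Proposition \ref{prop needle relations} send every empty green or brown needle, with or without an orange decoration on the loop, either to $0$ or to an $\Rtau$-multiple of the appropriate dot (an empty green needle vanishes unless the boundary strand is green, and so on); when an orange strand threads the needle one first slides it clear of the green--brown part using Theorem \ref{thm orange sliding} and relations such as \eqref{orange jump through GGG} and \eqref{GGB orange 1}. The broken terms are GB trees with at most one boundary strand, which collapse to a dot by the unit relations \eqref{one color unit rel}, \eqref{bivalent definition left}, \eqref{brown green unit rel} (or, more economically, by citing Lemma \ref{tree reduction}). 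Collecting the three kinds of terms and simplifying the polynomial coefficients with the Leibniz rule $\partial_s(fg) = s(f)\,\partial_s(g) + \partial_s(f)\,g$ and the definitions of $\Sym_\tau$ and $\Alt_\tau$ then yields exactly the right-hand sides displayed.

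I do not expect a conceptual obstacle here --- this is why the paper defers the computation --- so the hard part is really bookkeeping: in each of the roughly a dozen cases one must choose the forcing arc correctly, track which orange decorations survive the forcing and on which side they land, and confirm that sliding the orange strand clear of the loop before forcing does not interfere with the forcing itself. The sub-task most prone to error is the interaction of the orange decorations with the $\Sym_\tau$/$\Alt_\tau$ split of the forced polynomial, since $\Rtau$-elements commute freely past a green strand only when they are $\tau$-invariant and $s(f)$ need not be; keeping the $\tau$-invariant and $\tau$-anti-invariant streams separate all the way through the needle evaluation is what requires the most care.
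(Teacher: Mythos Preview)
Your proposal is correct and follows exactly the approach the paper indicates: the text preceding Proposition \ref{prop general needle rels} says ``Applying the general polynomial forcing relations to a needle and circle diagrams and then applying the corresponding needle relation to any empty needle, we get the following propositions. The proof is a straightforward calculation and is left as an exercise to the reader.'' Your plan of forcing $f$ across an arc via Proposition \ref{prop general polynomial forcing}, killing the resulting empty needles with \eqref{needle relations} and Proposition \ref{prop needle relations}, and collapsing the broken terms to dots is precisely this.
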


\begin{prop}[General circle relations]\label{prop general circle rels}
	Let $f\in\Rtau$. 
	The following relations hold:
	\begin{align}
		\begin{boxedstrings}
			\draw[green] (-0.25,0.25) rectangle ++(0.5,0.5);
			\path (0,0) -- ++(0,1);
			\polyboxscale{(0,0.5)}{$f$}{0.9}
		\end{boxedstrings}
		&=
		2 \Sym_\tau(\partial_s(f)) (\alpha_s+\alpha_t)
		,
		&
		\begin{boxedstrings}
			\draw[green] (-0.25,0.35) rectangle ++(0.5,0.5);
			\idfigparam {(0,0.35)}{0.35}{orange}
			\path (nd-1-2) -- ++(0,1);
			\polyboxscale{(0,0.6)}{$f$}{0.9}
		\end{boxedstrings}
		&=
		2 \Sym_\tau(\partial_s(f))
		\begin{boxedstrings}
			\dotu{(1,0)}{orange}
			\path (nd-1-1) -- ++(0,1);
		\end{boxedstrings}
		,
		\\
		\begin{boxedstrings}
			\node at (0,1) {};
			\draw[green] (0.25,0.25) rectangle ++(-0.75,0.5);			
			\path (0,0) -- ++(0,1);
			\polyboxscale{(0,0.5)}{$f$}{0.9}
			\dotu{(-0.3125,0.25)}{orange}
		\end{boxedstrings}
		&=
		2 \Sym_\tau(\partial_s(f(\alpha_s-\alpha_t))) (\alpha_s+\alpha_t)
		,
		&
		\begin{boxedstrings}
			\node at (0,1) {};
			\draw[green] (0.25,0.35) rectangle ++(-0.75,0.5);			
			\path (0,0) -- ++(0,1);
			\polyboxscale{(0,0.6)}{$f$}{0.9}
			\dotu{(-0.3125,0.35)}{orange}
			\idfigparam {(nd-1-1)}{0.35}{orange}	
		\end{boxedstrings}
		&=
		2 \Sym_\tau(\partial_s(f(\alpha_s-\alpha_t)))
		\begin{boxedstrings}
			\dotu{(1,0)}{orange}
			\path (nd-1-1) -- ++(0,1);
		\end{boxedstrings}
		,
		\\
		\begin{boxedstrings}
			\node at (0,1) {};
			\draw[brown] (-0.25,0.25) rectangle ++(0.5,0.5);
			\path (0,0) -- ++(0,1);
			\polyboxscale{(0,0.5)}{$f$}{0.9}
		\end{boxedstrings}
		&=
		\partial_{st}(f)	
		(\alpha_s \alpha_t)
		, 
		&
		\begin{boxedstrings}
			\node at (0,1) {};
			\draw[brown] (0.25,0.35) rectangle ++(-0.75,0.5);			
			\path (0,0) -- ++(0,1);
			\polyboxscale{(0,0.6)}{$f$}{0.9}
			\dotu{(-0.3125,0.35)}{orange}
			\idfigparam {(-0.3125,0.35)}{0.35}{orange}	
		\end{boxedstrings}
		&=
		\frac{\partial_{st}(f(\alpha_s-\alpha_t))}{\alpha_s-\alpha_t}	
		(\alpha_s \alpha_t)		
		\begin{boxedstrings}
			\dotu{(0.75,0)}{orange}
			\path (nd-1-1) -- ++(0,1);
		\end{boxedstrings}
		.
	\end{align}
\end{prop}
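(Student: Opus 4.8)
The plan is to prove Proposition \ref{prop general circle rels} exactly as the paragraph preceding Proposition \ref{prop general needle rels} advertises: treat each circle diagram as the closed ($\onecalbar\to\onecalbar$, or $\onecalbar\to\Xobjbar$ in the orange‑dotted cases) companion of the corresponding needle diagram of Proposition \ref{prop general needle rels}. Concretely, each diagram on the left is a monochrome (green or brown) loop bounding a single inner region which contains the polynomial box $f\in\Rtau$, possibly decorated by an orange strand threading the loop or an orange dot inside. First I would reduce the orange decoration: by Corollary \ref{polynomial forcing under orange} and Theorem \ref{thm orange sliding} an orange strand threading the loop can be pulled entirely outside, so the orange‑dressed circles reduce to orange‑dot multiples of the plain green/brown ones, and it suffices to evaluate a plain monochrome loop with $f$ (resp.\ with $f\cdot(\alpha_s-\alpha_t)\in R$ in the brown‑plus‑orange‑dot cases) in its interior.

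Next, the main reduction. I would apply the general polynomial forcing formulas of Proposition \ref{prop general polynomial forcing} to the loop edge, forcing $f$ across that edge into the single exterior region. This rewrites the circle as an $\Rtau$‑combination of four kinds of diagrams: (i) an empty monochrome loop; (ii) a loop broken by a dot–dot pair of the loop colour; (iii) a loop carrying a monochrome barbell along its edge; and (iv) a loop carrying an orange dot or an orange bivalent vertex on its edge. Now evaluate each: (i) empty green or brown loops vanish by the "any empty cycle is zero'' principle, i.e.\ apply \eqref{H=I one color} (green, resp.\ brown) repeatedly and then \eqref{needle relations}; (ii)–(iii) collapse by the barbell relations \eqref{barbell rels} to the scalars $\alpha_s+\alpha_t$ (green) and $\alpha_s\alpha_t$ (brown); and (iv) the orange‑decorated loops are simplified using \eqref{bivalent definition left}, \eqref{green orange unit rel}, the orange barbell relation in \eqref{barbell rels}, and, when an orange dot is present, the needle relations \eqref{needle relations} together with Proposition \ref{prop needle relations}, which is what leaves the surviving orange‑dot factor $\Dotup{orange}$ on the right‑hand sides. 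Collecting coefficients, the $\tau$‑anti‑invariant contributions cancel in pairs — here one uses $f\in\Rtau$, so that $\tau(\partial_s f)=\partial_t f$ and the $\Sym_\tau,\Alt_\tau$ parts of the forced data are exactly those in Proposition \ref{prop general polynomial forcing} — leaving precisely $2\Sym_\tau(\partial_s f)(\alpha_s+\alpha_t)$ for the plain green loop, $\partial_{st}(f)\,\alpha_s\alpha_t$ for the plain brown loop, and their orange‑dot analogues; the factor $2$ in the green cases records the two symmetric contributions that reach the green barbell (one from the $s(f)$‑term, one from the $\partial_s(f)$‑barbell term of the forcing formula). Equivalently, one may obtain each identity by capping the output strand of the relevant needle relation from Proposition \ref{prop general needle rels} with a dot of the loop colour and simplifying the resulting monochrome dot–dot into a barbell via \eqref{barbell rels} — this is the precise sense of "applying the corresponding needle relation to any empty needle''.

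I expect the main obstacle to be purely bookkeeping rather than conceptual: no relation beyond those already established (general polynomial forcing, the $H=I$ and needle relations, the barbell and unit relations, and orange sliding) is needed, but one must carry the $\Sym_\tau/\Alt_\tau$ decompositions faithfully through the barbell‑ and unit‑collapses, verify the pairwise cancellation of anti‑invariant terms, and pin down the scalar coefficients and signs exactly. This is exactly why the paper defers it to the reader, and the outline above is simply the honest spelling‑out of that exercise.
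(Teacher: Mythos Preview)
Your proposal matches the paper's own argument: the text immediately preceding Proposition~\ref{prop general needle rels} says exactly that both the needle and circle relations follow by applying the general polynomial forcing relations (Proposition~\ref{prop general polynomial forcing}) and then evaluating with the needle relations, leaving the calculation to the reader. Your two routes---forcing $f$ across the loop directly, or equivalently capping the corresponding needle identity from Proposition~\ref{prop general needle rels} with a dot of the loop colour and collapsing via \eqref{one color unit rel} and \eqref{barbell rels}---are both faithful implementations of that outline, and the orange-decorated cases reduce as you describe via Theorem~\ref{thm orange sliding} and the unit/bivalent relations.

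Two minor corrections to your write-up: an empty green or brown circle (no trivalent vertices) does not need $H{=}I$; it vanishes because it equals the empty needle with a dotted stem (via \eqref{one color unit rel}) and the empty needle is zero by \eqref{needle relations}. Also, in the second and fourth green diagrams the orange strand is a single \emph{landing} on the loop, not a strand threading through, so it cannot be ``pulled entirely outside''; rather it rides along through the polynomial-forcing step and survives as the boundary orange dot on the right-hand side.
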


\end{appendices}

\printbibliography

\end{document}